\documentclass[12pt, twoside, letterpaper]{article}
\usepackage[utf8x]{inputenc}
\usepackage{comment}
\usepackage{amsthm}
\usepackage{multirow}
\usepackage{mathtools} 
\usepackage{array}
\newcolumntype{P}[1]{>{\centering\arraybackslash}p{#1}}
\usepackage{longtable}
\usepackage[top=2.5cm,bottom=2.5cm,left=2cm,right=2cm]{geometry}
\usepackage{amsmath,amssymb}
\usepackage[mathscr]{euscript}
\usepackage[usenames,dvipsnames]{color}
\usepackage{booktabs}
\usepackage[colorlinks,linkcolor=blue]{hyperref}
\usepackage{pstricks}
\usepackage{graphicx,caption}
\usepackage{dsfont}
\usepackage[all]{xy}
\usepackage{tabularx}
\usepackage{bbm}
\usepackage{todonotes}
\usepackage{hyperref}
\usepackage{fancyhdr}

\usepackage[font=footnotesize, labelfont=sc, width=0.8\textwidth]{caption}
\usepackage{tikz}
\usepackage{tikz-3dplot}
\usepackage{amsmath}
\usetikzlibrary{arrows.meta,decorations.markings}
\usetikzlibrary{3d,decorations.markings}

\tikzset{
  ->-/.style={decoration={
    markings,
    mark=at position #1 with {\arrow{>}}},postaction={decorate}},
  ->-/.default=0.5
}

\newcommand{\dd}{\mathrm{d}}

\renewcommand{\geq}{\geqslant}
\renewcommand{\leq}{\leqslant}

\newcommand{\E}{ \mathbb{E} } 
 
\newcommand{\N}{ \mathbb{N} } 
\newcommand{\R}{ \mathbb{R} }

\newcommand{\Nc}{ \mathcal{N} }

\newcounter{chronopara}
\newcommand{\myparagraph}[1]{%
  \refstepcounter{chronopara}
  \paragraph{$\mathcal{P}$\arabic{chronopara}. #1}
  \label{para:\arabic{paragraph}}
}

\newcommand{\Tr}{ \mathrm{Tr} }

\newcommand{\gfrak}{ \mathfrak{g} }
\newcommand{\g}{\mathfrak{g}}

\newcommand{\ind}{ \mathrm{ind} } 

\newtheorem{remark}{Remark}
\numberwithin{equation}{section}

\newtheorem{thm}{Theorem}[section]
\newtheorem{proposition}[thm]{Proposition}
\newtheorem{corollary}[thm]{Corollary}

\newtheorem{definition}[thm]{Definition}
\newtheorem{example}[thm]{Example}
\newtheorem{lemma}[thm]{Lemma}

\newcommand{\EN}[1]{{\color{blue} #1}}

\title{Semiclassical analysis for Yang--Mills \\ random connections on compact surfaces}
\author{Nguyen Viet {\sc{Dang}\footnote{IRMA, Université de Strasbourg, 7 rue René Descartes, 67084 Strasbourg Cedex, France AND Institut Universitaire de France, Paris, France. Email : nvdang@unistra.fr} } and Elias \sc{Nohra}\footnote{Sorbonne Université, Université Paris Cité, CNRS, Laboratoire de Probabilités, Statistique et Modélisation, LPSM, F-75005, France. Email : elias.nohra@sorbonne-universite.fr}}
\date{\today}

\begin{document}
\maketitle
\thispagestyle{empty}
\begin{abstract}

We introduce anisotropic Banach spaces of distributional $1$-forms on compact surfaces designed to capture the fine regularity properties of Morse gauge-fixed Yang--Mills random connections. \par Using singular connections supported on unstable curves of a Morse gradient flow, we provide a new description of the moduli space of flat connections and of its canonical Atiyah--Bott--Goldman symplectic form. When the group is the special unitary group, we prove that in the zero-area limit, the Yang--Mills random connection converges, within these anisotropic spaces, to a random distributional $1$-form whose law coincides with our Morse-theoretic representative of the Atiyah--Bott--Goldman measure.
\par Our approach extends the works of Witten, Forman, Liu, and Sengupta by establishing the zero-area limit of the Yang--Mills measure at the level of random distributional connections, rather than only at the level of holonomies. 
This answers a question of Thierry Lévy on the semiclassical limits of Yang--Mills random connections.

\end{abstract}
\tableofcontents
\noindent\rule{\linewidth}{0.4pt}
\section{Introduction}
The Yang--Mills equation on Riemann surfaces occupies a central position in gauge theory, following the seminal work of Atiyah and Bott~\cite{AB}. Starting from the space $\mathcal{A}$ of unitary connections on a Riemann surface and the action of the gauge group $\mathcal{G}$, they introduced the Yang--Mills functional $S_{\mathsf{YM}}$~\cite[def p.~548]{AB} on the quotient $\mathcal{A}/\mathcal{G}$. A fundamental insight of~\cite{AB} is that $S_{\mathsf{YM}}$ behaves as a perfect equivariant Morse function on the infinite-dimensional space $\mathcal{A}$. This leads to a stratification of the space of connections into cells indexed by topological types of bundles~\cite[p.~536--539]{AB}, whose lowest stratum identifies with the moduli space of stable holomorphic bundles. In particular, solving the Yang--Mills equations is equivalent, via Morse theory, to the classification of stable bundles. Using this analytic approach, they recovered the fact that the moduli space of stable bundles over a Riemann surface can be identified with irreducible unitary representations of the surface group~\cite[(8.1), p.~569]{AB}, an important result originally due to Narasimhan--Seshadri~\cite{NS65}.\footnote{In fact, their work was motivated by the results of Narasimhan--Seshadri and Harder--Narasimhan, with the goal of recovering them from a geometric-analytic perspective.} They also determined the cohomology of the moduli space of semi-stable bundles~\cite[Section~9, p.~576]{AB}, thereby recovering earlier results of Harder--Narasimhan~\cite{HS75}; see also~\cite[Section~11, p.~593]{AB}.  
\par These irreducible unitary representations of the surface group can be identified with the moduli space of flat connections, namely the \emph{strict minima} of the Yang--Mills functional \( S_{\mathsf{YM}} \).
Another fundamental aspect of Atiyah--Bott~\cite[p.~587]{AB} is the discovery of a \emph{natural symplectic structure} on the space of connections. The action of the gauge group on \( \mathcal{A} \) is \emph{Hamiltonian}, with the curvature map
\[
A \in \mathcal{A} \longmapsto F(A)
\]
playing the role of a \emph{moment map}. 
Thus, by infinite-dimensional symplectic reduction, the space of flat connections \( \{F(A)=0\} \subset \mathcal{A} \) is identified as the zero level set of the moment map, and the moduli space
\[
\{F(A)=0\} / \mathcal{G}
\]
naturally inherits a symplectic structure from \( \mathcal{A} \). Since the moduli space of flat connections is finite-dimensional, it is natural to ask what its \emph{symplectic volume} is. This symplectic-geometric viewpoint on the moduli space \( \{F(A)=0\} / \mathcal{G} \) later played a central role in the work of Goldman~\cite{Goldman1}.

\par This question about the \emph{symplectic volume} motivates a fundamental connection with quantum gauge theory via semiclassical limits.

Roughly speaking, quantum Yang--Mills theory can be viewed as the study of the Gibbs measure
\[
\exp\left({-\frac{S_{\mathsf{YM}}(A)}{\mathrm{area}}}\right) \, \mathcal{D}A
\]
on the space of connections \( \mathcal{A} \), regarded as the configuration space of a statistical system. Letting the area tend to zero is \emph{analogous to taking a semiclassical or low-temperature limit} of the system.

In his seminal work, Witten~\cite{Witten1,Witten2} observed that the small-area regime of the Yang--Mills measure on a compact surface allows one to compute the symplectic volume of the moduli space of flat connections~\cite[(3.11)--(3.13), p.~178]{Witten1}, thereby revealing a deep connection between the zero-area limit of the Yang--Mills measure and the geometry of flat bundles. At a heuristic level, as the area tends to zero, one can see that the Yang--Mills measure concentrates on configurations minimizing the energy, namely flat connections, and therefore expects the limiting object to be closely related to the Atiyah--Bott--Goldman (ABG) measure~\cite{AB,Goldman1}.
\par Following the rigorous construction of the two-dimensional Yang--Mills measure as a \( G \)-valued stochastic process indexed by loops, due to Driver~\cite{driver1989ym2}, Sengupta~\cite{Sengupta92,Sengupta97}, and L\'evy~\cite{Levyphd,LevyMarkov}, this convergence has been understood at the level of holonomies by Sengupta and King~\cite{KS2,KS3}, as well as by Forman~\cite{Forman}. Subsequent work by Jeffrey~\cite{Je}, Biswas--Guruprasad~\cite{BG}, Sengupta~\cite{Sen2002}, and others further developed the theory of flat connections on surfaces with boundary and punctures. In all of the works mentioned above, in the small-area limit, the measure on holonomies collapses to a finite-dimensional object reflecting the combinatorial structure of holonomies along a finite family of lassos drawn on the surface. Recent contributions in these directions include works by Dahlqvist, François, Lévy, Lemoine, Tarrago, and García-Zelada ~\cite{Dahlq, DahlqLem, FT, FGLT}.

\par While this approach to the Yang--Mills measure provides a complete description of the behavior of Wilson loop observables, it does not yield an underlying random connection, and therefore does not allow for a direct analysis of the semiclassical limit at the level of gauge potentials. 
\par A functional approach to the Yang--Mills measure has emerged in recent years, aiming to construct it as a genuine random distribution. On the flat torus, Chevyrev~\cite{Chevyrev_2019} obtained such a construction, opening the way to a functional-analytic study of Yang--Mills fields. Building on ideas from dynamical systems and Morse theory, the works~\cite{BCDRT,DN} extended this construction to compact surfaces of arbitrary genus. In this framework, the Yang--Mills measure is realized as a probability measure on distributional connections in a specific gauge, the \emph{Morse gauge}, where the geometry of the surface is encoded via a Morse flow and its associated stable and unstable manifolds. This distributional framework provides a natural setting in which to revisit the small-area limit. 
\par From a geometric point of view, this limit can be classified into three different regimes: the case of genus $0$, where the character variety is trivial; the case of genus $1$, where the character variety exhibits singular behavior; and the case of genus greater than or equal to $2$, in which the character variety and its symplectic form are rather regular. The constructions in~\cite{BCDRT,DN} therefore make it possible to explore in a distributional setting and in all these different behaviors, the interplay between the Yang--Mills measure and the symplectic geometry of the moduli space of flat connections.
\par We can highlight the main contributions of our paper as follows.
\begin{enumerate}
\item \textbf{A new anisotropic functional space for the regularity analysis of the Yang--Mills random field.} Inspired by the recent progress on the
analysis of Ruelle spectra in hyperbolic dynamics~\cite{Baladi18, Baladiquest, Liverani, FaureSjostrand},
we introduce a family of low regularity \emph{anisotropic Banach spaces of distributions} tailored to the geometry of the Morse gauge, in which the Yang--Mills measure can be realized as a Borel probability measure. This provides a fine regularity analysis of the Yang--Mills random field and establishes a fundamental decomposition of this field into a noise-like term and a finite-dimensional component encoding topological degrees of freedom.
\item \textbf{A novel Morse-theoretic description of the moduli space of flat connections.} This can be viewed as a non-abelian analogue of the Morse--Witten complex.  
Using the spectral gap for the transfer operator associated to Morse--Smale gradient flows proved by  Rivière and  the first author~\cite{DR19}, we use a dynamical construction to interpolate between smooth flat connections and some distributional connections 
supported on unstable curves of the Morse flow.
This is reminiscent of the geometric measure theory approach to the Morse--Witten complex of Harvey--Lawson~\cite{HL01},  and can be seen as its generalization to non-abelian gauge theories. In this picture, flat connections admit canonical representatives supported on unstable manifolds, leading to a concrete realization of the character variety in terms of distributional connections.
\item \textbf{The semi-classical analysis of the Yang--Mills random fields.} As a consequence of the first two contributions, we establish that, for arbitrary compact surfaces of any genus and for the special unitary groups, the Yang--Mills measure converges in the small-area limit to a measure supported on a finite-dimensional space. Moreover this limit is the (normalized) Atiyah--Bott--Goldman symplectic volume form expressed in the Morse gauge. This provides a direct realization of the semi-classical limit at the level of gauge potentials, rather than only through holonomies, and gives a new probabilistic interpretation of the ABG measure. This result, stated precisely in Theorem~\ref{Thm1}, answers a question raised by Thierry Lévy during a lecture at  \emph{Collège de France}, available online at the time of writing on \href{https://www.youtube.com/watch?v=c3JZt91i2fU}{this link} (check 51:45--53:00).  
\end{enumerate}

\section*{Acknowledgments}
\par N.V.D heartily thanks Némo Sauvion for many explanations and discussions that helped him understand the beautiful results of Forman and Liu. He also thanks Julien Marché
and Arnaud Maret for answering several questions on character varieties. N.V.D acknowledges the support of the Institut Universitaire de France.
\par E.N. expresses his deepest gratitude to his PhD advisor, Thierry Lévy, for constant guidance, generosity, and encouragement. E.N. is profoundly indebted to his insight, patience, and unwavering support, both scientific and personal, throughout his PhD. E.N. also warmly thanks David García-Zelada, François Jacopin, and Thibaut Lemoine for many stimulating and enjoyable discussions related to this work. Finally, he gratefully acknowledges the financial support of the ``Fondation CFM pour la Recherche" and thanks the foundation for providing excellent working conditions during his PhD.

\par Both authors would like to thank Vladimir Fock, Thierry Lévy, and Némo Sauvion for enlightening discussions and suggestions.

\section{Main results}

In this section, we will describe in more details the main theorems of the present work. There are three main results, each discussed in a separate section.

\subsection{Anisotropic functional spaces and the regularity of the Yang--Mills random fields}
\label{ss:firstpart}
\par Previous works~\cite{BCDRT,DN} construct a candidate for the Yang--Mills measure as a probability measure on distributional \(1\)-forms. These works rely on a novel gauge-fixing method, itself based on a Morse function, which can be viewed as a generalization of the axial gauge. Accordingly, we refer to the Yang--Mills measure in this setting as the \emph{Morse gauge-fixed Yang--Mills measure}, or simply as the Yang--Mills random connection or field.

The strategy in~\cite{BCDRT,DN}  is to start from a surface with one boundary component, endowed with free boundary conditions, and define the random variable \( A_{\Sigma,\sigma}^{\mathsf{Free}} \), that is the Yang--Mills measure on the surface with no restriction on the holonomy of the boundary. One then conditions the boundary holonomy to be equal to \(1_G \) in order to obtain the Yang--Mills measure on the corresponding closed surface. With the aim of providing the reader with a global picture of both, the field itself and the functional space supporting it, we give a heuristic description of this random field.

The random field \( A_{\Sigma,\sigma}^{\mathsf{Free}} \) admits a decomposition into two components. The first term is a noise term, and is essentially given by the integral of a white noise in the direction of the gradient of a Morse function \( f \). The second term chooses a flat connection at random, and consists of integration currents supported on the unstable manifolds of \( f \). This structure motivates the introduction, in Section~\ref{SpacesBYM}, of Banach spaces of distributions \( \mathcal{B}_{\mathsf{YM}}^{\alpha,p,s,\ell} \), for $\alpha\in (\frac{1}{3},\frac{1}{2}), p\geqslant 2, s< -1, \ell<0  $ which provide a natural functional framework for these random fields.
\par The space~$\mathcal{B}_{\mathsf{YM}}^{\alpha,p,s,\ell}$ decomposes as a direct sum~$\mathcal{W}^{\alpha,p,s,\ell}\oplus\mathcal{F}$ of two subspaces of the space of $1-$currents on the punctured surface~$\mathcal{D}'^1(\Sigma^0,\mathfrak{g})$. The space~$\mathcal{W}^{\alpha,p,s,\ell}$ consists of distributions with anisotropic regularity. The parameters~$(\alpha,p)$ measure local anisotropic Sobolev regularity in the Gagliardo sense: $p$ is an integrability parameter, while~$\alpha$ is a differentiability parameter, indicating regularity~$\alpha$ in the direction of the flow lines of~$f$ and~$\alpha-1$ in the transverse direction.
\par The two remaining parameters, $\ell$ and $s$, are used to describe the behavior of the Yang--Mills connection near singular points on the surface. We use a Morse function to describe the surface, and in the associated local Morse coordinates, the area measure is distorted near the critical points of~$f$. Since the Yang--Mills random field is intrinsically related to the area form, it is naturally distorted near these critical points.
\par These critical points fall into two types, and different parameters are required to capture the behavior of the local norms as one approaches them. We use~$s$ and~$\ell$ to quantify the weighted behavior near saddle points and near extremal points (maxima and minima), respectively. A schematic illustration is provided in Figure~\ref{fig:reg}.

\begin{figure}
    \centering
    \includegraphics[width=\linewidth, trim={0 0 1cm 0}]{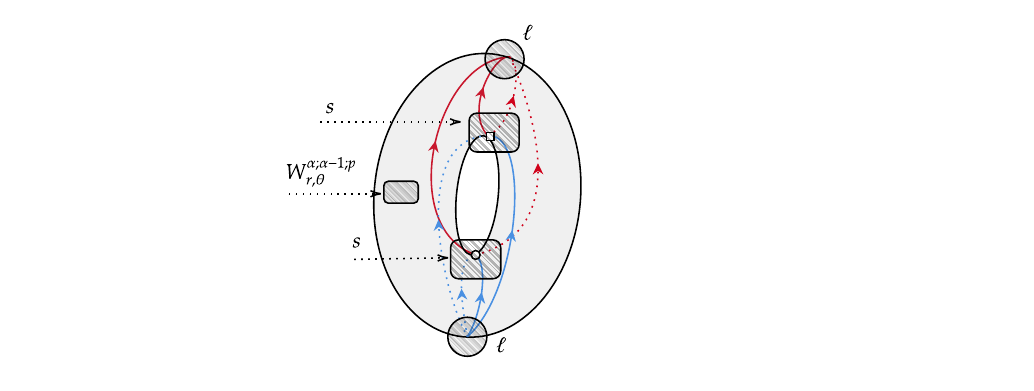}
    \caption{Visual description of the spaces~$\mathcal{W}^{\alpha,p,s,\ell}$: $(\alpha,p)$ encode local Sobolev regularity, with~$\alpha$ measuring differentiability along flow lines and~$\alpha-1$ transversely; $s$ and~$\ell$ control the behavior near saddle points and extremal points respectively.}
    \label{fig:reg}
\end{figure}
\par The second component~$\mathcal{F}$ is a finite-dimensional space of rough $1$-currents supported on 1-dimensional submanifolds of~$\Sigma$, encoding the zero modes of~$A_{\Sigma,\sigma}$.  The space~$\mathcal{F}$ arises canonically in the Morse-gauge description of flat connections, and our representative of the Atiyah--Bott--Goldman measure is supported on~$\mathcal{F}$ as described in the next paragraph. Working in these spaces allows us to extend the decomposition in~\cite[Proposition~1.2]{DN} from the free-boundary case to the closed surface case, and is reminiscent of the decomposition in~\cite[Theorem~1.1]{Lemoine2026}. More precisely, we prove the following result (See also Theorem~\ref{ClosedYMM}, and Lemma~\ref{structureLemma}).

\begin{thm}\label{MainRes1}
For $\alpha<\frac12$, $p\geq 2$, $\ell<0$, and $s<-1$, the Yang--Mills measure can be realized as a Borel probability measure on~$\mathcal{B}_{\mathsf{YM}}^{\alpha,p,s,\ell}$. Moreover, it admits the decomposition
\[
A_{\Sigma,\sigma}=A_{\mathcal{N},\Sigma,\sigma}+\sum_{a=1}^{2g}x_aU_a,
\]
where $A_{\mathcal{N},\Sigma,\sigma}\in \mathcal{W}^{\alpha,p,s,\ell}$ is the noise component, with regularity comparable to that of the Gaussian free field, each $U_a\in\mathcal{F}$ is an integration current on some unstable curve of a Morse gradient flow on~$\Sigma$, and almost surely
\[
\forall a=1,\dots,2g, \quad x_a\in B_{\mathfrak{g}}\bigl(0,\mathsf{diam}(G)\bigr).
\]
\end{thm}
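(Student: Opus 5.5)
The plan is to start from the free-boundary construction of~\cite{BCDRT,DN}, realize it in the anisotropic space, and then pass to the closed surface by conditioning the boundary holonomy. Cutting $\Sigma$ along the stable/unstable skeleton of a Morse--Smale gradient flow gives a surface $\Sigma^0$ with one boundary component. On $\Sigma^0$, the Morse gauge-fixed field $A^{\mathsf{Free}}_{\Sigma,\sigma}$ is, by~\cite[Proposition~1.2]{DN}, the sum of two pieces. The first is a noise part $A_{\mathcal N}$, obtained by integrating a $\mathfrak g$-valued white noise (weighted by the area density $\sigma$) along the flow lines of $f$. The second is a zero-mode part $\sum_a x_a U_a$. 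Here $U_a$ is the integration current along the $2g$ unstable curves of the index-one critical points. The coefficients $x_a$ are logarithms, taken in a fixed ball $B_{\mathfrak g}(0,\mathsf{diam}(G))$ on which $\exp$ is surjective, of independent Haar-distributed elements $g_a\in G$ that encode the holonomies around the generating loops.

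\textbf{Step 1: $A_{\mathcal N}\in\mathcal W^{\alpha,p,s,\ell}$ almost surely.} I would prove this by a Kolmogorov/Gagliardo moment computation. In local Morse charts, $A_{\mathcal N}$ is a Brownian-sheet-type primitive: it is a Brownian motion along the flow lines, with H\"older regularity $<\tfrac12$, and white noise transversally, with regularity $<-\tfrac12$, hence $\alpha-1$ for $\alpha<\tfrac12$. For Gaussian fields, $\E\|A_{\mathcal N}\|^p$ reduces by hypercontractivity to second moments of the increments. These I would bound chart by chart against the Gagliardo double integral. Near a saddle the area form in Morse coordinates is distorted by a factor comparable to $|\log|$ or to polynomial weights. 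Near an extremum the flow lines collapse radially. The weights $s<-1$ and $\ell<0$ are chosen precisely so that the resulting integrals over dyadic annuli converge. Summing the dyadic shells then yields finiteness. Borel measurability follows because $\mathcal W$ is defined by countably many seminorms of a separable approximation; I would take the closure of smooth forms so that the space is separable.

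\textbf{Step 2: $U_a\in\mathcal F$ and the bound on $x_a$.} Each $U_a$ is an integration current on an unstable curve, so it lies in $\mathcal F$ by the definition of $\mathcal F$. The bound $|x_a|\le\mathsf{diam}(G)$ holds because we may always choose the logarithm of minimal norm: for a bi-invariant metric, $|\log g|=d(1,g)\le\mathsf{diam}(G)$. Measurable selection on the cut locus is harmless, since the cut locus is a Haar-null set.

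\textbf{Step 3: conditioning.} The closed-surface measure is the law of $A^{\mathsf{Free}}$ conditioned on the boundary holonomy being $1_G$. In the Morse gauge the boundary holonomy is a continuous function $H(A_{\mathcal N},x)$: a product of commutators $\prod[g_{a},g_{a+g}]$ twisted by the holonomy of the noise around the boundary. Following~\cite{DN}, the conditional law is defined via the disintegration $\E[\,\cdot\,\mid H=1]$, using the heat-kernel density $p_{\sigma}(H)$, which is smooth and positive. Concretely, the conditioned law is absolutely continuous with respect to $\mathcal L(A_{\mathcal N})\otimes\mathcal L(x)$ restricted to the fibre $\{H=1\}$. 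Since that fibre sits inside $\mathcal W^{\alpha,p,s,\ell}\times B_{\mathfrak g}(0,\mathsf{diam}(G))^{2g}$, both the support property and the decomposition pass to the conditional law. This gives a Borel probability measure on $\mathcal B_{\mathsf{YM}}=\mathcal W\oplus\mathcal F$.

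The main obstacle is Step 1 near the critical points, especially the saddles. There, the flow-direction and transverse-direction scalings degenerate simultaneously and the area density is singular in Morse coordinates. I would first attempt explicit hyperbolic coordinates $(x,y)\mapsto(e^{t}x,e^{-t}y)$, tracking how the covariance of the noise rescales on each dyadic shell, to verify that $s<-1$ suffices.
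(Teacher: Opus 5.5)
Your Steps 1 and 2 match the paper's treatment of the free field and of the coefficients $x_a$. After closing, the bound on $x_a$ survives because $\mathsf{Ext}_a$ is local: it only sees a sublevel set $\{f\le r\}$ with $r<\max f$, where the closed law has a bounded density with respect to the free one (Lemma~\ref{structureLemma}). The gap is in Step~3.

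\textbf{Why Step~3 fails.} The event $\{\mathsf{Hol}_{\partial\mathcal S}=1_G\}$ has probability zero under the free law. So the closed law is \emph{singular}, not absolutely continuous, with respect to $\mathcal L(A_{\mathcal N})\otimes\mathcal L(x)$. ``Restricting to the fibre'' gives the zero measure, and an almost-sure property of the free field says nothing about one prescribed fibre. Nor is $H$ continuous in $(A_{\mathcal N},x)$: the holonomy of the rough noise needs a second-order (L\'evy area) term and is only a measurable functional (Appendix~\ref{s:justifyformulaDAS}). So you cannot single out the value $1_G$ by continuity either.

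What the Markov property actually gives is local. On $\{f\le r\}$ with $r<\max f$, the closed law has density $p_{\sigma(\Sigma_{\geq r})}(\mathsf{Hol}_r)/Z$ with respect to the free law. This density degenerates as $r\to\max f$, because $\sigma(\Sigma_{\geq r})\to 0$ and the heat kernel concentrates at $1_G$. Meanwhile, the $\mathcal W^{\alpha,p,s,\ell}$-norm contains a weighted dyadic sum over coronas shrinking to $\max f$. Finiteness of that sum is a tail event at the maximum, measurable with respect to no $\{f\le r\}$ with $r<\max f$. Hence no soft transfer of the support property is possible there.

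\textbf{What is missing.} You need a quantitative estimate near the maximum, uniform in a cut-off. The paper defines pre-measures $A^\epsilon_{\Sigma,\sigma}$ by reweighting the free law with $p_{\sigma(\Sigma_{\geq\max f-\epsilon})}(\mathsf{Hol}_{\max f-\epsilon})$. It observes that the $n$-th dyadic block
\[
b_n:=\Vert\mathcal S^{2^{-n}\star}\pi_{\mathsf{noise}}A^{\mathsf{Free}}_{\mathcal S,\sigma}\Vert_{\mathcal W^{\alpha,\alpha-1;2p}}
\]
is measurable with respect to a sublevel set $\{f\le r_n\}$, with $r_n\uparrow\max f$. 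Set $\tau_n:=\sigma(\Sigma_{\geq r_n})$, which decays geometrically in $n$. Then H\"older's inequality gives
\[
\mathbb{E}_{\mathsf{Free}}\big[b_n^{2p}\,p_{\tau_n}(\mathsf{Hol}_{r_n})\big]\lesssim \mathbb{E}_{\mathsf{Free}}\big[b_n^{2pq'}\big]^{1/q'}\,\Vert p_{\tau_n}\Vert_{L^q(G)}\,\Vert Z_{\Sigma_{\leq r_n},\sigma}\Vert_{\infty}^{1/q},\qquad \Vert p_{\tau}\Vert_{L^q(G)}\lesssim \tau^{-\frac{\dim G}{2q'}}.
\]
Each shell therefore picks up a geometrically growing heat-kernel factor, which the weight $\ell$ must absorb: this is the condition $2p\ell-2+\frac{\dim G}{q'}-p<0$ of Lemma~\ref{LemmaNoyau} and Proposition~\ref{qbigenough}. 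This yields $\sup_\epsilon\mathbb E\Vert A^\epsilon_{\Sigma,\sigma}\Vert^p_{\mathcal B_{\mathsf{YM}}}<\infty$. The compact embeddings of Section~\ref{CompInj} together with Prokhorov's theorem then produce the Borel probability measure on $\mathcal B_{\mathsf{YM}}^{\alpha,p,s,\ell}$.

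So the real difficulty is at the maximum under conditioning, not at the saddles. The saddles are far from $\max f$, so the free estimates transfer to them directly through a bounded density.
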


\subsection{Morse-theoretic description of the moduli space of flat connections}

We now turn to the second main contribution of this paper: a new Morse-theoretic model for the character variety arising from dynamical considerations. This construction can be viewed as a non-abelian generalization of a result proved by Harvey and Lawson, later strengthened by the first author and Rivière~\cite{DR19,DR21}, which we briefly recall in the next paragraph.

\myparagraph{The Harvey--Lawson dynamical construction of the Morse--Witten complex.}
Let $M$ be a smooth compact manifold, and let $(f,h)$ be a Morse--Smale pair consisting of a Morse function $f$ and a Riemannian metric $h$. Denote by $\varphi_f^t : M \to M$ the flow generated by the gradient vector field $V = \nabla_h f$, and assume that it satisfies Smale's transversality condition.

Given a differential form $\alpha \in \Omega^\bullet(M)$, define $
u(t) := \big(\varphi_f^{-t}\big)^*\alpha$.
Then $u$ solves the transport equation
\[\begin{cases}
    \partial_t u + \mathcal{L}_V u = 0\\
    u(0)=\alpha
\end{cases}
\]
where $\mathcal{L}_V$ denotes the usual Lie derivative along the vector field $V$.
\par The result of Harvey and Lawson describes the asymptotic behavior of $u(t)$ as $t \to +\infty$, providing an interpolation between smooth differential forms and a finite-dimensional space of de Rham currents defined via Morse theory. More precisely, under the assumption that the metric $h$ is flat in local Morse charts, they proved that, in the sense of currents,
\begin{equation*}
\underbrace{u(t) = \varphi_f^{-t*}\alpha}_{\text{smooth form}}
\;\xrightarrow[t \to +\infty]{}\;
\underbrace{\sum_{a \in \mathrm{Crit}(f)} \left( \int_{W^s(a)} \alpha \right) U_a}_{\text{Morse-theoretic current}}.
\end{equation*}
Here $U_a$ denotes the current of integration over the unstable manifold $W^u(a)$ associated with the critical point $a$. These unstable manifolds are well-defined and have finite volume. The term on the right is a smooth form that collapses, in the limit of large $t$, to a combinatorial object : a distribution supported on $1$-dimensional sub-manifolds.

This result was later extended in~\cite{DR19,DR21} to general Morse--Smale flows, removing the flatness assumption on the metric near critical points. Moreover, these works establish exponential convergence to equilibrium, together with a full asymptotic expansion of $\varphi_f^{-t*}\alpha$ as $t \to +\infty$.

Defining
\[
\Pi_0(\alpha) := \lim_{t \to +\infty} \varphi_f^{-t*}\alpha = \sum_{a \in \mathrm{Crit}(f)} \left( \int_{W^s(a)} \alpha \right) U_a,
\]
one obtains a realization of the Morse--Witten complex in terms of de Rham currents. This perspective was first discovered by Laudenbach~\cite{Laudenbach94} and later recovered by Harvey--Lawson~\cite{HL01}. In particular, if $\alpha \in \Omega^\bullet(M)$ is closed, then $\Pi_0(\alpha)$ defines a current representing the same cohomology class as $\alpha$. 
\par We next move to our second main theorem that adapts these ideas to the dynamical generation of the character variety of a surface.

\myparagraph{A dynamical approach to the combinatorial model of character varieties.}
Consider a compact surface $\Sigma$, a Morse--Smale pair $(f,h)$ on $\Sigma$ where $f$ is a \emph{perfect Morse function}. As before,
$V:=\nabla_h f$ is the gradient vector field on $\Sigma$, and assume that $V$ is $C^1$ linearizable near critical points of $f$. Denote by $\mathsf{Crit}_1(f)$ the saddle points of $f$, that is, critical points of degree $1$. For $a\in \mathsf{Crit}_1(f)$ we denote by $U_a$ the current of integration along the unstable curve $W^u(a)$ associated with a.
\par We use Morse--Smale dynamics to produce a family of gauge transformations that modify a connection and turns it asymptotically into a singular, combinatorial object, as stated --- in a slightly simplified and informal way --- in the following theorem (For the precise statement, see Theorem~\ref{Slicing}). 
\begin{thm}[Morse gauge representation of moduli space of flat connections]
Let $E=\Sigma\times \mathbb{C}^N\rightarrow\Sigma$ be a $G=\mathsf{SU}(N)$-vector bundle of rank $N$~\footnote{Since we are on surfaces and $\mathsf{SU}(N)$ simply connected, any such bundle is trivial.}, and let $\nabla=\mathrm \dd+A$, $A\in \Omega^1\big (\Sigma,\mathfrak{su}(N)\big)$ be a flat connection on $E$.
Define the following twisted transport equation
\begin{equation}
   \begin{cases}
        \partial_tg_t+\mathcal{L}_Vg_t+A(V)g_t=0
        \\
        g_0=1_G
   \end{cases}
\end{equation}
with Cauchy initial data.     
Then, the following holds.
\begin{enumerate}
\item The solution $(g_t)_{t\geqslant 0}$  to (2.1) provides a family of gauge transformations in $C^\infty(\Sigma,G)$ verifying, \emph{in the sense of currents}
\[
\underset{C^\infty \text{ connection}}{ \underbrace{ g_t^{-1}\mathrm d g_t+g_t^{-1}Ag_t}} \ \ 
\xrightarrow[t\to\infty]{}  \ \ 
\underset{ \text{Morse-theoretic flat connection} }{ \underbrace{ A_\infty
=
\sum_{a\in \mathsf{Crit}_1(f)}
\left(\int_{W^s(a)}A\right)U_a}}.
\]
\item If $A_1,A_2\in  \Omega^1\big (\Sigma,\mathsf{SU}(N)\big )$ are gauge equivalent, then
\[
A_{1,\infty}=g^{-1}A_{2,\infty}g
\]
for some group element $g\in \mathsf{SU}(N)$.
\item  The moduli space \[\mathcal M_g:=\Big\{ A\in \Omega^1\big (\Sigma,\mathfrak{su}(N)\big);\ \dd A+A\wedge A=0  \Big\}\Big / C^\infty\big (\Sigma,\mathsf{SU}(N)\big )\] of flat connections modulo gauge admits a description
\[
\mathcal M_g= \Big\{ \mathsf{Hol}_{\partial\mathcal S}\big( (g_a)_{a\in \mathsf{Crit}_1(f)} \big)=1_G  \Big\}\Big/G,
\]
where $\mathsf{Hol}_{\partial\mathcal S} : (g_a)_{a\in \mathsf{Crit}_1(f)}\in G^{2\mathsf{genus}(\Sigma)} \mapsto \mathsf{Hol}_{\partial\mathcal S}\big( (g_a)_{a\in \mathsf{Crit}_1(f)} \big)\in  G   $ is a monomial map and the quotient is taken with respect to the adjoint action of $G$.
\end{enumerate}
\end{thm}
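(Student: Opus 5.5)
The plan is to solve the twisted transport equation by the method of characteristics and interpret $g_t$ as parallel transport along gradient flow lines. Along the flow $\varphi_f^t$ the equation is an ODE. Writing $\gamma_x(s)=\varphi_f^{-s}(x)$ for the backward flow line from $x$, the solution is
\[
g_t(x)=\mathsf{Hol}^{A}\big(\varphi_f^{-t}(x)\to x\big)^{-1},
\]
the inverse parallel transport of $A$ along the flow segment of duration $t$ ending at $x$ (up to orientation conventions I would fix first). Hence $g_t\in C^\infty(\Sigma,G)$ for each $t$. The gauge-transformed connection $A^{g_t}=g_t^{-1}\dd g_t+g_t^{-1}Ag_t$ satisfies $A^{g_t}(V)=0$ identically on the region already "swept", because gauging by parallel transport along $V$ kills the $V$-component; this is the axial/Morse gauge. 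Flatness of $A$ then forces $\mathcal{L}_V A^{g_t}$ to be controlled: Cartan's formula together with $F(A^{g_t})=0$ and $\iota_VA^{g_t}=0$ give
\[
\partial_t A^{g_t}+\mathcal{L}_V A^{g_t}=0,
\]
i.e. $A^{g_t}=\varphi_f^{-t*}A^{g_{\text{loc}}}$ transported, except near the non-smooth points where the parallel transport from $t$ to $\infty$ is ill-defined.

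Step 1 is to rewrite $A^{g_t}$ as the pullback $(\varphi_f^{-t})^*\tilde A_t$ of a $1$-form whose transverse component is a parallel-transported version of $A$ evaluated on stable manifolds. Step 2 invokes the Harvey--Lawson / Dang--Rivière convergence $\varphi_f^{-t*}\alpha\to\Pi_0(\alpha)=\sum_a(\int_{W^s(a)}\alpha)U_a$. This requires care, since the form is $\mathfrak g$-valued and gauge-twisted. Near each saddle $a$, the holonomy along flow lines converging to $a$ converges to a fixed element, so the twisted problem reduces locally to the linear one. For a perfect Morse function only index-one critical points contribute to $1$-forms. The coefficient of $U_a$ is the integral of $A$ in the gauge where $A(V)=0$ along $W^s(a)$, which is what $\int_{W^s(a)}A$ means in the statement.

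The main obstacle is exactly this convergence in the sense of currents. Pairing against a test $1$-form $\beta$ gives $\int A^{g_t}\wedge\beta$, which after change of variables becomes an integral near the unstable curves. There I would use the $C^1$ linearizability of $V$ near critical points to compute explicitly in hyperbolic coordinates, together with the exponential convergence to equilibrium of the transfer operator from~\cite{DR19} to control the remainder uniformly in $t$. A further subtlety is that near the maximum and minimum, $g_t$ may fail to converge. The limit current, however, only sees saddles and is insensitive to this.

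For item 2, if $A_1=A_2^{h}$ with $h\in C^\infty(\Sigma,G)$, then the transports satisfy $g^{(1)}_t(x)=h(x)^{-1}g^{(2)}_t(x)\,h(\varphi_f^{-t}x)$. As $t\to\infty$, $\varphi_f^{-t}x$ tends to the minimum (or the relevant repeller) for almost every $x$, so $h(\varphi_f^{-t}x)\to h(m)$, a constant. Thus $A_{1,\infty}=h(m)^{-1}A_{2,\infty}h(m)$.

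For item 3, set $g_a=\exp$-type transports, precisely $g_a=\mathsf{Hol}(A_\infty,W^u(a))$. The complement of the unstable curves is a disk (the stable cell of the maximum), and its boundary word is the standard product of commutators. Flatness of $A_\infty$ is then equivalent to $\mathsf{Hol}_{\partial\mathcal S}((g_a))=1_G$. Conversely, every tuple satisfying this relation arises from a flat connection, by the standard correspondence with representations of $\pi_1$. Combining this with item 2 gives the quotient by conjugation.
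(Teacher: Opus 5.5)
Your treatment of item 1 has a genuine gap, and it starts from a false premise. You claim $A^{g_t}(V)=0$ on the swept region. But the transport equation gives $\iota_V\big(g_t^{-1}\dd g_t+g_t^{-1}Ag_t\big)=g_t^{-1}\big(\mathcal{L}_Vg_t+A(V)g_t\big)=-g_t^{-1}\partial_tg_t$, so your claim would force $g_t\equiv 1_G$. In fact this quantity equals $\varphi_f^{-t*}(\iota_VA)$; the Morse gauge condition only holds in the limit.

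The equation $\partial_tA^{g_t}+\mathcal{L}_VA^{g_t}=0$ that you write down is nevertheless true, for a different reason. On one side, $\partial_tA^{g_t}=\dd_{A^{g_t}}(g_t^{-1}\partial_tg_t)=-\dd_{A^{g_t}}(\iota_VA^{g_t})$, with $\dd_B=\dd+[B,\cdot\,]$. On the other, Cartan's formula gives $\mathcal{L}_VB=\dd_B(\iota_VB)$ for every flat $B$. What you miss is that, together with $A^{g_0}=A$, this equation integrates to the exact identity
\[
g_t^{-1}\dd g_t+g_t^{-1}Ag_t=\varphi_f^{-t*}A\qquad\text{on all of }\Sigma,\ \text{for every }t\geq 0 .
\]
This is the core of the paper's proof. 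There it is obtained by transporting the frame, $\mathbf{e}_i(t)=e^{-t\mathcal{L}^\nabla_V}\mathbf{e}_i$, and using unitarity together with the fact that $\nabla$ commutes with $\mathcal{L}^\nabla_V=[\nabla,\iota_V]$ when $F=0$.

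This identity reduces item 1 to the \emph{linear} convergence $\varphi_f^{-t*}\alpha\to\sum_a\big(\int_{W^s(a)}\alpha\big)U_a$ of \cite{DR19}, applied entrywise to the matrix of $1$-forms $A$. No gauge-twisted transfer operator is needed, nor any hyperbolic computation near saddles, nor any control of $g_t$ near the extrema. Your Step 2, which you flag as the main obstacle and leave as a sketch, is therefore both unnecessary and unproved. Your reading of the coefficient is also wrong. Since $W^s(a)$ is a union of flow lines, $\int_{W^s(a)}A=\int A(V)\,\dd s$, which vanishes in any gauge with $A(V)=0$ along $W^s(a)$. The coefficient is the plain integral of the original $A$.

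For item 2, the step ``$h(\varphi_f^{-t}x)\to h(\min f)$ for a.e.\ $x$, hence $A_{1,\infty}=h(m)^{-1}A_{2,\infty}h(m)$'' fails. Your formula for $g^{(1)}_t$ gives
\[
A_1^{g^{(1)}_t}=\varphi_f^{-t*}(h^{-1}\dd h)+(h\circ\varphi_f^{-t})^{-1}(\varphi_f^{-t*}A_2)(h\circ\varphi_f^{-t}).
\]
The limit lives on the null set $\bigcup_aW^u(a)$, exactly where a.e.\ convergence says nothing. Across the exponentially thin neighbourhood of $W^u(a)$ where $\varphi_f^{-t*}$ concentrates, $h\circ\varphi_f^{-t}$ runs through the values of $h$ along the whole loop $\overline{W^s(a)}$. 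Moreover, $\varphi_f^{-t*}(h^{-1}\dd h)\to\sum_b\big(\int_{W^s(b)}h^{-1}\dd h\big)U_b$, which is generally nonzero.

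Here is a concrete counterexample. Take $A_2=0$ and $A_1=h^{-1}\dd h$, where $h=1_G$ off a thin neighbourhood of an arc of $W^s(a)$. Along that arc, let $h$ trace a small closed loop $c=\exp u$ in $\mathsf{SU}(N)$, $N\geq 2$, with $\oint c^{-1}\dd c=-\tfrac12\oint[u,\dd u]+O(|u|^3)\neq 0$. Then $A_{1,\infty}\neq 0=A_{2,\infty}$. So conjugation cannot be obtained at the level of the currents $A_\infty$.

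The paper instead uses quantities that are preserved exactly at each finite $t$. Since $\varphi_f^{-t}$ maps the loop $\overline{W^s(a)}$ based at $\min f$ onto itself, $\mathsf{Hol}_{\overline{W^s(a)}}(\varphi_f^{-t*}A)=\mathsf{Hol}_{\overline{W^s(a)}}(A)$. Gauge covariance of holonomy then gives conjugation by $u(\min f)$. The same example shows that items 2 and 3 must be read through these path-ordered holonomies, not through $\exp\int_{W^s(a)}A$. The paper's final ``from this we deduce'' passes over that distinction.

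Your item 3 follows the paper's outline, but three points need fixing:
\begin{itemize}
\item The complement of $\overline{\bigcup_aW^u(a)}$ is the unstable cell of $\min f$, not the stable cell of the maximum.
\item ``$\mathsf{Hol}(A_\infty,W^u(a))$'' is not meaningful for a current supported on $W^u(a)$.
\item The relation $\mathsf{Hol}_{\partial\mathcal S}=1_G$ comes from the contractible loop $\partial\mathcal S$ for the smooth flat connections $\varphi_f^{-t*}A$, not from ``flatness'' of the singular current $A_\infty$.
\end{itemize}
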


A quick recollection on Morse theory and more details on our geometric settings can be found in Subsection~\ref{ss:slicing}, and the algebraic map $\mathsf{Hol}_{\partial\mathcal S} $ is defined in more detail in Definition \ref{def:combihol}.
 Moreover, we  give a formula for the Atiyah--Bott--Goldman symplectic form for flat connections in Morse gauge as shown in the following proposition. (For more details, see Section~\ref{sss:ABGsymplecticform}).

\begin{proposition}[The ABG symplectic form via Morse gauge]\label{ABGMeasure}
For $a\in \mathsf{Crit}_1(f)$,
denote $[W^u(a)]$ the cycle  corresponding  to $W^u(a)$, viewed an element in  $H_1(\Sigma,\mathbb{Z})$.
Assume $g=(g_a)_{a\in \mathsf{Crit}_1(f) }$ is a regular point of \[\left\{ \mathsf{Hol}_{\partial\mathcal S}(g)=1_G \right\}\]
which we identify as a point in the moduli space $\mathcal{M}_g$.    
Given two elements $(v_a)_{a\in \mathsf{Crit}_1(f)}$ and $(w_a)_{a\in \mathsf{Crit}_1(f)}$ in $\mathfrak{g}^{2\mathsf{genus}(\Sigma)}$ that we can identify with elements in the tangent space $T_{(g_a)_{a\in \mathsf{Crit}_1(f)}}\mathcal{M}_g$, the symplectic form writes:
\begin{equation*}\label{eq:symplecticABG}
\omega_g\left(v,w\right)=\sum_{a,b\in \mathsf{Crit}_1(f)} \mathrm{Tr}\Big( \big\{ \dd\log\left(g_a \right)\left(v_ag_a \right)\big\}\big\{\dd\log\left(g_b \right)(v_bg_b) \big\}   \Big)  \Big( \big[W^u(a)\big]\cap \big[W^u(b)\big]\Big),
\end{equation*}
where the quantity $\big[W^u(a)\big]\cap \big[W^u(b)\big] \in \mathbb{Z} $ is the oriented intersection number of the two cycles $[W^u(a)]$ and  $ [W^u(b)]$. 
\end{proposition}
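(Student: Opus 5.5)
The strategy is to compute the Atiyah--Bott--Goldman form directly on the Morse-theoretic representatives $A_\infty$ from Theorem~\ref{Slicing}. Recall that at a flat connection $A$ the form is
\[
\omega_A([\alpha],[\beta])=\int_\Sigma \mathrm{Tr}(\alpha\wedge\beta),
\]
where $\alpha,\beta$ are $\dd_A$-closed $\mathfrak{g}$-valued $1$-forms representing classes in $H^1(\Sigma,\dd_A)$. By part~2 of Theorem~\ref{Slicing}, gauge-equivalent connections give representatives differing only by global conjugation, and the integrand is $\mathrm{Ad}$-invariant. So it is enough to choose one convenient flat connection $A$ with $A_\infty=\sum_a x_aU_a$, $g_a=\exp(x_a)$ (or the appropriate holonomy convention), and compute $\omega$ on a pair of tangent vectors.

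\textbf{Step 1: tangent vectors in terms of $x_a$.} A tangent vector $v=(v_a)$ to $\{\mathsf{Hol}_{\partial\mathcal S}=1\}$ at $g$ is realized by a curve $g_a(\epsilon)$. Its logarithmic variation is $\dot x_a=\dd\log(g_a)(v_ag_a)$. The corresponding variation of $A_\infty$ is therefore
\[
\alpha_\infty=\sum_a \dd\log(g_a)(v_ag_a)\,U_a .
\]

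\textbf{Step 2: smooth representatives.} Because $\alpha_\infty$ is a current, $\int\mathrm{Tr}(\alpha_\infty\wedge\beta_\infty)$ is not directly defined: the currents $U_a$ and $U_b$ meet transversally, or coincide when $a=b$. Instead I would work with the smooth family $g_t^{-1}\dd g_t+g_t^{-1}A g_t$, which is gauge equivalent to $A$, and its variations $\alpha_t$, $\beta_t$. Here $\omega$ is gauge invariant, so $\omega(\alpha_t,\beta_t)$ is independent of $t$. Along the Morse flow, the cohomology class of $\alpha_t$ converges (by Theorem~\ref{Slicing} applied to variations) to the class pairing with stable manifolds, namely $\int_{W^s(a)}\alpha=\dot x_a$.

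\textbf{Step 3: intersection formula.} The alternative, which I would try first because it avoids pointwise analysis, is a cohomological pairing. Cut $\Sigma$ along the unstable curves to get the polygon $\mathcal S$. On $\mathcal S$ the connection is trivializable, so each $\dd_A$-closed form is $\dd_A$-exact there: $\alpha=\dd_A\phi$. Stokes' theorem then gives
\[
\int_\Sigma\mathrm{Tr}(\alpha\wedge\beta)=\int_{\partial\mathcal S}\mathrm{Tr}(\phi\,\beta),
\]
where the jumps of $\phi$ across $W^u(a)$ equal $\dot x_a$, up to conjugation by partial holonomies. This is the standard Goldman / Fock--Rosly computation. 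In the Morse gauge the partial holonomies along $\partial\mathcal S$ are products of the $g_b$. The boundary sum reduces to $\sum_{a,b}\mathrm{Tr}(\dot x_a\dot y_b)\,[W^u(a)]\cap[W^u(b)]$, using that the $W^u(a)$ are dual to the $W^s(a)$, which form a symplectic basis of $H_1$.

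The main obstacle is this last reduction. The conjugation factors at the vertex (the image of the maximum) must cancel, or combine into the logarithmic derivatives, and self-terms $a=b$ must vanish. I expect to need the regularity of $g$ and the specific monomial form of $\mathsf{Hol}_{\partial\mathcal S}$ (Definition~\ref{def:combihol}) to control these terms. If the cancellation fails in general, I would verify it first for abelian directions (the linearized, Harvey--Lawson case). After that I would extend it by the $\mathrm{Ad}$-invariance of $\mathrm{Tr}$.
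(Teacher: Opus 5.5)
Your Step~1 is exactly the computation the paper relies on. The paper's argument differentiates $t\mapsto\sum_a\log(g_a(t))\,U_a$, then implicitly pairs two such currents, reading $\int U_a\wedge U_b$ as $[W^u(a)]\cap[W^u(b)]$; closedness is quoted from Goldman. You are right not to take that pairing for granted: all the closures $\overline{W^u(a)}$ meet at $\max f$, where $U_a\wedge U_b$ is undefined. So the whole content of the statement sits in the reduction you flag at the end of Step~3, and that is a genuine gap.

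Your fallback does not close it. $\mathrm{Ad}$-invariance of $\mathrm{Tr}$ absorbs only one global conjugation. The Goldman/Fock--Rosly boundary computation instead produces conjugations by \emph{different} partial products of the $g_b$ along $\partial\mathcal S$. These act on the right-trivialized variations $v_a=\dot g_a g_a^{-1}$, which are the twisted-cohomology coordinates (not the untwisted integrals $\int_{W^s(a)}\alpha$ of Step~2). The claimed formula instead involves $\dd\log(g_a)(v_ag_a)=\frac{\mathrm{ad}_{x_a}}{e^{\mathrm{ad}_{x_a}}-1}\,v_a$, with $x_a=\log g_a$. Checking abelian directions says nothing about either kind of factor.

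In fact the reduction cannot be completed for non-abelian $G$. Take the infinitesimal conjugation $v_a=\xi-\mathrm{Ad}_{g_a}\xi$. It is tangent to $\{\mathsf{Hol}_{\partial\mathcal S}=1_G\}$ and projects to $0$ in $T\mathcal M_g$. For it, $\dd\log(g_a)(v_ag_a)=[\xi,x_a]$. Writing $\iota_{ab}:=[W^u(a)]\cap[W^u(b)]$, the proposed right-hand side becomes
\[
\sum_{a,b}\iota_{ab}\,\mathrm{Tr}\big([\xi,x_a]\,\dot y_b\big)=\mathrm{Tr}\big(\xi\,\dd Q(\dot y)\big),\qquad Q(x):=\tfrac12\sum_{a,b}\iota_{ab}\,[x_a,x_b].
\]
So the formula descends to $T\mathcal M_g$ only if $Q$ is locally constant on the level set.

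By Baker--Campbell--Hausdorff, the quadratic part of $\log\mathsf{Hol}_{\partial\mathcal S}(e^{x})$ is $\pm Q(x)$. Hence on the level set $Q$ equals, up to sign and $O(|x|^4)$, the cubic BCH term; for the word $\prod_i[g_{a_i},g_{b_i}]$ this term is $\tfrac12\sum_i[x_{a_i}+x_{b_i},[x_{a_i},x_{b_i}]]$. It is not identically zero. For $\mathsf{SU}(2)$ in genus $2$ with the word $[g_1,g_2][g_3,g_4]$ and $[e_1,e_2]=e_3$ cyclically, take the irreducible points $x\approx(\delta e_1,\delta e_2,-\delta e_2,-\delta e_1)$ of the level set. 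There one finds $Q=\pm\delta^3(e_1-e_2)+O(\delta^4)$, whereas $Q=0$ at the trivial representation. Hence, on an open set of regular points, the expression is nonzero on orbit directions and cannot be the ABG form.

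What Step~3 actually produces is Goldman's formula: the cup product of the group cocycles given by $v,w$, evaluated on the fundamental class built from the Fox derivatives of the relation, with $\mathrm{Ad}$-twists by partial products of the $g_a$. It collapses to the stated log-linear expression only when everything commutes (abelian $G$, or the torus directions in genus one), or to leading order at the trivial representation.
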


\subsection{Semi-classical analysis of the Yang--Mills random fields}

\par Recall that $\sigma$ is the area form on $\Sigma$, and that $A_{\Sigma,\sigma}$ is the associated Yang--Mills random field. If we multiply the area form by a parameter $t>0$, we get a random connection $A_{\Sigma,t\sigma}$ that is the Yang--Mills random connection at temperature $t\sigma(\Sigma)$. The third result concerns the study of the small-area limit of the family $(A_{\Sigma,t\sigma})_{t>0}$ (when $t\rightarrow 0^+
$) in the functional spaces $\mathcal{B}_{\mathsf{YM}}^{\alpha,p,s,\ell}$ of the previous section. The following theorem appears also as Theorem~\ref{Thm3}.
\begin{thm}\label{Thm1}
Let $\Sigma$ a compact surface. Consider the case of the group $G=\mathsf{SU}(N)$. For $\alpha\in (\frac{1}{3},\frac{1}{2}),p\geqslant 2,s<0,\ell<-1$, seen as a family of measures on $\mathcal{B}_{\mathsf{YM}}^{\alpha,p,s,\ell}$, $(A_{\Sigma,t\sigma})_{t>0}$ converges  to a measure supported on $\mathcal F$. Moreover, this limit coincides with our representative of the normalized Atiyah--Bott--Goldman symplectic volume measure on $\mathcal F$ of Proposition~\ref{ABGMeasure}.
\end{thm}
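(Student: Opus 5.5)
Our plan is to reduce the convergence to two separate statements, using the decomposition of Theorem~\ref{MainRes1} applied at area $t\sigma$:
\[
A_{\Sigma,t\sigma}=A_{\mathcal{N},\Sigma,t\sigma}+\sum_{a=1}^{2g}x_a^{(t)}U_a,
\]
with $A_{\mathcal{N},\Sigma,t\sigma}\in\mathcal{W}^{\alpha,p,s,\ell}$ and $\sum_a x_a^{(t)}U_a\in\mathcal{F}$. Since $\mathcal{B}_{\mathsf{YM}}^{\alpha,p,s,\ell}=\mathcal{W}^{\alpha,p,s,\ell}\oplus\mathcal{F}$ is a topological direct sum and $\mathcal{F}$ is finite-dimensional, it suffices to prove two things:
\begin{enumerate}
\item[(i)] $A_{\mathcal{N},\Sigma,t\sigma}\to 0$ in probability in $\mathcal{W}^{\alpha,p,s,\ell}$;
\item[(ii)] the law of $(x_a^{(t)})_{a}\in\mathfrak{g}^{2g}$ converges weakly to the pushforward of the normalized ABG measure, written in the coordinates of Proposition~\ref{ABGMeasure}.
\end{enumerate}
Slutsky's lemma in the separable Banach space then gives the result.

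\textbf{Step (i): the noise vanishes.} For the free-boundary field, the noise is essentially a stochastic integral of white noise of intensity $t\sigma$ along the gradient lines of $f$. Its moments in the Gagliardo-type anisotropic norm scale as
\[
\E\bigl\|A^{\mathsf{Free}}_{\mathcal{N},\Sigma,t\sigma}\bigr\|^p\lesssim t^{p/2}.
\]
This follows from the same Kolmogorov/Gaussian hypercontractivity estimates used to prove Theorem~\ref{MainRes1}, with the area rescaled, and the weights $s,\ell$ absorbing the distortion near critical points. For the closed surface we condition on the boundary holonomy being $1_G$. The Radon--Nikodym density is the ratio of heat kernels $p_{t\sigma(\Sigma)}(\cdot)/p_{t\sigma(\Sigma)}(\ldots)$, which is singular as $t\to0$. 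We therefore condition only on a small interior subregion, or apply Hölder's inequality with a disintegration. This reduces to a bound on the conditional law of the noise given the holonomy, which remains uniformly Gaussian-like. The outcome is $\E[\|A_{\mathcal{N}}\|^p\,|\,\mathsf{Hol}=1]\lesssim t^{p/2}$. We expect this conditioning step to be the \textbf{main obstacle}. The natural first attempt is to use the Markov property of the Yang--Mills measure: cut out a disc containing the maximum, so that the conditioning acts only through the law of the lasso holonomies and of a single Brownian bridge on $G$ of duration $\sim t$. The uniform bounds for Brownian bridges on compact $G$ in small time then give the estimate.

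\textbf{Step (ii): convergence of the finite-dimensional part.} The coefficients $x_a^{(t)}$ are logarithms of holonomies along the stable curves $W^s(a)$ of the $2g$ saddles. Hence their law is the finite-dimensional marginal of the Yang--Mills holonomy process on the generating lassos, conditioned on $\mathsf{Hol}_{\partial\mathcal{S}}((g_a))=1_G$. It has density proportional to $p_{t\sigma(\Sigma)}\bigl(\mathsf{Hol}_{\partial\mathcal S}(g)\bigr)\prod_a \dd g_a$. By the classical small-time results of Forman, Sengupta and Liu for $G=\mathsf{SU}(N)$, this converges weakly to the Liouville measure of the Goldman form on $\{\mathsf{Hol}_{\partial\mathcal S}=1\}/G$. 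That measure is expressed in the coordinates $(g_a)$ of Theorem (Morse gauge representation) via Proposition~\ref{ABGMeasure}. The key input is the heat kernel asymptotic $p_\tau\to\delta_{1_G}$ together with the coarea formula on the regular part of the level set; for $\mathsf{SU}(N)$ the singular locus has measure zero. Finally, we transfer from $g_a$ to $x_a=\log g_a$ via the continuous logarithm chart valued in $B_{\mathfrak g}(0,\mathsf{diam}(G))$ from Theorem~\ref{MainRes1}, handling its discontinuity set by showing it is null for the limit measure. This identifies the limit with our representative on $\mathcal{F}$.
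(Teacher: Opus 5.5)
\paragraph{Overall.} Your architecture is the paper's. You split $A_{\Sigma,t\sigma}$ via Proposition~\ref{prop:keyprop} and show the noise vanishes. You identify the law of $(x^{(t)}_a)$ as the push-forward by $\log$ of $Z^{-1}p_{t\sigma(\Sigma)}\bigl(\mathsf{Hol}_{\partial\mathcal S}(g)\bigr)\prod_a\dd g_a$. You conclude with Forman--Liu ($g\geq 2$) and Sengupta ($g=1$). Closing with Slutsky instead of tightness plus uniqueness of subsequential limits is fine, and it spares the Prokhorov step.

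\paragraph{The gap: Step (i).} You correctly flag Step (i) as the main obstacle, but you then only assert it. Under the closed measure, the law of the field on $\{f\leq r\}$ is the free law reweighted by $p_\tau(\mathsf{Hol}_r)/Z_{\Sigma,t\sigma}(1)$, with $\tau=t\sigma(\Sigma_{\geq r})$. This density depends on the noise itself, because the noise holonomies are interleaved with the $g_a$ in $\mathsf{Hol}_r$. Its supremum is of order $\tau^{-\dim G/2}$, which blows up as $t\to 0$. On the dyadic coronas at distance $2^{-n}$ from $\max f$, which make up the $\ell$-weighted part of the norm, it blows up like $(t2^{-2n})^{-\dim G/2}$.

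So ``the conditional law of the noise remains uniformly Gaussian-like'' is exactly what has to be proved, not a reduction. The other ingredients do not close the gap:
\begin{itemize}
\item Your Markov/Brownian-bridge sketch controls only the $G$-valued holonomy process on a cut-out disc. It does not control the anisotropic norm of the two-dimensional field outside the disc, which is reweighted by the same density. Nor does it control that norm inside the disc down to $\max f$.
\item H\"older's inequality alone cannot give a lossless $t^{p/2}$: any $q>1$ loses a power of $t$.
\item Genus $1$ is not addressed; there $Z_{\Sigma,t\sigma}(1)$ and $\|Z_{\Sigma_{\leq r},t\sigma}\|_\infty$ blow up as $t\to0$.
\item You never check how the weight $\ell$ absorbs the growth in $n$.
\end{itemize}

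\paragraph{How the paper closes it.} This is Proposition~\ref{qbigenough}.
\begin{itemize}
\item Each dyadic block near $\max f$ is measurable with respect to $\{f\leq\max f-2^{-n}\}$. Its closed expectation is therefore a free expectation against $p_\tau(\mathsf{Hol}_{\max f-2^{-n}})/Z_{\Sigma,t\sigma}(1)$, with $\tau=t\sigma(\Sigma_{\geq\max f-2^{-n}})\asymp t2^{-2n}$.
\item H\"older with conjugate exponents $(q,q')$, $q$ close to $1$, separates two factors. The Gaussian factor is of order $t^{p}$ times a negative power of $2^{n}$, by scaling and hypercontractivity. The density factor is bounded in Lemma~\ref{LemmaNoyau} by
\[
\E_{\mathsf{Free}}\bigl[p_\tau(\mathsf{Hol}_{\max f-2^{-n}})^q\bigr]^{1/q}\leq\|p_\tau\|_{L^q}\,\bigl\|Z_{\Sigma_{\leq\max f-2^{-n}},t\sigma}\bigr\|_\infty^{1/q},\qquad \|p_\tau\|_{L^q}\lesssim\tau^{-\frac{\dim G}{2q'}}.
\]
\item The sup-norm is uniformly bounded for $g\geq 2$, because $\zeta_G(2g-2)<\infty$. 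It is $O(t^{-\dim G/2})$ for $g=1$.
\item Summing against the weight $\ell$ gives $t^{p-\frac{\dim G}{2q'}}$, with an extra factor $t^{-\dim G/2}$ in genus $1$. This vanishes once $q'$ is large, and in genus $1$ once $p$ is large as well. The paper thus accepts a small power loss rather than your lossless $t^{p/2}$.
\end{itemize}

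\paragraph{How to get your lossless bound for $g\geq 2$.} Integrate out the Haar lasso variables first. For each fixed noise realization, the push-forward of Haar measure under $(g_a)\mapsto\mathsf{Hol}_r$ has density, with respect to the Haar probability measure, at most $\sum_\rho d_\rho^{2-2g}=\zeta_G(2g-2)$. Also $Z_{\Sigma,t\sigma}(1)\geq 1$. Hence the closed law of the noise on $\{f\leq r\}$ has Radon--Nikodym derivative at most $\zeta_G(2g-2)$ with respect to the free Gaussian law, uniformly in $t$ and $r$. This shortcut needs $g\geq 2$; genus $1$ requires a separate argument.
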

\par When $G$ is not simply connected, or less generally, not the special unitary group, geometric considerations become technical, and extra care should be taken. We emphasize that the probabilistic results, however, still hold for general compact Lie groups, but the geometric interpretations become complicated. To avoid geometric technicalities, we suppose that $G=\mathsf{SU}(N)$.
 \par Moreover, Theorems~\ref{MainRes1} and~\ref{Thm1} together define a new class of observables for the Yang--Mills and the ABG measures. Namely, any bounded measurable function \(\mathcal{B}_{\mathsf{YM}}^{\alpha,p,s,\ell} \to \mathbb{R}\) is an admissible observable for the Yang--Mills measure, and, by the convergence theorem~\ref{Thm1}, any continuous function \(\mathcal{B}_{\mathsf{YM}}^{\alpha,p,s,\ell} \to \mathbb{R}\) is an observable for the ABG measure.

\par We mention that this convergence result is also analogous to the beautiful small-temperature limit of Brownian bridges on a Riemannian manifold, studied by Hsu~\cite{Hsu}. There, the semi-classical limit of the Brownian bridge concentrates on geodesics minimizing the kinetic energy. Here, the Yang--Mills measure concentrates on connections minimizing the Yang--Mills energy --- the flat connections--- and the limit is described explicitly in the Morse gauge.  
\par Let us finally clarify how the  small-temperature limit becomes, in the case of Yang--Mills, a small-area limit, or a semiclassical limit, or even a strong-coupling limit. The Yang--Mills action functional is given by  
\[
S_{\mathsf{YM},\sigma}(A)=-\int_\Sigma \mathrm{Tr}\left( F(A)\wedge \star F(A) \right),
\]  
where the subscript $\sigma$ indicates dependence on the area form $\sigma$. The action depends on the Riemannian metric on $\Sigma$ only through the area form. Indeed, the curvature $F(A)$ is a $2$-form, and its Hodge star $\star F(A)$ depends solely on $\sigma$, not on the full Riemannian metric used to define $\sigma$. Concretely, if two Riemannian metrics $h_1,h_2$ induce the same area form $\sigma$ (without necessarily being isometric), then the resulting Yang--Mills functional is identical.

Under a conformal rescaling of the area form, $\sigma \mapsto e^{2\rho} \sigma$ for $\rho\in C^\infty(\Sigma)$, the Hodge star acting on $2$-forms rescales as $\star F(A) \mapsto e^{-2\rho}\star F(A)$. In the equilibrium statistical mechanics interpretation of the Yang--Mills measure, the Gibbs measure describing the canonical ensemble is  
\[
\exp\big({-\beta S_{\mathsf{YM},\sigma}(A) } \big) \mathcal{D}A,
\]  
where $\beta$ is interpreted as an inverse temperature $\beta = \frac{1}{k_B T}$ ($T$ the temperature, $k_B$ Boltzmann's constant). In field theory language, $\beta$ can also be viewed as a coupling constant or as an inverse semiclassical parameter $1/\hbar$.
Under this rescaling we have
\[
S_{\mathsf{YM},e^{2\rho}\sigma}(A) = -\int_\Sigma \mathrm{Tr}\left( F(A)\wedge \star_{e^{2\rho}\sigma} F(A) \right) = e^{-2\rho} S_{\mathsf{YM},\sigma}(A),
\]  
and the effective coupling becomes $e^{-2\rho}\beta$, which tends to $+\infty$ as $\rho\to -\infty$. Consequently, for two dimensional Yang--Mills, the small-area limit is equivalent to the strong-coupling, semiclassical, and small-temperature limits.
\subsection{Organization of the paper}
The paper is organized as follows. Section \ref{s2} reviews the moduli space of flat connections, presents its Morse-gauge description, and in this setting, computes its canonical symplectic form. 
Section \ref{SpacesBYM} introduces the anisotropic Banach spaces of distributional connections and proves the fundamental decomposition theorem into noise and flat components. 
Section \ref{s3} recalls the construction of the Yang--Mills random connection on compact surfaces appearing in \cite{DN,BCDRT}, and establishes that the Yang--Mills measure can be realized as a Borel probability measure on the Banach spaces introduced in \ref{SpacesBYM}.
Section \ref{semiclas} concludes the proof of the small area limit of the Yang--Mills measure. This goes into two steps. First, the tightness of the family $(A_{\Sigma,t\sigma})_{t>0}$. Second, the identification of the limiting measure with our representative of the normalized
Atiyah–Bott–Goldman measure.

\section{The moduli space of flat connections in the Morse gauge} \label{s2}
This section addresses the geometric core of the paper. Our goal is to give an explicit description of the space of flat connections modulo gauge transformations, together with its natural symplectic structure, using the Morse gauge.

We begin by recalling the definition of flat connections, the moduli space of flat connections modulo gauge, and the character variety of a compact surface. Next, we show how the Morse gauge yields an identification between the last two. This involves running a certain dynamics on smooth flat connections to transform them into combinatorial objects containing the finite-dimensional information needed to identify the gauge equivalence class of the connection.

Our next goal is to investigate the symplectic structure of this moduli space using the Morse gauge. To that end, we need to define tangent spaces to this set and express the symplectic $2$-form.

We then turn to the symplectic volume measure induced by this symplectic form. In this direction, two types of points arise: regular and singular. For genus \(g \geq 2\), it is known --- and we recall this fact --- that the set of singular points has zero symplectic volume; hence, understanding the symplectic volume essentially reduces to understanding it on regular points. This is precisely where the notion of \emph{torsion} enters: as we will show, torsion provides a link between the geometry of regular points and the global volume form.

For \(g = 1\), however, every point is singular in a precise sense, requiring a separate, dedicated treatment. This is kept to the end of this section.

\subsection{General definitions}
Let $\Sigma$ be a smooth, closed, compact surface of genus $g$, and let $G=\mathsf{SU}(N)$. Given a $G$-vector bundle $E \to \Sigma$, the gauge group 
$\mathcal{G} \coloneqq C^\infty(\Sigma, G)$ acts by automorphisms on the vector bundle $E$, i.e., by linear invertible maps on each fiber. 
Smooth sections of $E$ are denoted by $C^\infty(\Sigma, E)$, and $E$-valued differential forms of degree $k$ are denoted by $\Omega^k(\Sigma, E)$.
A smooth connection on $E$ is a linear differential operator $\nabla: C^\infty(\Sigma, E) \to \Omega^1(\Sigma, E)$ satisfying the Leibniz rule (or Koszul rule):
\[
\forall f \in C^\infty(\Sigma, \mathbb{C}),\; \forall s \in C^\infty(\Sigma, E), \qquad 
\nabla(fs) = \mathrm{d}f \otimes s + f \nabla s \in \Omega^1(\Sigma, E).
\]
The space of connections $\mathcal{A}$ is an affine space modeled on $\Omega^1\big(\Sigma,\operatorname{End}(E)\big)$. 
Two connections $\nabla_1, \nabla_2$ on the vector bundle $E \to \Sigma$ are said to be \emph{gauge equivalent} if $\nabla_1 = g^{-1} \nabla_2 g$ for some $g \in \mathcal{G}$. 
The orbit under the gauge group of a smooth connection $\nabla \in \mathcal{A}$ is denoted by $\mathcal{G} \cdot \nabla$ and is given by 
\[
\mathcal{G} \cdot \nabla \coloneqq \{ g^{-1} \nabla g : g \in C^\infty(\Sigma, G) \}.  
\]
The square $\nabla \circ \nabla: C^\infty(\Sigma, E) \to \Omega^2(\Sigma, E)$ acts on sections as multiplication by an element $F \in \Omega^2\big(\Sigma, \operatorname{End}(E)\big)$ called the curvature of $\nabla$. In homological terms, we may think of the curvature as measuring the obstruction of 
\[
\Omega^\bullet(\Sigma, E) \xrightarrow{\nabla} \Omega^{\bullet+1}(\Sigma, E) \xrightarrow{\nabla} \Omega^{\bullet+2}(\Sigma, E)
\]
to being a cochain complex. A connection $\nabla$ is called \emph{flat} if its curvature vanishes, \textit{i.e.}, $F = 0$.
In particular, a flat connection defines a twisted de Rham cochain complex
\[
\Omega^\bullet(\Sigma, E) \xrightarrow{d^\nabla} \Omega^{\bullet+1}(\Sigma, E) \xrightarrow{d^\nabla} \Omega^{\bullet+2}(\Sigma, E)
\]
that we later denote by $\big(\Omega^\bullet(\Sigma, E), d^\nabla \big)$, where $d^\nabla \coloneqq \nabla$ is the twisted de Rham differential. We denote by $\mathcal{M}_g$ the moduli space of flat connections modulo the action of the gauge group $\mathcal{G}$
\[
\mathcal{M}_g = \{ \nabla : F(\nabla) = 0 \} / \mathcal{G}.
\] 
We emphasize that for the moment we do not discuss any topological issues, which are quite subtle. It is well known that these quotients are not well behaved in this direction, for example at reducible connections.
\par The fact that $G$ is \emph{connected and simply connected} ensures that all flat $G$-bundles $E$ are trivial. 
Given a flat connection $\nabla$ on $E$,
the moduli space $\mathcal{M}_g$ can be identified 
with the moduli space $\operatorname{Hom}\big(\pi_1(\Sigma), G\big)/G$ of representations of 
the surface group $\pi_1(\Sigma)$, modulo the adjoint action of $G$~\cite[Thm 13.2 p.~159]{Taubes} and \cite[Thm 2.9 p.~56]{Morita}. One direction of this correspondence can be easily described. In fact, for every based closed loop $\gamma$ representing a given homotopy class, denote by $\rho_\nabla(\gamma) \in G$ the holonomy of $\nabla$
along $\gamma$. Under this identification, 
the map $\rho_\nabla$ sends $\pi_1(\Sigma)$ to the group $G$.
Concretely, the fundamental group of $\Sigma$, which we denote by $\pi_1(\Sigma)$ and call the \emph{surface group}, has the usual presentation as the group generated by $2g$ generators with one relation:
\[
\pi_1(\Sigma) \coloneqq \big\langle a_1, b_1, \dots, a_g, b_g : [a_1, b_1] \cdots [a_g, b_g] = 1_G \big \rangle.
\] 
Then the character variety $\operatorname{Hom}\big(\pi_1(\Sigma), G\big)$ is defined
as
\[
\mathcal{M}_g \coloneqq \{ (a_1, b_1, \dots, a_g, b_g) \in G^{2g} : [a_1, b_1] \cdots [a_g, b_g] = 1_G \} / G,
\]
where we quotient by the adjoint action of $G$ defined by
\[
g \cdot (a_1, b_1, \dots, a_g, b_g) \coloneqq \big(g^{-1} a_1 g,\; g^{-1} b_1 g,\; \dots,\; g^{-1} a_g g,\; g^{-1} b_g g\big).    
\]
Later we will use Morse theory to suggest, and prove, that the surface group admits another similar representation. 

\subsection{Slicing the space of flat connections via the Morse gauge}
\label{ss:slicing}
In this subsection, we will give yet another model of the character variety, relying on the Morse gauge introduced in~\cite{BCDRT,DN}. We start by recalling some facts about Morse theory that will be used throughout the paper.
\myparagraph{Recollection on Morse functions.}
A smooth function $f: \Sigma \to \mathbb{R}$ is a Morse function if it has only finitely many critical points and these critical points are nondegenerate. Denote by $V = \nabla_h f$ the gradient vector field of $f$ with respect to some Riemannian metric $h$ on $\Sigma$, which is chosen in such a way that 
$h$ is flat in the Morse coordinates --- given by the Morse Lemma --- near all critical points. We denote by $\varphi^t_f: \Sigma \to \Sigma$ the corresponding flow.
For every critical point $a \in \mathsf{Crit}(f)$, we define the stable and unstable manifolds of $a$
(check Figure~\ref{fig:Morse_flow}) as \label{StableUnstable}
\[
W^{s/u}(a) = \{ x \in \Sigma : \lim_{t \to \pm\infty} \varphi_f^{t}(x) = a \}.     
\]
In the case of surfaces, when the critical point $a$ has Morse index $1$ --- in which case $a$ is called a saddle point --- the corresponding stable and unstable manifolds are $1$-dimensional curves. The set of saddle points will be denoted by $\mathsf{Crit}_1(f)$ \label{Crit}.

\begin{figure}
    \centering
    \includegraphics[width=\linewidth]{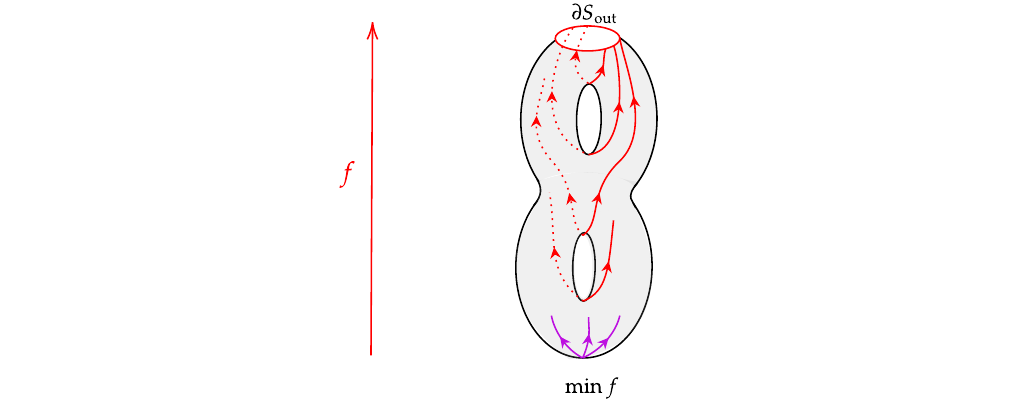}  
    \caption{Morse flow on $\Sigma$ and blow-up at $\max(f)$.}
    \label{fig:Morse_flow}
\end{figure}

For the sequel, we assume the following: 
\begin{itemize}
\item The Morse function $f$ is a \emph{perfect Morse function}, meaning that it has exactly one minimum, one maximum, and $2g$ saddle points.
\item The pair $(f, h)$ consisting of the Morse function and the Riemannian metric satisfies Smale's transversality condition~\cite[2.2.b p.~38]{AudinDamian}. This means that every unstable curve $W^u(a)$ corresponding to a saddle point $a$ is the union of two flow lines connecting $a$ and $\max(f)$.
\item The vector field $V = \nabla_h f$ is linear in Morse charts near every critical point. 
\end{itemize}

We refer the reader to the introductory sections in ~\cite{BCDRT,DN} for more recollection on the needed background in Morse theory.

\myparagraph{Morse-theoretic interpretation of surface group representations.}
\label{sss:morsemoduli}

Morse theory provides an alternative interpretation of representations of the surface group. Fix a Morse function $f$ verifying the assumptions of the previous paragraph. Let $\mathcal{S}$ be the surface with boundary obtained by blowing up $\Sigma$ at the maximum of $f$ and replacing it with a disk. Consider a singular connection of the form $\dd + A$, where
\[
A = \sum_{a \in \mathsf{Crit}_1(f)} \log(g_a) \, U_a,
\]
and the $U_a$ are currents of integration along the unstable curves. We will introduce the notion of combinatorial holonomy of such singular connection, and we will see later that, up to conjugation by $G$, each such singular connection encode the information about one and only equivalence class of flat connections.
\par Parametrize the boundary circle $\partial\mathcal{S}$ by an angle $\theta \in [0, 2\pi]$. Let $0 \leq \theta_1 < \dots < \theta_{4g} \leq 2\pi$ be the angles at which the $2g$ unstable curves $W^u(a)$ (with $\operatorname{ind}(a) = 1$) intersect $\partial\mathcal{S}$. Each unstable curve meets $\partial\mathcal{S}$ exactly twice. Orient each unstable curve $W^u(a)$ for $a \in \mathsf{Crit}_1(f)$. For each $i \in \{1, \dots, 4g\}$, we associate a critical point $a_{k_i} \in \mathsf{Crit}_1(f)$ such that $W^u(a_{k_i})$ intersects $\partial\mathcal{S}$ at time $\theta_i$, and we set $\varepsilon_i = 1$ if the curve is outgoing and $\varepsilon_i = -1$ if it is incoming.

This data yields a product of group elements
\[
g_{a_{k_{4g}}}^{\varepsilon_{4g}} \cdots g_{a_{k_2}}^{\varepsilon_2} g_{a_{k_1}}^{\varepsilon_1},
\]
where each $a_{k_i} \in \mathsf{Crit}_1(f)$ and each element $g_a$ appears exactly twice in the product, with opposite signs, corresponding to the two intersections of $W^u(a)$ with $\partial\mathcal{S}$.

\begin{definition}[Combinatorial holonomy]\label{def:combihol}
The combinatorial holonomy of the flat singular connection
\[
\sum_{a \in \mathsf{Crit}_1(f)} \log(g_a) \, U_a
\]
around $\partial\mathcal{S}$ is given by
\[
\mathsf{Hol}_{\partial\mathcal{S}}\!\left[\sum_{a \in \mathsf{Crit}_1(f)} \log(g_a) \, U_a\right]
= g_{a_{k_{4g}}}^{\varepsilon_{4g}} \cdots g_{a_{k_2}}^{\varepsilon_2} g_{a_{k_1}}^{\varepsilon_1}.\]
\end{definition}

More generally, let $\gamma: [0,1] \to \Sigma$ be any closed curve that is everywhere transverse to the vector field $V = \nabla f$. Denote by $t_1 < \dots < t_k$ the ordered times at which $\gamma$ intersects the union $\bigcup_{a \in \mathsf{Crit}_1(f)} W^u(a)$ of unstable curves. For each $i \in \{1, \dots, k\}$, let $a_i \in \mathsf{Crit}_1(f)$ be the saddle point whose unstable curve $W^u(a_i)$ meets $\gamma$ at time $t_i$, and define $\varepsilon_i = \int_\Sigma [\gamma] \wedge [U_{a_i}] = \pm 1$ depending on the orientation number. Then the holonomy $\mathsf{Hol}_\gamma(A)$ is defined as
\[
\mathsf{Hol}_\gamma(A) \coloneqq g_{a_{i_k}}^{\varepsilon_{i_k}} \cdots g_{a_{i_1}}^{\varepsilon_{i_1}}.
\]
The first theorem of this article specializes a result from~\cite{BCDRT} to the moduli space of flat connections. It gives a complete characterization of the set of flat connections modulo gauge using the combinatorial holonomy map.

\begin{thm}[Slicing flat connections with a Morse gauge]\label{Slicing}
Let $E = \Sigma \times \mathbb{C}^N \to \Sigma$ be a $G = \mathsf{SU}(N)$-vector bundle of rank $N$, and let $\nabla = \dd + A$ with $A \in \Omega^1(\Sigma, \mathfrak{su}(N))$ be a flat connection on $E$. Consider the twisted transport equation
\begin{equation}\label{eq:twistedtransport1}
    \partial_t g_t + \mathcal{L}_V g_t + A(V) g_t = 0,
\end{equation}
with initial condition $g_0 = 1_G$. Then the following hold.

\begin{enumerate}
\item The family of solutions $(g_t)_{t \geq 0}$ to (2.1) yields a family of gauge transformations in $C^\infty(\Sigma, G)$ such that
\[
\underbrace{g_t^{-1} \dd g_t + g_t^{-1} A g_t}_{\text{in $C^\infty$ connection}} \;\xrightarrow[t \to \infty]{}\; 
\underbrace{A_\infty \coloneqq \sum_{a \in \mathsf{Crit}_1(f)} \left( \int_{W^s(a)} A \right) U_a}_{\text{Morse-theoretic flat connection}},
\]
where convergence holds in $\mathcal{D}^{\prime,1}_{\Gamma_u}(\mathcal{S})$, and each $U_a$ (for $a \in \mathsf{Crit}_1(f)$) is the current of integration along the unstable curve $W^u(a)$.

\item If $A_1, A_2 \in \Omega^1(\Sigma, \mathfrak{su}(N))$ are gauge equivalent, then
\[
\exp\left(A_{1,\infty}\right) = g^{-1}\exp\left( A_{2,\infty}\right) g
\]
for some fixed group element $g \in \mathsf{SU}(N)$. This means that the holnomy representations induced by the two connections are conjugated by $g\in G$.

\item The moduli space of flat connections modulo gauge,
\[
\mathcal{M}_g := \left\{ A \in \Omega^1(\Sigma, \mathfrak{su}(N)) : \dd A + A \wedge A = 0 \right\} \big/ C^\infty(\Sigma, \mathsf{SU}(N)),
\]
admits the description
\[
\mathcal{M}_g = \left\{ \mathsf{Hol}_{\partial\mathcal{S}}\!\big( (g_a)_{a \in \mathsf{Crit}_1(f)} \big) = 1_G \right\} \big/ G,
\]
where the quotient is taken with respect to the adjoint action of $G$.
\end{enumerate}
\end{thm}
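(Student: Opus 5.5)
The plan is to solve the twisted transport equation \eqref{eq:twistedtransport1} explicitly along flow lines and then read off the limiting gauge-transformed connection. By the method of characteristics, $g_t(x)$ is the parallel transport of $\nabla=\dd+A$ along the flow segment $s\mapsto\varphi_f^{-s}(x)$, $s\in[0,t]$. Precisely, $g_t(x)=P_{\varphi_f^{-t}(x)\to x}$ up to the orientation convention, and one checks directly that this solves $\partial_t g_t+\mathcal{L}_Vg_t+A(V)g_t=0$ with $g_0=1_G$. Smoothness of $g_t$ and $g_t\in C^\infty(\Sigma,G)$ follow because $\varphi_f^{-t}$ is a smooth flow and $A$ is smooth with values in $\mathfrak{su}(N)$.

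Set $A_t:=g_t^{-1}\dd g_t+g_t^{-1}Ag_t$. Differentiating in $t$ should give $A_t(V)=0$ off the critical points for $t>0$ along the relevant directions, and more precisely $A_t=(\varphi_f^{-t})^*$-type transport of $A$ conjugated by holonomies. Flatness is used here: on a flow box, the gauge-transformed connection $A_t$ at $x$ equals the derivative of the holonomy around the thin rectangle spanned by flow lines of length $t$. By flatness, this holonomy depends only on the homotopy class of the rectangle relative to its endpoints. As $t\to\infty$, points of $\Sigma\setminus W^s(\text{saddles})$ flow backward to the minimum, so nearby rectangles collapse and $A_t\to 0$ there. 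Transversally to an unstable curve $W^u(a)$, however, backward flow lines from the two sides separate: they pass on either side of the saddle $a$ and differ by a loop crossing $W^s(a)$. This produces a jump equal to the holonomy along the stable curve $W^s(a)$, namely $\exp(\int_{W^s(a)}A)$ after the appropriate normalization/conjugation by the transport to the minimum. Test against smooth $1$-forms and split $\Sigma$ into a neighborhood of $\bigcup_aW^u(a)$ and its complement. On the complement, use the exponential convergence to equilibrium and spectral gap of the transfer operator from~\cite{DR19,DR21}, in the spirit of the Harvey--Lawson limit $\Pi_0$, to get convergence to zero in $\mathcal{D}'^{1}_{\Gamma_u}(\mathcal{S})$. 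Near $W^u(a)$, the linearizability of $V$ at saddles gives explicit hyperbolic coordinates in which the smoothed jump converges to the integration current $U_a$ weighted by the stable-curve integral. This identifies $A_\infty$. I expect the main obstacle to be the uniform control near the saddles and near the maximum (blown up in $\mathcal{S}$), where one needs the wavefront-set bound $\Gamma_u$ from the Dang--Rivière theory. I would first try to get it by reducing to the linear-flow model in Morse charts.

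For item 2, if $A_1=h^{-1}\dd h+h^{-1}A_2h$, then the transports satisfy $g^{(1)}_t(x)=h(x)^{-1}g^{(2)}_t(x)h(\varphi_f^{-t}x)$. As $t\to\infty$, $\varphi_f^{-t}x\to\min f$ for generic $x$, so the limits differ by conjugation by the constant $g=h(\min f)$, giving $\exp A_{1,\infty}=g^{-1}\exp(A_{2,\infty})g$. For item 3, the combinatorial holonomy of $A_\infty$ around $\partial\mathcal{S}$ equals the genuine holonomy of $A$ around a small loop about the maximum, which is $1_G$ by flatness. This gives a map $\mathcal{M}_g\to\{\mathsf{Hol}_{\partial\mathcal S}=1_G\}/G$, well defined by item 2. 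Since $\Sigma\setminus\bigcup_a W^u(a)\cup\{\max\}$ deformation-retracts onto the one-skeleton dual to the unstable curves, the $g_a$ are holonomies along a generating family of $\pi_1(\Sigma)$ satisfying exactly the single surface relation encoded by $\mathsf{Hol}_{\partial\mathcal S}$. The map therefore coincides with the standard identification $\mathcal{M}_g\simeq\operatorname{Hom}(\pi_1(\Sigma),G)/G$ recalled above, and bijectivity follows from that identification.
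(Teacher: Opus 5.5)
Your overall architecture (transport along flow lines, a limit current on the unstable curves, gauge covariance, then the classical correspondence $\mathcal{M}_g\simeq\operatorname{Hom}(\pi_1(\Sigma),G)/G$) matches the paper's. However, the identity that makes item~1 work is missing, and the mechanism you put in its place does not give the stated coefficient. The paper's proof rests on the exact equality
\[
g_t^{-1}\dd g_t+g_t^{-1}Ag_t=(\varphi_f^{-t})^*A\qquad\text{for every } t\geq 0,
\]
with no conjugation. Write $\mathcal{L}_V^\nabla=[\nabla,\iota_V]$. Flatness gives $[\nabla,\mathcal{L}_V^\nabla]=-\iota_VF=0$, so $e^{-t\mathcal{L}_V^\nabla}$ commutes with $\nabla$. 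Since $e^{-t\mathcal{L}_V^\nabla}(\omega\otimes s)=(\varphi_f^{-t})^*\omega\otimes e^{-t\mathcal{L}_V^\nabla}s$, the transported frame $\mathbf{e}_i(t)=e^{-t\mathcal{L}_V^\nabla}\mathbf{e}_i$ satisfies $\nabla\mathbf{e}_i(t)=\sum_j(\varphi_f^{-t})^*A_{ji}\,\mathbf{e}_j(t)$. In your thin-rectangle language: flatness shows the transported frames at $x$ and $x'$ differ by the transport along the bottom edge from $\varphi_f^{-t}x$ to $\varphi_f^{-t}x'$, whose first-order term is $(\varphi_f^{-t})^*A$. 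Item~1 is then the linear Dang--Rivi\`ere limit $\Pi_0$ applied entrywise. The analysis near saddles and $\max f$ that you flag as the main obstacle is exactly what that cited theorem provides.

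Your sketch instead has the following problems:
\begin{itemize}
\item You describe $A_t$ as a pull-back ``conjugated by holonomies''. That is an $x$-dependent conjugation, to which $\Pi_0$ does not apply.
\item You assert $A_t(V)=0$; in fact $A_t(V)=A(V)\circ\varphi_f^{-t}$.
\item You write $\Sigma\setminus W^s$ where $\Sigma\setminus\bigcup_aW^u(a)$ is meant.
\item You identify the weight of $U_a$ with ``the holonomy along $W^s(a)$, namely $\exp(\int_{W^s(a)}A)$''. This equality is false for non-abelian $G$.
\end{itemize}
To see why the last point fails, take a short transversal $\sigma$ to $W^u(a)$. The backward flow $\varphi_f^{-t}$ stretches it over the whole loop $\overline{W^s(a)}$. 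So the \emph{parallel transport} of $A_t$ along $\sigma$ tends to the path-ordered holonomy along $\overline{W^s(a)}$. By contrast, $\int_\sigma A_t$, which is what the distributional limit sees, tends to the additive integral $\int_{W^s(a)}A$. Your jump picture therefore does not produce the coefficient in item~1; linearity of $A\mapsto(\varphi_f^{-t})^*A$ does.

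The same conflation breaks your item~2. You pass to the limit in $g^{(1)}_t=h^{-1}g^{(2)}_t\,(h\circ\varphi_f^{-t})$ using $h\circ\varphi_f^{-t}\to h(\min f)$ off the unstable curves, and this is not legitimate. The limit currents live exactly on $\bigcup_aW^u(a)$. There, by the stretching just described, $h\circ\varphi_f^{-t}$ sweeps through the values of $h$ on $\overline{W^s(a)}$. Moreover, the extra term $(\varphi_f^{-t})^*(h^{-1}\dd h)$ tends to $\sum_a\big(\int_{W^s(a)}h^{-1}\dd h\big)U_a$, which need not vanish. By linearity, $A_{1,\infty}=\sum_a\big(\int_{W^s(a)}(h^{-1}\dd h+h^{-1}A_2h)\big)U_a$, which is not $h(\min f)^{-1}A_{2,\infty}h(\min f)$ in general.

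The paper takes no limit here. Since $\overline{W^s(a)}$ is flow-invariant, the holonomy of $(\varphi_f^{-t})^*A$ along this loop based at $\min f$ equals that of $A$ for every $t$. Based holonomy is gauge covariant, so the holonomy representations are conjugated by $u(\min f)$; this is the gloss given in item~2. Correspondingly, your item~3 argument needs $g_a$ to be this based holonomy along $\overline{W^s(a)}$, not $\exp\int_{W^s(a)}A$. That argument (boundary relation from the contractible loop around $\max f$, bijectivity from the classical correspondence) then agrees with the paper's.

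One further correction: it is $\Sigma\setminus\{\max f\}$ that retracts onto the wedge $\bigcup_a\overline{W^s(a)}$. The complement of $\bigcup_a\overline{W^u(a)}$ is the open disc $W^u(\min f)$.
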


In particular, this theorem recovers the classical results on character varieties in a purely analytical way.
\begin{proof}
We adopt the dual point of view and start from the canonical orthonormal moving frame $(\mathbf{e}_i)_{i=1}^N$ of the trivial $G$--bundle $E$ of rank $N$. The flat connection $\nabla$ induces a twisted Lie derivative $\mathcal{L}_V^\nabla \coloneq  [\nabla,\iota_V]: C^\infty(\Sigma,E)\rightarrow C^\infty(\Sigma,E) $ acting on smooth sections of $E$. Recall that given $\nabla$, the connection $1$--form in a given moving frame $(\mathbf{e}_i)_{i=1}^N$ is defined as $ \left\langle \mathbf{e}_j, \nabla \mathbf{e}_i \right\rangle=A_{ji}\in \Omega^1(\Sigma)$, $(i,j)\in \{1,\dots,N\}^2$.
Then, we use the flow twisted by the flat connection $\nabla$ to deform the moving frame as follows.
$$ \partial_t \mathbf{e}_i(t)=-\mathcal{L}_V^\nabla \mathbf{e}_i(t), \,\ \mathbf{e}_i(0)=\mathbf{e}_i. $$ 
Define $g_t\in C^\infty(\Sigma,G)$ by $\mathbf{e}_i(t)=g_{t,i}^j\mathbf{e}_j $ for all $t\geqslant 0$.
We verify that $g_t$ satisfies the transport equation~\ref{eq:twistedtransport1}.
By construction 
\[\partial_t \mathbf{e}_i(t)=\partial_t g_{t,i}^j \mathbf{e}_j=-\mathcal{L}_V^\nabla g_{t,i}^j \mathbf{e}_j= -\mathcal{L}_Vg_{t,i}^j\mathbf{e}_j -g_{t,i}^kA(V)_k^j \mathbf{e}_j\]
since the moving frame $(\mathbf{e}_j)_{j=1}^N$ is constant which implies that $g_t$ satisfies the transport equation~\ref{eq:twistedtransport1}.

The initial connection $\nabla$ viewed in the moving frame $(\mathbf{e}_i(t))_{i=1}^N$
has connection $1$--form
\begin{align*}
 A_{ji}(t)=(g_t^{-1}\dd g_t)_{ji}+\big(g_t^{-1}A g_t\big)_{ji}.
\end{align*}
Moreover, using the fact that $\nabla$ is a flat unitary connection, we also find:
\begin{align*}
    A_{ji}(t)&= \left\langle \mathbf{e}_j(t),\nabla \mathbf{e}_i(t)\right\rangle=
\left\langle e^{-t\mathcal{L}_V^\nabla} \mathbf{e}_j,\nabla e^{-t\mathcal{L}_V^\nabla} \mathbf{e}_i\right\rangle\\
&=\left\langle e^{-t\mathcal{L}_V^\nabla} \mathbf{e}_j, e^{-t\mathcal{L}_V^\nabla}\left(\nabla \mathbf{e}_i \right) \right\rangle=\big(\varphi_f^{-t}\big)^*\big\langle  \mathbf{e}_j,\nabla  \mathbf{e}_i\big\rangle=\big(\varphi_f^{-t}\big)^*A_{ji}.
\end{align*}
Then the identity
\[
\big(g_t^{-1}\dd g_t\big)_{ji}+\big(g_t^{-1}A g_t\big)_{ji}=    \varphi_f^{-t*}A_{ji}
\]
converges at the limit when $t\rightarrow +\infty$ 
to the current $\sum_{a\in \mathsf{Crit}_1(f)} \left(\int_{W^s(a)} A\right) U_a $ by the convergence result of \cite{DR19}. Now assume we have started from another gauge equivalent flat connection $g^{-1}\nabla g$ for some arbitrary gauge transformation $g\in C^\infty(\Sigma,G)$.

Let us prove the second item on gauge equivalence. Assume we are given two flat connections $\nabla_1$ and $\nabla_2=u^{-1} \nabla_1 u$ for $u\in C^\infty(\Sigma)$ which are gauge equivalent.
Assume we are given two connection $1$--forms $A$ and $u^{-1}du+u^{-1}Au$ s.t. 
$u\in C^\infty(\Sigma,G)$.
Observe that for any $\gamma:[0,1]\mapsto \Sigma$, for any $A\in \Omega^1(\Sigma,\mathfrak{g})$, $\mathsf{P}_{\gamma}(\varphi_f^{-t*}A)=\mathsf{P}_{\varphi_f^{-t}\left(\gamma\right)}(A)$ where $\mathsf{P}_\gamma(A)$ is the solution $v(1)$ to the ODE 
$$ \frac{\dd v}{\dd }=A(\gamma(t),\gamma^\prime(t))v(t); v(0)=1_G .$$ 
Also for any  
 $A\in \Omega^1(\Sigma,\mathfrak{g})$ and $u\in C^\infty(\Sigma,G)$,
 \begin{align*}
 \mathsf{P}_{\gamma} \left( u^{-1}du+u^{-1}Au \right)=  u(\gamma(1))\mathsf{P}_{\gamma}(A)u(\gamma(0))^{-1}. 
 \end{align*}

Now for every saddle point $a\in \mathsf{Crit}_1(f)$, we combine the two above
identities choosing $\gamma$ to be the closure of stable curve $\overline{W^s(a)}$ parametrized in such a way that $\gamma(0)=\gamma(1)=\min(f)$.
This gives
\begin{align*}
\mathsf{Hol}_{\overline{W^s(a)}}\left( \varphi^{-t*}_f(  u^{-1}du+u^{-1}Au ) \right)&=\mathsf{Hol}_{\varphi^{-t}_f(\overline{W^s(a))}}\left(  u^{-1}du+u^{-1}Au  \right)\\
&= \mathsf{Hol}_{\overline{W^s(a) } }\left(  u^{-1}du+u^{-1}Au  \right)\\
&=u(\min(f))  \mathsf{Hol}_{\overline{W^s(a)}}\left( A\right)u(\min(f))^{-1},
\end{align*}
where the equalities hold true for all $t\geqslant 0$.
From this we deduce the second claim.

Let us move to the proof of the third item: namely that the Morse gauge provides an identification with the character variety. Start from a representation $\rho:\pi_1(\Sigma)\mapsto G$ of the surface group. By~\cite[Thm 13.2 p.~159]{Taubes} and \cite[Thm 2.9 p.~56]{Morita}, there always exists a smooth flat connection $\nabla$ s.t. the Holonomy 
representation $\rho_\nabla$ coincides with $\rho$ up to the adjoint action of $G$ (just apply the result of \cite[top of p.~60]{Morita} which gives a flat principal $G$-bundle realizing the given element $\rho\in \mathsf{Hom}(\pi_1(\Sigma),G)$ then pass to the vector bundle picture by using the usual representation of $G=\mathsf{SU}(N)$ on $\mathbb{C}^N$). The smooth connection $\nabla$ reads $\dd+A$. Then applying the deformation argument from the first part to $\dd+A$ produces a $1$--parameter family $\nabla_t:= \dd+A_t$ of flat connections such that
for every $t$, $\rho_{\nabla_t}\simeq\rho_{\nabla} $ up to conjugation and the limiting connection $1$-form $A_\infty$
has the form
$\sum_{a\in \mathsf{Crit}_1(f)} \log(g_a)U_a $. The fact that both $\rho$ et $\rho_{\nabla_\infty}$ are conjugated comes from the second item.
\end{proof}

Finally, since the combinatorial holonomy map from Definition~\ref{def:combihol}:
\[
[\gamma] \in \pi_1(\Sigma) \longmapsto \mathsf{Hol}_{\gamma}\!\left[ \sum_{a \in \mathsf{Crit}_1(f)} \log(g_a) \, U_a \right] \in G,
\]
defines a representation of the surface group $\pi_1(\Sigma)$, we can freely use the results of Goldman~\cite{Goldman1, Goldman2}, which are formulated in this language.

\subsection{The  ABG symplectic form via the Morse gauge}
\label{sss:ABGsymplecticform}
Now that we have given a realization of $\mathcal{M}_g$, we will define the tangent space at a point of $\mathcal{M}_g$. This will let us explicit a formula for the Atiyah--Bott--Goldman (ABG) symplectic form\footnote{We thank Vladimir Fock for very interesting discussions on how our expression 
for the symplectic form is closely related to the Poisson structures appearing in his work with Rosly~\cite{FockRosly}; although the point of view there emphasizes more the Poisson bracket.}.
\par The moduli space reads \label{ModuSpace}
$$\mathcal{M}_g=\left\{\mathsf{Hol}_{\partial\mathcal{S}}\Bigg[\sum_{a\in \mathsf{Crit}_1(f)} \log(g_a)U_a \Bigg] = g_{a_{k_{4g}}}^{\varepsilon_{4g}} \dots  g_{a_{k_2}}^{\varepsilon_2} g_{a_{k_1}}^{\varepsilon_1}=\text{Id}_G\right\} \Bigg/G $$ 
where we quotient by the adjoint action of $G$.
Consider an element $(g_a)_{a\in \mathsf{Crit}(f)_1}$ of $\mathcal{M}$.  Following Goldman~\cite[paragraph 1.2 p.~203]{Goldman1}, we
choose any smooth curve 
\[
t \in (-1, 1) \mapsto \big( g_a(t) \big)_{a \in \mathsf{Crit}_1(f)} \in \mathcal{M}_g
\]
such that $g_a(0) = g_a$. Then we view 
$\left( \frac{d g_a}{\dd t}(0) \right)_{a \in \mathsf{Crit}_1(f)}$ as an element of $T_{(g_a)_{a \in \mathsf{Crit}_1(f)}} \mathcal{M}_g$.
Each vector $\frac{d g_a}{\dd t}(0)$ belongs to $T_{g_a} G$, which we identify with 
$v_a g_a$ for some $v_a \in \mathfrak{g}$. Now, differentiating the 
curve 
\[
t \mapsto \sum_{a \in \mathsf{Crit}_1(f)} \log\big( g_a(t) \big) \, U_a 
\]
in $\mathcal{D}^{\prime,1}(\Sigma) \otimes \mathfrak{g}$, we obtain
\[
\frac{\dd}{\dd t}\Bigg|_{t=0} \sum_{a \in \mathsf{Crit}_1(f)} \log\big( g_a(t) \big) \, U_a
= \sum_{a \in \mathsf{Crit}_1(f)} \dd \log\big( g_a(0) \big)(v_a g_a(0)) \, U_a.
\] 
We may therefore write the following definition.
\begin{definition}[Tangent space to $\mathcal{M}_g$]\label{TangCone}
    Let $(g_a)_{a\in\mathsf{Crit}_1(f)}\in\mathcal{M}_g$. The tangent space to $\mathcal{M}_g$ is given at $(g_a)_{a\in\mathsf{Crit}_1(f)}$ is given by 
    \[
\bigl( (v_a)_{a\in\mathsf{Crit}_1(f)} \bigr) \in T_{(g_a)_{a\in\mathsf{Crit}_1(f)}}\mathcal{M}_g
\qquad\Longleftrightarrow\qquad
\begin{cases}
(v_a)_{a \in \mathsf{Crit}_1(f)} \in \mathfrak{g}^{2g} \\[4pt]
\bigl( v_a g_a \bigr)_{a \in \mathsf{Crit}_1(f)} \in \ker\,\!\Bigl( \dd\,\mathsf{Hol}_{\partial\mathcal{S}}\bigl( (g_a)_{a \in \mathsf{Crit}_1(f)} \bigr) \Bigr).
\end{cases}
\]
\end{definition}
Let us now express the ABG symplectic form. Note that this definition only makes sense modulo the choice of a logarithm. Since the final goal is to identify the small area limit of the Yang--Mills random field with the symplectic volume induced by the ABG symplectic form, all we need is to be able to define this form almost everywhere, which is the case here.
\begin{proposition}[ABG symplectic form in Morse gauge] \label{ABGSymp}
  Given two tangent vectors 
  \[v=(v_a)_{a\in  \mathsf{Crit}(f)_1}, w=(w_a)_{a\in  \mathsf{Crit}(f)_1} \in T_{(g_a)_{a\in  \mathsf{Crit}(f)_1}}\mathcal{M}_g,\] 
  the symplectic form writes
\begin{equation}\label{eq:symplecticABG}
\omega\left(v,w\right)=\sum_{a,b\in \mathsf{Crit}(f)_1} \mathrm{Tr}\Big(  \dd\log\left(g_a \right)\left(v_ag_a \right)\  \dd\log\left(g_b \right)(v_bg_b)   \Big)  \Big( [W^u(a)]\cap [W^u(b)] \Big),
\end{equation}
where $[W^u(a)]\cap [W^u(b)] \in \mathbb{Z} $ denotes the oriented intersection numbers of the two cycles $[W^u(a)]$,  $ [W^u(b)]$ viewed as elements in $H_1(\Sigma,\mathbb{Z})$.  
\end{proposition}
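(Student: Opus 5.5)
The plan is to start from the Atiyah--Bott--Goldman definition of the symplectic form and evaluate it on the Morse-gauge representatives from Theorem~\ref{Slicing}. On flat connections, tangent vectors are $d^{\nabla}$-closed $\mathfrak{g}$-valued $1$-forms $\dot A,\dot B$ modulo exact ones, and
\[
\omega(\dot A,\dot B)=\int_\Sigma \mathrm{Tr}\big(\dot A\wedge \dot B\big).
\]
Goldman~\cite{Goldman1} shows that this is the cup product $H^1(\Sigma,\mathfrak{g}_{\mathrm{Ad}\rho})\times H^1(\Sigma,\mathfrak{g}_{\mathrm{Ad}\rho})\to H^2(\Sigma,\mathbb{R})\cong\mathbb{R}$, paired with the Killing form $\mathrm{Tr}$. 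The formula therefore only needs to be computed on one good representative of each cohomology class. I take this representative to be the derivative of the Morse-gauge curve, as in the discussion preceding Definition~\ref{TangCone}:
\[
\dot A_v=\sum_a \dd\log(g_a)(v_ag_a)\,U_a,
\]
and similarly $\dot A_w$.

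\textbf{Step 1: the Morse-gauge representatives are legitimate.} I would justify that $\dot A_v$ and $\dot A_w$ represent the same classes in twisted cohomology as the derivatives of smooth flat curves. Take a smooth curve of flat connections $A(t)$ whose Morse limits are $A_\infty(t)$. By the first item of Theorem~\ref{Slicing}, $g_s^{-1}\dd g_s+g_s^{-1}A(t)g_s=\varphi_f^{-s*}A(t)$. Differentiating in $t$, this stays in the same twisted cohomology class up to the gauge change by $g_s$. Letting $s\to\infty$ and using the convergence of~\cite{DR19}, where $\Pi_0$ preserves cohomology classes, the derivative converges to $\dot A_v$. Away from the maximum, the holonomy $g_a$ is constant along each unstable curve. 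So on the punctured surface the twisted differential becomes the plain de Rham differential after trivializing by the locally constant gauge, and the classes are represented by currents with constant coefficients on the cycles $[W^u(a)]$. The linearized holonomy constraint $\ker \dd\,\mathsf{Hol}_{\partial\mathcal{S}}$ in Definition~\ref{TangCone} is exactly what makes $\dot A_v$ closed across the blown-up disk.

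\textbf{Step 2: evaluate the pairing.} The wedge product $U_a\wedge U_b$ of two integration currents is not defined directly, so I would replace one of them by a cohomologous smooth form. Concretely, I use $\varphi_f^{-s*}$ applied to a smooth representative, or a Poincaré-dual smoothing of $[W^u(b)]$ by a thin bump form transverse to the curve. Then
\[
\int_\Sigma U_a\wedge \eta_b=\int_{W^u(a)}\eta_b\longrightarrow [W^u(a)]\cap[W^u(b)].
\]
The coefficients are constant Lie algebra elements, so bilinearity pulls out $\mathrm{Tr}$ of the products of coefficients. Summing over pairs $a,b$ then gives~\eqref{eq:symplecticABG}, up to reading the second factor as the $w$-coefficient.

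\textbf{Main obstacle.} The hardest part will be the intersections of the unstable curves: all $W^u(a)$ meet at $\max(f)$, so the curves are not transverse there. I would check that on the blow-up $\mathcal{S}$ the geometric contributions near the boundary circle reduce to the combinatorial signs $\varepsilon_i$. I would also check that the twisted correction terms at the max cancel precisely because of $\dd\,\mathsf{Hol}_{\partial\mathcal{S}}(vg)=0$, leaving the homological intersection number. If that local analysis is messy, my fallback is to argue purely cohomologically. Since $\omega$ is Goldman's cup product, and the constant-coefficient classes $c_a[W^u(a)]$ span the relevant $H^1$ near a regular point, bilinearity together with Poincaré duality (cup product of duals equals the intersection number) gives the formula directly.
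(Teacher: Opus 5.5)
\textbf{There is a genuine gap, and it sits exactly at the step you flag as the main obstacle.} Your outline is the same as the paper's, whose proof is a one-line ``adaptation of the computation'' preceding Definition~\ref{TangCone} plus Goldman for closedness. The twisted correction terms at $\max f$ do not cancel.

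\textbf{Where Step~1 goes wrong.} The Morse-gauge connection $A_\infty=\sum_a\log(g_a)U_a$ is trivial only on the open $4g$-gon $D=\Sigma\setminus\overline{\bigcup_a W^u(a)}$. It is not trivial on $\Sigma\setminus\{\max f\}$: the loop $\overline{W^s(a)}$ avoids $\max f$ and has holonomy $g_a^{\pm1}$. Consequences:
\begin{itemize}
\item $\sum_a c_aU_a$ is a cycle with coefficients in the nontrivial local system $\mathfrak{g}_{\mathrm{Ad}\rho}$, not an element of $H_1(\Sigma)\otimes\mathfrak{g}$.
\item $\Pi_0$ preserves only untwisted de Rham classes, and the components of a $d^\nabla$-closed form are not closed.
\item Neither of your smoothings is closed for the twisted differential of $A_\infty$ near $\max f$. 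A bump form around $W^u(b)$ meets every other unstable curve there. $\varphi_f^{-s*}$ of a smooth representative is closed for $\varphi_f^{-s*}A$, not for $A_\infty$.
\item Your fallback, untwisted Poincar\'e duality, does not compute a twisted cup product.
\end{itemize}

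\textbf{What the pairing actually gives.} Write $\alpha=\dd\phi$ on $\overline D$, with $\phi|_{e_a^+}=\mathrm{Ad}_{g_a}\phi|_{e_a^-}+c_a$ on the two copies of each edge, and $\beta=\dd\psi$ with jumps $d_b$. Then $\int_\Sigma\mathrm{Tr}(\alpha\wedge\beta)=\int_{\partial D}\mathrm{Tr}(\phi\,\dd\psi)$. Each glued pair of edges contributes $\pm\mathrm{Tr}\big(c_a\,\mathrm{Ad}_{g_a}(\psi(q_a)-\psi(p_a))\big)$, where $p_a,q_a$ are the endpoints of $e_a^-$. All vertices of $D$ map to $\max f$, and their $\psi$-values are related by $\psi(v)=\mathrm{Ad}_{g_a}\psi(v')+d_a$. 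So they differ by the $d_b$'s conjugated by partial products of the boundary word; these conjugations are your twisted corrections.

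The constraint $\dd\mathsf{Hol}_{\partial\mathcal S}(vg)=0$ only makes this bookkeeping close up consistently, i.e.\ it is the cocycle condition. It does not remove the $\mathrm{Ad}$'s. The outcome is Goldman's cup-product formula. That formula reduces to (Killing pairing)$\times$(intersection number) only when the $\mathrm{Ad}$'s act trivially. Examples are abelian $G$, or genus one with holonomies and variations in a common maximal torus (the pillowcase).

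\textbf{The target formula itself cannot be reached.} Already for $G=\mathsf{SU}(2)$ in genus two, no argument can produce the displayed formula, because it fails a basic test: a form pulled back from $\mathcal M_g$ must vanish on conjugation directions.
\begin{itemize}
\item For $\dot g_a=[g_a,\xi]$ one has $\dd\log(g_a)(\dot g_a)=[\ell_a,\xi]$, with $\ell_a=\log g_a$.
\item Plugging this into the right-hand side gives $-\mathrm{Tr}\big(\xi\,\dd\mu(\dot\ell)\big)$. Here $\dot\ell_b=\dd\log(g_b)(w_bg_b)$, $\mu(\ell)=\tfrac12\sum_{a,b}I_{ab}[\ell_a,\ell_b]$, and $I$ is the intersection matrix.
\item Vanishing for all $\xi$ and all tangent $w$ would force $\mu$ to be locally constant on the smooth part of $\mathsf{Hol}_{\partial\mathcal S}^{-1}(1_G)$.
\item By Baker--Campbell--Hausdorff, the relation agrees with $\{\mu=0\}$ only to second order.
\end{itemize}
Explicitly, take the standard word, linearly independent $X,Y\in\mathfrak{su}(2)$, $s\neq0$, small $\lambda>0$, and $k=\exp\big(s\log[e^{\lambda X},e^{\lambda Y}]\big)$. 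The irreducible points $(e^{\lambda X},e^{\lambda Y},ke^{\lambda Y}k^{-1},ke^{\lambda X}k^{-1})$ satisfy the relation, yet $\mu=\pm\lambda^2(1-\mathrm{Ad}_k)[X,Y]=\pm\tfrac{s\lambda^5}{2}\big[[X,Y],[X+Y,[X,Y]]\big]+O(\lambda^6)$. This is nonzero for small $\lambda>0$ and tends to $0$, so $\mu$ is not locally constant.

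\textbf{A related problem with your coefficients.} Smoothing $U_a$ transversally shows that the deformation class of the gluing data $g_a(t)$ has jumps given by the Maurer--Cartan derivative $\dot g_ag_a^{-1}$ (up to convention). This differs from $\dd\log(g_a)(\dot g_a)$ by $\tfrac{e^{\mathrm{ad}_{\ell_a}}-1}{\mathrm{ad}_{\ell_a}}$, so even the coefficients in Step~1 are off.

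\textbf{Conclusion.} What is missing is not a finer local analysis at $\max f$; the target has to be replaced by the $\mathrm{Ad}$-twisted formula. The paper's one-line proof does not address this point either.
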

\begin{proof}
    It is an adaptation of the computation carried in the beginning of this paragraph. Moreover, 
the fact that the above form is closed follows from \cite[Thm 1.7 p.~208]{Goldman1} and the fact that Morse theory gives us a way to represent $\operatorname{Hom}(\pi_1(\Sigma),G)$.
\end{proof}

\subsection{The symplectic volume}
\label{sss:ABGvol}
The purpose of this paragraph is to make the first link between the symplectic volume induced by the symplectic form $\omega$, and the low temperature limit of the Yang--Mills measure. The fundamental result that we would like to have is Theorem ~\ref{YMABG}. This theorem, and its proof deal with two separate cases, the case $g=1$ and the case $g\geq 2$. Note that for $g=0$ the treatment is trivial.
\par For the case $g\geq 2$, this result was proved by Forman, and Kefeng Liu in ~\cite{Forman, LiuHeat1}, based on ideas from Witten. Here, we recall the main steps and ingredients of this proof.  
\par For the case $g=1$, it was done by Sengupta~\cite{SenguptaTorus}.  We dedicate a special section for this case where we recall the proof of Sengutpa's result. 

\begin{thm}\label{YMABG}

  Let $N=2(g-1)\dim(G)$ if $g\geq 2$, $N=2\sqrt{\dim G+1}-2$ if $g=1$, and $N=0$ if $g=0$. Define $\dd \nu=Z^{-1} \omega^{\wedge^\frac{N}{2}}$, where $Z$ is a normalization constant that turns $\nu$ into a probability measure. Let 
    \[Z_\epsilon(\Sigma)=\int_{G^{2g}}p_{\epsilon}\left(\mathsf{Hol}_{\partial S}\left[\sum_{a\in\mathsf{Crit}_1(f)}\log g_a U_a\right]\right)\dd g.\]
    Then, there exists a measure $\tilde{\nu}$ supported on $\mathsf{Hol}_{\partial \mathcal{S}}^{-1}(1_g)$ such that
    \[\dd \tilde{\nu}(g_1,\cdots,g_{2g}) =\lim_{\epsilon\to 0} \frac{ p_{\epsilon}\left(\mathsf{Hol}_{\partial S}\left[\sum_{a\in\mathsf{Crit}_1(f)}\log g_a U_a\right]\right)}{Z_\epsilon(\Sigma)}\bigotimes_{a\in\mathsf{Crit}_1(f)}\dd g_a\]
        in the sense of weak convergence of probability measure. Moreover, the measure $\tilde{\nu}$ descends to the quotient space $\mathsf{Hol}_{\partial \mathcal{S}}^{-1}(1_g)/G$, and the quotient measure is equal to $\nu$. 
\end{thm}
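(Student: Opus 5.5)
Write $\mu:=\mathsf{Hol}_{\partial\mathcal S}:G^{2g}\to G$. Up to a fixed change of variables $g_a\mapsto g_a^{\pm1}$ (which preserves Haar measure), $\mu$ is the surface word $[a_1,b_1]\cdots[a_g,b_g]$. This follows from the cyclic order of the $4g$ intersection angles of the unstable curves with $\partial\mathcal S$, since $\mathcal S$ cut along the unstable curves is a disk. So I reduce to the classical statement about the pushforward of Haar measure under the commutator map. The plan is the Witten--Forman--Liu argument, which proceeds by character expansion and then disintegration along the fibres of $\mu$. First expand the heat kernel: $p_\epsilon(x)=\sum_{\lambda}d_\lambda e^{-\epsilon c_2(\lambda)/2}\chi_\lambda(x)$. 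Then use the Frobenius identity $\int_{G^{2g}}\chi_\lambda([a_1,b_1]\cdots[a_g,b_g])\,\dd g=d_\lambda^{1-2g}$. Together these give
\[
Z_\epsilon(\Sigma)=\sum_\lambda d_\lambda^{2-2g}e^{-\epsilon c_2(\lambda)/2}.
\]
For $g\geq2$ this converges at $\epsilon=0$ to $\zeta(2g-2)$, and for $g=1$ it diverges like $\epsilon^{-r/2}$, with $r$ the rank, by comparison with the Weyl lattice sum. Similarly, for $F\in C(G^{2g})$ the numerator $\int F\,p_\epsilon\circ\mu$ equals $\int_G p_\epsilon(x)\,(\mu_*(F\,\dd g))(x)$. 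Weak convergence therefore reduces to showing that the pushforward measure $\mu_*(F\,\dd g)$ has a density that is continuous at $x=1_G$, or, for $g=1$, has a controlled singular behaviour there.

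For $g\geq2$ I proceed as follows. On the open dense set $\mathcal R$ of points with trivial centralizer (irreducible representations), $\mu$ is a submersion. This holds because $\mathrm{Im}(\dd\mu)^\perp$ is the Lie algebra of the stabilizer, by Goldman's computation \cite[Prop.~3.7]{Goldman1}. The coarea formula then gives $\mu_*(F\,\dd g)(x)=\int_{\mu^{-1}(x)}F\,\dd\sigma_x/J_\mu$, with $J_\mu$ the normal Jacobian. The complement of $\mathcal R$ in $\mu^{-1}(1)$ has codimension large enough, when $g\geq2$, that it carries no mass in the limit. I take this input from \cite{Forman,LiuHeat1}: the fibre density is continuous near $1$ and the reducible locus is negligible. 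Since $p_\epsilon\to\delta_1$, we get $\tilde\nu=\lim$, which is the measure $F\mapsto \zeta(2g-2)^{-1}\int_{\mu^{-1}(1)}F\,\dd\sigma/J_\mu$. Its $G$-invariance is clear, because $\mu$ is equivariant and both Haar measure and $p_\epsilon$ are conjugation invariant, so it descends to the quotient.

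The main obstacle is identifying the descended measure with $Z^{-1}\omega^{N/2}$, where $N=2(g-1)\dim G$. I would do this pointwise at a regular point by comparing two volume forms on $T\mathcal M_g=\ker\dd\mu/\mathfrak g$. Witten's torsion argument shows that the Riemannian fibre measure $\dd\sigma/J_\mu$, pushed to the quotient by dividing by orbit volume, equals the Reidemeister torsion of the twisted complex $C^\bullet(\Sigma;\mathrm{ad}\rho)$ computed with Haar bases. The same torsion equals the symplectic volume $\omega^{N/2}/(N/2)!$ for the form of Proposition~\ref{ABGSymp}. The key computation uses the cell decomposition given by the Morse function, with one $0$-cell, $2g$ one-cells $W^u(a)$, and one $2$-cell. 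In this decomposition the twisted cochain complex is $\mathfrak g\to\mathfrak g^{2g}\to\mathfrak g$, with differentials $\dd(\text{orbit map})$ and $\dd\mu$. The pairing on $H^1$ is given by cup product, which in these cells is exactly the intersection-number formula of Proposition~\ref{ABGSymp}. Hence torsion and $\omega$ are computed in the same bases, and their ratio is a universal constant absorbed in $Z$. I would cite \cite{Witten1,Forman,LiuHeat1} for this torsion identity rather than reprove it.

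For $g=1$ every point is reducible. Here I follow \cite{SenguptaTorus}: I parametrize commuting pairs by a maximal torus $T^2$ modulo the Weyl group. I then compute the limit of $Z_\epsilon^{-1}p_\epsilon(aba^{-1}b^{-1})\,\dd a\,\dd b$ with the Weyl integration formula. The result is a measure on $(T\times T)/W$ with the explicit Vandermonde-type density, and I match it with the symplectic volume on the regular part of dimension $N$.
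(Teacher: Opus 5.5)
Your overall route is the paper's. For $g\geq 2$ the paper only recalls the Witten--Forman--Liu argument: pull back $p_\epsilon$ through $\mathsf{Hol}_{\partial\mathcal S}$ where it is a submersion, identify the Gelfand--Leray fibre measure contracted with the adjoint polyvector with Reidemeister torsion and then with $\omega^{N/2}$, and discard the singular locus by Lemma~\ref{NegSing}. For $g=1$ it simply invokes Sengupta's torus theorem. Your character expansion of $Z_\epsilon$ and the coarea description of the limit are correct additions. Two steps, however, are stated wrongly.

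First, $\mathsf{Hol}_{\partial\mathcal S}$ is not in general the standard word $\prod_i[a_i,b_i]$ up to relabelling and $g_a\mapsto g_a^{\pm1}$. The cyclic order of the $4g$ half-edges at $\max f$ can be any one-vertex, one-face chord diagram of genus $g$. For example, the octagon with opposite sides glued, with $\max f$ at the vertex, the saddles at the edge midpoints and $\min f$ at the centre, gives the word $abcd\,a^{-1}b^{-1}c^{-1}d^{-1}$. There all four chords cross pairwise, unlike in $[a,b][c,d]$. What is true is that the two relators differ, up to conjugation and inversion, by an automorphism of the free group $F_{2g}$. Indeed, the cut-and-paste moves reducing the polygon to normal form are compositions of Nielsen moves such as $g_a\mapsto g_a g_b^{\pm1}$. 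Such maps preserve Haar measure on $G^{2g}$, commute with diagonal conjugation, and only re-coordinatize the same representation of $\pi_1(\Sigma)$. Since $p_\epsilon$ is a class function with $p_\epsilon(x^{-1})=p_\epsilon(x)$, your reduction survives with this correction. Relatedly, for $\mathsf{SU}(N)$ the centralizer of an irreducible representation is the centre $\mathbb{Z}/N$, never trivial. The submersion criterion is that the centralizer be discrete, as in Definition~\ref{SingPoint}.

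Second, in genus one the limit on $(T\times T)/W$ does not carry a Vandermonde-type density. Take a class function $F$ of $b$ alone. Then $\int F(b)\,p_\epsilon([a,b])\,\dd a\,\dd b=\sum_\lambda e^{-\epsilon c_2(\lambda)/2}\int_G F\,|\chi_\lambda|^2$. The factor $|\Delta|^{2}$ from the Weyl integration formula cancels the $|\Delta|^{-2}$ from the Weyl character formula. After division by $Z_\epsilon$ the oscillating cross terms average out, and the limit is the flat Haar measure of $T$. The same happens jointly: the linearization of $a\mapsto[a,b]$ transverse to $T$ is $1-\mathrm{Ad}_b$ on $\mathfrak{t}^\perp$, with $\bigl|\det\bigl((1-\mathrm{Ad}_b)|_{\mathfrak{t}^\perp}\bigr)\bigr|=|\Delta(b)|^2$. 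This cancels the Weyl weight of $b$, and one gets a constant multiple of $\dd t_1\,\dd t_2$. This flatness is exactly what matches the symplectic area, a multiple of $\dd\theta_1\wedge\dd\theta_2$ on the pillowcase. A surviving Vandermonde weight would contradict the statement you want to prove. With these two corrections your proposal is a faithful, more detailed version of the paper's citation-based proof.
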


The need to separate into two cases comes from singular points. In fact, for $g \geq 2$, there are only few singular points, and the set of singular points has measure $0$. This lets us use usual techniques of change of variables using the almost everywhere submersion $\mathsf{Hol}_{\partial \mathcal{S}}$. However, for the case $g = 1$, all the points are singular in the sense we will shortly define (see Definition~\ref{SingPoint}), and special treatment is required. 
\par Another difference between these two cases is that the partition function that we denoted by $Z_{\epsilon}(\Sigma)$ in the previous theorem diverges for $g=1$ and converges for $g\geq 2$. In fact, For $G$ a compact semisimple Lie group, the Witten zeta function is defined as:
\[
    \zeta_G(s) \coloneq  \sum_{\rho\in \widehat{G}}\frac{1}{\dim(V_\rho)^{s}}.
\]
It follows from   \cite[Theorem 15.7, Equation 15.3.2]{Komori_Matsumoto_Matsumoto} that for a compact connected
semi-simple Lie group $G$, $\zeta_G(2k)\in \mathbb{Q}\pi^{2k p}, \forall k\in \mathbb{N}^*$ where $p$ denotes the number of positive roots of $Lie(G)$. This implies that $\zeta_G(2g-2)<+\infty$ for $g\geq 2$. Therefore, the partition function associated with the area function $\epsilon\sigma $ is equal to
\[ Z_\epsilon(\Sigma)= \sum_{\rho\in \widehat{G}} \exp\big(-\varepsilon\sigma(\Sigma) c_2(\rho)\big) \frac{1}{d_\rho^{2g-2}}
\]
and converges, when $\varepsilon\rightarrow 0^+$, to the Witten zeta function $\zeta_G(2g-2)$. When $g\leq 1$, $\zeta_G(2g-2)=\infty$, and the partition function does not converge as $\epsilon\to0$, and singular behavior is expected in this case. 
\par Let us first discuss singular points of $\mathcal{M}_g$.

\subsubsection{A discussion about singular points}
\label{sss:recollectionsingular}
Observe that Definition~\ref{TangCone} does not distinguish between singular and regular points of the moduli space $\mathcal{M}_g$. This is why we used the term ``tangent cone'' instead of ``tangent space''; Goldman uses the terminology \emph{Zariski tangent space}. We refer the reader to~\cite[2.3 p.~31-32]{MaretCharacter} for a very clear exposition of the Zariski tangent space. Of course, at regular points of $\mathcal{M}_g$, the two notions coincide.

Let us recall precisely the notion of singular points as stated in the work of Goldman. The Zariski tangent space at $(g_a)_{a \in \mathsf{Crit}_1(f)}$, as defined above, has dimension
\[
(2g - 1) \dim(G) + \dim Z_{(g_a)_{a \in \mathsf{Crit}_1(f)}},
\]
where $Z_{(g_a)_{a \in \mathsf{Crit}_1(f)}}$ denotes the centralizer of the corresponding representation of the surface group~\cite[Proposition on top of p.~204]{Goldman1}. Concretely, the centralizer consists of all group elements $h \in G$ that commute with every element
\[
\mathsf{Hol}_{\gamma}\!\left[\sum_{a \in \mathsf{Crit}_1(f)} \log(g_a) \, U_a \right],
\]
where $[\gamma]$ runs over all free homotopy classes in $\pi_1(\Sigma)$. The following definition of regular points is taken from ~\cite[line 9, p.~204; and equation (1.4) p.~204]{Goldman1}.
\begin{definition}[Regular points.]\label{SingPoint}
    A regular point of $g=(g_a)_{a\in\mathsf{Crit}_1(f)}\in \mathsf{Hol}_{\partial\mathcal{S}}^{-1}(1_G)$ corresponds to an element $(g_a)_{a \in \mathsf{Crit}_1(f)}$ for which $\dim Z_{(g_a)_{a \in \mathsf{Crit}_1(f)}}$ coincides with the dimension $\dim Z(G)$ of the center $Z(G)$ of $G$.
\end{definition}
 In fact, in the third paragraph of p.~204 in~\cite{Goldman1}, Goldman describes the stratification of the character variety by successively taking centralizers of centralizers of representations.
In~\cite[line 8 p.~46]{Forman}, the singular points of $\mathcal{M}_g$ are defined to be the elements $(g_a)_{a \in \mathsf{Crit}_1(f)} \in \mathsf{Hol}_{\partial\mathcal{S}}^{-1}(\operatorname{Id}_G)$ that are \emph{critical points} of the map $\mathsf{Hol}_{\partial\mathcal{S}}$; the regular points are the smooth points in the sense of~\cite[def. 2.4.1 p.~36]{MaretCharacter}. For $g\geqslant 2$, it is proved by Goldman that the set of regular points is open and dense in $\mathsf{Hol}_{\partial\mathcal{S}}^{-1}(1_G)$.
Moreover, we also use the result due to Goldman~\cite[see paragraph 1.3 p.~204 and the first sentences of paragraph 1.4 p.~205]{Goldman1} that regular points of $\mathsf{Hol}_{\partial\mathcal{S}}^{-1}(1_G)$ automatically descend to regular points of the moduli space $\mathcal{M}_g\simeq\mathsf{Hol}_{\partial\mathcal{S}}^{-1}(1_G)/G$.
Later, in Lemma~\ref{NegSing}, we will recall a central result that shows that for $g\geq 2$, the set of singular points has zero symplectic volume. However, for $g=1$, we will see that all the points of $\mathsf{Hom}(\pi_1(\Sigma),G)$ is singular in the sense of definition \ref{SingPoint} requiring a separate treatment.

\subsubsection{The case of genus $\geq 2$}

 The following is a recollection of the main steps is the case of genus $\geq 2$.

\begin{enumerate}
    \item Near regular points of $\mathsf{Hol}^{-1}_{\partial\mathcal{S}}(1_G)$, we can pull back the heat kernel $p_\varepsilon(\cdot)$ by the holonomy map. This pullback is well-defined when we let $\varepsilon \rightarrow 0^+$ by the pullback theorem of H\"ormander. In fact, $\operatorname{Hol}_{\partial\mathcal{S}}$ is a submersion near regular points. (See Paragraph~\ref{PullBackHeat})
    \item The tangent space to any regular point can be identified with certain first cohomology group $H^1$ of a twisted de Rham complex. (See Paragraph~\ref{TangentCohomology}).
    \item On the tangent space to any regular point, we can define a volume element. Using the above cohomological interpretation of the tangent space, this volume element is identified both with the Reidemeister torsion and also with some symplectic volume on $H^1$. This is based on fundamental ideas of Witten~\cite{Witten1}.
    \item We deduce that the symplectic volume of the regular part of the moduli space $\mathcal{M}_g$ has full mass as shown by Forman based on~\cite{Witten1}.
\end{enumerate}

\myparagraph{Pull back of the heat kernel near regular points of $\mathsf{Hol}_{\partial\mathcal{S}}^{-1}(1_G)$. } \label{PullBackHeat}
The group $G$ with its left invariant metric can be viewed as a Riemannian manifold. This convey to $G^{2g}$ the Riemannian structure and induces a Riemannian volume $\dd V_{\mathsf{Hol}_{\partial\mathcal{S}}^{-1}(1_G)}$ on the regular part of  $\mathsf{Hol}_{\partial\mathcal{S}}^{-1}(1_G)\subset G^{2g}$.  
For any collection of closed curves $\gamma_{i}, i=1,\dots,N$, we denote
by $\rho_{\gamma_i}$ the corresponding element of the surface group representation 
and $\mathsf{Hol}_{\gamma_i}(A)$ the holonomy under the flat connection $A$. By results of  Robin Forman~\cite{Forman} and Kefeng Liu~\cite{LiuHeat1}  which are heavily inspired by the work of Witten~\cite{Witten1}, we have the fundamental identity 
\begin{align}\label{eq:FromheattoABG}
 &\lim_{\epsilon\to 0}\int_{G^{2g}} \prod_{i=1}^N f_i\big(\mathsf{Hol}_{\gamma_i}(A)\big) p_{\epsilon}\big( \mathsf{Hol}_{\partial\mathcal{S}}(a)  \big) \dd^{2g}a \\
\nonumber \\
&\nonumber \hspace{2cm}= \mathsf{Vol}(G) \int\limits_{\mathsf{Hol}_{\partial\mathcal{S};\mathsf{reg}}^{-1}(1_G) } \prod_{i=1}^N f_i\big(\mathsf{Hol}_{\gamma_i}(A)\big) {\det}^{-\frac{1}{2}}\Big( \big(\dd \mathsf{Hol}_{\partial\mathcal{S}}(a)|_{N_a}\big)^*  \dd \mathsf{Hol}_{\partial\mathcal{S}}(a)|_{N_a} \Big)\dd V_{F^{-1}(1_G)}(a)  \\
\nonumber &\hspace{4cm}=\frac{\mathsf{Vol}(G)^2}{\sharp Z(G)} \int\limits_{\mathcal{M}^{\mathsf{reg}}_{g}}\prod_{i=1}^N f_i\left(\rho_{\gamma_i}\right) \dd\nu,
\end{align} 
where in the second line, we only integrate on the regular part $\mathsf{Hol}_{\partial\mathcal{S};\mathsf{reg}}^{-1}(1_G)$ of  $\mathsf{Hol}_{\partial\mathcal{S}}^{-1}(1_G)$, in the third line we only integrate on the regular part $\mathcal{M}^{\mathsf{reg}}_{g}$ of $\mathcal{M}_g$. The number
$\sharp Z(G)$ is the cardinality of the center of $G$,
and on the r.h.s. the holonomies are defined combinatorially by the intersection of each $\gamma_i$ with the unstable curves. The measure $\dd\nu$ will be identified with the symplectic volume 
on $\mathcal{M}_g$. The key reason why the symplectic volume appears
comes from some identification of the factor ${\det}^{-\frac{1}{2}}\Big( \big(\dd \mathsf{Hol}_{\partial\mathcal{S}}(a)|_{N_a}\big)^*  \dd \mathsf{Hol}_{\partial\mathcal{S}}(a)|_{N_a} \Big)$ with Reidemeister torsion
for non acyclic complexes, as seen next.

\myparagraph{Identifying the volume element $\dd\nu$.}
We reformulate the computations on the volume element $\dd\nu$ appearing in \cite{LiuHeat1,Forman}. On the regular part of the space of surface group representation $\mathsf{Hol}_{\partial\mathcal{S}}^{-1}\left(1_G\right)$,
one can define the measure
\[
\frac{\bigwedge_{a\in \mathsf{Crit}_1(f)} \dd g_a }  { \mathsf{Hol}_{\partial\mathcal{S}}^\star \dd g }    \]
where the numerator is the wedge product of Haar measure on $G^{2g}$ (equivalently, it is the same measure as the wedge product of each volume form) and the denominator is the pull--back $\mathsf{Hol}_{\partial\mathcal{S}}^\star \dd g$  of the Haar volume form on $G$ by the map $\mathsf{Hol}_{\partial\mathcal{S}}:G^{2g} \rightarrow G $.  The division of differential forms is to be understood in the sense of Gelfand--Leray~\cite[Def Lemma 5.11 p.~123]{Zoladek2006}. 
This measure descends from the pre-image $\mathsf{Hol}_{\partial\mathcal{S}}^{-1}(1_G)$ to the moduli space $ \mathsf{Hol}_{\partial\mathcal{S}}^{-1}(1_G)/G $. In fact, there is a gauge action of
$G$ on $G^{2g}$, defined
by 
\[u\cdot( g_a )_{a\in \mathsf{Crit}_1(f)}=\big (u^{-1}g_au\big)_{a\in \mathsf{Crit}_1(f)}\in G^{2g}.\]
We denote by $\mathbf{Ad}$ this action.
We choose a polyvector field\footnote{Recall that a polyvector field on $G$ of degree $k$ is a smooth section $C^\infty(\Lambda^kTG)$ of the bundle $\Lambda^kTG$ of exterior powers of degree $k$ of $TG$. } $V$ which is the canonical volume element on $G$ that is normalized to have volume $1$ on each fiber. This is possible since the Riemannian metric on $G$ is fixed. This choice 
induces
a polyvector 
\[\mathbf{Ad}_*(V )\in C^\infty\big(\Lambda^{\dim(\mathfrak{g})} TG^{2g}\big).\]  
Consider the contraction
operator 
$\iota_{\mathbf{Ad}_*(V )}$ defined by the above polyvector field.  The fundamental formula defining the symplectic volume on the moduli--space $\mathcal{M}_g=\mathsf{Hol}_{\partial\mathcal{S}}^{-1}(1_G)/G$ writes
\[\dd\nu \coloneq  C\iota_{\mathbf{Ad}_*(V )}    \frac{\bigwedge_{a\in \mathsf{Crit}_1(f)} \dd g_a }  {\mathsf{Hol}_{\partial\mathcal{S}}^\star \dd g},    \]
where the constant $C$ is irrelevant since we are only interested 
in the symplectic volume normalized to be of total volume $1$.

The striking fact is that this volume element $\dd\nu$ calculated at $\rho$ is exactly the Reidemeister torsion viewed as a volume element on $H^1\big(C^\bullet_\rho\big)$ following Forman~\cite[p.~50--51]{Forman} and Liu~\cite[Lemma 4 p.~753--754]{LiuHeat1}. This happens to have a symplectic interpretation as well. Let us review the necessary facts needed to state this identification. We start with identifying the tangent space $T_\rho \mathcal{M}_g$ to $\mathcal{M}_g$ at a regular point $\rho\in \mathcal{M}_g$ with the first cohomology group $H^1$ of a certain cochain complex.

\myparagraph{The tangent space at regular points of the character variety and cohomology.} \label{TangentCohomology}

In the sequel, we will freely use the identification: character variety $\simeq$ moduli space of flat connections.
It follows from the work of Goldman~\cite{Goldman1,Goldman2} that the character variety is an orbifold whose regular points correspond exactly to \emph{irreducible} flat connections which means that  
\[H^0\big(\Omega^\bullet(\Sigma,\mathfrak{g}),\mathbf{ad}_\nabla\big)=H^2\big(\Omega^\bullet(\Sigma,\mathfrak{g}),\mathbf{ad}_\nabla\big)=0.\]
We refer to ~\cite[p. 206-207]{Goldman1} where this is discussed in the language of group cohomology but
we mention that in \cite[ paragraph 1.8 p. 208]{Goldman1}, it is described how to go from the language of group
cohomology to de Rham theory. By the de Rham Theorem, one could deduce it from the argument
of \cite[bottom p. 50 and top p. 51]{Forman}. Another way to equivalently define irreducibility is that there are no $\nabla$-invariant sub-bundles of $E$. In this subsection, there are three pictures:
\begin{enumerate}
\item The de Rham picture which identifies tangent spaces of the character varieties to the first cohomology group $H^1\big(\Sigma,\operatorname{End}(E)\big)$ of the following cochain complex
\[
0\longrightarrow\Omega^0\big(\Sigma,\operatorname{End}(E)\big) \longrightarrow \Omega^1\big(\Sigma,\operatorname{End}(E)\big)  \longrightarrow \Omega^2\big(\Sigma,\operatorname{End}(E)\big)  \longrightarrow 0 
\]
equipped with the differential $\mathbf{ad}_\nabla $. This is described in a different language in \cite[1.3 p. 204]{Goldman1}
The flatness of $\nabla$ ensures that the differential squares to $0$, indeed $\mathbf{ad}_\nabla\circ \mathbf{ad}_\nabla=\mathbf{ad}_{[\nabla,\nabla]}=0$. In this case, one can identify the tangent space $T_m\mathcal{M}_{g,0}$ with the first cohomology group 
$H^1\big( \Omega^\bullet(\Sigma,\mathfrak{g}),\mathbf{ad}_\nabla \big)$ whose dimension is $(2g-2)\dim(G)$ by a simple application of the Atiyah--Bott--Lefschetz fixed point formula. We refer to Appendix~\ref{appendix:Atiyah-Bott-Lefschetz}.

\item The Betti picture which identifies it to infinitesimal deformations of representation of surface groups. 
This point of view is related to the deformation complex of the flat $G$--bundle in~\cite[p.~753]{LiuHeat1}.
Recall we defined the map: 
\[\mathsf{Hol}_{\partial\mathcal{S}} : (g_a)_{a\in \mathsf{Crit}_1(f)}\in G^{2g}\longmapsto  \mathsf{Hol}_{\partial\mathcal{S}}\big( (g_a)_{a\in \mathsf{Crit}_1(f)}\big)\in  G.\]
The pre-image $\mathsf{Hol}_{\partial\mathcal{S}}^{-1}\left( 1_G \right)$  is $\mathbf{Ad}_G$ invariant. The moduli space $\mathcal{M}_g \coloneq F^{-1}\left( 1_G \right)/G$ is the quotient by the $G$ action. Assume we are given a regular element $\rho\in \mathcal{M}_g $. As already seen above, this coincides with a representation of the surface group whose corresponding flat connection $\nabla_\rho$ is such that \[H^0\big(\Omega^\bullet\left(\Sigma,\mathfrak{g}\right),\mathbf{ad}_{\nabla_\rho}\big)=H^2\big(\Omega^\bullet\left(\Sigma,\mathfrak{g}\right),\mathbf{ad}_{\nabla_\rho}\big)=0.\] 
Then the tangent space $T_\rho\mathcal{M}_g$ at the point $\rho$ can be identified with 
the first homology of the short exact sequence~:
\[
0\longrightarrow {} T_{1_G}G \xrightarrow[]{\dd\mathrm{Ad}} T_\rho G^{2g}  \xrightarrow[]{\dd F} T_{1_G}G  \longrightarrow 0.    
\]
In the notations, we identify the element $\rho$ in the moduli space with its representative in $G^{2g}$. We now restrict the above complex to the fiber above $1_G\in G$, $\rho\in G^{2g}$ and $1_G\in G$ respectively.

\item The CW cohomology picture that identifies these infinitesimal deformations to
twisted cocycles on some cochain complex twisted by the representation of the surface group. The last picture is related to the first one via the de Rham isomorphism Theorem in the twisted setting and it is related to the Betti picture if one has a CW decomposition of the surface with only one cell with $4g$ edges.
\end{enumerate}

\myparagraph{Twisted cohomology and torsion.}
The torsion for non acyclic complexes was defined in the seminal work of Bismut--Zhang~\cite{BZ} to be a norm on the determinant line $\det\big( H^\bullet(C^\bullet) \big)$ of the homology of a chain complex which is induced by the choice of volume elements, for every $(C^i)_i$, of the cochain complex. This relies on the canonical isomorphism between determinant lines of Grothendieck and Knudsen--Mumford~\cite{KM}
\begin{equation}
\det\big(H(C^\bullet)\big)\simeq \det(C^\bullet).   
\end{equation}
We refer the reader to \cite[eq (1.5) p.~54]{BGS1} for the proof of the canonical isomorphism. In practice, both the chain and cochain complexes associated to the Morse--Witten complex of a manifold have canonical volume elements. This is one of the key ingredient in the work of Bismut--Zhang~\cite[(1.5) p.~22]{BZ}.

For each degree $i$, we choose $\left(e^i_1,\dots, e^i_{r_i}\right)\in C^i$
such that $\left(\dd e^i_1,\dots, \dd e^i_{r_i}\right) $ forms a basis of $\mathrm{Im}(\dd ^i)\subset C^{i+1} $, as
in~\cite[p.~49-50]{Forman}.
For any cohomology classes $f^i_1,\dots,f^i_s\in H^i(C)$, 
choose 
 $\tilde{f}^i_1,\dots,\tilde{f}^i_s$ 
 some representative in $C^i$ of the chosen cohomology classes.
The torsion is defined by the fundamental formula

\begin{align*}
    &\tau_C\left( \bigotimes_{i=0}^n (f_1\wedge\dots \wedge f_{s_i} )^{(-1)^{i+1}}  \right)  \\
    &\hspace{4cm}\coloneq  \frac{\prod\limits_{i \text{ odd}} \dd(e_1^{i-1}) \wedge\cdots \wedge \dd(e_{r_{i-1}}) \otimes \tilde{f_1}\wedge \cdots\wedge \tilde{f_{s_i}}\otimes e^i_1\wedge \cdots \wedge e^i_{r_i}  }{ \prod\limits_{i \text{ even}} \dd(e_1^{i-1}) \wedge\cdots \wedge \dd(e_{r_{i-1}}) \otimes \tilde{f_1}\wedge \cdots\wedge \tilde{f_{s_i}}\otimes e^i_1\wedge \cdots \wedge e^i_{r_i}}.  
\end{align*}

\myparagraph{ Identifying $d\nu$ with torsion and symplectic volumes.}
For completeness, we recall here the proof of Witten following the arguments in ~\cite{LiuHeat1,Forman}. We consider the cell decomposition of our surface $\Sigma$ given
by the $4g$-sided polygons which is obtained by cutting the surface along the unstable curves $W^u(a)$, $a\in \mathsf{Crit}_1(f)$. Each edge is identified with an unstable curves, which gives the well-known description of the
fundamental group of $\Sigma$ as explained in Paragraph~\ref{sss:morsemoduli}. It has one $0$-cell, one $2$-cell and $2g$ $1$-cells. 
The complex $C_\rho^\bullet$ is precisely the Morse--Witten complex twisted by the representation $\rho$. The dual cell decomposition obtained by cutting our surface along stable curves $W^s(a)$, $a\in \mathsf{Crit}_1(f)$ gives
us a dual complex $C_\rho^{\bullet,\vee}$ to $C_\rho^\bullet$. Poincaré duality induces a natural skew-
symmetric pairing on the complex $\mathcal{D}_\rho \coloneq C_\rho^\bullet\oplus C_\rho^{\bullet,\vee} $ which is compatible
with both the differentials and the natural measures on each term in $\mathcal{D}_\rho$.
Therefore, the Atiyah--Bott symplectic form induces a 
symplectic pairing on $H^1\big(C^\bullet_\rho\big)\simeq T_\rho \mathcal{M}_g$. It induces a symplectic pairing $T_\rho \mathcal{M}_g\times T_\rho \mathcal{M}_g\rightarrow\mathbb{R} $, which is the natural symplectic form $4\pi^2\omega$ on $\mathcal{M}_g$~\cite[eq (4.16) to (4.28)]{Witten1}. 
By multiplicativity of the torsion,  we obtain \[\tau\big(\mathcal{D}_\rho\big)=\tau\big(C_\rho\oplus C^\vee_\rho\big)=\tau\big(C_\rho\big)\tau\big(C^\vee_\rho\big)=\tau\big(C_\rho\big)^2.\]
By \cite[eq (4.28)]{Witten1} we have $\sqrt{\tau(\mathcal{D}_\rho)}=(2\pi)^N \frac{\omega^N}{N!} $ for $N=(g-1)\dim(G)$--- half the dimension of the moduli space $\mathcal{M}_g$.  This implies the desired equality due to Witten~\cite[eq (4.19) to (4.28)]{Witten1}, \cite[Lemma 4 p.~753]{LiuHeat1}:
\[\label{eq:WittenequalsTorsion}
\dd\nu(\rho)=\frac{(2\pi\omega)^N}{N!}=\tau(C_\rho^\bullet),
\]
where $\omega$ is the natural symplectic form on $T_\rho\mathcal{M}_g$.

\begin{remark}
Another way to get this identification we could use the Cheeger--M\"uller Theorem in the version proved by Bismut--Zhang \cite{BZ}. We know the Milnor metric on $\det\big(H^\bullet(C_\rho^\bullet)\big)$ coincides with the  Ray--Singer or $L^2$ metric on   $\det\big(H^\bullet(\Omega^\bullet,\nabla_\rho)\big)$. However, in our case, the $L^2$ metric on the determinant line coincides with the symplectic volume.   
\end{remark}

We are able now to recall a central result of Forman~\cite{Forman} and Sengupta~\cite{SenguptaSemiclassic} that allows us to freely disregard the singular locus $\mathcal{M}_g^{\mathsf{sing}}$ when discussing symplectic volumes.

\begin{lemma}[The symplectic volume of singular connections vanishes]\label{NegSing}
The symplectic volume of $\mathcal{M}_g^{\mathsf{sing}}$ is zero.    
\end{lemma}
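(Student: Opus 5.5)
The plan is a dimension count. The symplectic volume lives on the regular part $\mathcal{M}_g^{\mathsf{reg}}$, which for $g\geq 2$ is a smooth manifold of dimension $(2g-2)\dim(G)$. There $\dd\nu=\frac{(2\pi\omega)^N}{N!}$ is a smooth top-degree form, by the identification with torsion recalled above. To say that $\mathcal{M}_g^{\mathsf{sing}}$ has zero symplectic volume means the following: extending $\nu$ by zero, or equivalently taking the weak limit in \eqref{eq:FromheattoABG}, puts no mass on the singular locus. I will therefore show that $\mathcal{M}_g^{\mathsf{sing}}$ is a finite union of images of smooth manifolds of dimension strictly less than $(2g-2)\dim G$. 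I will also show that the heat-kernel limit gives it no mass.

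\emph{Step 1 (stratification).} Following Goldman's description (third paragraph of p.~204 of~\cite{Goldman1}), stratify $\mathsf{Hol}_{\partial\mathcal{S}}^{-1}(1_G)$ by the conjugacy class of the centralizer $H=Z_{(g_a)}$. There are finitely many such classes, since centralizers of subsets of $\mathsf{SU}(N)$ are compact subgroups of a fixed finite list up to conjugacy. A representation with centralizer $H$ takes values in $Z_G(H)=:K$, and $K$ is again a compact group. Hence the stratum is contained in $G\cdot \mathsf{Hom}(\pi_1(\Sigma),K)$. Its quotient by $G$ has dimension at most
\[
\dim\mathsf{Hom}(\pi_1\Sigma,K)/K+\big(\dim G-\dim N_G(K)\big)-\ldots
\]
A cleaner bound comes from the Zariski tangent space: its dimension is $(2g-1)\dim G+\dim H$. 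Orbits have dimension $\dim G-\dim H$. The smooth locus of the stratum therefore has dimension at most $2g\dim K-\dim K+\dim Z(K)-(\dim K-\dim Z(K))+\dim(G/N)$. For $g\geq 2$ and $K\neq G$ this is strictly below $(2g-2)\dim G$, since $\dim K<\dim G$ and the deficit $(2g-2)(\dim G-\dim K)$ dominates the extra orbit directions. I expect the explicit bookkeeping for $\mathsf{SU}(N)$ (block-diagonal $K=S(\prod U(n_i))$) to be routine but tedious. I will do it using $\dim\mathsf{Hom}(\pi_1\Sigma,K)\le (2g-1)\dim K+\dim Z(K)$.

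\emph{Step 2 (no mass from the heat kernel).} Lower dimension alone does not control the weak limit in Theorem~\ref{YMABG}. The key step, which I expect to be the main obstacle, is showing that $p_\varepsilon(\mathsf{Hol}_{\partial\mathcal{S}})\,\dd g$ does not concentrate near critical points of $\mathsf{Hol}_{\partial\mathcal{S}}$. My first attempt will use the character expansion. By Frobenius' formula, $\int_{G^{2g}}p_\varepsilon(\mathsf{Hol}_{\partial\mathcal S})\,\dd g=\sum_\rho e^{-\varepsilon c_2(\rho)}d_\rho^{2-2g}$, which converges uniformly as $\varepsilon\to0$ to $\zeta_G(2g-2)<\infty$. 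Similarly, integrating against a cutoff $\chi_\delta$ supported in a $\delta$-neighbourhood of the singular set, one compares the total mass $\zeta_G(2g-2)$ with the mass $\mathsf{Vol}(G)^2\nu(\mathcal{M}^{\mathsf{reg}}_g)/\sharp Z(G)$ obtained on the regular part from \eqref{eq:FromheattoABG} by Hörmander's pullback. Witten's volume formula~\cite{Witten1}, as established by Forman~\cite{Forman} and Liu~\cite{LiuHeat1}, states that these are equal: $\nu(\mathcal{M}_g^{\mathsf{reg}})$ equals $\frac{\sharp Z(G)}{\mathsf{Vol}(G)^{2}}\zeta_G(2g-2)$, up to the normalization convention. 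Equality of total masses forces the limiting measure to give zero mass to the singular set. Should that comparison be circular, I would instead estimate directly. Near a stratum with centralizer $H$, use a local normal form (a Luna slice) in which $\mathsf{Hol}_{\partial\mathcal{S}}$ is a quadratic map in the normal directions. Then show that $\int p_\varepsilon(\mathsf{Hol})\chi_\delta\,\dd g=O(\delta^{c})$ uniformly in $\varepsilon$, with $c>0$ coming from the codimension gap of Step 1; this is the approach of Sengupta~\cite{SenguptaSemiclassic}.
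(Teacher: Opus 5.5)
Your Step~2 (first attempt) is essentially the paper's argument. The paper computes the zero-area limit of the heat-kernel mass on the regular part via \eqref{eq:FromheattoABG}, identifies it with the total mass through Witten's volume formula (citing Forman and Liu's Lemma~3), concludes the singular locus carries no mass, and likewise points to Sengupta for a self-contained alternative; your Step~1 dimension count is not needed for this route, and its bookkeeping (the trailing ``$-\ldots$'', the stray $\dim(G/N)$ term) would have to be redone if you relied on it for the Sengupta-style fallback.
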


\begin{proof}    
This is proved in~\cite[Lemma 4 p.~407]{Forman}, where Forman gives an indirect proof using Witten's result. From the previous discussion, we have

\[
\frac{\mathsf{Vol}(\mathcal{M}^{\mathsf{reg}}_g)}{\sharp Z(G)}=\lim_{t \to 0^+} \frac{1}{\mathsf{Vol}(G)^2} \int_{G^{2g}} p_t\!\left( \mathsf{Hol}_{\partial\mathcal{S}}\!\left[ \sum_{a \in \mathsf{Crit}_1(f)} \log(g_a) \, U_a \right] \right) \prod_{a \in \mathsf{Crit}_1(f)} d g_a = \frac{\mathsf{Vol}(\mathcal{M}_g)}{\sharp Z(G)},
\]
where the second equality follows from Witten's computations, see \cite[Lemma 3 p.~752]{LiuHeat1}.
The careful reader will recognize that Forman does not actually analyze the volume of tubular neighborhoods of the singular locus, but rather relies on gluing formulas for the Reidemeister torsion, which allowed Witten to arrive at the above identity by combining~\cite[equation (4.72) p.~197 and equation (2.67) p.~71]{Witten1}.  
\end{proof}
\begin{remark}
    Another proof of the above result was given in~\cite[Prop. 8 p.~408]{SenguptaSemiclassic} in a more self-contained way, independent of Witten's calculation of symplectic volumes.
\end{remark}
\subsubsection{The case of genus $0,1$}

The purpose of this section is twofold. First, it can be seen as an illustrative and concrete example of the notions used previously. Second, it studies the case of genus $1$ which requires a special treatment. For simplicity, we will consider $G = \mathsf{SU}(2)$; the general case can be treated similarly.

We only discuss the case where $\Sigma$ is a torus, the case of the genus $0$ being trivial since the moduli space of flat connections on a sphere $\mathbb{S}^2$ is reduced to a point.

First let us discuss why in this case all the points of 
$\operatorname{Hom}(\pi_1(\Sigma), G)$ are singular in the sense of~\cite[Proposition on top of p.~204]{Goldman1}.

\myparagraph{Singularity of the case $g=1$.}
An element of $\operatorname{Hom}(\pi_1(\Sigma), G)$ can be identified with pairs of commuting matrices $\{AB = BA\} \subset G^2$. More precisely, $\pi_1(\Sigma) \simeq \mathbb{Z}^2$ and its image under the representation $\rho$ is the subgroup $\{A^n B^m : n, m \in \mathbb{Z}^2\}$ generated by the two commuting matrices $A, B$.

However, given any $\rho \in \operatorname{Hom}(\pi_1(\Sigma), G)$, we see that the centralizer $Z_\rho$ of the subgroup in $G$ generated by two commuting matrices is always of dimension at least $1$. This is straightforward since it is either a maximal torus of dimension $1$ if $(A, B) \neq (\pm 1_G, \pm 1_G)$, or the whole group $Z_\rho = G$ if $(A, B) = (\pm 1_G, \pm 1_G)$. The center $Z(G)$ of $G$ has dimension $0$, which implies that all points of $\operatorname{Hom}(\pi_1(\Sigma), G)$ are singular in the sense of \cite[Proposition on top of p.~204]{Goldman1} as discussed in paragraph~\ref{sss:recollectionsingular}.
Another more geometric way to view this is to identify $\operatorname{Hom}(\pi_1(\Sigma), G)$ with the preimage $\mathsf{Hol}_{\partial\mathcal{S}}^{-1}(1_G)$ of $1_G$ under the map
\[
\mathsf{Hol}_{\partial\mathcal{S}}: (A, B) \in G^2 \longmapsto ABA^{-1}B^{-1}.
\]
If we calculate the differential $\dd\mathsf{Hol}_{\partial\mathcal{S}}$ and restrict it to $T\mathsf{Hol}_{\partial\mathcal{S}}^{-1}(1_G)$, we see immediately that its image inside $T_{1_G}G \simeq \mathfrak{g}$ is not onto.
From the point of view of analysis, it also means that for any pair $(A_0,B_0)$  of commuting matrices, for any neighborhood $U_0\subset G^2$ of $(A_0,B_0)$, the pull--back $p_\varepsilon(ABA^{-1}B^{-1})$ does not converge in $\mathcal{D}^\prime(U_0)$ when $\varepsilon\rightarrow 0^+$.  This is due to
$\mathsf{Hol}_{\partial\mathcal{S}}$ failing to be a submersion
at $(A_0,B_0)$.

\myparagraph{Proof of Theorem~\ref{YMABG} for $g=1$.}
\par Up to simultaneous conjugation, any two commuting matrices $A, B \in \mathsf{SU}(2)$ can be chosen to lie in the maximal torus of $\mathsf{SU}(2)$, which is $\mathbb{S}^1$. Then, again up to conjugation, an element of the character variety can be identified with a pair of angles $(\theta_1, \theta_2) \in \mathbb{S}^1 \times \mathbb{S}^1$ modulo the symmetry $(\theta_1, \theta_2) \mapsto (-\theta_1, -\theta_2)$. 
\par Consequently, $\mathcal{M}_1$ can be identified with the quotient $\mathbb{T}^2 / \mathbb{Z}_2$, which is an orbifold sphere $\mathbb{S}^2$ with four singular points, sometimes called the \emph{pillowcase} (the torus $\mathbb{T}^2$ is a ramified covering of the pillowcase $\mathbb{S}^2$). A straightforward calculation also shows that the symplectic area of $\mathcal{M}_1$ is a multiple of $d\theta_1 \wedge d\theta_2$.
\par Now, let $\epsilon>0$, and define the measure $\nu_\epsilon$ as a measure on $G\times G$ by 
\[\nu_\epsilon(\dd g,\dd h)=\frac{p_{\epsilon}([g,h])}{\int_{G^2}p_{\epsilon}([x,y])\dd x \dd y}\,\dd g \dd h.\]
The first step is to show that $(\nu_\epsilon)_{\epsilon>0}$ converges. The second step is to relate it with the ABG symplectic form $\omega$ of Proposition~\ref{ABGSymp}.
\begin{proposition}[Convergence of $(\nu_\epsilon)_{\epsilon>0}$ in genus 1]\label{prop:genus1iden}
    There exists a measure $\nu$ on $G\times G$ invariant by the adjoint action of $G$ on $G\times G$ such that $\nu_\epsilon\xrightarrow[\epsilon\to 0]{}\nu$. Moreover, the support of $\nu$ is contained in $\mathcal{C}:= \{(g,h)\in G^2 : gh=hg\}$.
    \par Denote by $\pi:\mathcal{C}\mapsto \mathcal{M}_1\simeq \mathcal{C}/G$ where the quotient is by the adjoint action. The limiting measure $\nu$ is given by the formula
  \begin{align*}
  \int_{G\times G} F \nu=\int_{\mathcal{C}} F \pi^*\omega
  \end{align*}
  on all $G$--invariant functions $F$ and  where $\omega$ the symplectic probability measure on $\mathcal{M}_1$. 
\end{proposition}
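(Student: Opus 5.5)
Since the objects involved are conjugation invariant, I reduce to class functions via the Weyl integration formula. The commutator map $(g,h)\mapsto [g,h]$ is equivariant for simultaneous conjugation, and Haar measure is invariant, so $\nu_\epsilon$ is invariant under the adjoint action of $G$ on $G\times G$. It is therefore determined by its values on $G$-invariant test functions. Once the integrals of invariant functions converge, tightness on the compact space $G^2$ gives convergence along subsequences, and averaging any test function over the adjoint action identifies the limit uniquely. So it suffices to compute $\lim_{\epsilon\to0}\int F(g,h)\,p_\epsilon([g,h])\,\dd g\,\dd h$ for continuous invariant $F$, together with the normalizing constant.

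\textbf{Step 1: the ratio localizes on $\mathcal C$.} I expand $p_\epsilon$ in characters, $p_\epsilon(x)=\sum_\lambda d_\lambda e^{-\epsilon c_2(\lambda)}\chi_\lambda(x)$. I then use the classical identity $\int_G \chi_\lambda(g h g^{-1}h^{-1})\,\dd g=\chi_\lambda(h)\overline{\chi_\lambda(h)}/d_\lambda$. This gives
\[
Z_\epsilon=\int_{G^2}p_\epsilon([x,y])\,\dd x\,\dd y=\sum_\lambda e^{-\epsilon c_2(\lambda)}.
\]
For $\mathsf{SU}(2)$ this sum is $\sim C\epsilon^{-1/2}$, so it diverges, in agreement with $\zeta_G(0)=\infty$. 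For an invariant $F$, I first integrate over $g$ with $h$ fixed. By the Weyl formula, $h$ can be conjugated into the maximal torus $T=\{e^{i\theta_2 H}\}$ with density $|\Delta(\theta_2)|^2=4\sin^2\theta_2$. The function $g\mapsto p_\epsilon(g h g^{-1}h^{-1})$ then concentrates on the centralizer $Z(h)$, which is $T$ for $h\neq\pm1$. For $h$ regular the map $g\mapsto ghg^{-1}h^{-1}$ is a submersion transversally to $T$. Its normal Jacobian on $G/T$ is $|1-e^{2i\theta_2}|^2=4\sin^2\theta_2$. This produces a factor $Z_\epsilon\cdot(\text{const})/(4\sin^2\theta_2)$, which exactly cancels the Weyl density. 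Heuristically, then,
\[
\frac{1}{Z_\epsilon}\int F\,p_\epsilon([g,h])\,\dd g\,\dd h\longrightarrow c\int_{T\times T}F(e^{i\theta_1H},e^{i\theta_2H})\,\dd\theta_1\,\dd\theta_2 .
\]

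\textbf{Step 2: make the heuristic rigorous.} The main obstacle is controlling uniformity near the singular set. This means near $h=\pm1$, where the centralizer jumps to $G$ and the transversal Jacobian degenerates. The heat kernel scale $\sqrt\epsilon$ competes there with $\sin\theta_2$. Rather than a pullback argument, I would compute directly in characters. For invariant $F$ it suffices by Stone--Weierstrass to take $F(g,h)$ a polynomial in characters of $g$, $h$ and products such as $\chi_\mu(gh)$. The $g$-integral against $\chi_\lambda([g,h])$ can be evaluated exactly with Schur orthogonality and the Weyl character formula $\chi_\lambda(e^{i\theta})=\sin((\lambda+1)\theta)/\sin\theta$. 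Dividing by $Z_\epsilon$ then becomes an Abel/Cesàro average over $\lambda$ with weights $e^{-\epsilon c_2(\lambda)}$. The exceptional points $h=\pm1$ contribute $O(\sqrt\epsilon)$ relative to $Z_\epsilon$, because they carry Weyl measure $\sim\theta^2\,\dd\theta$ at scale $\sqrt\epsilon$. As a fallback, I would cut out $\{|\sin\theta_2|<\delta\}$, bound its mass by $O(\delta)$ uniformly in $\epsilon$ using $p_\epsilon\ge0$ and the same character computation, and treat the complement by Step 1.

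\textbf{Step 3: identify the limit.} On commuting pairs $(e^{i\theta_1H},e^{i\theta_2H})$, Proposition~\ref{ABGSymp} gives $\omega$ with $v_a=\dot\theta_a H$. The single nonzero intersection number is $[W^u(a)]\cap[W^u(b)]=\pm1$. Hence $\omega=c'\,\dd\theta_1\wedge\dd\theta_2$, which is invariant under $(\theta_1,\theta_2)\mapsto-(\theta_1,\theta_2)$ and descends to the pillowcase $\mathcal M_1=\mathbb T^2/\mathbb Z_2$. The limiting functional of Step 2 is therefore $\int_{\mathcal M_1}\bar F\,\omega$, normalized to total mass $1$, where $\bar F$ is the function induced by $F$ on $\mathcal M_1$. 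Lifting via $\pi$ gives $\int_{\mathcal C}F\,\pi^*\omega$. Finally, the limit $\nu$ is supported in $\mathcal C$: for any continuous $F$ vanishing on $\mathcal C$, averaging over conjugation gives an invariant function also vanishing on $\mathcal C$, and its limiting integral is $0$ by Step 2.
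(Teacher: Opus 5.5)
Your argument is correct, but it takes a different route from the paper. The paper gives no argument of its own for this proposition: it simply invokes the main result of Sengupta~\cite{SenguptaTorus}. You instead rebuild the result from the character expansion of $p_\epsilon$, the Weyl integration formula, and localization of the $g$-integral on $Z(h)=T$ for regular $h$.

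The citation buys brevity, and it delegates the delicate analysis near the singular points $h=\pm 1_G$ to an established reference. Your route buys a self-contained proof, and it makes explicit two things the paper only asserts.
\begin{itemize}
\item The identification $\omega\propto\dd\theta_1\wedge\dd\theta_2$ in the Morse-gauge coordinates of Proposition~\ref{ABGSymp}.
\item The mechanism behind the limit. The divergence $Z_\epsilon=\sum_\lambda e^{-\epsilon c_2(\lambda)}\sim C\epsilon^{-1/2}$ is matched by concentration of $[g,h]$ on a codimension-one surface through $1_G$. The normal Jacobian $\det(1-\mathrm{Ad}_h)|_{\mathfrak t^\perp}=\prod_{\alpha}(1-e^{\alpha})$ then cancels the Weyl density $|\Delta|^2$ exactly.
\end{itemize}
Two ingredients are general: that cancellation, and the bound on $|\chi_\lambda\Delta|$ by the order of the Weyl group. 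In addition, for $\mathsf{SU}(N)$ commuting pairs lie in a common maximal torus. So the same mechanism carries over to $\mathsf{SU}(N)$, with $\epsilon^{-(N-1)/2}$ in place of $\epsilon^{-1/2}$. The paper only claims this extension in a remark.

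Two technical remarks on your write-up.

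First, for regular $h$ the map $g\mapsto[g,h]$ is not a submersion onto $G$. It has constant rank $2$, fibres the cosets $gT$, and image the surface $C(h)h^{-1}$. That codimension-one image is precisely why the normalization is of order $\epsilon^{-1/2}$ rather than $O(1)$.

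Second, in Step~2 the exact character route is harder than stated: evaluating $\int_G \chi_\mu(g)\chi_\lambda([g,h])\,\dd g$ requires Clebsch--Gordan data, not only Schur orthogonality. Your fallback is the efficient route, and it closes cleanly as follows.
\begin{itemize}
\item Start from the exact identity $\int_G p_\epsilon([g,h])\,\dd g=\sum_\lambda e^{-\epsilon c_2(\lambda)}|\chi_\lambda(h)|^2$. Multiplying by $4\sin^2\theta_2$ gives a density $\sum_\lambda e^{-\epsilon c_2(\lambda)}\,4\sin^2((\lambda+1)\theta_2)\leq 4Z_\epsilon$. This yields both your uniform $O(\delta)$ bound near $\pm1_G$ and domination on $\{|\sin\theta_2|\geq\delta\}$.
\item Fix a regular $t$. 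Since $[g\tau,t]=[g,t]$ for $\tau\in T$, the normalized measure $p_\epsilon([g,t])\,\dd g$ is right-$T$-invariant, and it concentrates on $T$. It therefore converges to Haar measure on $T$, with no Laplace expansion needed.
\end{itemize}
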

\begin{proof}
This is the main result from the article of Sengupta~\cite{SenguptaTorus}.  
\end{proof}
This completes the proof of Theorem.

\begin{remark}
    Modulo some rather technical combinatorial complications, the above proof generalized to the case of $\mathsf{SU}(N)$ for all $N$. We chose to do the case $\mathsf{SU}(2)$ for simplicity.
\end{remark}

\section{Some Banach spaces of distributional connections} \label{SpacesBYM}
In this section, we introduce the spaces $\mathcal{B}^{\alpha,p,s,\ell}_\mathsf{YM}$. This space appears as a canonical space that contains the support of the Morse gauge fixed Yang--Mills measure.  We will prove a fundamental decomposition theorem for distributions in this space, and finally study compact injections with respect to the parameters $\alpha,p,s,\ell$.

\subsection{The spaces $\mathcal{B}^{\alpha,p,s,\ell}_\mathsf{YM}$}
Let $\Sigma$ be a closed compact surface, $G$ a compact connected Lie group (we only need $G=\mathsf{SU}(N)$ for the semiclassical limit), and $\g$ its Lie algebra endowed with a bi-invariant inner product. Let $f$ be a Morse function on $\Sigma$ verifying the Smale conditions. 
We start by recalling the definition of the space $\mathcal{W}^{\alpha,p,s,\ell}$ of distributional $1$-forms introduced in \cite[Definition 5.2]{DN}.
\par We would like to define anisotropic spaces of distributional connections that contain the support of the Yang--Mills measure. In \cite{BCDRT,DN}, the Yang--Mills measure is expressed as the sum of a noise term and a singular term. Therefore, our spaces  $\mathcal{B}^{\alpha,p,s,\ell}_\mathsf{YM}$ will be defined as direct sums of two types of spaces. We will start with the noise part.

\subsubsection{The weighted  noise space}
In \cite{BCDRT,DN}, the noise term is roughly the integral of a white noise in the direction of the flow of $f$. We therefore expect the local regularity of the Yang--Mills random connection to be $\frac{1}{2}^-$ in the direction of the flow of $f$ and $-\frac{1}{2}^-$ in the transverse direction. However, since Morse theory imposes analytical singularities on the area function near critical points, and since the Yang--Mills measure exhibits area law, we expect the behaviour of the Yang--Mills random field to be more singular as we approach critical points of $f$.
\par Therefore, the idea of our spaces is as follows. Define local anisotropic norms using Gagliardo-type norms. Then, define weighted scaled quantities of these norms as we approach the critical points of $f$. We start by covering the complements of the critical locus by flow boxes to define the local anisotropic norms.

\myparagraph{Covering complements of the critical locus by flow boxes.}
Near each critical point \( a \in \mathsf{Crit}(f) \), consider a small disc \( D_a \) containing \( a \). Remove these discs and consider the punctured surface
\[
\Sigma \setminus \cup_{a \in \mathsf{Crit}(f)} D_a.
\]
This set can be covered by finitely many flow boxes \( (\square_i)_{i \in I} \) associated with the gradient vector field \( V = \nabla f \). That is,
\[
\Sigma \setminus \cup_{a \in \mathsf{Crit}(f)} D_a = \bigcup_{i \in I} \square_i.
\]
On each flowbox $\square_i$, we will now define local anisotropic norms.
\myparagraph{The anisotropic space $\mathcal{W}^{\alpha,\alpha-1,p}(\square)$.}
In this paragraph, we introduce the anisotropic space $\mathcal{W}^{\alpha,\alpha-1,p}(\square)$ associated to the flow box $\square$. Each flow box is identified with $[0,1]_x\times [0,1]_y$ with local coordinates $(x,y)$ where $V=\partial_x$ in the local coordinates. This means that the direction of the flow of $\nabla f$ is parallel to $x$ and the transverse direction is parallel to $y$.
\begin{definition}[Flowbox anisotropic norms] \label{FlowboxSpace}
 On $\square \equiv[0,1]_x\times [0,1]_y$, we define local semi-norms on smooth connections as follows. For all $ \tilde{A}\in\Omega^1(\square,\g)$,
 \begin{align*}\label{eq:weightedSobo}
    &\Vert \tilde{A}\Vert_{\mathcal{W}^{\alpha,\alpha-1;p}_{x,y}(\square) } \coloneq    \Vert W\Vert_{\mathcal{W}^{\alpha,\alpha;p}_{x,y}([0,1]\times [0,1]) }\\
     &\hspace{2cm}= 
\left\{
\int_{ [0,1]\times [0,1]} \frac{\vert W(x_1,y_2)-W(x_2,y_1)-W(x_1,y_2)+W(x_2,y_2) \vert^p_{\g}}{\vert x_1-x_2\vert^{1+\alpha p}\vert y_1-y_2\vert^{1+\alpha p}} \,\dd x_1\dd x_2 \dd y_1\dd y_2\right\}^{\frac{1}{p}}
\nonumber\\
&\nonumber\hspace{4cm}+ \Vert W \Vert_{L^p([0,1]\times [0,1])},
 \end{align*}
where $W$ is such that
\[\tilde{A}(x,y)= \left(\partial_y W \right) (x,y)\dd y  \text{ and } W(.,0)=0~\footnote{This acts like some half--Dirichlet condition on $W$.}.\]
With the usual changes for $p=\infty$.
\end{definition}

On $\Sigma\setminus D_a$, we consider the norm 
\[\sum_{i\in I}  \Vert A \Vert_{\mathcal{W}^{\alpha,\alpha-1,p}(\square_i) } \]
where the sum runs over all flow boxes of the covering.
The next step is to define norms near the critical locus of $f$. We will do this by scaling flow box norms closer and closer to each critical locus.

\myparagraph{Scaling near critical points.}
The next step involves scaling near the saddle points and near the maximum $\max f$ \footnote{And, of course, $\min f$. However, the dynamics of the Morse function is similar near these two points, which makes these cases exactly similar. We treat therefore only one of them.}.
Scaling near $\max(f)$ is the same as scaling near the boundary $\partial\mathcal{S}$, since in polar coordinates $(r,\theta)$,  $\{r=\max f\}$ is the equation for the boundary $\partial\mathcal{S}$ which happens to be the blown--up maximum. 

\par Fix now a critical point $a\in \mathsf{Crit}(f) $, that can be either a saddle point, or an extremal point. We will study the scaling near $a$.
Consider a corona $C_a$ obtained by setting $C_a \coloneq D_a\setminus \tilde{D}_a$ where $\tilde{D}_a\subsetneq D_a$ is another disc around $a$ strictly smaller than $D_a$. We will probe the regularity of $A$ near each critical point by scaling. A weight $s\in \mathbb{R}$ is used to track the growth when we scale near saddle points, and a weight $\ell$ is used for the extremal points. Refer to Figure~\ref{fig:scalednorms} for a schematic representation of the scaling process.
\par For $\lambda\in (0,1]$, we denote by $\mathcal{S}^\lambda_a:D_a\rightarrow D_a$ the scaling operator around $a$. In Morse chart \footnote{ Recall by the Morse Lemma, near every critical point $a$ of $f$, there exists a coordinate system $(x,y)$ such that $a$ reads $(0,0)$ in this coordinate system and $f(x,y)=f(0,0)+ \varepsilon_1x^2+\varepsilon_2y^2 $ where $\varepsilon_1\in \pm 1$, $\varepsilon_2\in \pm 1$. The metric $h$ is chosen to be flat in the Morse chart near every critical points.}  $(x,y)$ near any critical point $a=(0,0)$, it is simply defined by
\begin{equation}
\mathcal{S}^\lambda_a(x,y)=(\lambda x,\lambda y).    
\end{equation}
This defines naturally a scaling $\mathcal{S}^{\lambda,\star}_a$ on forms $\Omega^k(D_a,\g), 0\leq k\leq 2$, by pull--back
\[  \left( \mathcal{S}_a^{\lambda,\star}\omega\right)(x,y;\dd x,\dd y)= \omega(\lambda x,\lambda y; \lambda \dd x, \lambda \dd y)   \]
where the notation $\omega(x,y;\dd x,\dd y)$ suggests that a differential form $\omega\in \Omega^k(D_a,\g)$ is viewed as a smooth function of $(x,y)$ and also as a polynomial in the Grassmann variables $(\dd x,\dd y)$.

\begin{figure}[t]
    \centering
    \includegraphics[width=\linewidth]{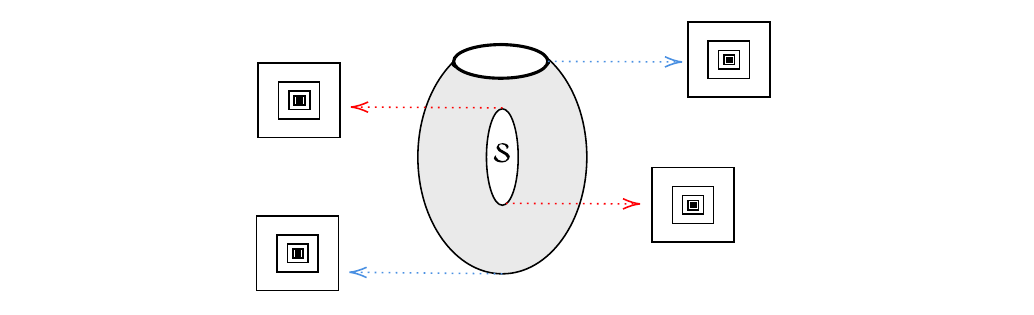}
    \caption{The corona $C_a$ and its scaling near critical points $a\in\mathsf{Crit}_1(f)$ in red, and near $a\in\{\min f,\max f \}$ in blue.}
    \label{fig:scalednorms}
\end{figure}

We introduce the weighted norms in the following definition.
\begin{definition}\label{WeightedSemiNorms}
Near $a\in  \mathsf{Crit}(f)$, define the local semi-norm 
\begin{eqnarray*}
 \Vert A\Vert_{\mathcal{W}^{p;\alpha,\alpha-1,s} (D_a) }^p \coloneq \sum_{\substack{i\in I \\\square_i\cap C_a \neq \emptyset}}\ \sum_{n=0}^\infty  \left(2^{n(s-\frac{2}{p})}\Vert \mathcal{S}^{2^{-n} ,\star}_{a} A\Vert_{\mathcal{W}^{\alpha,\alpha-1;p} (\square_i) }\right)^p   
\end{eqnarray*}
Define the global norm $\mathcal{W}^{p;\alpha,\alpha-1,s}\big(\Sigma\setminus \max(f)\big)$ by
\begin{align*}
    &\Vert A\Vert_{\mathcal{W}^{\alpha,p,s,\ell}(\Sigma\setminus \max(f)) }^p\\
    &\hspace{2cm}=\sum_{a\in \mathsf{Crit}_1(f)}\Vert A\Vert_{\mathcal{W}^{p;\alpha,\alpha-1,s} (D_a) }^p
+ \sum_{a\in \{\min(f),\max(f)\} } \Vert A\Vert_{\mathcal{W}^{p;\alpha,\alpha-1,\ell} (D_{a}) }^p
+\sum_{i\in I}  \Vert A \Vert_{\mathcal{W}^{\alpha,\alpha-1,p}(\square_i) }^p.
\end{align*}
We can now define the space $\mathcal{W}^{\alpha,p,s,\ell}$.
\end{definition}
\begin{definition}[The noise space]\label{NoiseSpace}
    The space $\mathcal{W}^{\alpha,p,s,\ell}$ is the completion of the space of smooth $1$-forms $\Omega^1\big(\Sigma\setminus \max(f)\big)$ with respect to the norm $\mathcal{W}^{\alpha,p,s,\ell}(\Sigma\setminus \max(f))$.
\end{definition}

We should allow different weights for $\min(f)$, $\max(f)$ and saddle points. In fact, the dynamics are around these ponts are different in nature, and we expect the behavior of the distribution to be different at these points. Let us fix our convention for the remainder of the paper:
the weight order $s$ is chosen to measure the blow--up at the saddle points and the weight order $\ell$ is chosen to measure the blow--up at the $\min$, $\max$ of $f$. In \cite{DN}, the support of the Yang--Mills measure was shown to be with weights $s<-1$, and we will verify $\ell<0$. The above norms also take into account the growth at the boundary $\partial\mathcal{S}$ which is obtained after blowing up the maximum $\max(f)$.

\subsubsection{ The noise-flat direct sum}
To define the space that supports the Yang--Mills measure, we need to add a space that supports singular connections of the form $x_aU_a$ where $x_a\in\g$ and $U_a$ is as before, the unstable integration current associated with the unstable manifold $W^u(a)$. We would like to define the space $\mathcal{B}_{\mathsf{YM}}^{\alpha,p,s,\ell}$ as
    \[\mathcal{B}_{\mathsf{YM}}^{\alpha,p,s,\ell}=\mathcal{W}^{\alpha,p,s,\ell}\oplus\Big(
    \g\otimes \mathsf{vect}\big\{U_a: a\in \mathsf{Crit}_1(f)\big\}\Big).\]
The direct sum is needed to have a well defined decomposition of elements in $\mathcal{B}_{\mathsf{YM}}^{\alpha,p,s,\ell}$ into a noise term and a flat, singular term. Therefore, we need the following lemma whose proof follows from Proposition~\ref{prop:notchargeflow}.
\begin{lemma}
  The following sum of vector spaces is direct.
  \[ \mathcal{W}^{\alpha,p,s,\ell}\oplus \Big(\g\otimes\mathsf{vect}\big\{U_a: a\in \mathsf{Crit}_1(f)\big\}\Big),\]
  when seen as subspaces of $\mathcal{D}'^1(S,\g)$.
\end{lemma}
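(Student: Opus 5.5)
To prove that the sum is direct, we must show that if
\[
A=\sum_{a\in\mathsf{Crit}_1(f)} x_a U_a,\qquad x_a\in\g,
\]
belongs to $\mathcal{W}^{\alpha,p,s,\ell}$, then every $x_a=0$. The idea is that elements of $\mathcal{W}^{\alpha,p,s,\ell}$ \emph{do not charge flow lines} in a quantitative sense (this is what Proposition~\ref{prop:notchargeflow} should express), whereas each $U_a$ is concentrated on the flow lines forming $W^u(a)$.

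\textbf{Step 1: reduce to a single flow box.} Fix $a\in\mathsf{Crit}_1(f)$. Since $W^u(a)$ consists of two flow lines joining $a$ to $\max(f)$, we can choose a flow box $\square\equiv[0,1]_x\times[0,1]_y$ away from all critical points, with $V=\partial_x$, that meets $W^u(a)$ in a single horizontal segment $\{y=y_0\}$. By the Smale transversality and perfectness assumptions, the unstable curves are pairwise disjoint away from $\max(f)$, so we can shrink $\square$ to avoid every other $W^u(b)$. On $\square$ we then have $A|_\square=\pm x_a\,\delta(y-y_0)\,\dd y$. In the primitive of Definition~\ref{FlowboxSpace}, this reads $W(x,y)=\pm x_a\mathbf{1}_{y>y_0}$, which does not depend on $x$.

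\textbf{Step 2: identify the obstruction in the noise space.} A primitive independent of $x$ has vanishing mixed Gagliardo difference, so the anisotropic seminorm alone does not detect $U_a$. The obstruction must therefore come from the structure of the completion: the half-Dirichlet normalisation $W(\cdot,0)=0$ together with the global weights $s,\ell$, or equivalently the content of Proposition~\ref{prop:notchargeflow}.

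I would argue through a continuous functional. Pair $A$ with a transverse test current, namely the integral of $A$ along a small transverse segment $\gamma_x=\{x\}\times[y_0-\delta,y_0+\delta]$, averaged against a smooth bump $\chi(x)$, and then compare this pairing with its values on flow boxes scaled toward $\max(f)$ or toward the saddle $a$. For $U_a$, the transverse integral equals $x_a$ at every scale, because $W^u(a)$ enters every corona around $\max(f)$ and around $a$. For smooth forms, and by density for every element of $\mathcal{W}^{\alpha,p,s,\ell}$, the weighted summability
\[
\sum_n \Bigl(2^{n(s-2/p)}\bigl\|\mathcal{S}^{2^{-n},\star}_a A\bigr\|\Bigr)^p<\infty
\]
should force the corresponding scaled pairings to decay. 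If the pairing is also bounded below for $U_a$, then $x_a=0$.

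\textbf{Main obstacle.} The delicate point is to exhibit a linear functional $\Lambda_a$ that is continuous on $\mathcal{W}^{\alpha,p,s,\ell}$, vanishes identically on it, and satisfies $\Lambda_a(U_b)=\delta_{ab}$ up to sign. Equivalently, one must show that $U_a$ is not a limit of smooth forms in the weighted norm. Since $\alpha<\tfrac12$, the local Sobolev norm alone cannot rule out jumps, so the argument has to exploit the boundary weights. I would first test $\Lambda_a$ as a limit of the transverse fluxes across $\gamma$ on the dyadic coronas $C_a$ near the saddle, using the constraint $s<-1$ to control these fluxes for elements of $\mathcal{W}^{\alpha,p,s,\ell}$.

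Once such functionals $\Lambda_a$ are available for every saddle point, applying them to $\sum_b x_bU_b\in\mathcal{W}^{\alpha,p,s,\ell}$ gives $x_a=0$ for each $a$. This yields the directness of the sum as subspaces of $\mathcal{D}'^1(\mathcal{S},\g)$.
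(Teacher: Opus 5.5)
You are right that the rectangular-increment norm does not see $U_a$. But you never construct $\Lambda_a$, and your proposed mechanism for it cannot work.

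\textbf{The gap.} Your Steps~1--2 locate the crux correctly. With the seminorm of Definition~\ref{FlowboxSpace}, the primitive $\pm x_a\mathbf{1}_{y>y_0}$ of $U_a$ on a flow box costs only its $L^p$ norm. It is also the $L^p$-limit of the smooth $x$-independent primitives $\pm x_a H\bigl((y-y_0)/v\bigr)$, with $H$ a smoothed Heaviside function, so locally $U_a$ lies in the closure of smooth forms. As you note yourself, producing $\Lambda_a$ is equivalent to the statement, and it is never produced.

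The mechanism you propose runs the wrong way, for three reasons.
\begin{itemize}
\item Since $s<-1$ and $\ell<0$, the weights $2^{n(s-2/p)}$ and $2^{n(\ell-2/p)}$ decay. Weighted summability therefore allows the rescaled local norms to grow; it forces no decay of anything.
\item Near $a$ and near $\max(f)$, $W^u(a)$ is a union of rays in the linearizing Morse chart. So $U_a$ is invariant under $\mathcal{S}^{2^{-n}\star}$, and its weighted norm is a convergent geometric series.
\item The averaged flux $\int\chi(x)\bigl(W(x,y_0+\delta)-W(x,y_0-\delta)\bigr)\,\dd x$ is a pointwise evaluation in $y$. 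It is not controlled by $\Vert W\Vert_{L^p}$ plus mixed increments, so ``by density'' transfers nothing.
\end{itemize}

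\textbf{No such $\Lambda_a$ exists for the norm as displayed.} Here is a sketch of why $U_a$ lies in the completion.
\begin{itemize}
\item Let $\rho_v=v^{-1}\rho(\cdot/v)$ and $v=4^{-n_*}$. Let $\mathcal{Y}$ be a flow-invariant transverse coordinate along $W^u(a)$: $\pm xy$ in the Morse chart at $a$, and a function of the angle near $\max(f)$. Take the form $\rho_v(\mathcal{Y})\,\dd\mathcal{Y}$ along $W^u(a)$.
\item Near $a$, replace it by the smooth form $\beta(2^{n_*}x)\,\rho_v(xy)\,\dd(xy)$, where $\beta$ is smooth, odd, and equal to $\mathrm{sgn}$ on $\{|u|\geq\frac12\}$.
\item At dyadic scales $n<n_*$, the rescaled error is flow-invariant along $W^u(a)$, so it has zero mixed increments, and its $L^p$ size is $O(4^{(n-n_*)/p})$. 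There is also a contribution near $W^s(a)$, which vanishes as $n_*\to\infty$.
\item At scales $n\geq n_*$ the rescaled error is $O(1)$.
\item The decaying weights then make the total weighted error tend to $0$ as $n_*\to\infty$.
\end{itemize}
So the weights cannot rescue your argument.

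\textbf{Comparison with the paper.} The paper does not use weights at all. It deduces the lemma from Proposition~\ref{prop:notchargeflow}: elements of $\mathcal{W}^{\alpha,p,s,\ell}$ do not charge flow lines. Then, as in the uniqueness part of the proof of Proposition~\ref{prop:keyprop}, a current supported on $\overline{\bigcup_a W^u(a)}$ that does not charge this set must vanish. The input to Proposition~\ref{prop:notchargeflow} is that elements of the noise space have transversal regularity $\alpha-1$ uniformly along the flow. This is fed into the homogeneity criterion of Lemma~\ref{lem:notcharge1}.

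Your Step~2 shows that the displayed norm does not supply this transversal regularity. That is the missing idea in your proposal. What is needed is a genuine transverse bound on each flow box, such as $\sup_x\Vert W(x,\cdot)\Vert_{W^{\alpha,p}_y}<\infty$ with $\alpha p>1$. The noise, a reparametrized Brownian sheet, satisfies this for $\alpha<\frac12$. With such a bound, $W(x,\cdot)$ is H\"older continuous for elements of the noise space, so it cannot jump by $\pm x_a$ across $W^u(a)$. Directness then follows from a local argument in a single flow box, with no use of the weights or of the critical points.
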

Now we can give our definition of the space $\mathcal{B}_{\mathsf{YM}}^{\alpha,p,s,\ell}$.
\begin{definition}\label{YmSpace}
    The space $\mathcal{B}_{\mathsf{YM}}^{\alpha,p,s,\ell}$ is defined as 
    \[\mathcal{B}_{\mathsf{YM}}^{\alpha,p,s,\ell}=\mathcal{W}^{\alpha,p,s,\ell}\oplus\Big(
    \g\otimes \mathsf{vect}\{U_a: a\in \mathsf{Crit}_1(f)\}\Big).\]
    For $A\in\mathcal{B}_{\mathsf{YM}}^{\alpha,p,s,\ell}$ that can be written uniquely as 
\[A_\mathcal{N}+\sum_{a\in \mathsf{Crit}_1(f)}x_aU_a,\]
we define 
\[\|A\|_{\mathcal{B}_{\mathsf{YM}}^{\alpha,p,s,\ell}}=\|A_{\mathcal{N}}\|_{\mathcal{W}^{\alpha,p,s,\ell}}+\max_{a\in\mathsf{Crit}_1(f)}\|x_a\|_\g.\]
\end{definition}

\subsection{The fundamental decomposition theorem}
In this section we prove the following theorem. This is a key result because it tells us that distributions in $\mathcal{B}_{\mathsf{YM}}^{\alpha,p,s,\ell}$ for adequate parameters $\alpha,p,s,\ell$ admit a decomposition as a noise term and a singular term.
\begin{proposition}\label{prop:keyprop}
    Every $A\in \mathcal{B}^{\alpha,p,s,\ell}_{\mathsf{YM}}$ admits a unique decomposition of the form 
    \[A_\mathsf{\mathcal{N}}+\sum_{a\in \mathsf{Crit}_1(f)}x_aU_a,\]
    where $x_a\in \g$, $A_{\mathcal{N}}\in \mathcal{W}^{\alpha,p,s,\ell}$. 
    Moreover, the mappings 
    \[\pi_{\mathsf{noise}}: A\mapsto A_{\mathcal{N}}\ \text{ , } \ \mathsf{Ext}_a: A \mapsto x_a \ \text{ and } \ \pi_{\mathsf{flat}}: A\mapsto \sum_{a\in \mathsf{Crit}_1(f)}x_aU_a,\]
    are all well defined, and continuous.  The mapping $\mathsf{Ext}_a$ is local, meaning that for any $\psi\in C^\infty(\Sigma,\R)$ such that $\psi=1$ on a non-empty open set intersecting the unstable manifold $W^u(a)$, we have 
    \[\forall A\in\mathcal{B}_\mathsf{YM}^{\alpha,p,s,\ell}, \ \ \ \mathsf{Ext}_a\big(A\big)=\mathsf{Ext}_a\big(A\psi\big).\]
\end{proposition}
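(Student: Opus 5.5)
The plan is to build, for each saddle point $a\in\mathsf{Crit}_1(f)$, a continuous linear functional on $\mathcal{D}'^1$ that detects the coefficient $x_a$, vanishes identically on $\mathcal{W}^{\alpha,p,s,\ell}$, and is supported near a single point of $W^u(a)$. Uniqueness, continuity and locality will then follow together.

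\textbf{Step 1: a localized probe of $\mathsf{Ext}_a$.} Fix a point $q\in W^u(a)$ away from the critical points. Choose a flow box $\square\equiv[0,1]_x\times[0,1]_y$ around $q$, with $V=\partial_x$, in which $W^u(a)$ is the flow line $\{y=y_0\}$. For $\eta>0$ small, let $\chi_\eta(x,y)=\phi(x)\rho_\eta(y)$, where $\phi\in C^\infty_c((0,1))$ has $\int\phi=1$ and $\rho_\eta$ is a smoothed step equal to $0$ for $y<y_0-\eta$ and $1$ for $y>y_0+\eta$. Consider the test $1$-form $\beta_\eta=\dd\chi_\eta$. Pairing against the transverse current $U_a$, which is $\delta(y-y_0)\dd y$ in these coordinates up to the wedge convention, gives $\langle U_a,\beta_\eta\rangle=\pm 1$ for every $\eta$. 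The other currents give $\langle U_b,\beta_\eta\rangle=0$ for $b\neq a$, since their unstable curves do not meet $\square$. Define
\[
\mathsf{Ext}_a(A):=\pm\lim_{\eta\to0}\langle A,\beta_\eta\rangle.
\]

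\textbf{Step 2: the noise part is invisible (main obstacle).} I must show that $\langle A_{\mathcal N},\beta_\eta\rangle\to0$ as $\eta\to0$, uniformly in the sense that
\[
\bigl|\langle A_{\mathcal N},\beta_\eta\rangle\bigr|\le C\,\eta^{\kappa}\,\|A_{\mathcal N}\|_{\mathcal{W}^{\alpha,\alpha-1;p}(\square)}
\]
for some $\kappa>0$. This is the content of Proposition~\ref{prop:notchargeflow}, which I would prove as follows. By density, take $A_{\mathcal N}$ smooth and write $A_{\mathcal N}=\partial_yW\,\dd y$ with $W(\cdot,0)=0$. Only the $\dd x$-component of $\beta_\eta$ pairs with a $\dd y$-form, so
\[
\langle A_{\mathcal N},\beta_\eta\rangle=\int \partial_yW\;\phi'(x)\rho_\eta(y)\,\dd x\,\dd y .
\]
Integrating by parts in $y$ gives
\[
\int W(x,1)\phi'(x)\,\dd x-\int W\,\phi'(x)\rho_\eta'(y)\,\dd x\,\dd y .
\]
Since $\int\phi'=0$, these are mixed increments of $W$ in $x$ and in $y$ (a $y$-average of width $\eta$ near $y_0$ against a cancelling $x$-profile, compared with the value at $y=1$). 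Because the Gagliardo mixed-difference norm controls $W$ in $B^{\alpha}_{p,p}$ in each variable separately, a H\"older/Besov estimate shows the difference between the average near $y_0$ and the boundary term is governed by the fine-scale mixed regularity. The $\eta$-dependent piece is bounded by $C\eta^{\alpha-1/p}$, up to $\eta$-independent terms that cancel in the limit. Verifying this cancellation is the delicate point: the $\eta$-independent parts must telescope, which I would check by comparing $\beta_\eta$ with $\beta_{\eta'}$ and proving Cauchy-ness using $\alpha>1/p$. If $\alpha p\le1$, I would instead exploit $p$-integrability in $x$ with the fixed smooth $\phi'$ and use only the $y$-H\"older exponent from the mixed norm. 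This step is where I expect the real work.

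\textbf{Step 3: conclusions.} Granting Step 2, $\mathsf{Ext}_a$ is well defined on $\mathcal{B}_{\mathsf{YM}}^{\alpha,p,s,\ell}$ and vanishes on $\mathcal{W}^{\alpha,p,s,\ell}$, while $\mathsf{Ext}_a(\sum_b x_bU_b)=x_a$.

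\emph{Uniqueness.} If $A_{\mathcal N}+\sum x_bU_b=0$, applying $\mathsf{Ext}_a$ gives $x_a=0$ for every $a$, and then $A_{\mathcal N}=0$. This also proves the preceding directness lemma.

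\emph{Continuity.} By the definition of the norm, $|\mathsf{Ext}_a(A)|\le\|A\|_{\mathcal{B}_{\mathsf{YM}}}$. Hence $\pi_{\mathsf{flat}}=\sum_a\mathsf{Ext}_a(\cdot)U_a$ is continuous, being a finite-rank map, and $\pi_{\mathsf{noise}}=\mathrm{Id}-\pi_{\mathsf{flat}}$ is continuous too.

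\emph{Locality.} The point $q$ and the box $\square$ can be chosen inside any open set $O$ meeting $W^u(a)$, and Step 2 shows the limit is independent of these choices. If $\psi=1$ on $O$, then $\beta_\eta$ is supported in $O$ and $\langle A\psi,\beta_\eta\rangle=\langle A,\beta_\eta\rangle$, so $\mathsf{Ext}_a(A\psi)=\mathsf{Ext}_a(A)$. It remains to check that $A\psi$ lies in $\mathcal{B}_{\mathsf{YM}}$. This holds because multiplication by a smooth $\psi$ preserves $\mathcal{W}$, by a Leibniz estimate on mixed differences, and maps $x_bU_b$ to currents whose pairing with $\beta_\eta$ is unchanged.
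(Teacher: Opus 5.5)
Your probe does not do what you need, in either direction.

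\textbf{Step 1 fails.} In the flow box, $U_a$ is integration along the flow line $\{y=y_0\}$, so it pairs only with the $\dd x$-component of a test form: $\langle U_a,\beta\rangle=\pm\int\beta(\partial_x)(x,y_0)\,\dd x$. For $\beta_\eta=\dd\chi_\eta$ this gives $\pm\rho_\eta(y_0)\int\phi'(x)\,\dd x=0$, not $\pm1$. You are testing a closed current against the differential of a function that vanishes at both ends of the curve.

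\textbf{Step 2 fails.} Your own integration by parts gives
\[\langle A_{\mathcal N},\beta_\eta\rangle\to\int\phi'(x)\bigl(W(x,1)-W(x,y_0)\bigr)\,\dd x \quad\text{as } \eta\to0.\]
For the smooth form $A_{\mathcal N}=x\,\dd y$ this limit is $-(1-y_0)\neq0$ up to orientation, so nothing telescopes.

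Your functional $\lim_\eta\langle\cdot,\beta_\eta\rangle$ therefore kills the flat part and sees the noise part. Uniqueness and locality in Step~3, which both go through it, collapse. Continuity would follow from the definition of the norm once directness is known, as in the paper.

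\textbf{The failure is structural, not a matter of tuning $\eta$.} Reducing everything to mixed increments of $W$ via $\int\phi'=0$ is exactly what blinds the probe. The primitive of $U_a$ in a flow box is $H(y-y_0)$, which is constant along the flow. Its mixed increments vanish, so its flow-box norm in Definition~\ref{FlowboxSpace} is just $\|H(y-y_0)\|_{L^p}<\infty$. Any estimate of the form $|\langle T,\beta_\eta\rangle|\le C\eta^\kappa\|T\|_{\mathcal W^{\alpha,\alpha-1;p}(\square)}$ would therefore apply to $U_a$ and force $\mathsf{Ext}_a(U_a)=0$.

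\textbf{What a working probe needs.} To detect $x_a$ you need a non-exact probe whose $\dd x$-component has nonzero flow average. For example, take $\beta_\eta=\phi(x)\kappa_\eta(y)\,\dd x$ with $\int\phi=1$, $\kappa_\eta(y_0)=1$ and $\operatorname{supp}\kappa_\eta\subset(y_0-\eta,y_0+\eta)$. Then $\langle U_a,\beta_\eta\rangle=\pm1$. On the noise part, $\langle A_{\mathcal N},\beta_\eta\rangle=\mp\int\phi(x)W(x,y)\kappa_\eta'(y)\,\dd x\,\dd y$. Its vanishing as $\eta\to0$ is a statement about transverse regularity of $W$ itself: $\int\phi(x)W(x,\cdot)\,\dd x$ must have no jump across $y_0$. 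In other words, noise elements must not charge flow lines.

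That is the ingredient the paper's proof rests on. It uses Proposition~\ref{prop:notchargeflow}, obtained from the scaling criterion of Lemma~\ref{lem:notcharge1} with transverse exponent $\alpha-1>-1$. It applies this to cut-offs $\chi_\varepsilon$ around $\overline{\bigcup_a W^u(a)}$, rather than using an exact probe. This transverse control is not a consequence of the mixed seminorm alone, so the Besov/mixed-increment route you sketch cannot supply it.

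\textbf{Locality.} Your claim that $A\psi\in\mathcal B_{\mathsf{YM}}^{\alpha,p,s,\ell}$ does not follow. The current $\psi U_b$ is a truncated integration current, not an element of $\mathfrak{g}\otimes\mathsf{vect}\{U_c: c\in\mathsf{Crit}_1(f)\}$. The paper sidesteps this by letting the cut-off act only on the noise component, as in Definition~\ref{ClosedYMM}.
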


The name $\mathsf{Ext}$ is motivated by the fact that this map \emph{extracts} the singular part of $A$ which is supported on unstable curves.
To prove the Proposition, we will state and prove general results on currents and the set that they charge. 

\myparagraph{Currents that do not charge submanifolds.}
We start with the following notion, inspired by Meyer~\cite[Definition~2.5, p.~49]{Meyer98}.

\begin{definition}[Currents that do not charge a submanifold]\label{def:charge}
Let $M$ be a smooth manifold of dimension $d$, let $T\in\mathcal{D}^{\prime,0}(M)$ be a current of degree $0$, and let
$Y\subset M$ be a finite union of smooth embedded submanifolds of dimension $p<d$.
We say that $T$ \emph{does not charge} $Y$ if there exists a family of smooth cut-off functions
$(\chi_\varepsilon)_{\varepsilon>0}\subset C_c^\infty(M)$ such that, for every $\varepsilon>0$,
\[
0\leq \chi_\varepsilon \leq 1,\qquad
\chi_\varepsilon \equiv 1 \ \text{on a neighborhood of } Y,\qquad
\operatorname{supp}(\chi_\varepsilon)\subset U_{2\varepsilon}(Y),
\]
where $U_{2\varepsilon}(Y)$ denotes the $2\varepsilon$-tubular neighborhood of $Y$, and such that
\[
T(\chi_\varepsilon)\longrightarrow 0
\quad\text{as }\varepsilon\to 0^+.
\]
\end{definition}

We next define the notion of scaling near a submanifold $Y\subset M$, which generalizes the standard Euler vector field on $\mathbb{R}^n$.

\begin{definition}[Euler vector fields]\label{def:euler}
Let $M$ be a smooth manifold and let $Y\subset M$ be a smoothly embedded submanifold. Denote by
\[
\mathcal I_Y:=\{f\in C^\infty(M)\,:\, f|_Y=0\}
\]
the ideal of smooth functions vanishing on $Y$.
A vector field $\rho_Y$ on $M$ is called an \emph{Euler vector field of $Y$} if
\[
\rho_Y f - f \in \mathcal I_Y^2
\qquad\text{for every } f\in\mathcal I_Y,
\]
where $\mathcal{I}_Y^2$ denotes the ideal of smooth functions vanishing at order $2$ along $Y$.
Equivalently, $\rho_Y$ acts as the radial vector field in the normal directions to $Y$, modulo terms vanishing to second order along $Y$.
\end{definition}
Given some Euler vector field $\rho_Y$, the corresponding dynamics: $$e^{-t\rho_Y}:M\rightarrow M, t\geqslant 0$$ allows to scale near the submanifold $Y\subset M$ as made precise by the normal form Theorem proved in~\cite[Prop 2.5 p.~1252-1253]{DW}. We next give a sufficient condition, formulated in terms of scaling, ensuring that a given distribution does not charge a given sub-manifold $Y$.

\begin{lemma}\label{lem:notcharge1}
Under the assumptions and notation of Definition~\ref{def:charge}, let $\rho_Y$ be an Euler vector field of $Y$, let $T\in \mathcal{D}'(M)$, and assume that for some $s\in\mathbb{R}$ the family $
\left(e^{ts}\, e^{-t\rho_Y^*}T\right)_{t>0}
$
is bounded in $\mathcal{D}'(M)$. If $
s>\dim(Y)-\dim(M)$, $T$ does not charge $Y$.
\end{lemma}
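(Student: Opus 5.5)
The plan is to build the cut-offs $\chi_\varepsilon$ of Definition~\ref{def:charge} by scaling a single fixed cut-off with the Euler flow, and then to read off $T(\chi_\varepsilon)$ from the bounded family by a change of variables. To keep the problem local, I first use a partition of unity. Near any point of $Y$, the normal form theorem of~\cite[Prop 2.5]{DW} gives coordinates $(y,z)\in\mathbb{R}^{p}\times\mathbb{R}^{d-p}$ with $Y=\{z=0\}$, in which $e^{-t\rho_Y}$ is conjugated to the linear scaling $(y,z)\mapsto(y,e^{-t}z)$. Since $Y$ is a finite union of submanifolds, I treat each piece separately and work on a tubular neighbourhood $U$ of $Y$ where the flow $e^{-t\rho_Y}$, $t\ge 0$, is defined and maps $U$ into itself.

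Next I fix $\chi\in C_c^\infty(U)$ with $0\le\chi\le 1$ and $\chi\equiv1$ near $Y$. I set $\chi_t:=\chi\circ e^{t\rho_Y}$, i.e.\ $\chi_t=(e^{t\rho_Y})^*\chi$, which in normal coordinates is $\chi(y,e^{t}z)$. This function equals $1$ near $Y$ and is supported in a neighbourhood of size $O(e^{-t})$. Putting $\varepsilon\sim e^{-t}$ gives a family satisfying the support requirements of Definition~\ref{def:charge}.

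Now I compute the pairing by transposing the flow:
\[
T(\chi_t)=\big\langle T,(e^{t\rho_Y})^*\chi\big\rangle=\big\langle (e^{t\rho_Y})_*T,\chi\big\rangle\cdot J_t ,
\]
where $J_t$ is the Jacobian factor that appears when a distribution (a density) is pushed forward. In normal coordinates the flow contracts only the $d-p$ normal directions, so $J_t=e^{-t(d-p)}$ up to a bounded smooth factor. I will interpret $e^{-t\rho_Y^*}T$ as the pullback of $T$ by the scaling, consistent with the convention used in Definition~\ref{WeightedSemiNorms} for $\mathcal{S}^{\lambda,\star}_a$. Then
\[
T(\chi_t)=e^{-t(d-p)}\big\langle e^{-t\rho_Y^*}T,\chi\big\rangle \quad\text{(up to bounded factors)}.
\]
The hypothesis that $e^{ts}e^{-t\rho_Y^*}T$ is bounded in $\mathcal{D}'(M)$ gives $|\langle e^{-t\rho_Y^*}T,\chi\rangle|\le Ce^{-ts}$ for this fixed test function $\chi$. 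Hence
\[
|T(\chi_t)|\le C\,e^{-t(s+d-p)}\longrightarrow 0
\]
exactly when $s>p-d=\dim Y-\dim M$, which is the claim.

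The main obstacle is the bookkeeping of conventions, namely whether $e^{-t\rho_Y^*}$ includes the Jacobian, which determines whether the threshold comes out as $\dim Y-\dim M$ or $0$. I will fix this by testing the statement on $T=\delta_Y$ (a smooth density on $Y$). For this $T$ the pullback scales like $e^{t(d-p)}$, so the boundedness hypothesis holds with $s=p-d$; this is the borderline case, and $\delta_Y$ does charge $Y$, which matches the strict inequality. A second technical point is that the normal-form coordinates are only smooth modulo $\mathcal I_Y^2$ corrections. To handle this I will check that $\chi_t$ remains a bounded family after rescaling, i.e.\ that $\chi\circ e^{t\rho_Y}$ pulled back by the scaling is bounded in $C_c^\infty$; boundedness in $\mathcal{D}'$ is only uniform on bounded subsets of test functions, and this check supplies that uniformity.
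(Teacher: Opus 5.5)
Your proof is correct and follows the paper's route. You linearize $\rho_Y$ by the normal form theorem, take $\chi_\varepsilon$ to be a fixed cut-off rescaled by the Euler flow, change variables to pick up the Jacobian $\varepsilon^{\dim M-\dim Y}$, and use the Banach--Steinhaus uniformity of the bounded family to obtain $O(\varepsilon^{s+\dim M-\dim Y})$. Your $\delta_Y$ test correctly fixes the convention that $e^{-t\rho_Y^*}$ is the pull-back of a $0$-form current, which matches the paper's threshold.

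The only difference is that the paper pairs $T\chi_\varepsilon$ with an arbitrary top-degree test form $\varphi$. It obtains $|\langle T\chi_\varepsilon,\varphi\rangle|\leq C\|\varphi\|_{C^m}\varepsilon^{s+\dim M-\dim Y}$, i.e.\ $T\chi_\varepsilon\to 0$ in $\mathcal{D}'$. This is the version actually used in the proof of Proposition~\ref{prop:keyprop}, where it forces a difference supported on the unstable curves to vanish. Your computation yields it verbatim, since $\varepsilon^{\dim Y-\dim M}\chi\, e^{(\log\varepsilon)\rho_Y^*}\varphi$ stays bounded in $C^m$.
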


The assumption that the family $
\left(e^{ts}\, e^{-t\rho_Y^*}T\right)_{t>0}
$
is bounded in $\mathcal{D}'(M)$ is a weak homogeneity assumption. It measures the growth of the distribution $T$ when we zoom in closer and closer to the submanifold $Y$. We use the scaling by $\rho_Y$ to approach $Y$.

\begin{proof}
By the results of \cite{DW}, there exists a neighborhood  $\mathcal U_Y$ of $Y$ which is stable under the forward flow $e^{-t\rho_Y}\mathcal U_Y\subset \mathcal U_Y$ for all $t\geqslant 0$.
The statement is local near $Y$, so we work in coordinates $(x,y)\in \mathbb{R}^{d-p}\times\mathbb{R}^p$ centered at a point of $Y$, with
\[
Y=\{x=0\},\qquad \rho_Y=\sum_{i=1}^{d-p} x_i\partial_{x_i}.
\]
The existence of such system of local coordinates is ensured by the normal form Theorem of~\cite[Prop 2.5 p.~1252-1253]{DW}. 
By the Banach--Steinhaus theorem, the boundedness assumption implies that there exists an integer $m\geq 0$ and a constant $C>0$ such that
\[
\sup_{t>0}\left|\left\langle e^{ts}e^{-t\rho_Y^*}T,\varphi\right\rangle\right|
\leq C\|\varphi\|_{C^m}
\]
for all test forms $\varphi\in \Omega_c^{\dim M}(\mathcal U_Y)$, where $\mathcal U_Y$ is the neighborhood of $Y$ which is stable under the forward flow.
Let $\chi\in C_c^\infty(\mathcal U_Y)$ satisfy $\chi\equiv 1$ near $Y$, and define
\[
\chi_\varepsilon:=e^{-(\log \varepsilon)\rho_Y^*}\chi,\qquad 0<\varepsilon<1.
\]
Then there exists $0<a<b$ such that 
$\chi_\varepsilon$ is a cutoff function supported in some $b\varepsilon$--tubular neighborhood of $Y$, and $\chi_\varepsilon=1$ on some $a\varepsilon$--neighborhood of $Y$ under the scaling induced by \(\rho_Y\). For any test form \(\varphi\in\Omega_c^{\dim M}(\mathcal U_Y)\), we want to prove that
$\langle T\chi_\varepsilon,\varphi\rangle$  vanishes when $\varepsilon\rightarrow 0^+$. The strategy is to use the dynamics to make a change of variables inside the duality pairing and conclude with the weak homogeneity assumption on $T$:
\[
\langle T\chi_\varepsilon,\varphi\rangle
=\langle e^{(\log\varepsilon)\rho_Y^*}\left( T\chi_\varepsilon \right),  e^{(\log\varepsilon)\rho_Y^*}\varphi\rangle
=
\left\langle e^{-(\log\varepsilon)s}e^{(\log\varepsilon)\rho_Y^*}T,\,
\chi e^{(\log\varepsilon)s}e^{(\log\varepsilon)\rho_Y^*}\varphi \right\rangle.
\]
By the Euler property of \(\rho_Y\), the rescaled test form is flattened by the flow hence it satisfies
\[
\big\| \varepsilon^{\dim(Y)-\dim(M)} \chi \, e^{(\log\varepsilon)\rho_Y^*}\varphi \big\|_{C^m}
\leq C \Vert \varphi \Vert_{C^m} ,
\]
uniformly for \(0<\varepsilon<1\). Hence
\[
|\langle T\chi_\varepsilon,\varphi\rangle|
\leq
C \Vert \varphi \Vert_{C^m}\,\varepsilon^{\,s-\dim(Y)+\dim(M)}.
\]
Since \(s-\dim(Y)+\dim(M)>0\), we conclude that $
\langle T\chi_\varepsilon,\varphi\rangle \to 0 \ \text{as } \varepsilon\to 0^+$.
Therefore \(T\) does not charge \(Y\).
\end{proof}

\myparagraph{Anisotropic spaces under scalings.}
We now explain how Lemma~\ref{lem:notcharge1} applies to our anisotropic spaces. The guiding principle is that the noise component has Hölder regularity $\alpha$ along the flow direction and regularity $\alpha-1$ transversally, while the weights $s$ and $\ell$ control the behavior near saddle points and extremal critical points, respectively.

Recall that we can choose $\alpha\in\left(\frac13,\frac12\right)$, so that the Yang--Mills random field is $\alpha$-regular along flow lines and $\alpha-1$-regular in the transverse direction. We also assume $s<-1$ and $\ell<0$. The singular behavior near saddle points is therefore stronger than near the extrema of the Morse function.

\begin{proposition}[Anisotropic currents do not charge flow lines]\label{prop:notchargeflow}
Let $T$ belong to an anisotropic space $\mathcal W^{\alpha,p,s,\ell}$ for $\alpha\in (\frac{1}{3},\frac{1}{2})$ and $(s,\ell)>-2$. Then $T$ does not charge any flow line of the gradient flow $(\varphi^t_f)_{t\in\mathbb R}$, in particular any stable or unstable curve.
\end{proposition}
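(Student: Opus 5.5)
We reduce the statement to Lemma~\ref{lem:notcharge1}, applied locally along the flow line $Y$. Since not charging is a local property, we split $Y$ into three kinds of pieces by a partition of unity: compact arcs of $Y$ inside the flow boxes $\square_i$ away from $\mathsf{Crit}(f)$, and the germs of $Y$ inside the discs $D_a$ where $Y$ ends at a critical point $a$. On each piece we produce cutoffs $\chi_\varepsilon$ with $T(\chi_\varepsilon\,\cdot)\to0$; here $T$ is a $1$-current, so ``charging'' is tested by pairing $T\chi_\varepsilon$ with smooth $1$-forms $\varphi$. Patching the local cutoffs then gives the global family required by Definition~\ref{def:charge}.

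\emph{Flow box pieces.} In $\square\simeq[0,1]_x\times[0,1]_y$ with $V=\partial_x$, a flow line is $Y=\{y=y_0\}$, and $\rho_Y=(y-y_0)\partial_y$ is an Euler vector field. By density it suffices to treat smooth $T=\partial_yW\,\dd y$, provided the bounds depend only on $\|T\|_{\mathcal{W}^{\alpha,\alpha-1;p}(\square)}$. Pair $T\chi_\varepsilon$ with $\varphi=\phi\,\dd x$, where $\chi_\varepsilon(y)=\chi((y-y_0)/\varepsilon)$, and integrate by parts in $y$. This gives
\[
\langle T\chi_\varepsilon,\varphi\rangle=-\int W\,\partial_y\big(\chi_\varepsilon\phi\big)\,\dd x\,\dd y .
\]
Next we normalize $W$ by subtracting its value on the line $y=y_0$; the half-Dirichlet condition together with the mixed Gagliardo norm lets us do this. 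The resulting $W$ is $\alpha$-H\"older in $y$ in an $L^p_x$-averaged sense. Since $\partial_y\chi_\varepsilon$ has size $\varepsilon^{-1}$ on a strip of width $\varepsilon$, we get
\[
|\langle T\chi_\varepsilon,\varphi\rangle|\lesssim \varepsilon^{\alpha}\cdot\varepsilon^{-1}\cdot\varepsilon=\varepsilon^{\alpha}\to0 .
\]
In the language of Lemma~\ref{lem:notcharge1}, $e^{ts}e^{-t\rho_Y^*}T$ is bounded with $s=\alpha-1>-1=\dim Y-\dim M$. The condition $p\ge2$ and the fractional Sobolev embedding along $y$ make the averaged H\"older bound rigorous.

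\emph{Near critical points.} Here $Y$ enters $D_a$ radially, since $V$ is linear in Morse charts. We cover the part of $Y$ inside $D_a$ by the dyadic coronas $\mathcal{S}_a^{2^{-n}}(C_a)$. On each corona, the rescaled current $\mathcal{S}^{2^{-n},\star}_aT$ has flow box norm at most $2^{-n(s-2/p)}\|T\|$ by Definition~\ref{WeightedSemiNorms}. Applying the flow box estimate at scale $\varepsilon$ in rescaled coordinates, the $n$-th annulus therefore contributes
\[
\lesssim 2^{-n(s-2/p)}\,(2^{n}\varepsilon)^{\alpha}\,2^{-n\cdot(\text{test form scaling})} .
\]
Summing over $n$ gives a geometric series, which converges exactly when the weight exceeds $-2$ (i.e.\ $(s,\ell)>-2$, the dimension of $M$). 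Its total is $O(\varepsilon^{\alpha'})$ for some $\alpha'>0$. In addition, the tail of the series (annuli of radius $\lesssim \delta$) is small uniformly in $\varepsilon$, so a single point $a$ carries no mass. For $\max(f)$ the same argument works on the blown-up boundary $\partial\mathcal{S}$, with weight $\ell$.

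\emph{Main obstacle.} The hardest step is this summation near saddles. There the flow lines $W^{s/u}(a)$ are the coordinate axes, and the flow boxes meeting $C_a$ are transverse to them in a hyperbolic way. One must check that the tubular cutoff around $Y$, after rescaling by $\mathcal{S}^{2^{-n}}_a$, is still controlled by the flow box norms of $\mathcal{S}^{2^{-n},\star}_aT$ uniformly in $n$. I would handle this by choosing the $\chi_\varepsilon$ adapted to the linear scaling, namely homogeneous of degree $0$ in the angular variable near $a$. This makes the scaling commute with the cutoff, and the exponent bookkeeping then reduces to the condition $s>-2$.
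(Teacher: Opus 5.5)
Your overall route is the paper's. In a flow box you rescale transversally with the Euler field $(y-y_0)\partial_y$ and use transverse homogeneity $\alpha-1>-1=\dim Y-\dim M$, as in Lemma~\ref{lem:notcharge1}. Near critical points you use the weights $s,\ell>-2$. The one structural difference is that you sum dyadic annuli along the germ of $Y$ at $a$, whereas the paper applies Lemma~\ref{lem:notcharge1} to $Y=\{a\}$ and then ``combines''; your version makes that combining step quantitative.

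The step you add to justify the flow-box bound, however, is false. The condition $W(\cdot,0)=0$ pins $W$ along the flow line $\{y=0\}$. Together with the rectangular increments of Definition~\ref{FlowboxSpace}, it therefore controls $W(x_1,y)-W(x_2,y)$, i.e.\ increments along the flow, not in $y$. The mixed seminorm vanishes on every function of $y$ alone. Concretely, fix $0<y_0<1$ and take $W=\mathbf 1_{\{y>y_0\}}$.
\begin{itemize}
\item It satisfies $W(\cdot,0)=0$, has zero mixed seminorm and $\|W\|_{L^p}\le 1$.
\item Locally it is the limit in this norm of smooth $b_k(y)$.
\item Yet $T=\partial_yW\,\mathrm{d}y=\delta(y-y_0)\,\mathrm{d}y$ is the integration current of the flow line $\{y=y_0\}$, and $\langle T\chi_\varepsilon,\phi\,\mathrm{d}x\rangle=\pm\int\phi(x,y_0)\,\mathrm{d}x\not\to0$.
\end{itemize}
So transverse regularity $\alpha-1$ cannot be derived from the displayed flow-box norm. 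The paper does not derive it either: it asserts it as part of the definition (``by definition of the anisotropic spaces, $T$ has transversal regularity $\alpha-1$ in $y$, uniformly in $x$''). Your proof must likewise take it as an input, e.g.\ by adding a $y$-Gagliardo term to the flow-box norm, rather than attribute it to the half-Dirichlet condition.

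Even granting an $L^p_xW^{\alpha,p}_y$ bound on $W$, your exponents need correcting.
\begin{itemize}
\item The embedding $W^{\alpha,p}_y\hookrightarrow C^{\alpha-1/p}_y$ needs $\alpha p>1$. Since $\alpha<\frac12$, this means $p>1/\alpha>2$, so $p\ge 2$ is not enough. In fact $\mathbf 1_{\{y>y_0\}}\in W^{\alpha,p}_{\mathrm{loc}}$ whenever $\alpha p<1$, so the conclusion itself fails in that range. With $p>1/\alpha$, the flow-box rate is $\varepsilon^{\alpha-1/p}$, not $\varepsilon^{\alpha}$, consistent with the paper's ``choose $p$ large''.
\item The factor you left as ``(test form scaling)'' is what fixes the threshold near $a$. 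Suppose $\mathcal S_a^{\lambda,\star}$ acts on $T$ as a $1$-form, as in Definition~\ref{WeightedSemiNorms}, and you test against $1$-forms. Then that factor is $2^{-n}$, and your bound is summable only when $s-\frac2p>-1$.
\item The threshold $-2$ arises only when $\mathcal S^{\lambda,\star}$ rescales the coefficient of $T$ and the pairing is against $2$-forms. That is the convention used in Proposition~\ref{normep} and implicit in the paper's appeal to Lemma~\ref{lem:notcharge1}. Fix one convention and carry it through.
\end{itemize}
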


\begin{proof}
We argue locally. First consider a point away from the critical set. In a flow box $[-1,1]_x\times[-1,1]_y$, we may assume that the vector field is $\partial_x$ and that the flow line under consideration is given by $\gamma=\{(t,0): t\in[-1,1]\}$. In these coordinates, the dyadic scaling in the transverse variable $y$ is
\[
(\mathcal S^{2^{-n}\star}\varphi)(x,y)=\varphi(x,2^{-n}y),
\]
so the corresponding Euler vector field is $\rho=y\partial_y$.

By definition of the anisotropic spaces, the distribution $T$ has transversal regularity $\alpha-1$ in the $y$ variable, uniformly in the $x$ variable (representing the flow direction). Therefore, the family
\[
\left(2^{n(\alpha-1)}\,\mathcal S^{2^{-n}\star}T\right)_{n\geq 0}
\]
is bounded in $\mathcal D'([-1,1]^2)$. Since $\alpha\in(\frac13,\frac12)$, we have $
\alpha-1> -\frac23>-1$,
and Lemma~\ref{lem:notcharge1} applies with the relevant scaling exponent. Hence $T$ does not charge $\gamma$.

It remains to treat the critical points. We start with the case where $a$ is either a minimum or maximum of $f$. Let $a\in\mathsf{Crit}(f)$ and let $\mathcal S_a^\lambda$ denote the scaling map centered at $a$. By definition of the weight $\ell$, the family
\[
\left(2^{n(\ell-\frac2p)}\,\mathcal S_a^{2^{-n}\star}T\right)_{n\geq 0}
\]
is bounded in $\mathcal D'(U_a)$. Since $\ell>-2$, and after choosing $p$ sufficiently large, the exponent satisfies the threshold required by Lemma~\ref{lem:notcharge1}. Therefore $T$ does not charge $a$. In the case $a$ is a saddle point, the argument is almost verbatim since the weight $s<-1$ can be chosen in such a way that $s>-2$.

Combining the regular-flow and critical-point cases, we conclude that $T$ does not charge any flow line of the gradient flow.
\end{proof}

\myparagraph{Proof of Proposition~\ref{prop:keyprop}.}
We now explain how to conclude the proof of Proposition~\ref{prop:keyprop}. Let $(\chi_\varepsilon)_{\varepsilon>0}$ be a family of smooth cut-off functions such that $\chi_\varepsilon\equiv 1$ on an $\varepsilon$-neighborhood of $\overline{\bigcup_{\ind(a)=1} W^u(a)}$ and $\chi_\varepsilon\equiv 0$ outside a $2\varepsilon$-neighborhood of this set as in the proof of Lemma \ref{lem:notcharge1}. 

For fixed $\varepsilon>0$, consider the map
$
A \in \mathcal{B}_{\mathsf{YM}}^{\alpha,p,s,\ell}
\longmapsto
\bigl(A\chi_\varepsilon,\; A(1-\chi_\varepsilon)\bigr).
$
Let
\[A=A^{\mathsf n}+\sum_{a\in\mathsf{Crit}_1(f)} x_a U_a
\] be the decomposition of $A$ given by Proposition~\ref{prop:keyprop}, then $1-\chi_\varepsilon$ vanishes near $\overline{\bigcup_{\ind(a)=1} W^u(a)}$, so
\[
A(1-\chi_\varepsilon)=A^{\mathsf n}(1-\chi_\varepsilon),
\qquad
A\chi_\varepsilon=A^{\mathsf n}\chi_\varepsilon+\sum_{a\in\mathsf{Crit}_1(f)}x_aU_a.
\]
By Proposition~\ref{prop:notchargeflow}, the distribution $A^{\mathsf n}$ does not charge $\overline{\bigcup_{\ind(a)=1} W^u(a)}$. Hence
\[
\lim_{\varepsilon\to 0^+} A^{\mathsf n}\chi_\varepsilon=0
\qquad\text{and}\qquad
\lim_{\varepsilon\to 0^+} A(1-\chi_\varepsilon)=A^{\mathsf n}.
\]
It follows that the map
\[
A \longmapsto \lim_{\varepsilon\to 0^+}\bigl(A\chi_\varepsilon,\; A(1-\chi_\varepsilon)\bigr)
=
\left(
\sum_{a\in\mathsf{Crit}_1(f)} x_a U_a,
\; A^{\mathsf n}
\right)
\]
is well defined and continuous as a map from $\mathcal{B}_{\mathsf{YM}}^{\alpha,p,s,\ell}$ to
\[
\Big(\mathfrak g\otimes \mathrm{Vect}\big\{U_a:a\in\mathsf{Crit}_1(f)\big\}\Big)
\times
\mathcal{W}^{\alpha,p,s,\ell}.
\]

It remains to prove uniqueness. Suppose that
\[
A=A_{\mathcal N,1}+\sum_{a\in\mathsf{Crit}_1(f)}x_{1,a}U_a
=
A_{\mathcal N,2}+\sum_{a\in\mathsf{Crit}_1(f)}x_{2,a}U_a,
\]
with $x_{i,a}\in\mathfrak g$ and $A_{\mathcal N,i}\in \mathcal{W}^{\alpha,p,s,\ell}$. Then
\[
A_{\mathcal N,1}-A_{\mathcal N,2}
=
\sum_{a\in\mathsf{Crit}_1(f)}(x_{2,a}-x_{1,a})U_a.
\]
The right-hand side is supported on 
$\overline{\bigcup_{a\in\mathsf{Crit}_1(f)} W^u(a)}$, while the left-hand side belongs to
$\mathcal{W}^{\alpha,p,s,\ell}$ and therefore does not charge that set. Hence
$A_{\mathcal N,1}-A_{\mathcal N,2}=0$.
It then follows immediately that $x_{1,a}=x_{2,a}$ for every $a\in\mathsf{Crit}_1(f)$, and the decomposition is unique.
This concludes the proof of Proposition~\ref{prop:keyprop}.

\subsection{Compact injections and weights}\label{CompInj}

We conclude this section by proving necessary continuous injections and compactness results on our spaces $\mathcal{B}_{\mathsf{YM
}}^{\alpha,p,s,\ell}$. This essentially reduces to studying the compact injections of
$\mathcal{W}^{\alpha,p,s,\ell}$. More precisely, we show the following two results, whose proof are similar to the compact injections in the appendix of~\cite{DN}. The same results with the same parameters hold then for $\mathcal{B}_{\mathsf{YM}}^{\alpha,p,s,\ell}$.
\begin{proposition}
    For $\alpha<\alpha', s'<s, \ell'<\ell$ the canonical injections $\mathcal{W}^{\alpha',p,s',\ell'}\hookrightarrow\mathcal{W}^{\alpha,p,s,\ell}$ and $\mathcal{B}^{\alpha',p,s',\ell'}\hookrightarrow\mathcal{B}^{\alpha,p,s,\ell}$ are compact.
\end{proposition}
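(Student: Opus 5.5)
We will prove compactness of $\mathcal{W}^{\alpha',p,s',\ell'}\hookrightarrow\mathcal{W}^{\alpha,p,s,\ell}$ in two steps. First, on a single flow box we show the local injection $\mathcal{W}^{\alpha',\alpha'-1;p}(\square)\hookrightarrow\mathcal{W}^{\alpha,\alpha-1;p}(\square)$ is compact. Second, we sum these local statements along the dyadic scales near each critical point, using the strict gain in the weights $s'<s$, $\ell'<\ell$ to control the tails. The statement for $\mathcal{B}_{\mathsf{YM}}$ then follows at once. Indeed, by Proposition~\ref{prop:keyprop} and Definition~\ref{YmSpace}, $\mathcal{B}^{\alpha,p,s,\ell}_{\mathsf{YM}}$ is isomorphic to $\mathcal{W}^{\alpha,p,s,\ell}\times\g^{2g}$ with the sum norm. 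The injection acts as the identity on the finite-dimensional factor, and a finite-dimensional identity is compact.

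\emph{Local step.} On $\square\simeq[0,1]^2$ the norm of $\tilde A=\partial_yW\,\dd y$ is by definition the mixed Gagliardo norm of the primitive $W$ (with $W(\cdot,0)=0$). So it suffices to show that $W\mapsto W$ is compact from the mixed space $\mathcal{W}^{\alpha',\alpha';p}_{x,y}$ into $\mathcal{W}^{\alpha,\alpha;p}_{x,y}$ on the unit square. This space is the tensor product $W^{\alpha',p}_x\hat\otimes W^{\alpha',p}_y$ of one-dimensional fractional Sobolev spaces, defined by the rectangular increment. I would argue with a dyadic (Littlewood--Paley or Haar-type) characterisation: the mixed norm is equivalent to a weighted $\ell^p$ norm over pairs of scales $(j,k)$ with weight $2^{(j+k)\alpha p}$. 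Truncating to $j,k\le J$ gives operators that are compact, since they are finite rank up to an $L^p$-compact convolution. The remainder is bounded by $2^{-J(\alpha'-\alpha)}$ times the $\alpha'$-norm, so the injection is a norm limit of compact operators. An alternative route is Kolmogorov--Riesz: bounded sets in the $\alpha'$ norm have uniformly small translation moduli in the $\alpha$ norm by interpolation between the $\alpha'$ seminorm and $L^p$. I expect this local step to be the main technical obstacle. The difficulty is handling the mixed (rectangular) difference together with the half-Dirichlet condition, and making sure the $L^p$ term is also compactly embedded. That compactness follows from Rellich applied to the anisotropic mixed space, since the mixed seminorm plus $L^p$ dominates the $W^{\alpha',p}$ norm in each variable separately. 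This is the argument of the appendix of~\cite{DN}, which I would follow.

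\emph{Global step.} Take a bounded sequence $(A_k)$ in $\mathcal{W}^{\alpha',p,s',\ell'}$ with norms at most $1$. Near a saddle $a$ we have
\[
\sum_n \big(2^{n(s-\frac2p)}\|\mathcal S_a^{2^{-n}\star}A\|\big)^p=\sum_n 2^{-n(s'-s)p}\cdot 2^{-n\cdot 0}\cdots
\]
More precisely, each term at level $n$ in the $(\alpha,s)$-norm is bounded by $2^{n(s-s')}$ times the corresponding term in the $(\alpha',s')$-norm. Hence the tail $\sum_{n>N}$ is at most $C\,2^{N(s-s')p}$ uniformly in $k$, and the same holds near $\min f,\max f$ with $\ell-\ell'$. (The sign of $s-s'$ must make this decay; should the convention turn out opposite, I would recheck whether the weight enters as $2^{ns}$ and adjust the choice of which direction is compact.) For each of the finitely many remaining pieces, namely the flow boxes $\square_i$ and the levels $n\le N$ near each critical point, the local step combined with a diagonal extraction yields a subsequence that is Cauchy in the corresponding local $\alpha$-norm. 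A standard $\varepsilon/3$ argument then shows the extracted subsequence is Cauchy in $\mathcal{W}^{\alpha,p,s,\ell}$. Finally we pass from smooth forms to the completion by density.
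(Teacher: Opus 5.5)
Your architecture matches the paper's proof: compactness on each flow box, the gap between the weights to make the dyadic tails near critical points uniformly small, diagonal extraction over the finitely many remaining pieces, and the identity on the finite-dimensional factor of $\mathcal{B}_{\mathsf{YM}}$. There is, however, a genuine gap in the local step.

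\textbf{The local step.} Take the norm of Definition~\ref{FlowboxSpace}: the rectangular Gagliardo seminorm of the primitive $W$ plus $\Vert W\Vert_{L^p}$, with $W(\cdot,0)=0$. For this norm, your claim that the mixed seminorm plus $L^p$ ``dominates the $W^{\alpha',p}$ norm in each variable separately'' is false. Every $W(x,y)=b(y)$ with $b(0)=0$ satisfies the half-Dirichlet condition and has all rectangular increments equal to zero.

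Take $W_n(x,y)=\sin(2\pi n y)$, i.e.\ $\tilde A_n=2\pi n\cos(2\pi n y)\,\dd y$. Its norm equals $\Vert W_n\Vert_{L^p}$ for \emph{every} $\alpha$, so the sequence is bounded in the source, yet no subsequence converges in $L^p$. Hence the local injection, with the norm as literally defined, is not compact, and none of your three tools works as stated:
\begin{itemize}
\item the two-parameter dyadic norm is strictly stronger, since its blocks with $j=0$ give $W_n$ size $\approx n^{\alpha}$;
\item Rellich and Kolmogorov--Riesz extract no $y$-regularity from the data.
\end{itemize}

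Your description of the space as $W^{\alpha',p}_x\hat\otimes W^{\alpha',p}_y$ is the right one, but it is not what the rectangular seminorm plus $\Vert W\Vert_{L^p}$ defines. You must work with the genuine dominating-mixed norm, e.g.\ that of $W^{\alpha,p}\big(I;W^{\alpha,p}(J)\big)$, which also contains $\Vert W\Vert_{L^p_xW^{\alpha,p}_y}$.

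For that norm your route is sound, and somewhat cleaner than the paper's. After extending one variable at a time to a torus, the truncation to $j,k\leq J$ is finite rank and the remainder costs $2^{-J(\alpha'-\alpha)}$, so you stay on the same box. The paper instead cuts off in $y$, expands in Fourier modes in $y$, applies the one-dimensional compact embedding in $x$ mode by mode, and must pass to strictly smaller boxes ($\square_1\subsetneq\square_2$, then $\tilde\square_i$).

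The paper's argument has the same blind spot. It only estimates $\int_{I^2}\Vert T(x_1,\cdot)-T(x_2,\cdot)\Vert^p_{W^{\alpha,p}_y}\,\vert x_1-x_2\vert^{-1-\alpha p}\,\dd x_1\dd x_2$ and never controls the high $y$-frequencies of the $L^p$ term. So both proofs need the stronger norm.

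\textbf{The sign of the weight gap.} Do not leave this open. By Definition~\ref{WeightedSemiNorms}, the level-$n$ weight is $2^{n(s-\frac2p)}$. Your comparison therefore bounds the tail $\sum_{n>N}$ by $C\,2^{-Np(s'-s)}$, which is small exactly when $s'>s$; likewise one needs $\ell'>\ell$. This is what the paper's proof actually uses (``Fix $s'>s$'', with tail $2C\,2^{-Np(s'-s)}$).

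The inequalities $s'<s$, $\ell'<\ell$ in the statement are reversed. With them the target weights grow faster than the source weights, and the inclusion is not even bounded.

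With these two corrections, the rest of your argument coincides with the paper's: diagonal extraction over boxes and levels $n\leq N$, the $\varepsilon/3$ argument, density, and the identity on $\g^{2g}$.
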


\begin{proof}
  The proof proceeds in two steps. The first step is local: we show that for any pair of flowboxes \(\square_1 \subsetneq \square_2\), the inclusion of local spaces
\[
\mathcal{W}^{\alpha',\alpha',p}(\square_2)\hookrightarrow \mathcal{W}^{\alpha,\alpha,p}(\square_1)
\]
is compact. Without loss of generality, assume \(\alpha - \frac{1}{p} > 0\) and write \(\square_1 = I \times J_1\), \(\square_2 = I \times J_2\), where \(J_1\) is an interval compactly contained in the interior of \(J_2\). All intervals are compact otherwise the compact injections are wrong. 

Let \(\chi \in C_c^\infty(J_2)\) be a cut-off function depending only on the second variable \(y\), with \(\chi \equiv 1\) on \(J_1\). For any \(T\) defined on \(I \times J_2\), the product \(T\chi\) can be viewed as a function on \(I \times \mathbb{S}^1\) by extending it by zero outside \(J_2\).

It therefore suffices to prove that the map
\[
T \in \mathcal{W}^{\alpha+\varepsilon,\alpha+\varepsilon,p}_{x,y}(I \times J_2,\mathfrak{g}) \mapsto T\chi \in \mathcal{W}^{\alpha,\alpha,p}_{x,y}(I \times \mathbb{S}^1,\mathfrak{g})
\]
is compact, where \(\alpha' = \alpha + \varepsilon\). Taking derivatives in \(y\) and multiplying by an additional cut-off function in \(y\) then yields the desired compact embedding.
\par Since \(T\chi\) is now viewed as a function on \(I \times \mathbb{S}^1\), we may expand it in Fourier modes in the second variable \(y\). We write
\begin{align*}
 \|T\chi\|_{\mathcal{W}^{\alpha,\alpha,p}}
 := \int_{I^2} \frac{ \| T\chi(x_1,\cdot) - T\chi(x_2,\cdot) \|^p_{W^{\alpha,p}_y} }{|x_2 - x_1|^{1+\alpha p}} \, \mathrm{d}x_1 \mathrm{d}x_2.
\end{align*}
Recall that the Littlewood--Paley decomposition
relies on a dyadic partition of unity that writes 
$1=\sum_{j=0}^\infty \psi_j$ where each $\psi_j$ is supported on the dyadic interval $ \{ 2^{j-1} \leqslant \vert \tau\vert \leqslant 2^{j+1} \} $.
Moreover, we may write
\begin{align*}
 \| T\chi(x_1,\cdot) - T\chi(x_2,\cdot) \|^p_{W^{\alpha,p}_y}
 = \sum_{j=0}^\infty 2^{j p \alpha} \left\| \psi_j(\sqrt{-\Delta}) \big( T\chi(x_1,\cdot) - T\chi(x_2,\cdot) \big) \right\|_{L^p_y}^p,
\end{align*}
where we use the characterization of \(\mathcal{W}^{\alpha,p}(\mathbb{S}^1)\) via the Littlewood--Paley decomposition.

\par Assume that we have a bounded sequence $(T_n)_{n\geq 0}$ in the anisotropic space $\mathcal{W}^{\alpha+\varepsilon,\alpha+\varepsilon,p}(I\times J_2)$. The sequence of functions 
$\left( x\mapsto \widehat{T_n\chi}(x,k) \right)_{k\in \mathbb{Z}} $ is bounded in $\mathcal{W}^{\alpha+\varepsilon,p}(I)$ by the \textit{a priori} bound

\[ \int_{I^2} \frac{ \vert \widehat{T\chi}(x_1,k)-\widehat{T\chi}(x_2,k) \vert^p }{\vert x_2-x_1\vert^{1+\alpha p}}    \dd x_1\dd x_2 \lesssim  \Vert T\chi\Vert_{\mathcal{W}^{\alpha,\alpha,p}} \left\langle k \right\rangle^{-\alpha-\varepsilon}, \]
since
\[ \left \vert \left\langle T_n\chi(x_1,.)-T_n\chi(x_2,.), e^{ik.}\right\rangle\right \vert \leqslant C \Vert T_n\chi(x_1,.)-T_n\chi(x_2,.)\Vert_{W^{\alpha+\varepsilon,p}_y} \left\langle k \right\rangle^{-\alpha-\varepsilon } .\]

Assume $\alpha-\frac{1}{p}>0$.
By a diagonal extraction argument, and using the compact injection $\mathcal{W}^{\alpha+\varepsilon,p}(I)\hookrightarrow \mathcal{W}^{\alpha,p}(I)$ where $I$ is a compact interval~\cite[item 4) Thm 2.1]{Vasquez} \footnote{In fact, it is a bit indirect to get the form we need. So first given a function $u\in \mathcal{W}^{s,p}(I)$ extend it as $\tilde{u}\in \mathcal{W}^{s,p}(\mathbb{R})$, the extension is linear continuous by \cite[Thm 5.4 p.~548]{Nezzafractional} and then apply  \cite[item 4) Thm 2.1]{Vasquez} to get the compact injection } ,  
we can extract a subsequence 
so that for every $k$, $\widehat{T_n\chi}(.,k)$ converges in $\mathcal{W}^{\alpha+\varepsilon,p}(I)$ to a limit $\widehat{T_\infty}(.,k)$. We need to prove that up to extraction of the subsequence, the sequence $(T_n\chi)_n$ is a Cauchy sequence in $\mathcal{W}^{\alpha,\alpha,p}(I\times \mathbb{S}^1)$. 
\par By boundedness of the sequence $(T_n\chi)_{n\geq 0} $ in $\mathcal{W}^{\alpha+\varepsilon,\alpha+\varepsilon,p}(I\times \mathbb{S}^1)$
\[
\sup_{n\geq 0}\left\{\int_{I^2} \frac{\sum_{j=0}^\infty 2^{jp(\alpha+\varepsilon)} \Vert \psi_j(\sqrt{-\Delta})( T_n\chi(x_1,.)-T_n\chi(x_2,.))  \Vert_{L^p_y}^p }{\vert x_2-x_1\vert^{1+(\alpha+\varepsilon) p}}  \,  \dd x_1\dd x_2 \right\}\leqslant C.   
\]
We deduce that for every integer $N\geqslant 1$, for all $n$
\[
\int_{I^2 } \frac{\sum_{j=N}^\infty 2^{jp\alpha} \Vert \psi_j(\sqrt{-\Delta}) (T_n\chi(x_1,.)-T_n\chi(x_2,.) ) \Vert_{L^p}^p }{\vert x_2-x_1\vert^{1+\alpha p}}\,    \dd x_1\dd x_2 \leqslant C 2^{-Np\varepsilon}.\]
Now, using the above bound, we can control the following difference term
{\small
\begin{align*}
 &\Vert (T_{n_1}-T_{n_2})\chi\Vert_{\mathcal{W}^{\alpha,\alpha,p}(I\times \mathbb{S}^1)}\\
 &\hspace{1cm}= \int_{I^2} \frac{ \sum_{j=0}^\infty 2^{jp\alpha} \Vert \psi_j(\sqrt{-\Delta})( T_{n_1}\chi(x_1,.)-T_{n_1}\chi(x_2,.)-T_{n_2}\chi(x_1,.)+T_{n_2}\chi(x_2,.)  )\Vert_{L^p(\mathbb{S}^1)}^p }{\vert x_2-x_1\vert^{1+\alpha p}}  \,  \dd x_1\dd x_2 \\
 &\hspace{1cm}\leqslant \int_{I^2} \frac{ \sum_{j=0}^N 2^{jp\alpha} \Vert \psi_j(\sqrt{-\Delta})( T_{n_1}\chi(x_1,.)-T_{n_1}\chi(x_2,.)-T_{n_2}\chi(x_1,.)+T_{n_2}\chi(x_2,.) ) \Vert_{L^p(\mathbb{S}^1)}^p }{\vert x_2-x_1\vert^{1+\alpha p}} \,   \dd x_1\dd x_2  \\
 &\hspace{4cm}+2C  2^{-Np\varepsilon}. 
\end{align*}}
Now by choosing $N$ large enough, we can make the error term $2C  2^{-Np\varepsilon} $ as small as we want.
Since the partial sum \[\sum_{j=0}^N 2^{jp\alpha} \Vert \psi_j(\sqrt{-\Delta})( T_{n_1}\chi(x_1,.)-T_{n_1}\chi(x_2,.)-T_{n_2}\chi(x_1,.)+T_{n_2}\chi(x_2,.) ) \Vert_{L^p}^p\]  \emph{depends only on a finite number of Fourier modes}, letting $n_1,n_2\rightarrow +\infty$, we have for each $j\in \{0,\dots,N\} $, the term \[x\in I\mapsto \Vert \psi_j(\sqrt{-\Delta})\left(T_{n_1}\chi(x,.)-T_{n_2}\chi(x,.) \right)\Vert_{L^p_y}^p \]
goes to $0$ in $\mathcal{W}_x^{\alpha,p}(I)$. Finally, since the term
\[\int_{I^2} \frac{ \sum_{j=0}^N 2^{jp\alpha} \Vert \psi_j(\sqrt{-\Delta})( T_{n_1}\chi(x_1,.)-T_{n_1}\chi(x_2,.)-T_{n_2}\chi(x_1,.)+T_{n_2}\chi(x_2,.)  )\Vert_{L^p}^p }{\vert x_2-x_1\vert^{1+\alpha p}} \,   \dd x_1\dd x_2 \] goes to zero, we can conclude. 
The second step is to globalize taking into account the 
weights. Fix $s'>s,\alpha'>\alpha$. Start from a bounded sequence $(A_k)_k$ in $\mathcal{W}^{\alpha'-1,\alpha',p,s'}$ and just forget about the index $\ell'$ which can be treated exactly in the same way as $s'$. Then by definition, we have
\[\sup_{k\geq 0}\left\{\sum_{\substack{i\in I \\\square_i\cap C_a \neq \emptyset}}\ \sum_{n=0}^\infty  \left(2^{n(s'-\frac{2}{p})}\Vert \mathcal{S}^{2^{-n} \star}_{a} A_k
\Vert_{\mathcal{W}^{\alpha',\alpha'-1;p} (\square_i) }\right)^p \right\}\leqslant C <+\infty . \]
For every $n$, $\mathcal{S}^{2^{-n} \star}_{a} A_k, k\in \mathbb{N}$ is a bounded sequence in $\mathcal{W}^{\alpha',\alpha'-1;p} (\square_i) $ and therefore by a diagonal extraction argument using the compact injection $\mathcal{W}^{\alpha',\alpha'-1,p}(\square_i) \hookrightarrow \mathcal{W}^{\alpha,\alpha-1,p}(\tilde{\square}_i) $ from the previous step up to choosing some strictly smaller rectangles $\tilde{\square}_i$, we may assume that for every $n$,
$ \lim_{k\rightarrow +\infty} \mathcal{S}^{2^{-n} \star}_{a} A_k = \mathcal{S}^{2^{-n} \star}_{a} A_\infty  $ in $\mathcal{W}^{\alpha,\alpha-1,p}(\tilde{\square}_i)$.
For every $N$, we have a bound of the form
\[\sup_{k\geq 0}\left\{\sum_{\substack{i\in I \\\square_i\cap C_a \neq \emptyset}}\ \sum_{n=N}^\infty  \left(2^{n(s-\frac{2}{p})}\Vert \mathcal{S}^{2^{-n} \star}_{a}\left( A_k -A_\infty\right)
\Vert_{\mathcal{W}^{\alpha,\alpha-1;p} (\tilde{\square}_i) }\right)^p \right\}\leqslant 2C 2^{-Np(s'-s)}. \]
By choosing $N$ large enough we can make it smaller than $\frac{\varepsilon}{2}$. Then the finite sum
\[\sum_{\substack{i\in I \\\square_i\cap C_a \neq \emptyset}}\ \sum_{n=0}^N  \left(2^{n(s-\frac{2}{p})}\Vert \mathcal{S}^{2^{-n}\star}_{a} \left( A_k-A_\infty\right)
\Vert_{\mathcal{W}^{\alpha,\alpha-1;p} (\tilde{\square}_i) }\right)^p \]
goes to $0$ when $k\rightarrow +\infty$ 
since $ \lim_{k\rightarrow +\infty} \mathcal{S}^{2^{-n} \star}_{a} A_k = \mathcal{S}^{2^{-n} \star}_{a} A_\infty  $ in $\mathcal{W}^{\alpha,\alpha-1,p}(\tilde{\square}_i)$. The weight $\ell$ is treated exactly in the same way as the weight $s$ which concludes the proofs of the compact injections.
\end{proof}

\begin{proposition}
    For $ s'<s, \ell'<\ell$, $\alpha'>\alpha-\frac{1}{p}  $ the canonical injection $\mathcal{W}^{\alpha',p,s',\ell'}\hookrightarrow \mathcal{W}^{\alpha-\frac1p,\infty,s,\ell}$ is compact. 
\end{proposition}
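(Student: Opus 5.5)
The plan is to split the argument into a local step on flow boxes and a global step handling the dyadic weights, mirroring the proof of the previous proposition. The new ingredient is a fractional Sobolev--Morrey embedding in each variable. This trades the integrability exponent $p$ for $\infty$ at the cost of $\frac1p$ derivatives in each direction.

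\textbf{Local step.} Fix flow boxes $\square_1\Subset\square_2$ and the primitive $W$ of Definition~\ref{FlowboxSpace}, so $\tilde A=\partial_yW\,\dd y$ with $W(\cdot,0)=0$. I would show
\[
\|W\|_{\mathcal W^{\alpha-\frac1p,\alpha-\frac1p;\infty}(\square_1)}\lesssim \|W\|_{\mathcal W^{\alpha',\alpha';p}(\square_2)} .
\]
Here the left side is the mixed H\"older-type seminorm: the rectangular increment divided by $|x_1-x_2|^{\alpha-\frac1p}|y_1-y_2|^{\alpha-\frac1p}$. The embedding goes through the one-dimensional fractional Morrey inequality $W^{\beta,p}(I)\hookrightarrow C^{\beta-\frac1p}(I)$, valid for $\beta p>1$. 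I apply it first in $x$, to the $W^{\alpha',p}_y$-valued function $x\mapsto W(x,\cdot)$, using the vector-valued Gagliardo form of the norm already exploited in the previous proof. I then apply it in $y$, pointwise in $(x_1,x_2)$, to the difference $W(x_1,\cdot)-W(x_2,\cdot)$. This requires $\alpha'p>1$. If $\alpha'$ is not already large enough, I first use the compact injection of the previous proposition in the $\alpha$ variable to reduce to the case $\alpha'>\alpha-\frac1p$ with $\alpha'p>1$ (for $p$ large, as is assumed throughout). The cutoff $\chi(y)$ and periodisation to $I\times\mathbb S^1$ from the previous proof let me use the Littlewood--Paley characterization. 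Compactness then comes from the slack $\alpha'-(\alpha-\frac1p)>0$. A bounded sequence is equicontinuous for the mixed H\"older seminorm with a strictly smaller exponent, so Arzel\`a--Ascoli (or the Fourier-mode truncation argument of the previous proof, with $L^p_y$ replaced by $L^\infty_y$ and the tail bounded by $2^{-N\varepsilon}$) yields a Cauchy subsequence.

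\textbf{Global step.} Take a bounded sequence $(A_k)$ in $\mathcal W^{\alpha',p,s',\ell'}$. For each dyadic scale $n$ and each box meeting a corona $C_a$, the rescaled forms $\mathcal S_a^{2^{-n}\star}A_k$ are bounded in $\mathcal W^{\alpha',\alpha'-1;p}(\square_i)$. By the local step and a diagonal extraction, they converge in $\mathcal W^{\alpha-\frac1p,\alpha-\frac1p-1;\infty}(\tilde\square_i)$ for every $n$ simultaneously. In the target space the weight factor is $2^{n(s-\frac2\infty)}=2^{ns}$, while the source factor is $2^{n(s'-\frac2p)}$. The $\ell^p$-in-$n$ bound in the source gives $\sup_n 2^{n(s'-\frac2p)}\|\mathcal S_a^{2^{-n}\star}A_k\|\le C$. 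Combined with the local embedding, the tail $\sup_{n\ge N}2^{ns}\|\cdot\|_{\infty}$ is controlled by $C\,2^{-N(s'-s)}$ up to the $\frac2p$ discrepancy.

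The main obstacle is exactly this bookkeeping of the $\frac2p$ shift. In the source, the scale normalization $2^{-2n/p}$ accounts for the Jacobian of the dilation in the $L^p$ norm, whereas the $L^\infty$ norm carries no such factor. I expect the local constant in the embedding, applied to a dilated form, to produce precisely the compensating factor $2^{2n/p}$, since H\"older norms scale without a Jacobian. I would verify this by computing how $\mathcal S^{\lambda\star}$ acts on the two seminorms. If a residual power of $2^n$ appears, I would absorb it into the strict inequality $s'<s$ (respectively $\ell'<\ell$), possibly tightening the hypotheses. The remaining finitely many scales $n<N$, together with the finitely many interior boxes, converge by the local step. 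This gives the Cauchy property and hence compactness; the weight $\ell$ is handled identically.
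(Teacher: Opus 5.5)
Your overall route is the paper's: a local compact embedding on flow boxes obtained from a one-dimensional fractional Sobolev--Morrey embedding, followed by diagonal extraction over dyadic scales. However, the exponent bookkeeping fails exactly where you were unsure, and also earlier.

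\textbf{The weight step.} There is no compensating factor. The local embedding is applied to the already rescaled forms $\mathcal{S}^{2^{-n}\star}_aA_k$ on the fixed boxes $\square_i$, so its constant is independent of $n$, and the best you get is
\[
2^{ns}\big\Vert \mathcal{S}^{2^{-n}\star}_a(A_k-A_\infty)\big\Vert_{\mathcal{W}^{\alpha-\frac1p,\alpha-\frac1p-1;\infty}(\square_i)}\lesssim 2^{-n(s'-s-\frac2p)}\cdot 2^{n(s'-\frac2p)}\big\Vert \mathcal{S}^{2^{-n}\star}_a(A_k-A_\infty)\big\Vert_{\mathcal{W}^{\alpha',\alpha'-1;p}(\square_i)}.
\]
Transporting a fixed Morse-gauge bump to the corona at scale $2^{-m}$ shows this loss is sharp. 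Hence the tail is small only if $s'>s+\frac2p$ (resp. $\ell'>\ell+\frac2p$). Nothing can be absorbed into $s'<s$, which has the wrong sign; your own tail bound $2^{-N(s'-s)}$ already presupposes $s'>s$.

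\textbf{The regularity exponent.} Morrey costs exactly $\frac1p$ in each variable, so you land in mixed exponent $\alpha'-\frac1p$. The slack for compactness is therefore $\alpha'-\alpha$, not $\alpha'-(\alpha-\frac1p)$. For $\alpha-\frac1p<\alpha'<\alpha$ your local estimate is false: test $W=u(x/\lambda)v(y)$ with $v(0)=0$ and $\lambda\to0$. Your reduction to $\alpha'p>1$ through the previous proposition is backwards, since that injection lowers the regularity index; once $\alpha'>\alpha$ and $\alpha p>1$, the condition $\alpha'p>1$ is automatic. The paper's proof tacitly uses $\alpha'>\alpha$ (through its compact injection $W^{\alpha',p}(I)\hookrightarrow W^{\alpha-\frac1p,\infty}(I)$ and the Bernstein constant $C_N\to0$) and calls the weight step verbatim. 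The hypotheses should be read as $\alpha'>\alpha$, $s'>s+\frac2p$, $\ell'>\ell+\frac2p$.

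\textbf{The Arzel\`a--Ascoli step.} This has a further gap, shared by the paper's Fourier-mode argument, which only estimates $x$-differences. Bounded rectangular increments together with $W(\cdot,0)=0$ give equicontinuity in $x$, since $W(x_1,y)-W(x_2,y)$ is the increment over $[x_1,x_2]\times[0,y]$. But nothing controls $y\mapsto W(x,y)$ beyond the zeroth-order term of the norm: any $W$ depending on $y$ alone has vanishing mixed seminorm. So $W_k(x,y)=\sin(ky)$ is bounded in $\mathcal{W}^{\alpha',\alpha';p}(\square)$, yet has no uniformly convergent subsequence. A uniform limit would have to be the weak limit $0$, while $\Vert W_k\Vert_{L^\infty}=1$.

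This is not a local artifact. Take the Morse-gauge forms $\dd(h_k\circ\theta)$, where $\theta$ is the polar angle at $\min f$ and $h_k(\theta)=\phi(\theta)\sin(k\theta)$ is supported in a sector of flow lines running from $\min f$ to $\max f$. Their primitives $W$ depend only on $y$ in every flow box, and the forms are invariant under $\mathcal{S}^{2^{-n}\star}$ at both extrema. Hence they are bounded in $\mathcal{W}^{\alpha',p,s',\ell'}$ (for $\ell'<\frac2p$) with no Cauchy subsequence in the target.

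To get a genuine compact embedding you must add to the flow-box norm a term controlling the $y$-regularity of $W$, e.g. $\Vert W\Vert_{L^p_xW^{\alpha',p}_y}$. With that term:
\begin{enumerate}
\item the mixed bound transfers $y$-H\"older control from one good fiber to all $x$;
\item Arzel\`a--Ascoli then applies;
\item interpolation with the slack $\alpha'-\alpha$ upgrades uniform convergence to convergence in the mixed H\"older norm;
\item with the corrected weights, your global step goes through.
\end{enumerate}
With that fix, your Morrey-based local step is a legitimate substitute for the paper's Littlewood--Paley/Bernstein estimate, and it even avoids shrinking the boxes.
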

\begin{proof}
The proof is very similar to the previous one. For step one, given any pair of flowboxes $\square_1\subsetneq \square_2$, we need to establish the compact embedding of ``local spaces''
$$\mathcal{W}^{\alpha',\alpha',p}(\square_2)\hookrightarrow  \mathcal{W}^{\alpha-\frac{1}{p},\alpha-\frac{1}{p},\infty}(\square_1).$$
 Only two things must be changed. We use the compact injection $W^{\alpha',p}(I) \hookrightarrow W^{\alpha-\frac{1}{p},\infty}(I) $ to deduce that $\widehat{T_n\chi}(.,k)$ converges in $W^{\alpha-\frac{1}{p},\infty}(I)$ to $\widehat{T_\infty\chi}(.,k)$. By the boundedness of $(T_n\chi)_n$ in $\mathcal{W}^{\alpha',\alpha',p}$,  we need to control:
\[
\sup_{x_1\neq x_2\in I^2 } \sup_{j\geq 0}  \left\{  \frac{2^{j(\alpha-\frac{1}{p})} \Vert \psi_j(\sqrt{-\Delta})( (T_n-T_\infty)\chi(x_1,.)-(T_n-T_\infty)\chi(x_2,.))  \Vert_{L^\infty_y} }{\vert x_2-x_1\vert^{(\alpha-\frac{1}{p})}} \right\}
\]
in terms of
\[
\sup_{n\geq 0}\int_{I^2} \frac{\sum_{j=0}^\infty 2^{jp\alpha'} \Vert \psi_j(\sqrt{-\Delta})( T_n\chi(x_1,.)-T_n\chi(x_2,.))  \Vert_{L^p_y}^p }{\vert x_2-x_1\vert^{1+\alpha' p}}  \,  \dd x_1\dd x_2 \leqslant C.     
\]

But we find from the existence of compact injections that
\begin{align*}
    &\sup_{j\geqslant N}\left\{2^{j(\alpha-\frac{1}{p})} \Vert \psi_j(\sqrt{-\Delta})( T\chi(x_1,.)-T\chi(x_2,.))  \Vert_{L^\infty_y} \right\} \\
    &\hspace{2cm}\leqslant C_N  \left( \sum_{j=0}^\infty 2^{jp(\alpha')} \Vert \psi_j(\sqrt{-\Delta})( T\chi(x_1,.)-T\chi(x_2,.))  \Vert_{L^p_y}^p \right)^{\frac{1}{p}} 
\end{align*}
where $C_N\rightarrow 0$ when $N\rightarrow +\infty$ and $C_N$ does not depend on $T$.

Therefore 
$$ 
\sup_{x_1\neq x_2\in I^2 } \sup_{j\geqslant N}    \frac{2^{j(\alpha-\frac{1}{p})} \Vert \psi_j(\sqrt{-\Delta})( (T_n-T_\infty)\chi(x_1,.)-(T_n-T_\infty)\chi(x_2,.))  \Vert_{L^\infty_y} }{\vert x_2-x_1\vert^{(\alpha-\frac{1}{p})}} 
$$
can be made arbitrarily small by choosing $N$ large enough and 
$$ 
\sup_{x_1\neq x_2\in I^2 } \sup_{j\leqslant N}    \frac{2^{j(\alpha-\frac{1}{p})} \Vert \psi_j(\sqrt{-\Delta})( (T_n-T_\infty)\chi(x_1,.)-(T_n-T_\infty)\chi(x_2,.))  \Vert_{L^\infty_y} }{\vert x_2-x_1\vert^{(\alpha-\frac{1}{p})}} 
$$
goes to $0$ when $n\rightarrow +\infty$ since the term depends on finite number of Fourier modes and $\widehat{T_n\chi}(.,k) \rightarrow \widehat{T_\infty\chi}(.,k) $ in $W^{\alpha-\frac{1}{p},\infty}(I)$.

The second step dealing with the weight is almost verbatim the same as for the previous proposition.
\end{proof}

\section{The Yang--Mills random field} \label{s3}
We review the construction from~\cite{BCDRT,DN} of the Yang--Mills measure (in the Morse gauge) on $\Sigma$. 
Starting with the free boundary 
Yang--Mills measure on the surface $\mathcal{S}$---obtained by blowing up the maximum of $f$ into a boundary circle---one then conditions the boundary holonomy to be trivial in order to close the surface. 
This paragraph justifies the existence of this random connection in the spaces $\mathcal{B}_{\mathsf{YM}}^{\alpha,p,s,\ell}$. We start with an informal exposition of the construction of the free measure. 

\subsection{The free boundary Yang--Mills measure}
In this paragraph, we give an informal account of the construction of the YM measure using the Morse gauge introduced in~\cite{BCDRT,DN}. This is a novel dynamical gauge fixing which allows to abelianize the Yang--Mills theory on surfaces of any genus at the price of introducing singularities along curves which act as \emph{defect lines}.
Conceptually, it yields a slicing of the space of connections, and on each slice, an infinite dimensional normal form for the Yang--Mills action functional. This introduces new coordinates $\big(B,(g_a)_{a\in\mathsf{Crit}_1(f)})$ on the moduli space $ \mathcal{A} /\mathcal{G} $ of connections. In this set of coordinates, $B$ is a $1$-form that only has a component in the orthogonal direction to the gradient of $f$, and $(g_a)_{a\in \mathsf{Crit}_1(f)}$ is a family of group elements.  In these coordinates, the Yang--Mills functional becomes quadratic in $B$, and the Morse gauge acts as an infinite dimensional Morse chart for the Yang--Mills functional where it becomes \emph{quadratic in one of the variables}. Let us explain how these coordinates arise.
\par We work on the blow--up surface $\mathcal{S}$ and lift the gradient field $V=\nabla f$ that vanishes at $\max(f)$ as a $b$--vector field still denoted by $V$~\footnote{informally, a $b$--vector field is a $C^\infty$ vector field tangent to the boundary of $\mathcal{S}$. In the present case, the boundary $\partial\mathcal{S}$ is given by $r=0$ for the boundary defining function $r=\sqrt{x^2+y^2}$ where $V=-x\partial_x-y\partial_y$, then the  $b$--vector field $V$ reads $-r\partial_r$ in this coordinate} on $\mathcal{S}$ that vanishes along the boundary $\partial\mathcal{S}$. Given the gradient vector field $V= \nabla f$ with $f$ a Morse function, we can define the notion of Morse gauge: for any smooth connection $\nabla=\dd+A$, there exists a family of gauge transformations $\left(g_T\right)_{T\geqslant 0}$ on $\mathcal{S}$ such that \[\lim_{T\rightarrow +\infty}g^{-1}_T(\dd+A) g_T\] yields trivial parallel transport along the integral curves of $V$:
  \[ \iota_V \big( \lim_{T\rightarrow +\infty} g_T^{-1}\dd g_T+g_T^{-1}Ag_T \big)=0 .\]
 The fact that the Morse gauge is a genuine gauge transformation is discussed in~\cite{BCDRT}.
\par The limiting connection that we obtain after the dynamical procedure \[A_\infty=\lim_{T\rightarrow +\infty} g_T^{-1}\dd g_T+g_T^{-1}Ag_T \] in the space of $\mathfrak{g}$ valued $1$--forms of suitable negative Sobolev regularity decomposes as
\begin{equation}\label{eq:MorseHodge}
 A_\infty=\underset{\in \mathrm{Im}(\iota_V)}{ \underbrace{ \iota_V \beta }} + \underset{\text{zero modes}}{\underbrace{\sum_{a\in \mathsf{Crit}_1(f)} {\rm m}_a U_a  }},
\end{equation} 
where $\beta$ is a $2$--form, smooth outside $\overline{\cup_{a\in \mathsf{Crit}_1(f)}W^u(a)}$, the $m_a$ are elements in $\g$, and each $U_a, a\in\mathsf{Crit}_1(f)$ is the current of integration on the unstable curve $W^u(a)$. The right hand side of the above decomposition involves a direct sum and the space $\left(\ker(\mathcal{L}_V)\cap \Omega^1\right)$ represents the zero modes of the Lie derivative $\mathcal{L}_V$ acting on the space of $1$--forms. This is a Morse theoretic decomposition reminiscent of the Hodge decomposition in elliptic theory where usual Sobolev spaces are replaced by anisotropic spaces. 
\par Endow the closed  surface $\Sigma$  with an area form $\sigma$ which induces an area form on $\mathcal{S}$ and that we denote by $\sigma$ as well. This area form induces a Hodge $\star$ operator acting on  $\Omega^0(\mathcal{S})\rightarrow \Omega^2 (\mathcal{S})$.  Under the Morse gauge, $\iota_VA=0$, and we may rewrite the Yang--Mills action functional as
\[S_{\mathsf{YM}}(A) = - \lim_{\varepsilon\rightarrow 0^+} \int_{\mathcal{S}\setminus C_\varepsilon} \Tr\left(\dd A\wedge \star \dd A \right) \]
where $C_\varepsilon$ is a small $\varepsilon$--neighborhood collar around
the boundary $\partial\mathcal{S}$.
\par The goal of the next paragraph is to properly motivate our definition of the physicist's path integral $
   \exp\big( -S_{\mathsf{YM}}(A) \big) \mathcal{D} A $ .
\myparagraph{Physicist's functional integral.}
Set $B=\dd A\in \Omega^2(\mathcal{S})$, with the constraint $A\in \ker(\iota_V)$. Then, in some sense, we can establish that the de Rham boundary operator $\dd: A\in \ker(\iota_V) \mapsto B=\dd A $ 
can be inverted by the chain homotopy operator $\iota_V\mathcal{L}_V^{-1}$ up to elements in $\ker\left( \mathcal{L}_V\right)\cap \Omega^1$. This suggests that any connection in the Morse gauge admits a decomposition of the form~\cite{BCDRT, DN}:
\begin{align}
    \label{eq:A_specific_form}
    A = & \iota_V\mathcal{L}_V^{-1}B+\sum_{a\in \mathsf{Crit}_1(f)} \log(g_a) U_a,
\end{align}
where $g_a \in G$ and $U_a$ are the unstable currents~\cite[Prop 6.6 p.~1437]{DR19}. Here we have made the choice of a logarithm $\log: G \rightarrow \mathfrak{D}\subset \mathfrak{g}$, taking values in a fundamental domain $\mathfrak{D}$. For convenience, we write $A_{\mathcal{N}}  \coloneq  \iota_V\mathcal{L}_V^{-1}B$. 
\par Since unstable currents are de Rham closed --- which follows from unstable curves being closed curves or from the instanton formula~\cite[Thm 2.5 p.~1803]{DR21}--- we indeed have $\dd A = B$. Thus the Yang Mills functional reads
\[ S_{\mathsf{YM}}(A) = -\int_{\Sigma} \Tr\left(B\wedge \star B \right)\]
in terms of $B$. In particular, given that $A$ is defined from the data of $\left(B, (g_a)_{a \in \mathrm{Crit}(f)_1} \right)$, the $g_a$'s contribution vanishes through the action functional. In physics, such variables are called \textit{zero modes}, and they need to be attributed a natural finite-dimensional measure on $G^{2g}$. Unsurprisingly, in order to agree with the theoretical physics literature, the appropriate choice will prove to be the Haar measure $\prod_a \dd g_a$.
\par Expressing the $2$-form $B$ in terms of the area form $\sigma$ yields $B=\xi\sigma$ where $\xi$ is a Lie algebra valued distribution and the Yang--Mills functional becomes
$  \int_{\mathcal{S}} \vert \xi\vert_{\mathfrak{g}}^2\sigma $. 
The above discussion is summarized by saying that
\begin{align*}
S = - \lim_{\varepsilon\rightarrow 0^+} \int_{\mathcal{S}\setminus C_\varepsilon } \Tr\left(\dd A\wedge \star \dd A \right), \text{ for } A\in \Omega^1(\mathcal{S},\mathfrak{g}) \text{ s.t. } \iota_VA=0,
\end{align*}
corresponds to
\begin{align*}
S = \int_{\mathcal{S}} \vert \xi\vert_{\mathfrak{g}}^2\sigma \text{ for }A=\iota_V\mathcal{L}_V^{-1}\left(\xi\sigma \right)+\sum_{a\in\mathsf{Crit}_1(f)} \log g_a U_a , \ \xi\in C^\infty(\mathcal{S},\mathfrak{g}), g_a \in G. 
\end{align*}

The justification of the existence of the term $\iota_V\mathcal{L}_V^{-1}\left(\xi\sigma \right)$ comes from solving stochastic cohomological equations in \cite{BCDRT}, or using pseudo-coordinates in~\cite{DN}. The former relies on some spectral gap estimates for the transfer operator associated with the gradient flow proved in \cite{BCDRT}. In the functional integral, for any gauge invariant observable $F$, we write
\begin{align*}
   & \int_{A\in \Omega^1(\mathcal{S},\mathfrak{g}),\iota_VA=0} F(A) e^{-S_{\mathsf{YM}}(A)} \dd A \\
 &\hspace{2cm}= \int_{G^{2g}} \prod_{a\in \mathsf{Crit}_1(f)} \dd g_a 
 \\&\hspace{4cm}\cdot
\int_{\xi\in \Omega^0(\mathcal{S},\mathfrak{g})}
 \dd\xi\, 
e^{ -\int_{ \mathcal{S}} \vert \xi\vert_{\mathfrak{g}}^2\sigma }  \left| \det( \iota_V\mathcal{L}_V^{-1}) \right|
F\Big(\iota_V\mathcal{L}_V^{-1}(\xi\sigma)+\sum_{a\in\mathsf{Crit}_1(f)} \log g_aU_a \Big),
\end{align*}
under the formal change of variable 
$A=\iota_V\mathcal{L}_V^{-1}\left(\xi\sigma \right)+\sum \log(g_a)  U_a$ inside the functional integral. Furthermore, when normalized into a probability measure, the formal Jacobian $\left| \det( \iota_V\mathcal{L}_V^{-1}) \right|$ is a constant that simplifies.
At this stage, the kinetic part of the path integral $ \dd \xi
e^{ -\int_{\mathcal{S}} \vert \xi\vert_{\mathfrak{g}}^2\sigma }$ should be interpreted as white noise with values in $\gfrak$. This leads to the following definition of the (free boundary) Yang Mills measure on the surface $\mathcal{S}$. Let $\xi$ be a $\gfrak$-valued Gaussian process on $\mathcal{S}$ with induced area form that we still denote by $\sigma$, with zero mean and covariance structure given by 
\[
\E\left[ \xi(f_1) \xi(f_2) \right]
= 
\int_{\mathcal{S}} \langle f_1, f_2 \rangle_\gfrak \sigma
\ ,
\]
for any smooth functions $f_1, f_2: S \rightarrow \gfrak$, and where $\xi(f)$ is a shorthand notation for $\int_{\mathcal{S}} \left\langle\xi,f\right\rangle_{\mathfrak{g}}\sigma$.

\begin{definition}[Free boundary Yang--Mills measure] \label{FreeYMM}
\label{definition:YM_free_boundary}
Define the random connection $A^\mathsf{Free}_{\mathcal{S},\sigma}$ on $\mathcal{S}$ via its expectation for any functional $F$  by
\[
\E \Big[ F\big(A^\mathsf{Free}_{\mathcal{S},\sigma} \big)\Big]
=
\int_{G^{2g}} \prod_{a\in \mathsf{Crit}_1(f)} \dd g_a\, 
\E\Bigg[ F\Big( \iota_V\mathcal{L}_V^{-1}\left(\xi\sigma \right)+\sum_{a'\in\mathsf{Crit}_1(f)} \log g_{a'}[U_{a'}] \Big)  \Bigg].
\]
The law of $A^\mathsf{Free}_{\mathcal{S},\sigma}$ seen as a measure on $\mathcal{D}'^1(\mathcal{S},\g)$ is called the free boundary  Yang--Mills measure on $\mathcal{S}$. 
\end{definition}
It is, of course, a task to show that the above definition makes sense. This work was carried out in \cite{DN,BCDRT}. Here we will only state the precise existence theorem for this measure.

The following theorem is a rigorous result on the existence of the Yang--Mills measure. It has been proved in both \cite{DN} and \cite{BCDRT}. In \cite{DN}, it is proved using a scaling limit argument, meaning that there exists a sequence of lattice measures that converge, in the spaces $\mathcal{B}_{\mathsf{YM}}^{\alpha,p,s,\ell}$, to the continuous measure. In \cite{BCDRT}, it was shown using mollifications that the Yang--Mills measure can be obtained as a limit in some weighted Sobolev space of Sobolev regularity $<-1$. In the appendix, we recall how to adapt the arguments of \cite{BCDRT} to obtain a convergence in  $\mathcal{B}_{\mathsf{YM}}^{\alpha,p,s,\ell}$ instead of only in some less sharp weighted Sobolev space.

\begin{thm}[The free boundary Yang--Mills measure \cite{DN, BCDRT}, Appendix \ref{s:justifyformulaDAS}] \label{MeasureTheorem}
    For $\alpha<\frac{1}{2}$, $p\geq 1$, $s<0$, and $\ell$ small enough, there exists a $\mathcal{B}_{\mathsf{YM}}^{\alpha,p,s,\ell}$-valued random variable $A^\mathsf{Free}_{\mathcal{S},\sigma}$ such that
    \begin{itemize}
        \item The solution $(U_\varepsilon(1))_{\varepsilon>0}$ at time $1$  of the $\epsilon$-mollification of the SDE defining the holonomy along level curves:
        \[\dd U_\varepsilon=U_\varepsilon \dd W_\varepsilon, U_\varepsilon(0)=1_G, \,\ W_\varepsilon(t):= \int_{\mathbb{R}} \chi_\varepsilon(t-r)\left(\int_0^r
        \gamma^*A^\mathsf{Free}_{\mathcal{S},\sigma} \right) \dd r \]
        converges a.s., when $\varepsilon\rightarrow 0^+$, to a random variable $\mathsf{Hol}_{\gamma}\big (A^\mathsf{Free}_{\mathcal{S},\sigma}\big )$  measurable w.r.t. $A^\mathsf{Free}_{\mathcal{S},\sigma}$.
        \item The family of random variables $\big(\mathsf{Hol}_{\{f=r\}}(A^\mathsf{Free}_{\mathcal{S},\sigma})\big)_{a_{2g}\leq r\leq \max f}$ is a Markov process.
        \item The law of $\mathsf{Hol}_{\{f=r\}}$ is given by 
        \begin{equation*}
\mathbb{P}\big[\mathsf{Hol}_{\{f=r\}}\big(A^{\mathsf{Free}}_{S,\sigma}\big)\in \dd g \big]= \sum_{\rho\in \widehat{G}}  \exp\Big(-\sigma\big(\Sigma_{\leq r}\big) c_2(\rho)\Big)  \dim(V_\rho)^{1-2g} \chi_\rho(g) \,\dd g=:Z_{\Sigma_{\leq r}, \sigma},
\end{equation*}
where $Z_{\Sigma,\sigma}$ is the Yang--Mills partition function of the surface $\Sigma$ equipped with area form $\sigma$.
    \end{itemize}
\end{thm}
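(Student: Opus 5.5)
The plan is to build $A^{\mathsf{Free}}_{\mathcal{S},\sigma}$ directly from Definition~\ref{FreeYMM}. The noise part is $A_{\mathcal N}=\iota_V\mathcal{L}_V^{-1}(\xi\sigma)$ and the zero-mode part is $\sum_a \log g_a\, U_a$, with the $g_a$ independent and Haar distributed. The second term is harmless. The logarithm takes values in a bounded fundamental domain $\mathfrak D$, so $\sum_a \log g_a U_a$ lies in the finite-dimensional summand $\mathfrak g\otimes\mathsf{vect}\{U_a\}$ with norm at most $\mathsf{diam}(G)$. It is therefore measurable as a random variable there. By Definition~\ref{YmSpace} and Proposition~\ref{prop:keyprop}, the whole task reduces to showing that $A_{\mathcal N}$ is an a.s.\ defined $\mathcal W^{\alpha,p,s,\ell}$-valued random variable, independent of $(g_a)$.

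For the noise term I would mollify, setting $A_{\mathcal N,\varepsilon}=\iota_V\mathcal{L}_V^{-1}(\xi_\varepsilon\sigma)$, and show that these form a Cauchy family in $L^p(\Omega;\mathcal W^{\alpha,p,s,\ell})$. In a flow box with $V=\partial_x$ we have $A_{\mathcal N}=\partial_y W\,\dd y$. Here $W(x,y)$ is, up to the smooth density of $\sigma$ and the Jacobian of the flow, the white noise integrated over the rectangle between the level of the boundary/base point and $(x,y)$, i.e.\ a Brownian sheet. The rectangular increment in Definition~\ref{FlowboxSpace} is then a Gaussian variable with variance $\lesssim |x_1-x_2||y_1-y_2|$. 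Gaussian hypercontractivity then bounds $\mathbb E\|\cdot\|^p$ by
\[\int |x_1-x_2|^{p/2-1-\alpha p}|y_1-y_2|^{p/2-1-\alpha p}\,\dd x\,\dd y,\]
which is finite precisely when $\alpha<\tfrac12$. The same computation applied to $A_{\mathcal N,\varepsilon}-A_{\mathcal N,\varepsilon'}$ gives a small factor, because the covariances converge in the relevant Hölder norms. This is the Kolmogorov-type step.

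Near critical points I would apply the dyadic scaling $\mathcal S_a^{2^{-n}\star}$. Since $\iota_V\mathcal L_V^{-1}$ commutes with scaling in linearizing Morse charts, and white noise pulled back by a dilation of ratio $2^{-n}$ has variance scaled by $2^{-2n}$, we get
\[\mathbb E\|\mathcal S^{2^{-n}\star}_aA_{\mathcal N}\|^p\lesssim 2^{-np}\, \cdot(\text{polynomial/log factors from the flow time}).\]
Summing $2^{n(s-2/p)p}$ against this converges once $s$ (resp.\ $\ell$) is below the threshold. These thresholds are where the hypotheses $s<0$ and $\ell$ small come from.

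The holonomy statements would then follow from \cite{DN,BCDRT}. Along a level curve $\gamma$ transverse to $V$, $\gamma^*A$ is, by the above structure, the derivative of a Brownian motion run with clock $\sigma(\Sigma_{\le r})$ plus finitely many jumps $\log g_a$ at the intersections with $W^u(a)$. The mollified SDE therefore converges by the Wong--Zakai theorem to the Stratonovich solution, interlaced with multiplications by $g_a^{\pm1}$. The Markov property in $r$ comes from the independence of white noise on disjoint strips $\{r\le f\le r'\}$ together with the independence of the $g_a$. The heat-kernel law follows by convolving Brownian motions on $G$ with Haar-integrated commutator words, using the character expansion and $\int_G\chi_\rho(xgx^{-1}h)\,\dd x=\chi_\rho(g)\chi_\rho(h)/d_\rho$. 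Each handle contributes a factor $d_\rho^{-2}$.

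The main obstacle is the weighted estimate near saddle points and near $\partial\mathcal S$. There the flow time to reach a box diverges logarithmically, and $\iota_V\mathcal L_V^{-1}$ accumulates noise from a non-compact region of flow lines. I would control this by using the explicit linear flow $(x,y)\mapsto(e^tx,e^{-t}y)$ in Morse charts and computing the covariance of $W$ directly. My concern is whether the resulting bound is exactly $2^{-n}$ up to a loss of $n^{C}$, which the strict inequality on $s$ would absorb.
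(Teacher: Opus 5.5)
Your treatment of the noise term is, in substance, the paper's Appendix~\ref{s:justifyformulaDAS}. It is a Cauchy sequence in $L^p(\Omega;\mathcal{W})$, obtained from hypercontractivity and the fact that rectangular increments of the primitive $W$ have variance $\sigma(\square)$; this is integrable against the Gagliardo kernel exactly when $\alpha<\frac12$. The paper approximates $\xi$ by the truncation $\xi_N=\sum_{n\le N}c_ne_n$ of an orthonormal expansion rather than by mollifying. It extracts the small factor from $\sum_{N_1\le n\le N_2}|\langle 1_\square,e_n\rangle|^2\le o(1)\,\sigma(\square)^{1-\varepsilon}$. Either way smallness costs an $\varepsilon$ of regularity, which is harmless since $\alpha<\frac12$ is an open condition.

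The genuine difference is the holonomy. The paper does not use Wong--Zakai. It mollifies along the curve, computes $\mathbb{E}|\mathbb{W}_\varepsilon|^2$ by Wick's theorem, and passes to the limit using Lemma~\ref{lem:regularityarea}. It then applies Kolmogorov's criterion for rough paths and solves by Gubinelli integration, so that It\^o--Lyons continuity gives $\mathsf{Hol}_\gamma$ as a continuous function of the lifted line integrals. It takes this route because on an elementary piece of a level curve meeting $W^u(a)$, the clock is $\phi(\theta)=\sigma(\angle\gamma([0,\theta]))$. (It is not $\sigma(\Sigma_{\le r})$, which is only its total.) This $\phi$ is merely $C^{1-}$, with a logarithmically singular derivative. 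Your route can be made rigorous: $\phi$ is continuous and strictly increasing, so the driving signal is a time-changed Brownian motion, and Stratonovich solutions are reparametrization invariant. You still have to check two things. First, that mollification in $\theta$ is an admissible approximation after the time change, e.g.\ via Gaussian rough path results for covariances of finite $1$-variation. Second, as the paper stresses, that line integrals along level curves are measurable functionals on the anisotropic space, so that $\mathsf{Hol}_\gamma$ is measurable with respect to $A^{\mathsf{Free}}_{\mathcal{S},\sigma}$ and not merely with respect to $\xi$. Wong--Zakai buys brevity; the rough-path route buys a deterministic, continuous solution map.

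Two corrections. First, near $\partial\mathcal{S}$ your bound $2^{-np}$ up to logarithms is false. Scaling only controls the rectangular (Gagliardo) part of the flow-box norm. The $\|W\|_{L^p}$ part is non-local: along a transversal arc at radius $2^{-n}$ of angular size $O(1)$, it integrates the noise over a backward-swept sector of area $O(1)$. That term is therefore $O(1)$ uniformly in $n$, and $\sum_n 2^{n(\ell-\frac2p)p}$ converges only for $\ell<\frac2p$. This, not a strict inequality absorbing logarithms, is where ``$\ell$ small enough'' enters. At saddle points your guess is right, since the swept area there is $O(2^{-2n}n)$.

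Second, independence of the strip noise does not by itself give the Markov property, because $\mathsf{Hol}_{r'}$ depends on the whole driving path at level $r$, not only on $\mathsf{Hol}_r$. Write $U$ for the transport at level $r$ and $M$ for the strip noise. Then $\mathsf{Hol}_{r'}=\tilde V_1\,\mathsf{Hol}_r$ with $\dd\tilde V=\tilde V\,U\,\dd M\,U^{-1}$. $\mathrm{Ad}$-invariance of the covariance makes $\tilde V_1$ independent of everything below level $r$, which gives the Markov property. The paper itself defers both the Markov property and the law to \cite{DN,BCDRT}.
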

For the precise reason of why this random variable is indeed the free boundary Yang--Mills measure, we refer the reader to \cite{DN,BCDRT}.
\subsection{The Yang--Mills pre-measure on closed surfaces}
\label{ss:YMmeasureconditioning}
Definition \ref{definition:YM_free_boundary} and the associated results describe a measure with two independent components: the gaussian component $A_{\Nc}$ and the component associated to the zero modes $\sum_{a\in\mathsf{Crit}_1(f)}U_a\log g_a$. In order to define the Yang--Mills measure on the closed surface $\Sigma$, those two random variables will be coupled through a (non-linear) conditioning. The intuition behind this procedure is two-fold. 
\begin{itemize}
    \item Geometrically, this amounts to ``plugging the hole'' at $\max f$ by prescribing the appropriate condition $\mathsf{Hol}_{\partial \mathcal{S}}\big(A^{\mathsf{Free}}_{S,\sigma}\big)=1_{G}$. 
    \item Probabilistically it is reminiscent of the construction of the Brownian bridge from the  Brownian motion by conditioning its end value to be $0$.
\end{itemize}
A similar idea played a central role in the work of Sengupta~\cite{Sengupta97}. We will start by identifying the expected law of the closed measure. This does not give for the moment one proper measure, but rather, a family of projective measures. It will be shown in later sections that one could define one single measure out of this family by tightness.
\par For $\epsilon>0$ let $\psi_\epsilon:\Sigma \rightarrow [0,1]$ be a smooth function such that 
\[\psi_\epsilon(x)=\begin{cases}
    1 \text{ if } d(x,\max f)\geq 3\epsilon, \\
    0 \text{ if } d(x,\max f)\leq \epsilon.
\end{cases}\]
The notion $\Sigma_{\geq r}$ refers to the part of the surface such that $f\geq r$. Refer to Appendix \ref{symb} for notations.

\begin{definition}[The Yang--Mills pre-measure] \label{ClosedYMM}
For $\epsilon>0$, define the $\mathcal{B}_{\mathsf{YM}}^{\alpha,p,s,\ell}$-valued random variable $A_{\Sigma,\sigma}^\epsilon$ as follows. For any $F:\mathcal{B}_{\mathsf{YM}}^{\alpha,p,s,\ell}\rightarrow \R_+$ measurable, set
 \begin{equation}  \label{FiniteDimClosed}
 \mathbb{E}\Big[F\big(A^\epsilon_{\Sigma,\sigma}\big) \Big]=\frac{\mathbb{E} \Bigg[ 
 F
 \big(A^{\mathsf{Free}}_{S,\sigma}\psi_\epsilon\big) 
 \cdot 
 p_{
 \sigma\big(\Sigma_{\geq \max f- \epsilon}\big)
 }
 \Big(\mathsf{Hol}_{\max f-\epsilon}\big(A^{\mathsf{Free}}_{S,\sigma}\big)\Big)
 \Bigg]}
 {\mathbb{E}\Big[ p_{
 \sigma\big(\Sigma_{\geq \max f -\epsilon}\big)
 }
 \Big(\mathsf{Hol}_{\max f-\epsilon }\big(A^{\mathsf{Free}}_{S,\sigma}\big)\Big)\Big] },
 \end{equation}   
 where $A^{\mathsf{Free}}_{S,\sigma}\psi_\epsilon$ is short for 
 \[\left(\pi_\mathsf{noise}\cdot A^{\mathsf{Free}}_{S,\sigma}\right)\psi_\epsilon+\pi_{\mathsf{flat}}\cdot A^{\mathsf{Free}}_{S,\sigma}. \]
\end{definition}
We refer to Appendix~\ref{s:justifyformulaDAS} 
where we justify formula~(\ref{FiniteDimClosed}), precisely for the problem of defining the holonomy map for $A^{\mathsf{Free}}_{S,\sigma}$.
\par The above definition defines a family of random variables $\big(A^\epsilon_{\Sigma,\sigma}\big)_{\epsilon>0}$. It does not however define one single random variable that can be called the Yang--Mills measure on the closed surface. However, by compatibility provided by the Markov property of the family of random variables $\big(\mathsf{Hol}_{\{f=r\}}(A^\mathsf{Free}_{\mathcal{S},\sigma})\big)_{a_{2g}\leq r\leq \max f}$, it allows to compute without ambiguity the expectation of any observable measurable with respect to $\Sigma_{\leq \max f-\epsilon}$. In the next section, we will show that $\big(A^\epsilon_{\Sigma,\sigma}\big)_{\epsilon>0}$ is tight. Then, the convergence is straightforward and its limit will be called the Yang--Mills measure on the closed surface $\Sigma$.

\subsection{The Yang--Mills measure as a Borel probability measure on $\mathcal{B}_{\mathsf{YM}}^{\alpha,p,s,\ell}$} \label{s6}
In this section, we want to show that the family of measures that we defined in Definition \ref{ClosedYMM} induce a Borel probability measure on the space $\mathcal{B}_{\mathsf{YM}}^{\alpha,p,s,\ell}$.  The main theorem of this section is as follows.
\begin{thm} [Yang--Mills measure] \label{ClosedYMMeasure}
    For $\alpha<\frac 1 2$, $p\geq 1$, $\ell<1$ , and $s<0$,  as $\epsilon\to 0$, $\big(A^\epsilon_{\Sigma,\sigma}\big)_{\epsilon>0}$ converges, in the sense of probability measures on $\mathcal{B}_{\mathsf{YM}}^{\alpha,p,s,\ell}$, to a random variable $A_{\Sigma,\sigma}$. This random variable is called the Yang--Mills random field in Morse gauge.
\end{thm}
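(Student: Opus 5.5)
The plan is to prove tightness of $(A^\epsilon_{\Sigma,\sigma})_{\epsilon>0}$ in $\mathcal{B}_{\mathsf{YM}}^{\alpha,p,s,\ell}$ and then identify the limit uniquely through the consistency furnished by the Markov property.

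\textbf{Tightness.} By the compact injections of Subsection~\ref{CompInj}, it suffices to show that
\[
\sup_{\epsilon}\mathbb{E}\big[\|A^\epsilon_{\Sigma,\sigma}\|_{\mathcal{B}^{\alpha',p,s',\ell'}_{\mathsf{YM}}}\big]<\infty
\]
for some slightly better parameters $\alpha'>\alpha$, $s'<s$, $\ell'<\ell$. Closed balls of the better space are then compact in the target space, and Markov's inequality gives tightness.

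By Proposition~\ref{prop:keyprop}, the norm splits into two parts.
\begin{itemize}
\item The flat part is $\max_a\|x_a\|_{\mathfrak g}$. It is almost surely bounded by a constant depending only on the fundamental domain $\mathfrak D$ of $\log$, namely by $\mathsf{diam}(G)$. This part is therefore trivially uniformly integrable.
\item The noise part is $\|(\pi_{\mathsf{noise}}A^{\mathsf{Free}})\psi_\epsilon\|$. Here I use the Radon--Nikodym density in Definition~\ref{ClosedYMM}:
\[
D_\epsilon=\frac{p_{\sigma(\Sigma_{\geq \max f-\epsilon})}\big(\mathsf{Hol}_{\max f-\epsilon}\big)}{\mathbb{E}\big[p_{\sigma(\Sigma_{\geq \max f-\epsilon})}\big(\mathsf{Hol}_{\max f-\epsilon}\big)\big]}.
\]
The denominator converges to the closed partition function $Z_{\Sigma,\sigma}>0$ by the heat kernel formula of Theorem~\ref{MeasureTheorem}. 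The numerator, however, blows up like $\epsilon^{-\dim G}$, so a naive $L^\infty$ bound on $D_\epsilon$ fails.
\end{itemize}

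\textbf{Uniform bound on the noise part.} I will not bound the density directly. Instead I will split the noise norm into a part localized on $\Sigma_{\leq \max f-\delta}$ and a part in the annulus near $\max f$, using the dyadic structure of the weighted norm with weight $\ell$.

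On $\Sigma_{\leq\max f-\delta}$, the Markov property of level-curve holonomies lets me condition on $\mathsf{Hol}_{\max f-\delta}$. The effective density is then $p_{\sigma(\Sigma_{\geq \max f-\delta})}(\cdot)/Z$, which is bounded uniformly in $\epsilon<\delta$ because the heat kernel at a fixed positive time is smooth. The free-measure moment bounds from Theorem~\ref{MeasureTheorem} then transfer.

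For the annulus pieces at dyadic scale $2^{-n}$ around $\max f$, I will apply the same argument on each dyadic corona $C_n$. The conditioning density on $C_n$ involves $p_{c4^{-n}}$, which is of order $4^{n\dim G/2}$. After Hölder, this costs at most a polynomial factor $2^{nK}$ per scale. The Gaussian noise contributes $2^{-n(\ell_0-\ell')}$ for the free weight $\ell_0$ of Theorem~\ref{MeasureTheorem}. The main point is that any polynomial loss in $2^n$ can be absorbed by lowering $\ell$ (this is why $\ell$ small is assumed), so the dyadic sum in the $\ell'$ norm converges uniformly in $\epsilon$.

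\textbf{Main obstacle.} The hard step is this annulus estimate. The conditioning couples the Gaussian noise with the group-valued holonomy, so the noise near the boundary is no longer Gaussian. I would first try Cauchy--Schwarz:
\[
\mathbb{E}[\|\cdot\|\,D]\leq \mathbb{E}[\|\cdot\|^2]^{1/2}\,\mathbb{E}[D^2]^{1/2}.
\]
Here $\mathbb{E}[D^2]$ on scale $n$ is computed explicitly via the Peter--Weyl expansion of the heat kernel, and it grows only polynomially in $2^n$.

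\textbf{Identification of the limit.} Let $\mu$ be any subsequential limit. For observables measurable with respect to $\Sigma_{\leq\max f-\delta}$, Definition~\ref{ClosedYMM} gives values independent of $\epsilon<\delta$, by the Markov compatibility noted after that definition. Such observables, over all $\delta>0$, separate Borel probability measures on $\mathcal{B}_{\mathsf{YM}}$: they determine $\mu$ restricted to test forms supported away from $\max f$, and Proposition~\ref{prop:notchargeflow} shows that the noise part does not charge the point $\max f$. Hence all limit points agree, and the whole family converges.
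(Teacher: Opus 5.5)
Your proposal is correct and follows the paper's proof: tightness from the compact injections, the almost-sure $\mathsf{diam}(G)$ bound on the $\mathsf{Ext}_a$ coefficients, transfer of free-measure bounds away from $\max f$ as in Remark~\ref{remcond}, and on each dyadic scale near $\max f$ the heat-kernel density controlled by H\"older as in Lemma~\ref{LemmaNoyau} (your Cauchy--Schwarz is the case $q=2$, while the paper takes $q$ close to $1$ to shrink the loss $2^{n\dim G/q'}$), with uniqueness of limit points coming from the Markov consistency, which the paper only asserts and you spell out. One slip is copied from the paper's statement of the compact injection: larger weights give stronger norms (the paper's proof takes $s'>s$), so you must prove the uniform moment bound for some $s'>s$ and $\ell'>\ell$ with $\ell'$ still below the threshold of Proposition~\ref{qbigenough}, which means that, as in the paper, you only cover $\ell$ below that threshold and not the full range $\ell<1$ written in the statement.
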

\begin{proof}
    First note that it is enough to show tightness to conclude the convergence. By the compact injections of Section \ref{CompInj},  to show tightness of the family $\big(A^\epsilon_{\Sigma,\sigma}\big)_{\epsilon>0}$, it is enough to show 
    \[\sup_{\epsilon>0} \mathbb{E}\Big[\big \| A^\epsilon_{\Sigma,\sigma}\big\|_{\mathcal{B}_{\mathsf{YM}}^{\alpha,p,s,\ell}}^p\Big] <\infty.\]
    This will be showed throughout this section. 
\end{proof}
Recall that
\[\big\|A^\epsilon_{\Sigma,\sigma}\big\|_{\mathcal{B}_{\mathsf{YM}}^{\alpha,p,s,\ell}}=\big \| \pi_{\mathsf{noise}}\cdot A^\epsilon_{\Sigma,\sigma}\big\|_{\mathcal{W}^{p;\alpha,\alpha-1,s,\ell}}+\max_{a\in\mathsf{Crit}_1(f)}\big\|\mathsf{Ext}_a\big(A^\epsilon_{\Sigma,\sigma}\big)\big\|_\g,\]
and we have to deal with these two terms. Let us start with the easiest of both, the second term. 
\myparagraph{Tightness of the flat term $ \mathsf{Ext}_a\big(A^\epsilon_{\Sigma,\sigma}\big)$.}
In the case of the free boundary Yang--Mills measure, we have a nice decomposition as the independent sum of a noise term, the term $\mathcal{L}_V^{-1}\left(\iota_V\left(\xi \sigma\right)\right)$, and a random flat connection term given by 
\[\sum_{a\in \mathsf{Crit}_1(f)} \log g_a U_a,\]
where the $g_a$ are independent, and Haar distributed.
After closing the surface, \textit{i.e.} after conditioning the holonomy on $\partial \mathcal{S} $ to be $1_G$, we loose the independence. However, using the fundamental decomposition lemma in the space $\mathcal{B}^{\alpha,p,s,\ell}_{\mathsf{YM}}$, we can still extract a structural formula of this type for the closed Yang--Mills measure. More precisely, we have the following Lemma: 
\begin{lemma}\label{structureLemma}
    For every $\epsilon>0$, $A^\epsilon_{\Sigma,\sigma}$ can be written as  \[A^\epsilon_{\Sigma,\sigma}= A^\epsilon_{\mathcal{N};\Sigma;\sigma} +\sum_{a\in \mathsf{Critf}_1(f)}x^\epsilon_a U_a,\]
    where 
    \[\mathbb{P}\left[\bigcap_{a\in  \mathsf{Critf}_1(f) } \big\{x^\epsilon_a\in B_\g(0,\dim G)\big\}\right]=1.\]
    Moreover, the joint law of the $x_a^\epsilon$ does not depend on $\epsilon$.
\end{lemma}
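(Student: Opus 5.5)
The plan is to read the decomposition off Definition~\ref{ClosedYMM} and Proposition~\ref{prop:keyprop}. By Definition~\ref{ClosedYMM}, $A^\epsilon_{\Sigma,\sigma}$ is the law of
\[
\big(\pi_{\mathsf{noise}}A^{\mathsf{Free}}_{\mathcal{S},\sigma}\big)\psi_\epsilon+\pi_{\mathsf{flat}}A^{\mathsf{Free}}_{\mathcal{S},\sigma}
\]
under the probability measure reweighted by the density
\[
p_{\sigma(\Sigma_{\geq \max f-\epsilon})}\Big(\mathsf{Hol}_{\max f-\epsilon}\big(A^{\mathsf{Free}}_{\mathcal{S},\sigma}\big)\Big)
\]
divided by its expectation. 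Multiplication by $\psi_\epsilon$ maps $\mathcal{W}^{\alpha,p,s,\ell}$ into itself. Indeed, $\psi_\epsilon$ is smooth and vanishes near $\max f$, so it only modifies the field on finitely many flow boxes and on finitely many dyadic scales near $\max f$. Hence $A^\epsilon_{\mathcal N;\Sigma;\sigma}:=(\pi_{\mathsf{noise}}A^{\mathsf{Free}})\psi_\epsilon$ lies in $\mathcal{W}^{\alpha,p,s,\ell}$. By the uniqueness part of Proposition~\ref{prop:keyprop}, we may then set $x^\epsilon_a:=\mathsf{Ext}_a(A^\epsilon_{\Sigma,\sigma})$. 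By Definition~\ref{FreeYMM}, this equals $\log g_a$, where $(g_a)_a$ are the Haar zero modes of the free field.

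The support claim is now immediate. Each $x^\epsilon_a=\log g_a$ takes values in the fundamental domain $\mathfrak{D}$ chosen for $\log$. Taking $\mathfrak{D}$ to be the preimage under $\exp$ of $G$ within the cut locus (the closed ball of radius $\mathrm{diam}(G)$ for the bi-invariant metric), every $x_a^\epsilon$ lies in $B_\g(0,\mathrm{diam}\,G)$ surely, for the free measure. Reweighting by a density only produces an absolutely continuous law, so the same holds almost surely under the conditioned measure. The statement's ``$\dim G$'' should be read as the diameter bound of Theorem~\ref{MainRes1}.

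The independence of the joint law from $\epsilon$ is the main step. The joint law of $(x_a^\epsilon)_a$ has density proportional to
\[
\mathbb{E}\Big[p_{\sigma(\Sigma_{\geq \max f-\epsilon})}\big(\mathsf{Hol}_{\max f-\epsilon}(A^{\mathsf{Free}})\big)\,\Big|\,(g_a)_a\Big]
\]
with respect to $\prod_a \dd g_a$. The plan is to use the Morse-gauge structure: the level curve $\{f=\max f-\epsilon\}$ crosses the unstable curves $W^u(a)$ near $\max f$. Its holonomy therefore factorises as a product of the noise holonomy along the level curve and the combinatorial factors $g_a^{\pm1}$, arranged in the order of Definition~\ref{def:combihol}. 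This can be justified with the approximation in Theorem~\ref{MeasureTheorem}.

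Conditioning on $(g_a)$ and using the Markov property of the level-curve holonomies together with the heat-kernel law of Theorem~\ref{MeasureTheorem}, the noise contributes a Brownian increment on $G$ at time $\sigma(\Sigma_{\leq \max f-\epsilon})$, independent of $(g_a)$. The semigroup property then gives
\[
\mathbb{E}[\,\cdot\mid g]=p_{\sigma(\Sigma)}\big(\mathsf{Hol}_{\partial\mathcal{S}}(g)\big),
\]
up to conjugation invariance of the heat kernel. This follows because $p_t$ is a class function, so the Brownian increments can be moved to combine with $p_{\sigma(\Sigma_{\geq\max f-\epsilon})}$ via $p_t*p_u=p_{t+u}$. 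The resulting joint density is $p_{\sigma(\Sigma)}(\mathsf{Hol}_{\partial\mathcal S}(g))/Z_{\Sigma,\sigma}$, which is independent of $\epsilon$.

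The hard part is the rigorous factorisation of $\mathsf{Hol}_{\max f-\epsilon}$ into a noise part times the combinatorial product. This requires checking that the noise increment is independent of the zero modes and that the ordering of the factors does not matter inside $p_t$. I would try to get both from the Markov structure in Theorem~\ref{MeasureTheorem} combined with bi-invariance of the heat kernel.
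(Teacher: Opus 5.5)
Your proof is correct, but it reaches the $\epsilon$-independence by a different route from the paper.

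\textbf{The paper's route.} The paper takes the decomposition straight from Proposition~\ref{prop:keyprop} applied to $A^\epsilon_{\Sigma,\sigma}$. It then uses only the \emph{locality} of $\mathsf{Ext}_a$. Because $\mathsf{Ext}_a$ can be read off the field below a fixed level (the paper uses the level of the last saddle $a_{2g}$), the Markov compatibility of the conditioning lets it replace the tilt $p_{\sigma(\Sigma_{\geq \max f-\epsilon})}(\mathsf{Hol}_{\max f-\epsilon})$ by the tilt at that level, which does not involve $\epsilon$. The ball bound then follows because the event has probability one under the free measure and the tilt is absolutely continuous. The $\epsilon$-independence is left implicit in that formula.

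\textbf{Your route.} You read $x^\epsilon_a=\log g_a$ directly off Definition~\ref{ClosedYMM}. You then compute the tilted joint law in closed form, $p_{\sigma(\Sigma)}\big(\mathsf{Hol}_{\partial\mathcal S}(g)\big)\prod_a \dd g_a/Z_{\Sigma,\sigma}$, using the class-function property and the semigroup law. This is exactly the computation the paper postpones to Section~\ref{s8} when it identifies the limit. Your route needs more input, but it gives the explicit law for every $\epsilon$. That makes the $\epsilon$-independence transparent and is directly reusable for the semiclassical limit.

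Your choice of logarithm with values in the ball of radius $\mathsf{diam}(G)$ gives an actual reason for the almost sure bound, which the paper only asserts. This works because one-parameter subgroups are the geodesics through $1_G$ for a bi-invariant metric. Your reading of $\dim G$ as $\mathsf{diam}\,G$ agrees with the paper's own proof and with Theorem~\ref{MainRes1}.

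\textbf{One correction to your justification of the key step.} The Markov property of $r\mapsto \mathsf{Hol}_{\{f=r\}}$ and the heat-kernel law in Theorem~\ref{MeasureTheorem} are averaged over the zero modes. They therefore do not give the conditional law given $(g_a)_a$. The input you actually need is the white-noise structure of the arc holonomies:
\begin{itemize}
\item The $4g$ crossings cut the level curve into arcs, and the noise holonomy $U_i$ on each arc depends only on the white noise over the region swept by flow lines from $\min f$ to that arc.
\item These regions are pairwise disjoint, so the $U_i$ are mutually independent.
\item They are independent of $(g_a)_a$ by Definition~\ref{FreeYMM}.
\item $U_i$ has law $p_{t_i}(u)\,\dd u$, with $\sum_i t_i=\sigma(\Sigma_{\leq \max f-\epsilon})$.
\end{itemize}
Since the $U_i$ are interleaved with the factors $g_a^{\pm1}$, they do not combine into a single increment in the product. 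They merge only inside $p_t$: integrate them out one at a time, cyclically rotating the argument of the class function between steps. With that input, your computation goes through as written.
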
 
\begin{proof}
   We drop for simplicity the subscript $\epsilon$. From Proposition \ref{prop:keyprop}, we can write this random element uniquely as 
    \[A_{\Sigma,\sigma}=\pi_{\mathsf{noise}}\left( A_{\Sigma,\sigma} \right) +\sum_{a\in \mathsf{Critf}_1(f)}\mathsf{Ext}_a\big(A_{\Sigma,\sigma}\big) U_a.\]
    It remains to show that almost surely, $\mathsf{Ext}_a\big(A_{\Sigma,\sigma}\big)\in B_\g(0,\mathsf{diam}\, G)$. To do this, we note that the extraction map verifies $\mathsf{Ext}_a\big( A_{\Sigma,\sigma} \big)=\mathsf{Ext}_a\big( A_{\Sigma,\sigma}\psi \big),$
    for any $\psi$ that is equal to $1$ for $r>f(a_{2g})$, where $a_{2g}$ is the last saddle point of $f$. This lets us write, for any $a\in\mathsf{Crit}_1(f)$, 
        \[\mathbb{E}\Big[1_{d(\mathsf{Ext}_a,1)\leq \mathsf{diam}\,G}\big (A_{\Sigma,\sigma})\Big]=\frac{\mathbb{E}\Big[1_{d(\mathsf{Ext}_a,1)\leq \mathsf{diam}\, G}(A^\mathsf{Free}_{\Sigma,\sigma})\cdot p_{\sigma\big(\Sigma_{\geq a_{2g}}\big)}\left(\mathsf{Hol}_{\max f-a_{2g}}\big(A^\mathsf{Free}_{\Sigma,\sigma}\big)\right)\Big]}{\mathbb{E}\Big[
        p_{\sigma\big(\Sigma_{\geq a_{2g}}\big)}\left(\mathsf{Hol}_{\max f-a_{2g}}\big(A^\mathsf{Free}_{\Sigma,\sigma}\big)\right)\Big]},\]
        which is equal to $1$ since almost surely, $1_{d(\mathsf{Ext}_a,1)\leq \mathsf{diam}\, G}(A^\mathsf{Free}_{\Sigma,\sigma})=1$.
\end{proof}
As a direct corollary of the previous lemma, we get 
\[\sup_{\epsilon>0}\mathbb{E}\Big[\max_{a\in\mathsf{Crit}_1(f)}\big\|\mathsf{Ext}_a\big(A^\epsilon_{\Sigma,\sigma}\big)\big\|_\g^p\Big]\leq \mathsf{diam}(G)^p.\]
\myparagraph{Tightness of the noise term $\pi_{\mathsf{noise}}\cdot A^\epsilon_{\Sigma,\sigma}$.} \label{BoundNoise}
In this section, we would like to show that under suitable paramters $\alpha,p,s,\ell$, 
\[\sup_{\epsilon>0}\mathbb{E}\Big[\big \| \pi_{\mathsf{noise}}\cdot A^\epsilon_{\Sigma,\sigma}\big\|_{\mathcal{W}^{p;\alpha,\alpha-1,s,\ell}}^p\Big] < \infty.\]
Recall from Definition \ref{WeightedSemiNorms} that 
\begin{align*}
    &\Vert A\Vert_{\mathcal{W}^{p;\alpha,\alpha-1,s,\ell}}^p\\
    &\hspace{2cm}=\sum_{a\in \mathsf{Crit}_1(f)}\Vert A\Vert_{\mathcal{W}^{p;\alpha,\alpha-1,s} (D_a) }^p
+ \sum_{a\in \{\min(f),\max(f)\} } \Vert A\Vert_{\mathcal{W}^{p;\alpha,\alpha-1,\ell} (D_{a}) }^p
+\sum_{i\in I}  \Vert A \Vert_{\mathcal{W}^{\alpha,\alpha-1,p}(\square_i) }^p,
\end{align*}
and therefore there are many different terms to bound. 
\begin{remark}\label{remcond}
Observe that when we rewrite this norm as 
\begin{align*}
    &\Vert A\Vert_{\mathcal{W}^{p;\alpha,\alpha-1,s,\ell}}^p\\
    &\hspace{2cm} = \Vert A\Vert_{\mathcal{W}^{p;\alpha,\alpha-1,\ell} (D_{\max f}) }^p\\
    &\hspace{4cm}+\underbrace{\sum_{a\in \mathsf{Crit}_1(f)}\Vert A\Vert_{\mathcal{W}^{p;\alpha,\alpha-1,s} (D_a) }^p
+  \Vert A\Vert_{\mathcal{W}^{p;\alpha,\alpha-1,\ell} (D_{\min f}) }^p
+\sum_{i\in I}  \Vert A \Vert_{\mathcal{W}^{\alpha,\alpha-1,p}(\square_i) }^p}_{\|A\|^p_{\leq a_{2g}}},
\end{align*}
we see that the term under bracket is measurable with respect to the information located at a  distance of $d(a_{2g},\max f)$ from the maximum. Therefore, we have directly that 
\[\mathbb{E}\Big[\big\|  \pi_{\mathsf{noise}}\cdot A^\epsilon_{\Sigma,\sigma} \big\|^p_{\leq a_{2g}}\Big] \lesssim \mathbb{E}\Big[\big\|  \pi_{\mathsf{noise}}\cdot A^\mathsf{Free}_{\Sigma,\sigma} \big\|^p_{\leq a_{2g}}\Big].\]
These norms in the free case were treated in details in \cite[Section 5]{DN}. For the sake of completeness, we will expose again the treatment of one of the terms, for example near $\min f$.
\par It is important to note that this is not anymore true for the first term which gets closer and closer to the maximum, and conditioning needs to be taken care of. 
\end{remark}

\paragraph{Near $\min(f)$.}
Near the minimum, there are local polar coordinates, $(r,\theta)$ where $r=0$ is the minimum. In these coordinates, we have 
\[V=r\partial_r,\ \ \text{ and }\ \  \sigma=r\rho(r,\theta)\,\dd r\dd\theta,\]
where $\rho\in C^\infty([0,1]\times \mathbb{S}^1)$ and $\rho(0,.)\equiv C>0$ that we may choose equal to $1$ for simplicity.
Let us start with a proposition. We will introduce a scaling parameter of the total area by $t$ as it will be of interest for later.
\begin{proposition}\label{ScaleFreeMin}
    We have, for  $\alpha<\frac 12$, $p\geq 1$ and all $0<t<1$ 
    \[\mathbb{E}\Big[\big\Vert \mathcal{S}^{2^{-n}\star}\left( \pi_{\mathsf{noise}}\cdot A^\epsilon_{\Sigma,t\sigma} \right)\big  \Vert^{2p}_{\mathcal{W}^{\alpha,\alpha-1,2p}(I_1\times J_1)} \Big] \lesssim t^{p}2^{-2np},\]
    where the bound is uniform in $t,\epsilon$ and $n$.
\end{proposition}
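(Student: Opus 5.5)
Near $\min(f)$ the norm is measurable with respect to the region $\{f\leq f(a_{2g})\}$, by Remark~\ref{remcond}. So the first step is to remove the conditioning. Since $\psi_\epsilon\equiv 1$ there, the conditioning density in Definition~\ref{ClosedYMM} reduces, by the Markov property of the level-curve holonomies in Theorem~\ref{MeasureTheorem}, to a bounded Radon--Nikodym factor $p_{t\sigma(\Sigma_{\geq r})}(\cdot)/Z$. For $t\in(0,1)$ this factor should be controlled uniformly in $\epsilon$, or at least it only multiplies the free expectation by a $t$-independent constant. I will commit to proving the bound for $\pi_{\mathsf{noise}}\cdot A^{\mathsf{Free}}_{\mathcal S,t\sigma}$, and will justify the uniformity of the density ratio by the heat kernel bounds on $G$.

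For the free field, $\pi_{\mathsf{noise}}A=\iota_V\mathcal L_V^{-1}(\xi\, t\sigma)$ is Gaussian, since $\xi$ is white noise with covariance given by $t\sigma$. Near the minimum, in polar coordinates with $V=r\partial_r$ and $\sigma=r\,\dd r\,\dd\theta$, I will write it explicitly as
\[
A_{\mathcal N}(r,\theta)=\Big(\int_0^r \xi(u,\theta)\,u\,\dd u\Big)\,\dd\theta \quad\text{(up to sign and normalization)},
\]
so that $W(r,\theta)=\int_0^\theta\int_0^r \xi\, t\,u\,\dd u\,\dd\theta'$. This $W$ is a Brownian-sheet-type field with covariance $\mathbb E[W(r_1,\theta_1)W(r_2,\theta_2)]=t\,\tfrac{(r_1\wedge r_2)^2}{2}(\theta_1\wedge\theta_2)$. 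Pulling back by $\mathcal S^{2^{-n}}$ maps $r\mapsto 2^{-n}r$. The rectangular increments of $W\circ\mathcal S^{2^{-n}}$ over $[x_1,x_2]\times[y_1,y_2]$ in the corona then have variance
\[
\approx t\,2^{-2n}\,|x_1-x_2|\,|y_1-y_2|.
\]
The factor $2^{-2n}$ comes from the area scaling, and the precise Jacobian powers for the $1$-form pullback must be checked. Depending on the chart, the flow-box coordinates on the corona are $(\log r,\theta)$ or $(r,\theta)$, but on the fixed corona they are smoothly equivalent.

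Next comes Gaussian hypercontractivity. The $2p$-th moment of each increment is bounded by $C_p\big(t2^{-2n}|x_1-x_2||y_1-y_2|\big)^{p}$. Plugging this into the Gagliardo double integral of Definition~\ref{FlowboxSpace} with exponent $2p$ gives the integrand
\[
|x_1-x_2|^{p-1-2\alpha p}\,|y_1-y_2|^{p-1-2\alpha p},
\]
which is integrable precisely when $\alpha<\tfrac12$. The $L^{2p}$ term is handled the same way. Together these yield $t^{p}2^{-2np}$, uniformly in $n$, $t$, $\epsilon$.

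The main obstacle is the first step: showing that the conditioning by $p_{t\sigma(\Sigma_{\geq\max f-\epsilon})}$ costs only a constant uniform in both $\epsilon$ and small $t$. As $t\to0$ the heat kernel concentrates and the ratio is not obviously bounded. My first approach is to condition on the holonomy at level $f(a_{2g})$ and use that, for fixed $t$, the density is a ratio of heat kernels at times bounded below by $t\sigma(\Sigma_{\geq a_{2g}})$. Failing a uniform bound, I would instead show that the noise near $\min f$ is independent of the zero modes and of the boundary holonomy given the lower level sets, so the conditioning does not affect its law.
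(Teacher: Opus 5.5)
Your Gaussian computation is the same as the paper's: Fubini, hypercontractivity of the Brownian-sheet rectangular increments, the $\sigma$-area of the rescaled box, and integrability of $|x_1-x_2|^{p-1-2\alpha p}$ iff $\alpha<\frac12$. Your bookkeeping is also the right one for the stated bound. The Morse-chart dilation fixes $\theta$, and $\mathcal{S}^{\lambda\star}(\partial_\theta W\,\dd\theta)=\partial_\theta(W\circ\mathcal{S}^{\lambda})\,\dd\theta$, so the $1$-form pullback adds no Jacobian. The rescaled box has $\sigma$-area $\asymp 2^{-2n}|\Delta r|\,|\Delta\theta|$, which gives exactly $t^{p}2^{-2np}$; the paper's display also rescales $\theta$ and writes $\lambda^{3p}$.

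The gap is the first step, which you leave open and which neither of your two routes closes. The paper disposes of it by citing Remark~\ref{remcond}, i.e.\ measurability with respect to $\{f\le f(a_{2g})\}$, without tracking the constant. You are right that this alone does not give uniformity in $t$, and uniformity is the whole content of the $t^p$ factor.

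Your first route bounds the density $p_{t\sigma(\Sigma_{\geq r_0})}(\mathsf{Hol}_{r_0})/Z_{\Sigma,t\sigma}$ pointwise. The numerator costs $\|p_T\|_\infty\asymp T^{-\dim G/2}$, while the denominator stays bounded for $g\geq 2$ and grows only like $t^{-\mathrm{rk}\,G/2}$ for $g=1$. You therefore lose a negative power of $t$.

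Your fallback is false: conditioning does change the law of the field near $\min f$. The holonomy $h$ of a small circle around $\min f$ is $p_{t\sigma(D)}(h)\,\dd h$-distributed under the free measure. Under the closed measure it is reweighted by the non-constant class function $h\mapsto\sum_\rho e^{-t\sigma(\Sigma\setminus D)c_2(\rho)}d_\rho^{1-2g}\chi_\rho(h)$.

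The missing idea is to average the density over the zero modes before bounding it. Under the free measure the $g_a$ are Haar distributed and independent of $\xi$, while your observable $X$ is a functional of $\xi$ alone. Hence $\mathbb{E}[X\,p_T(\mathsf{Hol}_{r_0})]=\mathbb{E}\big[X\,\mathbb{E}[p_T(\mathsf{Hol}_{r_0})\mid\xi]\big]$. Expand $p_T=\sum_\rho e^{-Tc_2(\rho)}d_\rho\chi_\rho$ and integrate out each $g_a$ with Schur orthogonality, using $\int_G\chi_\rho(gXg^{-1}Y)\,\dd g=\chi_\rho(X)\chi_\rho(Y)/d_\rho$ and $\int_G\chi_\rho(gX)\chi_\rho(g^{-1}Y)\,\dd g=\chi_\rho(XY)/d_\rho$. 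As in the computation behind the density in Theorem~\ref{MeasureTheorem}, this gives
\[
\mathbb{E}\big[p_T(\mathsf{Hol}_{r_0})\mid\xi\big]=\sum_{\rho\in\widehat G}e^{-Tc_2(\rho)}d_\rho^{1-2g}\chi_\rho(h_\xi)\leq\sum_{\rho\in\widehat G}e^{-Tc_2(\rho)}d_\rho^{2-2g}
\]
for some $\xi$-measurable $h_\xi\in G$. On the other side, $Z_{\Sigma,t\sigma}=\sum_\rho e^{-t\sigma(\Sigma)c_2(\rho)}d_\rho^{2-2g}\geq 1$.

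For $g\geq 2$ the Radon--Nikodym factor is therefore at most $\zeta_G(2g-2)$, uniformly in $t$, and this holds even directly at the level $\max f-\epsilon$. For $g\leq 1$, first use the Markov/semigroup reduction, as in Lemma~\ref{structureLemma}, to a fixed level $r_0<\max f$ above all saddles. Then $T=t\sigma(\Sigma_{\geq r_0})$ is a fixed fraction of $t\sigma(\Sigma)$. Both sums blow up at the same power-law rate as $t\to 0$, so their ratio stays bounded. With this, your free-field estimate transfers to $A^\epsilon_{\Sigma,t\sigma}$ with a constant independent of $t$ and $\epsilon$.
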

\begin{proof}
Recall from Remark \ref{remcond} that, near the minimum, it is enough to treat the free case. 
We denote by $\xi_\sigma$ the white noise 
associated with the area form $\sigma$, we use the identity
$\xi_{t\sigma}= t^{-\frac{1}{2}}\xi_\sigma $ for all $t>0$.
We have, in law, 
\[\pi_{\mathsf{noise}}\cdot A^\mathsf{Free}_{\Sigma,t\sigma}=A_{\mathcal{N};S;t\sigma}=\iota_V\mathcal{L}_V^{-1}\left(\xi_{t\sigma} \right)=\sqrt{t}\mathcal{L}_V^{-1} \left(\iota_V \xi_\sigma \sigma \right) ,\]
and the parameter $\sqrt{t}$ factors out.
We would like to control the anisotropic norm of 
$W$ on some product of intervals $I_1\times I_2=[\frac{1}{2},1]_r\times [a,b]_\theta$, where $W$ is such that
\[A_{\mathcal{N};S;t\sigma}(r,\theta)= \left(\partial_\theta W \right) (r,\theta)  \text{ and } W(.,0)=0.\] 
The fact that we are allowed to primitive $A_{\mathcal{N};S;t\sigma}$ 
is recalled in the appendix using Cauchy sequences arguments.
We have
\begin{align*}
\Vert W \Vert^{2p}_{\mathcal{W}^{\alpha,\alpha;2p}_{r,\theta}(I_1\times J_1)} \coloneq  \int_{I_1^2\times I_2^2} \frac{\vert W(r_1,\theta_1)-W(r_2,\theta_1)- W(r_1,\theta_2)+W(r_2,\theta_2)  \vert_{\mathfrak{g}}^{2p} }{ \vert r_1-r_2 \vert^{1+2\alpha p}\vert \theta_1-\theta_2 \vert^{1+2\alpha p}  }  \,\dd r_1\dd r_2\dd\theta_1\dd\theta_2. 
\end{align*}

Then we get an estimate of the form
\begin{align*}
&\mathbb{E}\Big[\big\Vert W \big \Vert^{2p}_{\mathcal{W}^{\alpha,\alpha;2p}_{r,\theta}(I_1\times I_2)}\Big]\\ 
&\hspace{2cm} =\int_{I_1^2\times I_2^2} \frac{\mathbb{E}\Big[ \big\vert W(r_1,\theta_1)-W(r_2,\theta_1)- W(r_1,\theta_2)+W(r_2,\theta_2)  \big \vert_{\mathfrak{g}}^{2p} \Big] }{ \vert r_1-r_2 \vert^{1+2\alpha p}\vert \theta_1-\theta_2 \vert^{1+2\alpha p}  }  \,\dd r_1\dd r_2\dd\theta_1\dd\theta_2 \\
&\hspace{2cm}\lesssim \int_{I_1^2\times I_2^2} \frac{\mathbb{E}\Big[ \big\vert W(r_1,\theta_1)-W(r_2,\theta_1)- W(r_1,\theta_2)+W(r_2,\theta_2)  \big \vert_{\mathfrak{g}}^{2} \Big]^p }{ \vert r_1-r_2 \vert^{1+2\alpha p}\vert \theta_1-\theta_2 \vert^{1+2\alpha p}  }  \,\dd r_1\dd r_2\dd\theta_1\dd\theta_2\\
&\hspace{2cm}\lesssim C_p\int_{I_1^2\times I_2^2} \frac{ \sigma\big(\square(r_1,r_2,\theta_1,\theta_2)\big) ^p }{ \vert r_1-r_2 \vert^{1+2\alpha p}\vert \theta_1-\theta_2 \vert^{1+2\alpha p}  }  \,\dd  r_1\dd r_2\dd\theta_1\dd\theta_2,
\end{align*}
where we used respectively Fubini's theorem, then hyper-contractivity of $W$ (since $(r,\theta)\mapsto W(r,\theta)$ is a Brownian sheet, check \cite[Proposition 2.4]{DN}), and where  we have used the notation $\square(r_1,r_2,\theta_1,\theta_2) $ for the flow box bordered by $(r_1,\theta_1),(r_2,\theta_1),(r_1,\theta_2),(r_2,\theta_2)$. Since far from the minimum (and other critical points)  the area function behaves like 
\[\square(r_1,r_2,\theta_1,\theta_2)\sim\vert r_2-r_2\vert \vert \theta_2-\theta_2\vert,\]
we see immediately that one needs to impose the condition $p-1-2\alpha p>-1$ which is equivalent to $\alpha<\frac 12$. Beware that the above estimate on the area 
is not uniform when we get closer to $\min(f)$, which is why
we need to scale the previous estimate taking into account that the area functional has the same growth of $r$ near $0$. We have 
\begin{align*}
    \mathbb{E}\Big[\big\Vert  \mathcal{S}^{\lambda \star} W \big\Vert^{2p}_{\mathcal{W}^{\alpha,\alpha;2p}_{r,\theta}(I_1\times I_2)}  \Big] &\lesssim C_p\int_{I_1^2\times I_2^2} \frac{ \sigma\big(\square(\lambda r_1, \lambda r_2, \lambda \theta_1, \lambda \theta_2)\big)^p }{ \vert r_1-r_2 \vert^{1+2\alpha p}\vert \theta_1-\theta_2 \vert^{1+2\alpha p}  }  \,\dd  r_1\dd r_2\dd\theta_1\dd\theta_2\\
    &= C_p\int_{I_1^2\times I_2^2} \frac{ \left(\int_{\lambda r_1}^{\lambda r_2} \int_{\lambda \theta_1}^{\lambda \theta_2} \sigma(r,\theta)\,\dd  r\dd\theta \right) ^p }{ \vert r_1-r_2 \vert^{1+2\alpha p}\vert \theta_1-\theta_2 \vert^{1+2\alpha p}  }  \,\dd  r_1\dd r_2\dd\theta_1\dd\theta_2\\
&= C_p\int_{I_1^2\times I_2^2} \frac{ \left(\int_{ r_1}^{ r_2} \int_{\theta_1}^{ \theta_2}   \lambda^3    r\rho(\lambda r,\lambda\theta)  \,\dd  r\dd\theta  \right) ^p }{ \vert r_1-r_2 \vert^{1+2\alpha p}\vert \theta_1-\theta_2 \vert^{1+2\alpha p}  }  \,\dd  r_1\dd r_2\dd\theta_1\dd\theta_2\\
&\lesssim C_p\lambda^{3p}\int_{I_1^2\times I_2^2} \frac{ \left(\int_{\square(r_1,r_2,\theta_1,\theta_2)} \,\dd  rd\theta \right) ^p }{ \vert r_1-r_2 \vert^{1+2\alpha p}\vert \theta_1-\theta_2 \vert^{1+2\alpha p}  }  \,\dd r_1\dd  r_2\dd\theta_1\dd\theta_2.
\end{align*}
Note that this is finite for $\alpha <\frac 12$,
which gives the desired result.
\end{proof}

\begin{proposition}\label{normep}
     We have, for  $\alpha<\frac 12$, $p\geq 1$, $\ell $ such that $\ell-\frac{1}{p}<0$, and all $0<t<1$ 
\[ \mathbb{E}\Big[\big\Vert \pi_{\mathsf{noise}}\cdot A^\epsilon_{\Sigma,t\sigma}\big\Vert_{\mathcal{W}^{2p;\alpha,\alpha-1,\ell} (D_{\min f}) }^{2p}\Big] \leqslant C t^{p}, \]
    where again the constant $C>0$ does not depend on $\epsilon,t\in (0,1)^2$.
\end{proposition}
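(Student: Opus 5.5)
The plan is to expand the weighted norm from Definition~\ref{WeightedSemiNorms} and bound it dyadic piece by dyadic piece using Proposition~\ref{ScaleFreeMin}. By definition,
\[
\Vert A\Vert_{\mathcal{W}^{2p;\alpha,\alpha-1,\ell}(D_{\min f})}^{2p}
=\sum_{\substack{i\in I\\ \square_i\cap C_{\min f}\neq\emptyset}}\ \sum_{n=0}^\infty 2^{n(2p\ell-2)}\,\big\Vert \mathcal{S}^{2^{-n},\star}_{\min f}A\big\Vert^{2p}_{\mathcal{W}^{\alpha,\alpha-1;2p}(\square_i)} .
\]
The sum over flow boxes meeting the corona $C_{\min f}$ is finite and independent of $n$, $\epsilon$ and $t$. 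So after taking expectations and exchanging sum and expectation by Tonelli (everything is nonnegative), it suffices to bound each term $\mathbb{E}\big[\Vert \mathcal{S}^{2^{-n}\star}(\pi_{\mathsf{noise}}\cdot A^\epsilon_{\Sigma,t\sigma})\Vert^{2p}_{\mathcal{W}^{\alpha,\alpha-1;2p}(\square_i)}\big]$ uniformly in $\epsilon$ and $t$.

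Each such term is given directly by Proposition~\ref{ScaleFreeMin}, once we check that every flow box $\square_i$ meeting $C_{\min f}$ can be put in polar coordinates as a product $I_1\times J_1$ of the type used there. That is, $r$ ranges over an interval bounded away from $0$, and $\theta$ over an interval on which the primitive $W$ with $W(\cdot,\theta_0)=0$ is taken. If this fails, we refine the covering or apply the estimate to finitely many rectangles covering each box. The constant is uniform because, by Remark~\ref{remcond}, the disc $D_{\min f}$ lies in $\Sigma_{\leq a_{2g}}$, so the conditioning only enters through a bounded factor and we may reduce to the free field. We obtain
\[
\mathbb{E}\Big[\Vert \pi_{\mathsf{noise}}\cdot A^\epsilon_{\Sigma,t\sigma}\Vert^{2p}_{\mathcal{W}^{2p;\alpha,\alpha-1,\ell}(D_{\min f})}\Big]\lesssim t^p\sum_{n\geq 0}2^{n(2p\ell-2)}\,2^{-2np}=t^p\sum_{n\ge0}2^{2np(\ell-\frac1p-1)} .
\]

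The geometric series converges as soon as $\ell-\frac1p-1<0$. This is implied by the stated hypothesis $\ell-\frac1p<0$ (in fact with room to spare, which absorbs any loss in the scaling exponent). This gives the bound $Ct^p$ with $C$ independent of $(\epsilon,t)$.

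The main obstacle is justifying uniformity in $\epsilon$, that is, that the conditioning in Definition~\ref{ClosedYMM} does not affect the law near $\min f$. I would handle this via the Markov property of $r\mapsto \mathsf{Hol}_{\{f=r\}}$. Conditioning on the holonomy at level $\max f-\epsilon$ reweights the law of the field on $\Sigma_{\le a_{2g}}$ by the density
\[
x\mapsto \frac{p_{\sigma(\Sigma_{\ge a_{2g}})}(x)}{\mathbb{E}\big[p_{\sigma(\Sigma_{\ge a_{2g}})}(\cdot)\big]}
\]
evaluated at $x=\mathsf{Hol}_{\{f=a_{2g}\}}$ (or at a fixed level slightly above $a_{2g}$). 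This is a heat-kernel ratio at fixed positive time, hence bounded above uniformly in $\epsilon$. Its dependence on $t$ through the area $t\sigma$ is the delicate point: as $t\to0$ the heat kernel degenerates. To control it I would use the Hölder inequality together with the $t$-uniform bounds on the normalized partition functions $Z_{\Sigma_{\le r},t\sigma}$. The cleanest route, if available, is to bound the heat-kernel ratio relative to the total partition function.
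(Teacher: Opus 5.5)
Your argument is correct and is essentially the paper's: expand the $\ell$-weighted norm into dyadic blocks, bound each block with Proposition~\ref{ScaleFreeMin}, and sum a geometric series. The only difference is bookkeeping: the paper bounds the primitive $W$ (per block $t^p2^{-3np}$) and converts to $A$ via $\partial_\theta\mathcal{S}^{2^{-n}\star}=2^{-n}\mathcal{S}^{2^{-n}\star}\partial_\theta$, which costs a factor $2^{2np}$, so it needs $\ell-\frac1p<\frac12$ rather than your $\ell-\frac1p<1$; both follow from the hypothesis, as your ``room to spare'' remark anticipates. Your last paragraph is not needed, since Proposition~\ref{ScaleFreeMin} (through Remark~\ref{remcond}) is already stated for the conditioned field uniformly in $(\epsilon,t)$, and the paper's proof does not revisit the conditioning. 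If you did want to re-derive it, a H\"older-based fix as in Proposition~\ref{qbigenough} would cost a factor $t^{-\frac{\dim G}{2q'}}$ and so would not give $Ct^p$; instead, condition on the noise and use that for $g\geq 2$ the Haar zero modes give $\mathsf{Hol}$ a conditional density bounded by $\zeta_G(2g-2)$, while $Z_{\Sigma,t\sigma}\geq 1$.
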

\begin{proof}
Using Proposition \ref{ScaleFreeMin} 
\begin{align*}
    \mathbb{E} \Big[  \sum_{n=0}^\infty 2^{n2p (\ell-\frac{2}{2p} ) } \big \Vert \mathcal{S}^{2^{-n}\star} W\big \Vert^{2p}_{\mathcal{W}^{\alpha,\alpha,2p}} \Big] &=\sum_{n=0}^\infty 2^{n2p (\ell-\frac{2}{2p} ) }  \mathbb{E}\Big[\big \Vert \mathcal{S}^{2^{-n}\star} W\big \Vert^{2p}_{\mathcal{W}^{\alpha,\alpha,2p}}\Big]\\ 
    & \lesssim t^{p}\sum_{n=0}^\infty 2^{2np (\ell-\frac{2}{2p}-\frac{3}{2} ) }  
\end{align*}
which is finite provided $\ell-\frac{1}{p}<\frac{3}{2}$.
 We still need to mention one more detail. Since we need the norm in term of $A$ rather than in term of $W$, there is a slight change in the weights.
The commutation relation
$\partial_\theta\mathcal{S}^{2^{-n}\star}=2^{-n}\mathcal{S}^{2^{-n}\star}\partial_\theta$, this implies that 
\[\mathcal{S}^{2^{-n}\star}A_{\mathcal{N}}=\mathcal{S}^{2^{-n}\star}\left(\partial_\theta W\right)=2^n\partial_\theta\mathcal{S}^{2^{-n}\star} W \]
and hence by definition,
\[\Vert \mathcal{S}^{2^{-n}\star} A_{\mathcal{N}} \Vert^{2p}_{\mathcal{W}^{\alpha,\alpha-1,2p}}=2^{2pn}\Vert \partial_\theta\mathcal{S}^{2^{-n}\star} W \Vert^{2p}_{\mathcal{W}^{\alpha,\alpha-1,2p}}=2^{2pn}\Vert\mathcal{S}^{2^{-n}\star} W \Vert^{2p}_{\mathcal{W}^{\alpha,\alpha,2p}}. \] 
From the equality
\begin{align*}
\Vert \mathcal{S}^{2^{-n}\star} A_{\mathcal{N}} \Vert^{2p}_{\mathcal{W}^{\alpha,\alpha-1,2p}}=2^{2pn}\Vert\mathcal{S}^{2^{-n}\star} W \Vert^{2p}_{\mathcal{W}^{\alpha,\alpha,2p}},    
\end{align*}
we deduce that
\begin{align*}
 \mathbb{E} \Big[ \sum_{n=0}^\infty 2^{n2p (\ell-\frac{2}{2p} ) } \big \Vert \mathcal{S}^{2^{-n}\star} A_{\mathcal{N},S,t\sigma} \big\Vert^{2p}_{\mathcal{W}^{\alpha,\alpha-1,2p}} \Big] &=\sum_{n=0}^\infty 2^{n2p (\ell-\frac{2}{2p} ) } 2^{2pn} \underset{2^{-3pn}}{\underbrace{  \mathbb{E}\Big[\big \Vert\mathcal{S}^{2^{-n}\star} W \big \Vert^{2p}_{\mathcal{W}^{\alpha,\alpha,2p}}\Big]}}\\
  &\lesssim C_p \sum_{n=0}^\infty 2^{n2p (\ell-\frac{2}{2p} ) } 2^{2pn} 2^{-3np}
\end{align*}
therefore resumming the previous bound yields
\[
\mathbb{E} \Big[ \sum_{n=0}^\infty 2^{n2p (\ell-\frac{2}{2p} ) } \big\Vert \mathcal{S}^{2^{-n}\star} A_{\mathcal{N},S,t\sigma} \big\Vert^{2p}_{\mathcal{W}^{\alpha,\alpha-1,2p}} \Big]  <+\infty   
\]
provided that $\ell-\frac{2}{2p}<\frac{1}{2}$.
\end{proof}
Note that we have also showed that near the minimum 
$A_{\mathcal{N}}$ belongs
to the space 
$\mathcal{W}^{2p;\alpha,\alpha-1;\ell}$
for all $\ell<\frac{1}{2}$ and $p$ large enough.

\paragraph{Near $\max(f)$.}
Near the maximum, the difficulty is twofold: we must combine a scaling argument with a conditioning argument. After conditioning, we can only access observables that depend on the region strictly below the maximum. The scaling norms are essential in this setting, since they probe regularity near the maximum by testing on shrinking coronas that approach it arbitrarily closely without ever touching it. We repeat the operation of the previous section for the weighted Bony norm as observable.
At each dyadic scale $\lambda=2^{-n}$, the key idea is that each dyadic block 
$\Vert  \mathcal{S}^{2^{-n}\star} A \Vert^{2p}_{\mathcal{W}^{\alpha,\alpha;2p}_{r,\theta}([\frac{1}{2},1]\times [a,b]_\theta)} $
of the weighted norm can be viewed as an observable which is measurable with respect to what happens in $f\leq \max f-2^{-n}$. Therefore by Definition \ref{ClosedYMM}, and for every $\epsilon>0$,
the expectation of the weighted norm w.r.t. to the closed surface Yang--Mills pre-measure can be decomposed as
\begin{align} \label{WeightSum}
&\mathbb{E}\Big[\big\| \pi_{\mathsf{noise}}\cdot A^\epsilon_{\Sigma,\sigma}\big\|^{2p}_{\mathcal{B}_{\mathsf{YM}}^{\alpha,2p,s,\ell}}\Big] \\
&\nonumber\hspace{2cm}\leq  \sum_{n=0}^\infty 2^{n2p(\ell-\frac{2}{2p})} \frac{ \mathbb{E}\Bigg[ \big \Vert \pi_{\mathsf{noise}}\left( \mathcal{S}^{2^{-n}\star}A^\mathsf{Free}_{\Sigma,\sigma} \right) \Vert^{2p}_{\mathcal{W}^{\alpha,\alpha-1;2p}}\cdot p_{\sigma\big(\Sigma_{\geq \max f -2^{-n}}\big)}\Big(\mathsf{Hol}_{\max f- 2^{-n} }\big(A^{\mathsf{Free}}_{\Sigma,\sigma}\big ) \Big)\Bigg] }{ \mathbb{E}\Bigg[p_{\sigma\big(\Sigma_{\geq \max f -2^{-n}}\big)}\Big(\mathsf{Hol}_{\max f- 2^{-n} }\big(A^{\mathsf{Free}}_{\Sigma,\sigma}\big ) \Big)\Bigg]}.
\end{align}
The denominator is equal to the partition function of the closed Yang--Mills measure, and does not depend on $n\geqslant 0$. We will denote it by $Z_{\Sigma,t\sigma}(1)$, where $1$ stands for the boundary condition which is $\mathsf{Hol}_{\partial \mathcal{S}}=1$ in the case of the closed measure.
\par This leads us into dividing the analysis into two cases. The first case is the case of surfaces of genera greater or equal to $2$, in which $Z_{\Sigma,t\sigma}(1)$ is uniformly bounded in $t$. In this case, we have 
\begin{align} \label{Boundggeq2}
&\mathbb{E}\Big[\big\| \pi_{\mathsf{noise}}\cdot A^\epsilon_{\Sigma,\sigma}\big\|^{2p}_{\mathcal{B}_{\mathsf{YM}}^{\alpha,2p,s,\ell}}\Big] \\
&\nonumber\hspace{2cm}\lesssim \sum_{n=0}^\infty 2^{n2p(\ell-\frac{2}{2p})}  \mathbb{E}\Bigg[ \big \Vert \pi_{\mathsf{noise}}\left( \mathcal{S}^{2^{-n}\star}A^\mathsf{Free}_{\Sigma,\sigma} \right) \Vert^{2p}_{\mathcal{W}^{\alpha,\alpha-1;2p}}\cdot p_{\sigma\big(\Sigma_{\geq \max f -2^{-n}}\big)}\Big(\mathsf{Hol}_{\max f- 2^{-n} }\big(A^{\mathsf{Free}}_{\Sigma,\sigma}\big ) \Big)\Bigg] .
\end{align}
\par The second case concerns the case of genus $1$. In this case $Z_{\Sigma,t\sigma}(1)\xrightarrow[t\to\infty]{}\infty$. Even though this does not yield the optimal choice for the parameter $p$, we can still bound as in Equation \ref{Boundggeq2}.

Beware that in inequality (\ref{WeightSum}), the expectation is taken w.r.t. the conditionned ($\varepsilon$--regularized) Yang--Mills measure of the closed surface on the left hand side and the expectation on the r.h.s. is taken w.r.t. the free boundary Yang--Mills measure.

\begin{proposition}\label{qbigenough}
   Let $0<t<1$, $\alpha<\frac 1 2$, $p\geq 1$, $\ell<0$, and $q$ such that $2p\ell-2+\frac{\dim G}{q'}-p<0$, where $\frac{1}{q}+\frac{1}{q'}=1$.  We have the following.
    \begin{itemize}
        \item For the case $g\geq 2$: 
            \[\sup_{\epsilon>0} \mathbb{E}\Big[\big\| \pi_{\mathsf{noise}}\cdot A^\epsilon_{\Sigma,t\sigma}\big\|^{2p}_{\mathcal{B}_{\mathsf{YM}}^{\alpha,2p,s,\ell}}\Big]\lesssim t^{p-\frac{\dim G}{2q'}}.\]
        \item For the case $g=1$:
                   \[\sup_{\epsilon>0} \mathbb{E}\Big[\big\| \pi_{\mathsf{noise}}\cdot A^\epsilon_{\Sigma,t\sigma}\big\|^{2p}_{\mathcal{B}_{\mathsf{YM}}^{\alpha,2p,s,\ell}}\Big]\lesssim t^{p-\frac{\dim G}{2q'}-\frac{\dim G}{2}}.\]
    \end{itemize}

\end{proposition}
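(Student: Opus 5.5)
Starting from the decomposition (\ref{WeightSum}) for the part of the norm near $\max f$ (Remark~\ref{remcond} and Proposition~\ref{normep} handle the remaining terms and give $t^p$), I will bound each dyadic term separately. For the $n$-th term, I apply Hölder's inequality with exponents $(q,q')$ to the product of the scaled noise norm and the heat kernel factor:
\[
\mathbb{E}\big[X_n\, p_{t\sigma(\Sigma_{\geq \max f-2^{-n}})}(\mathsf{Hol}_n)\big]\leq \mathbb{E}\big[X_n^{q}\big]^{1/q}\,\mathbb{E}\big[p_{t\sigma(\Sigma_{\geq \max f-2^{-n}})}(\mathsf{Hol}_n)^{q'}\big]^{1/q'},
\]
where $X_n=\Vert \pi_{\mathsf{noise}}(\mathcal{S}^{2^{-n}\star}A^{\mathsf{Free}}_{\Sigma,t\sigma})\Vert^{2p}_{\mathcal{W}^{\alpha,\alpha-1;2p}}$. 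For the first factor I redo the computation of Proposition~\ref{ScaleFreeMin} near $\max f$ with the free measure, which is Gaussian. Hypercontractivity lets me replace the $2pq$-moment by the $2p$-moment to the power $q$ up to a constant. The area of a scaled box is $O(\lambda^{2})$ in the polar Morse chart, and the passage from $W$ to $A$ costs $2^{2pn}$, exactly as in Proposition~\ref{normep}. Together with $\xi_{t\sigma}=t^{-1/2}\xi_\sigma$, this yields $\mathbb{E}[X_n^q]^{1/q}\lesssim t^{p}2^{n(2p- c p)}$ for an explicit $c$ coming from the area scaling.

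For the heat kernel factor I use the Markov property: under the free measure, $\mathsf{Hol}_{\max f-2^{-n}}$ has the smooth density $Z_{\Sigma_{\leq r},t\sigma}$ given in Theorem~\ref{MeasureTheorem}. For $g\geq 2$ this density is bounded uniformly in $t$, because $\zeta_G(2g-2)<\infty$. Hence
\[
\mathbb{E}[p_u(\mathsf{Hol})^{q'}]\lesssim \int_G p_u^{q'}\,\dd g\lesssim u^{-\frac{\dim G}{2}(q'-1)},
\]
using the Gaussian short-time bound $\Vert p_u\Vert_\infty\lesssim u^{-\dim G/2}$. Here $u=t\sigma(\Sigma_{\geq\max f-2^{-n}})\asymp t\,2^{-2n}$ near the maximum. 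Raising to the power $1/q'$ gives the factor $(t2^{-2n})^{-\frac{\dim G}{2q'}}$ (up to the harmless $q'-1$ versus $q'$ bookkeeping). The denominator $Z_{\Sigma,t\sigma}(1)$ is bounded below uniformly in $t$ for $g\geq 2$, since it tends to $\zeta_G(2g-2)>0$.

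Multiplying everything and summing over $n$ with the weight $2^{n2p(\ell-\frac1p)}$ gives $t^{p-\frac{\dim G}{2q'}}$ times a geometric series. The series converges exactly under the stated condition $2p\ell-2+\frac{\dim G}{q'}-p<0$, and I will match the exponents to that condition. For $g=1$ the only change is in the density of $\mathsf{Hol}$ and in the partition function. I will bound the free density $\sum_\rho e^{-tc_2(\rho)\cdot}\dim V_\rho^{-1}\chi_\rho$ by $\sum_\rho e^{-tc_2(\rho)}\dim V_\rho\lesssim t^{-\dim G/2}$ via Weyl's law. I will also show the normalization $Z_{\Sigma,t\sigma}(1)$ stays bounded below (it in fact diverges), which produces the extra factor $t^{-\dim G/2}$.

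The main obstacle I expect is the exponent bookkeeping in the scaled noise moment near the maximum on the blown-up boundary, where the coordinates are $V=-r\partial_r$ and the area form degenerates. This requires getting the correct power of $2^{-n}$ so that the series condition is precisely the stated one. Controlling $\int p_u^{q'}$ uniformly is standard by comparison.
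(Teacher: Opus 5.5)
Your skeleton matches the paper's: the dyadic splitting (\ref{WeightSum}) near $\max f$, H\"older in $(q,q')$ on each block, $W_t=\sqrt t\,W$ in law plus hypercontractivity for the noise factor, and the bound $\|p_u\|_{L^q}\lesssim u^{-\frac{\dim G}{2}(1-\frac1q)}$ with $u\asymp t2^{-2n}$ against the bounded free density $Z_{\Sigma_{\le r},t\sigma}$. That density is bounded by $\zeta_G(2g-2)$ for $g\ge2$ and is $O(t^{-\dim G/2})$ for $g=1$.

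The gap is the one step that produces the $-p$ in the hypothesis. You leave it open, and the inputs you do commit to give the wrong exponent. To reach exactly $2p\ell-2+\frac{\dim G}{q'}-p<0$, the noise factor of block $n$ must be $\lesssim t^p2^{-np}$, i.e.\ $c=3$ in your notation. Your two stated facts are box area $O(\lambda^2)$ and a cost $2^{2pn}$ for passing from $W$ to $A$. Together they give $c=2$, so no decay per block, and the series would then need $2p\ell-2+\frac{\dim G}{q'}<0$. That is strictly stronger than the hypothesis.

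The two facts come from incompatible conventions:
\begin{itemize}
\item The factor $2^{2pn}$ of Proposition~\ref{normep} comes from $\partial_\theta\mathcal{S}^{2^{-n}\star}=2^{-n}\mathcal{S}^{2^{-n}\star}\partial_\theta$, i.e.\ from dilating the angular variable too. In that convention (the one of Proposition~\ref{ScaleFreeMin}), the box $[\lambda r_1,\lambda r_2]\times[\lambda\theta_1,\lambda\theta_2]$ has $\sigma$-area $O(\lambda^3)$ because $\sigma=r\rho\,\dd r\,\dd\theta$. The $W$-moment is then $\lesssim t^p2^{-3np}$ and the net is $t^p2^{-np}$, which is exactly what the paper uses.
\item If you instead use the Cartesian dilation $(x,y)\mapsto(\lambda x,\lambda y)$, which fixes $\theta$, your $O(\lambda^2)$ is right. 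But then $\mathcal{S}^{\lambda\star}(\partial_\theta W\,\dd\theta)=\partial_\theta(W\circ\mathcal{S}^\lambda)\,\dd\theta$ costs nothing. The net is $t^p2^{-2np}$, and the series converges under $2p\ell-2+\frac{\dim G}{q'}-2p<0$, which the hypothesis implies.
\end{itemize}
Either consistent choice closes the proof; the mixture does not.

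Smaller points:
\begin{itemize}
\item With the exponent $q'$ on the heat kernel you get $\bigl(u^{-\frac{\dim G}{2}(q'-1)}\bigr)^{1/q'}=u^{-\frac{\dim G}{2q}}$, not $u^{-\frac{\dim G}{2q'}}$. This is a swap of $q$ and $q'$, not ``$q'-1$ versus $q'$'' bookkeeping. Put $q$ on the heat kernel and $q'$ on the noise, as the paper does.
\item You apply H\"older to the whole norm $X_n$, whereas the paper applies it pointwise in $(r_1,r_2,\theta_1,\theta_2)$ after Fubini. So you also need Minkowski's integral inequality to bring the $L^{q'}(\Omega)$-norm inside the Gagliardo integral before invoking hypercontractivity.
\item Your handling of the denominator is fine (it is bounded below; the trivial representation already contributes $1$). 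So is your treatment of the terms away from $\max f$ via Remark~\ref{remcond}.
\end{itemize}
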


\begin{proof}
    We have 
    {\small
        \begin{align*}
        2^{-2np}&\mathbb{E}\Bigg[ \big \Vert \mathcal{S}^{2^{-n}\star}\left(\pi_{\mathsf{noise}} A^\mathsf{Free}_{\Sigma,t\sigma} \right)  \Vert^{2p}_{\mathcal{W}^{\alpha,\alpha-1;2p}}\cdot p_{t\sigma\big(\Sigma_{\geq \max f -2^{-n}}\big)}\Big(\mathsf{Hol}_{\max f- 2^{-n} }\big(A^{\mathsf{Free}}_{\Sigma,t\sigma}\big ) \Big)\Bigg] \\
        =&\int_{I_1^2\times I_2^2}  \frac{\,\dd r_1\dd r_2\dd\theta_1\d d\theta_2 }{\vert r_1-r_2\vert^{1+2\alpha p} \vert \theta_1-\theta_2 \vert^{1+2\alpha p}}\\
        &\hspace{2cm}\cdot\mathbb{E}\Big[\vert W_t(2^{-n}r_1,2^{-n}\theta_1)-W_t(2^{-n}r_2,2^{-n}\theta_1)- W_t(2^{-n}r_1,2^{-n}\theta_2)+W_t(2^{-n}r_2,2^{-n}\theta_2)    \vert^{2p}  \\
        &\hspace{6cm}\cdot p_{t\sigma\big(\Sigma_{\geq \max f -2^{-n}}\big)}\Big(\mathsf{Hol}_{\max f- 2^{-n} }\big(A^{\mathsf{Free}}_{\Sigma,t\sigma}\big ) \Big)\Big],
    \end{align*}}
   where $W_t$ is such that
\[A^\mathsf{Free}_{\Sigma,t\sigma}(x,y)= \left(\partial_y W_t\right) (x,y)  \text{ and } W_t(.,0)=0.\] 
    Now using H\"older inequality with conjugate exponents $(q,q')\in (1,+\infty)^2$ such that $\frac 1 q +\frac{1}{q'}=1,$ we get  
    {\small
    \begin{align*}
        2^{-2np}&\mathbb{E}\Bigg[ \big \Vert \mathcal{S}^{2^{-n}\star}\left(\pi_{\mathsf{noise}} A^\mathsf{Free}_{\Sigma,t\sigma} \right) \Vert^{2p}_{\mathcal{W}^{\alpha,\alpha-1;2p}}\cdot p_{t\sigma\big(\Sigma_{\geq \max f -2^{-n}}\big)}\Big(\mathsf{Hol}_{\max f- 2^{-n} }\big(A^{\mathsf{Free}}_{\Sigma,t\sigma}\big ) \Big)\Bigg] \\
        \leq&\int_{I_1^2\times I_2^2}  \frac{\,\dd r_1\dd r_2\dd\theta_1\dd\theta_2 }{\vert r_1-r_2\vert^{1+2\alpha p} \vert \theta_1-\theta_2 \vert^{1+2\alpha p}}\\
        &\hspace{2cm}\cdot\mathbb{E}\Big[\vert W_t(2^{-n}r_1,2^{-n}\theta_1)-W_t(2^{-n}r_2,2^{-n}\theta_1)- W_t(2^{-n}r_1,2^{-n}\theta_2)+W_t(2^{-n}r_2,2^{-n}\theta_2)    \vert^{2pq'}\Big]^{\frac{ 1}{q'}}  \\
        &\hspace{6cm}\cdot \mathbb{E}\Big[p_{t\sigma\big(\Sigma_{\geq \max f -2^{-n}}\big)}\Big(\mathsf{Hol}_{\max f- 2^{-n} }\big(A^{\mathsf{Free}}_{\Sigma,t\sigma}\big ) \Big)^{q}\Big]^\frac{1}{q}.
    \end{align*}}
    
    In law, $W_t=\sqrt{t}W$ so that, combining with hypercontractivity, and using a procedure similar to the one in Proposition \ref{normep}, we get 
    {\small
    \begin{align*}
&\int_{I_1^2\times I_2^2}  \frac{\,\dd r_1\dd r_2\dd\theta_1\dd\theta_2 }{\vert r_1-r_2\vert^{1+2\alpha p} \vert \theta_1-\theta_2 \vert^{1+2\alpha p}}\\
        &\hspace{2cm}\cdot\mathbb{E}\Big[\vert W_t(2^{-n}r_1,2^{-n}\theta_1)-W_t(2^{-n}r_2,2^{-n}\theta_1)- W_t(2^{-n}r_1,2^{-n}\theta_2)+W_t(2^{-n}r_2,2^{-n}\theta_2)    \vert^{2pq'}\Big]^{\frac{ 1}{q'}}  \\
\lesssim&\ t^{p}\int_{I_1^2\times I_2^2} \frac{\dd r_1\dd r_2\dd\theta_1\dd\theta_2 }{\vert r_1-r_2\vert^{1+2\alpha p} \vert \theta_1-\theta_2 \vert^{1+2\alpha p}}\\
        &\hspace{2cm}\cdot\mathbb{E}\Big[\vert W(2^{-n}r_1,2^{-n}\theta_1)-W(2^{-n}r_2,2^{-n}\theta_1)- W(2^{-n}r_1,2^{-n}\theta_2)+W(2^{-n}r_2,2^{-n}\theta_2)    \vert^{2}\Big]^{p}\\
        \lesssim & t^{p}2^{-3np}
    \end{align*}}
since the proof proceed as in Proposition \ref{normep}.  
For the moment, our estimate writes
\begin{align*}
       &\mathbb{E}\Bigg[ \big \Vert \mathcal{S}^{2^{-n}\star}\pi_{\mathsf{noise}}\cdot A^\mathsf{Free}_{\Sigma,t\sigma} \Vert^{2p}_{\mathcal{W}^{\alpha,\alpha-1;2p}}\cdot p_{t\sigma\big(\Sigma_{\geq \max f -2^{-n}}\big)}\Big(\mathsf{Hol}_{\max f- 2^{-n} }\big(A^{\mathsf{Free}}_{\Sigma,t\sigma}\big ) \Big)\Bigg] \\
&\hspace{2cm}\lesssim t^{p}2^{-np}\underbrace{ \mathbb{E}\Big[p_{t\sigma\big(\Sigma_{\geq \max f -2^{-n}}\big)}\Big(\mathsf{Hol}_{\max f- 2^{-n} }\big(A^{\mathsf{Free}}_{\Sigma,t\sigma}\big ) \Big)^{q}\Big]^\frac{1}{q}.}
    \end{align*}    
    
    We need now an intermediate lemma to control the term underbraced.
\begin{lemma}\label{LemmaNoyau}
We have 
\begin{align*}
    &\mathbb{E}\Big[ p_{t\sigma\big(\Sigma_{\geq \max f -2^{-n}}\big)}\big (\mathsf{Hol}_{{\max f-2^{-n}} }(A_{\Sigma,t\sigma}^{\mathsf{Free}}) \big)^{q}\Big]^{\frac{1}{q}} \lesssim \Big (t2^{-2n}\Big)^{-\frac{\dim G}{2}\left(1-\frac 1q\right)}\left\|Z_{\Sigma_{\leq\max f- 2^{-n}}}\right\|_{\infty}.
\end{align*}
\end{lemma}
\begin{proof}
We have
    \begin{align*}
    \mathbb{E}\Big[ p_{t\sigma\big(\Sigma_{\geq \max f -2^{-n}}\big)}\big (\mathsf{Hol}_{{\max f-2^{-n}} }(A_{\Sigma,t\sigma}^{\mathsf{Free}}) \big)^{q}\Big]^{\frac{1}{q}}  &=\left(\int_{G}p_{t\sigma\big(\Sigma_{\geq \max f -2^{-n}}\big)}(x)^qZ_{\Sigma_{\leq\max f- 2^{-n}},t\sigma}(x)\,\dd x\right)^{\frac{1}{q}} \\
    &\lesssim \left\|p_{t\sigma\big(\Sigma_{\geq \max f -2^{-n}}\big)}\right\|_{L^{q}} \left\|Z_{\Sigma_{\leq\max f- 2^{-n}},t\sigma}\right\|_{\infty}
\end{align*}
From the $L^q$ bounds of heat kernel, we get 
\[\mathbb{E}\Big[ p_{t\sigma\big(\Sigma_{\geq \max f -2^{-n}}\big)}\big (\mathsf{Hol}_{{\max f-2^{-n}} }(A) \big)^{q}\Big]^{\frac{1}{q}} \lesssim \Big (t\sigma\big(\Sigma_{\geq \max f -2^{-n}}\big)\Big)^{-\frac{\dim G}{2}\left(1-\frac 1q\right)}\left\|Z_{\Sigma_{\leq\max f- 2^{-n}},t\sigma}\right\|_{\infty}. \]
Since $\Sigma_{\geq \max f -2^{-n}}$ is diffeomorphic to a disc of radius $2^{-n}$, we get 
\[\mathbb{E}\Big[ p_{t\sigma\big(\Sigma_{\geq \max f -2^{-n}}\big)}\big (\mathsf{Hol}_{{\max f-2^{-n}} }(A) \big)^{q}\Big]^{\frac{1}{q}} \lesssim \Big (t2^{-2n}\Big)^{-\frac{\dim G}{2}\left(1-\frac 1q\right)}\left\|Z_{\Sigma_{\leq\max f- 2^{-n}},t\sigma}\right\|_{\infty}. \]
\end{proof}

Now we need to separate into two cases. The case where the genus $g\geq 2$ and the case where the genus $g=1$.
\paragraph{The case $g\geq 2$.} From Lemma \ref{LemmaNoyau},  since for $g\geq 2$, $\big\|Z_{\Sigma_{\leq\max f- 2^{-n}}}\big\|_{\infty}$ is uniformly bounded in $n,t$ and $x$, we have 
   \begin{align*}
        &\mathbb{E}\Bigg[ \big \Vert \mathcal{S}^{2^{-n}\star}\pi_{\mathsf{noise}}\cdot A^\mathsf{Free}_{\Sigma,t\sigma} \Vert^{2p}_{\mathcal{W}^{\alpha,\alpha-1;2p}}\cdot p_{t\sigma\big(\Sigma_{\geq \max f -2^{-n}}\big)}\Big(\mathsf{Hol}_{\max f- 2^{-n} }\big(A^{\mathsf{Free}}_{\Sigma,t\sigma}\big ) \Big)\Bigg] 
       \\ 
      & \lesssim  t^{-\frac{\dim(G)}{2}(1-\frac{1}{q})} 2^{n\dim(G)(1-\frac{1}{q})}t^{p}2^{-np} \\
        &\lesssim  t^{p-\frac{\dim G}{2q'}}2^{n\left(\frac{\dim G }{q'} -p\right) },
    \end{align*}
where $q'$ is such that $\frac{1}{q}+\frac{1}{q'}=1$. Therefore,
\[\mathbb{E}\Big[\big\| \pi_{\mathsf{noise}}\cdot A^\epsilon_{\Sigma,\sigma}\big\|^{2p}_{\mathcal{B}_{\mathsf{YM}}^{\alpha,2p,s,\ell}}\Big] \lesssim  t^{p-\frac{\dim G}{2q'}} \sum_{n=0}^\infty 2^{n2p(\ell-\frac{2}{2p})} 2^{n\left(\frac{\dim G }{q'} -p\right) },\]
which is finite if $2p\ell-2+\frac{\dim G}{q'}-p<0$.
\paragraph{The case $g=1$.}\label{CaseGenus1}
    For the case where $g=1$, the partition function verifies $\left\|Z_{\Sigma,t\sigma}\right\|\leq C_2t^{-\frac{\dim G}{2}}$.
    In this case, again for $2p\ell-2+\frac{\dim G}{q'}-p<0$, we have 
    \[\mathbb{E}\Big[\big\| \pi_{\mathsf{noise}}\cdot A^\epsilon_{\Sigma,\sigma}\big\|^{2p}_{\mathcal{B}_{\mathsf{YM}}^{\alpha,2p,s,\ell}}\Big] \lesssim t^{p-\frac{\dim G}{2q'}-\frac{\dim G}{2}}. \]

\end{proof}

\section{Semiclassical limit}\label{semiclas}
In this section we conclude the proof of the semiclassical limit. The proof is divided into two steps. The first one is the tightness of the family of random $1$-forms $\big(A_{\Sigma,t\sigma}\big)_{t>0}$, and the second step consists in identifying the limit. The conclusion follows then from Prokhorov's theorem.
\subsection{Tightness of $\big(A_{\Sigma,t\sigma}\big)_{t>0}$}\label{s7}
Recall that we are considering a compact surface $\Sigma$ endowed with an area measure $\sigma$. We denote by $A_{\Sigma,\sigma}$ the $\mathcal{B}_{\mathsf{YM}}^{\alpha,p,s,\ell}$-valued random variable describing the Yang--Mills measure associated with the area form $\sigma$. For $t>0$, we consider the random variable $A_{\Sigma,t\sigma}$ that is the Yang--Mills measure with area form $t\sigma(\Sigma)$. In this section, we would like to show that the family  $(A_{\Sigma,t\sigma}^\epsilon\big)_{\epsilon>0}$  is tight in the space $\mathcal{B}_{\mathsf{YM}}^{\alpha,p,s,\ell}$ for suitable values of the parameters $\alpha,p,s,\ell$ ($\alpha<\frac 1 2$, $p> 1$, $\ell<0$). More precisely, we have the following theorem.
\begin{thm}\label{tightness}
     For $\alpha<\frac 1 2$, $p$ big enough, $\ell<0$, the family $\big(A_{\Sigma,t\sigma}\big)_{t>0}$ is tight in $\mathcal{B}^{\alpha,p,s,\ell}_{\mathsf{YM}}$.
\end{thm}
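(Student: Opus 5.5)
The plan is to reduce tightness to a uniform moment bound, exactly as in the proof of Theorem~\ref{ClosedYMMeasure}. Since the injections $\mathcal{B}^{\alpha',p,s',\ell'}_{\mathsf{YM}}\hookrightarrow\mathcal{B}^{\alpha,p,s,\ell}_{\mathsf{YM}}$ are compact for $\alpha<\alpha'$, $s'<s$, $\ell'<\ell$ (Section~\ref{CompInj}), closed balls of $\mathcal{B}^{\alpha',p,s',\ell'}_{\mathsf{YM}}$ are relatively compact in $\mathcal{B}^{\alpha,p,s,\ell}_{\mathsf{YM}}$. By Markov's inequality it therefore suffices to prove
\[
\sup_{0<t<1}\mathbb{E}\Big[\big\|A_{\Sigma,t\sigma}\big\|^{2p}_{\mathcal{B}^{\alpha',2p,s',\ell'}_{\mathsf{YM}}}\Big]<\infty
\]
for some $\alpha'\in(\alpha,\frac12)$, $s'<s$ and $\ell'<\ell<0$. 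Only small $t$ matters for the semiclassical limit. For $t$ in a compact subset of $(0,\infty)$ the bounds below are plainly uniform, so I restrict to $t\in(0,1)$.

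I would split the norm, via Proposition~\ref{prop:keyprop}, into the flat part and the noise part.

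\textbf{Flat part.} By Lemma~\ref{structureLemma}, whose proof holds for every area form $t\sigma$, the coefficients $\mathsf{Ext}_a(A^\epsilon_{\Sigma,t\sigma})$ lie almost surely in a ball of radius $\mathsf{diam}(G)$. This is uniform in $\epsilon$ and $t$, so the term is bounded by $\mathsf{diam}(G)^{2p}$. The bound passes to the limit $\epsilon\to0$ because $\mathsf{Ext}_a$ is continuous and the event is closed.

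\textbf{Noise part away from $\max f$.} By Remark~\ref{remcond}, the part of the norm measurable with respect to $\{f\le f(a_{2g})\}$ is dominated by the free-boundary expectation. Propositions~\ref{ScaleFreeMin} and~\ref{normep}, together with the analogous saddle-point estimates of \cite[Section 5]{DN}, then give a bound $\lesssim t^p\le 1$.

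\textbf{Noise part near $\max f$: the main obstacle.} Here I would invoke Proposition~\ref{qbigenough}. For $g\ge2$ it gives $\lesssim t^{p-\frac{\dim G}{2q'}}$, which stays bounded as $t\to0$ as soon as $p\ge \frac{\dim G}{2q'}$. The constraint $2p\ell-2+\frac{\dim G}{q'}-p<0$ can be met simultaneously: since $\ell<0$, take $q'$ close to $1$ and $p$ large, so that $\frac{\dim G}{q'}\le\dim G<p+2-2p\ell$. This is where ``$p$ big enough'' enters.

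For $g=1$ the bound is $t^{p-\frac{\dim G}{2q'}-\frac{\dim G}{2}}$, coming from the blow-up of the partition function. One needs $p\ge\frac{\dim G}{2}\big(1+\frac1{q'}\big)$, which again holds for $p$ large. The real subtlety I expect is that the estimates are stated for the pre-measures $A^\epsilon$. I would pass to $\epsilon\to0$ by lower semicontinuity of the norm under the convergence in law from Theorem~\ref{ClosedYMMeasure}, using Fatou or the portmanteau theorem applied to truncations $\min(\|\cdot\|,M)$.

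If the normalization by $Z_{\Sigma,t\sigma}(1)$ in the $g=1$ case turns out not to be absorbed by that power of $t$, my first attempt would be to use a lower bound $Z_{\Sigma,t\sigma}(1)\gtrsim t^{-\dim G/2}\cdot c$ for the torus. This would cancel against the numerator's $\|Z\|_\infty$ and restore a $t$-independent bound.

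Finally, combining the three bounds yields the uniform moment estimate, and tightness in $\mathcal{B}^{\alpha,p,s,\ell}_{\mathsf{YM}}$ follows. The statement for exponent $p$ rather than $2p$ is obtained by relabeling $p$, or by Hölder's inequality.
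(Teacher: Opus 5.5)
Your argument is essentially the paper's proof of Theorem~\ref{tightness}: compact injections reduce tightness to a uniform moment bound, the flat part is controlled by Lemma~\ref{structureLemma}, and the noise part by Remark~\ref{remcond}, Propositions~\ref{ScaleFreeMin}--\ref{normep} and Proposition~\ref{qbigenough} with $p$ large. Bounding the norm of a strictly stronger space, and passing $\epsilon\to0$ by Fatou, is more careful than the paper, which bounds the moment in $\mathcal{B}^{\alpha,p,s,\ell}_{\mathsf{YM}}$ itself.

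Two small corrections. First, with the weights of Definition~\ref{WeightedSemiNorms} the stronger space has the larger weights, so you need $s'>s$ and $\ell'\in(\ell,0)$. This is what the proof of the compact-injection proposition actually uses; its statement has the inequalities reversed. Second, your $g=1$ fallback is unnecessary, since the paper only uses $Z_{\Sigma,t\sigma}(1)\geq c>0$. It is also incorrect as written, because the torus partition function grows like $t^{-\mathrm{rank}(G)/2}$, not $t^{-\dim G/2}$.
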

\begin{proof}
\par Using the compact injections of the spaces $\mathcal{B}^{\alpha,p,s,\ell}_{\mathsf{YM}}$, it is enough to show that 
\[\sup_{t>0} \mathbb{E}\Big[\big\|A_{\Sigma,t\sigma} \big\|^p_{\mathcal{B}^{\alpha,p,s,\ell}_{\mathsf{YM}}}\Big] <\infty.\]
The fundamental decomposition lemma lets us write 
$A_{\Sigma,t\sigma}=\pi_{\mathsf{noise}}\cdot A_{\Sigma,t\sigma} +\pi_{\mathsf{flat}}\cdot A_{\Sigma,t\sigma},$
 and from Lemma \ref{structureLemma}, we know that 
\[ \sup_{t>0} \mathbb{E}\Big[\big \|\pi_{\mathsf{flat}}\cdot A_{\Sigma,t\sigma}\big\|^p_{\mathcal{B}^{\alpha,p,s,\ell}_\mathsf{YM}}\Big] \leq (\dim G)^p.\]
From Section \ref{BoundNoise}
 \[\mathbb{E}\Big[\big\|\pi_{\mathsf{noise}}\cdot A
 _{\Sigma,t\sigma}\big\|^p_{\mathcal{B}_{\mathsf{YM}}^{\alpha,p,s,\ell}}\Big]\lesssim \begin{cases}
     t^{p-\frac{\dim G}{2q'}} \text{ if } g\geq 2\\
    t^{p-\frac{\dim G}{2q'}-\frac{\dim G}{2}} \text{ if } g= 1.\\
 \end{cases}\] 
    Choose therefore the index $q'$ large enough so that $p>\frac{\dim G}{2q'}$ if $g\geq 2$, or $p>\frac{\dim G}{2q'}+\frac{\dim G}{2}$ if $g=1$. Then we have 
    \[\mathbb{E}\Big[\big\|A_{\Sigma,t\sigma} \big\|^p_{\mathcal{B}^{\alpha,p,s,\ell}_{\mathsf{YM}}}\Big]  \lesssim  \mathbb{E}\Big[\big \|\pi_{\mathsf{noise}}\cdot A_{\Sigma,t\sigma}\big\|^p_{\mathcal{B}^{\alpha,p,s,\ell}_\mathsf{YM}}\Big]+ \mathbb{E}\Big[\big \|\pi_{\mathsf{flat}}\cdot A_{\Sigma,t\sigma}\big\|^p_{\mathcal{B}^{\alpha,p,s,\ell}_\mathsf{YM}}\Big],\]
    which gives $\sup_{t>0}\mathbb{E}\Big[\big\|A_{\Sigma,t\sigma} \big\|^p_{\mathcal{B}^{\alpha,p,s,\ell}_{\mathsf{YM}}}\Big]  <\infty,$ and concludes the proof.
\end{proof}

\subsection{Identification of the limit}\label{s8}
In the previous section, we have showed that the family $(A_{\Sigma,t\sigma})_{t>0}$ is a tight family of $\mathcal{B}^{\alpha,p,s,\ell}_\mathsf{YM}$-valued random variables for $\alpha<\frac 1 2$, $p\geq 1$, $\ell<0$. This means that the family of random variables $(A_{\Sigma,t\sigma})_{t>0}$ is compact in the weak convergence topology, by Prokhorov's theorem. To conclude the proof of Theorem \ref{Thm1} about the convergence of $A_{\Sigma,t\sigma}$ as $t\to 0$, it remains to show that the family $(A_{\Sigma,t\sigma})_{t>0}$ has only one accumulation point. Equivalently, we need to show the following proposition.
\begin{proposition}
    For $\alpha<\frac 1 2$, $p$ big enough\footnote{It is different in the cases $g=1$ and $g\geq 2$.}, $\ell<0$, there  exists a $\mathcal{B}^{\alpha,p,s,\ell}_\mathsf{YM}$-valued random variable $A_{\Sigma,0}$ such that,  for any sequence of decreasing positive real numbers $(s_n)_{n>0}$ converging to $0$ such that $(A_{\Sigma,s_n\sigma})_{n\geq0}$ converges, we have 
   \[A_{\Sigma,s_n\sigma}\xrightarrow[n\to\infty]{ }A_{\Sigma,0}.\]
   Moreover, almost surely we have 
   \[A_{\Sigma,0}\in \mathsf{vect}\{U_a: a\in \mathsf{Crit}_1(f)\},\]
   and it is the push--forward of the normalized Atiyah--Bott--Goldman measure by the map 
   \[ \left\{(a_1,a_2,\dots,a_{2g-1},a_{2g})\in G^{2g} \ : \mathsf{Hol}_{\partial \mathcal{S}}\Bigg[\sum_{i=1}^{2g}\log(a_i)U_{a_i}\Bigg]=1\right\} \rightarrow  \mathsf{vect}\{U_a: a\in \mathsf{Crit}_1(f)\}\]
   defined as 
   \[(a_1,a_2,\dots,a_{2g-1},a_{2g}) \in G^{2g} \mapsto\sum_{i=1}^{2g} \log(a_i)U_{a_i}.\]
\end{proposition}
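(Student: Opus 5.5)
The plan is to identify the limit directly, which is stronger than uniqueness of accumulation points: I will show that $A_{\Sigma,t\sigma}$ converges in law as $t\to 0^+$. I use the fundamental decomposition of Proposition~\ref{prop:keyprop},
\[
A_{\Sigma,t\sigma}=\pi_{\mathsf{noise}}\cdot A_{\Sigma,t\sigma}+\sum_{a\in\mathsf{Crit}_1(f)}\mathsf{Ext}_a\big(A_{\Sigma,t\sigma}\big)U_a ,
\]
and treat the two summands separately, then recombine with a Slutsky-type argument in $\mathcal{B}^{\alpha,p,s,\ell}_{\mathsf{YM}}$. Both $\pi_{\mathsf{noise}}$ and $\pi_{\mathsf{flat}}$ are continuous, so convergence of the pair (noise, zero modes) in law suffices.

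\textbf{Step 1: the noise term vanishes.} By Proposition~\ref{qbigenough} (letting $\epsilon\to 0$ via Theorem~\ref{ClosedYMMeasure} and lower semicontinuity of the norm), $\mathbb{E}\big[\|\pi_{\mathsf{noise}}\cdot A_{\Sigma,t\sigma}\|^{2p}\big]\lesssim t^{p-\frac{\dim G}{2q'}}$ for $g\geq 2$, and $\lesssim t^{p-\frac{\dim G}{2q'}-\frac{\dim G}{2}}$ for $g=1$. I choose $p$ large, keeping $2p\ell-2+\frac{\dim G}{q'}-p<0$, which holds automatically since $\ell<0$, so that the exponent of $t$ is positive. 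This is why $p$ must be taken larger when $g=1$. Markov's inequality then gives $\pi_{\mathsf{noise}}\cdot A_{\Sigma,t\sigma}\to 0$ in probability in $\mathcal{W}^{\alpha,p,s,\ell}$.

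\textbf{Step 2: the law of the zero modes.} I compute the joint law of $x^t=(\mathsf{Ext}_a(A_{\Sigma,t\sigma}))_a$. By Lemma~\ref{structureLemma} this law does not depend on $\epsilon$, and by locality of $\mathsf{Ext}_a$ it can be computed through the formula of Definition~\ref{ClosedYMM}. Under the free measure the zero modes are $x_a=\log g_a$, with $(g_a)$ Haar distributed and independent of the noise. Conditionally on $(g_a)$, the holonomy along $\{f=\max f-\epsilon\}$ is $\mathsf{Hol}_{\partial\mathcal S}(g)$ multiplied (and conjugated) by the Brownian increments produced by the white noise on the area below that level. Integrating the noise out with the convolution property of heat kernels $p_{t_1}*p_{t_2}=p_{t_1+t_2}$ and conjugation invariance, exactly as in the Markov property of Theorem~\ref{MeasureTheorem}, I expect the conditional density of $(g_a)$ to be
\[
\frac{p_{t\sigma(\Sigma)}\big(\mathsf{Hol}_{\partial\mathcal S}(g)\big)}{Z_{t\sigma(\Sigma)}(\Sigma)}\bigotimes_a \dd g_a .
\]
This identification, making sure the noise-induced holonomy enters only through a heat kernel evaluated at the combinatorial holonomy, is the step I expect to be the main obstacle. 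I would first try to establish it for cylinder functionals of $(g_a)$ using the lasso decomposition of the level curves.

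\textbf{Step 3: passage to the limit and conclusion.} Theorem~\ref{YMABG} (Forman--Liu for $g\geq2$, Sengupta's Proposition~\ref{prop:genus1iden} for $g=1$) then shows that this law converges weakly to $\tilde\nu$, which is supported on $\mathsf{Hol}_{\partial\mathcal S}^{-1}(1_G)$ and descends to the normalized ABG measure. The map $g\mapsto\sum_a\log(g_a)U_a$ is continuous off the cut locus of $\log$. I still need that $\tilde\nu$ gives zero mass to that locus: for $g\geq2$ I would use Lemma~\ref{NegSing} and the smoothness of $\tilde\nu$ on the regular part; for $g=1$ I would check it by hand on the pillowcase. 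The continuous mapping theorem then gives convergence in law of $\pi_{\mathsf{flat}}\cdot A_{\Sigma,t\sigma}$ to the pushforward $A_{\Sigma,0}$. Combined with Step~1, every convergent subsequence $A_{\Sigma,s_n\sigma}$ has limit $A_{\Sigma,0}$, and $A_{\Sigma,0}$ is almost surely supported in $\mathsf{vect}\{U_a\}$, as claimed.
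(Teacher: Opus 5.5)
Your proposal is correct and follows essentially the same route as the paper: the noise component is killed by the moment bound of Proposition~\ref{qbigenough}, the law of the zero modes is computed from the conditioning formula via independence of the Haar and Brownian factors and the heat-kernel semigroup property, giving the density $p_{t\sigma(\Sigma)}\big(\mathsf{Hol}_{\partial\mathcal S}(g)\big)/Z$, and Theorem~\ref{YMABG} together with Slutsky's theorem identifies the limit. Your two additions — proving full convergence in law directly rather than only identifying subsequential limits, and checking that the discontinuity locus of $\log$ is $\tilde\nu$-null before applying the continuous mapping theorem — are small refinements of steps the paper leaves implicit.
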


\begin{proof}
Let  $(s_n)_{n>0}$ be a sequence  of decreasing positive real numbers that converges to $0$ and is such that $(A_{\Sigma,s_n\sigma})_{n\geq0}$ converges to some limit random variable $A$. From Paragraph \ref{BoundNoise},  for $\alpha<\frac 1 2$, $p\geq 1$, $\ell<0$, we have 
\[\mathbb{E}\Big[\big \|\pi_{\mathsf{noise}}\cdot A_{\Sigma,\sigma t}\big \|_{\mathcal{B}^{\alpha,p,s,\ell}_\mathsf{YM}}^p\Big] \xrightarrow[t\to 0]{}0.\]
This gives the following convergence in law $\pi_{\mathsf{noise}}\cdot A_{\Sigma,s_n\sigma} \xrightarrow[n\to \infty]{} 0$,
which means that almost surely, $\pi_{\mathsf{noise}}\cdot A=0$. We immediately get that almost surely, $A\in \mathsf{vect}\{U_a: a\in \mathsf{Crit}_1(f)\}$. This lets us write 
\[A=\sum_{i=1}^{2g} M_iU_{a_i},\]
and the law of $A$ is completely determined by the joint law $(M_1,\cdots, M_{2g})$. We have, for any bounded continuous function $F:\g^{2g}\rightarrow \R$, and since $A_{\Sigma,s_n\sigma}\xrightarrow[n\to\infty]{} A$,
\[ \mathbb{E}\big[F(M_1,\cdots,M_{2g})\big]=\lim_{n\to\infty} \mathbb{E}\Big[F\Big(\mathsf{Ext}_{a_1}(A_{\Sigma,s_n\sigma}),\dots,\mathsf{Ext}_{a_{2g}}(A_{\Sigma,s_n\sigma})\Big)\Big].\]
However, for $r>a_{2g}$
{\small
\begin{align*}
  &\mathbb{E}\Big[F\Big(\mathsf{Ext}_{a_1}(A_{\Sigma,s_n\sigma}),\cdots,\mathsf{Ext}_{a_{2g}}(A_{\Sigma,s_n\sigma})\Big)\Big]\\
  &\hspace{2cm}= \frac{ \mathbb{E}\Big[F\Big(\mathsf{Ext}_{a_1}(A_{S,s_n\sigma}^\mathsf{Free}),\cdots,\mathsf{Ext}_{a_{2g}}(A_{S,s_n\sigma}^\mathsf{Free})\Big)\cdot p_{s_n\sigma\big(\Sigma_{\geq r}\big)}\Big(\mathsf{Hol}_{r}(A^\mathsf{Free}_{S,s_n\sigma})\Big)\Big]}{Z_{\Sigma,s_n\sigma}}\\
&\hspace{2cm}=\big(Z_{\Sigma,s_n\sigma}\big)^{-1} \int_{G^{2g}}F\big(\log(a_1),\dots,\log(a_{2g})\big)\cdot p_{s_n\sigma(\Sigma)}\left (\mathsf{Hol}_{\partial \mathcal{S}}\Bigg[\sum_{i=1}^{2g}\log(a_i)U_{a_i}\Bigg]\right)\,\dd a,
\end{align*}}
which converges to the expectation of $F$ under our representative of the Atiyah--Bott--Goldman measure. In fact, the third step follows the fact that $\mathsf{Hol}_{r}(A^\mathsf{Free}_{S,s_n\sigma})$ is an alternate product of Haar distributed, independent random variables, and independent random variables distributed as the law of a Brownian motion taken at time $s_n\sigma(\square(\theta_i,\theta_{i+1},0,r))$, combined with the semi-group property of the heat kernel.
\par This concludes, using Slutsky's theorem, the proof of the proposition. 
\end{proof}

Combining with the representation of the Atiyah--Bott--Goldman volume in the Morse gauge from Section~\ref{sss:ABGvol}, especially equation~(\ref{eq:FromheattoABG}), we have now concluded the following theorem of this article.

\begin{thm}\label{Thm3}
 For $\alpha<\frac 1 2$, $p\geq 1$, $\ell<0$,  in the sense of weak convergence of measures in $\mathcal{B}_{\mathsf{YM}}^{\alpha,p,s,\ell}$, we have $A_{\Sigma, t\sigma}\xrightarrow[t\to 0]{}A_{\Sigma,0}$.
   Moreover, $A_{\Sigma,0}$ is supported on $\mathsf{vect}\{U_a: a\in \mathsf{Crit}_1(f)\}$  and it is the push--forward of the Atiyah--Bott--Goldman measure by the map 
   \[ \left\{ \mathsf{Hol}_{\partial \mathcal{S}}\Bigg[\sum_{ a\in \mathsf{Crit}_1(f) }\log(g_a)U_{a}\Bigg]=1\right\} \rightarrow  \mathsf{vect}\{U_a: a\in \mathsf{Crit}_1(f)\}\]
   defined as 
 $   (g_a)_{  a\in \mathsf{Crit}_1(f) }  \in G^{2g} \mapsto\sum_{ a\in \mathsf{Crit}_1(f)} \log (g_a)U_{a}$.
\end{thm}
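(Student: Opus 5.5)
The proof will be assembled from three ingredients already in place: tightness (Theorem~\ref{tightness}), uniqueness of subsequential limits (the preceding proposition), and the Morse-gauge identification of the Atiyah--Bott--Goldman volume (Theorem~\ref{YMABG} together with equation~(\ref{eq:FromheattoABG})). Fix $\alpha<\frac12$, $\ell<0$, and $p$ large enough that the exponent conditions of Proposition~\ref{qbigenough} hold. The choice of $q'$ differs in genus $g\geq2$ and $g=1$; in both cases it must give $p>\frac{\dim G}{2q'}$, respectively $p>\frac{\dim G}{2q'}+\frac{\dim G}{2}$.

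\textbf{Step 1: compactness.} By Theorem~\ref{tightness} the family $(A_{\Sigma,t\sigma})_{t\in(0,1)}$ is tight in $\mathcal{B}^{\alpha,p,s,\ell}_{\mathsf{YM}}$. This is a separable Banach space, being the completion of smooth forms plus a finite-dimensional summand. Prokhorov's theorem therefore makes every sequence $t_n\to0$ have a weakly convergent subsequence.

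\textbf{Step 2: uniqueness of the limit.} By the preceding proposition, every subsequential limit equals the same random variable $A_{\Sigma,0}$. Its noise part vanishes, because
\[
\mathbb{E}\big[\|\pi_{\mathsf{noise}}A_{\Sigma,t\sigma}\|^p\big]\lesssim t^{\kappa}\quad\text{with }\kappa>0,
\]
and $\pi_{\mathsf{noise}}$ is continuous by Proposition~\ref{prop:keyprop}. The flat coordinates $\mathsf{Ext}_a$ have law given by the limit of
\[
Z_{\Sigma,t\sigma}^{-1}\,p_{t\sigma(\Sigma)}\!\left(\mathsf{Hol}_{\partial\mathcal{S}}\right)\prod_a \dd g_a,
\]
pushed forward by $\log$. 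The standard subsequence argument then gives convergence of the whole family as $t\to0$.

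\textbf{Step 3: identification.} Writing $\epsilon=t\sigma(\Sigma)$, the law in Step 2 is exactly the measure $\tilde\nu_\epsilon$ of Theorem~\ref{YMABG}. That theorem yields weak convergence to $\tilde\nu$, supported on $\mathsf{Hol}_{\partial\mathcal S}^{-1}(1_G)$ and descending to the normalized symplectic measure $\nu$. For $g\geq2$ this relies on Forman--Liu (equation~(\ref{eq:FromheattoABG}) together with Lemma~\ref{NegSing}, which discards the singular locus). For $g=1$ it is Sengupta's result, Proposition~\ref{prop:genus1iden}. For $g=0$ the statement is trivial since there are no saddles.

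Finally, $A_{\Sigma,0}$ is the pushforward of $\tilde\nu$ under $(g_a)\mapsto\sum_a\log(g_a)U_a$.

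\textbf{Main obstacle.} The map $\log$ is discontinuous on the boundary of the fundamental domain $\mathfrak D$, so the continuous mapping theorem cannot be applied naively when pushing forward. I would handle this by showing that the cut locus of $\log$ in $G^{2g}$ has $\tilde\nu$-measure zero. It is a finite union of submanifolds of positive codimension meeting $\mathsf{Hol}^{-1}_{\partial\mathcal S}(1_G)$ in a null set for the smooth density of $\nu$. The portmanteau theorem then applies with a $\tilde\nu$-a.e. continuous map.

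A second subtlety is $G$-invariance. The pushforward depends on the representative in $G^{2g}$ and not only on the class in $\mathcal M_g$, so I will state the result at the level of $\tilde\nu$ on $\mathsf{Hol}_{\partial\mathcal S}^{-1}(1_G)$. I will note that its image in $\mathcal M_g$ is $\nu$, which is what "push-forward of the ABG measure" means here.
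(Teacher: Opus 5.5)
Your proposal follows the paper's route: tightness plus Prokhorov, vanishing of the noise component via the $t^{\kappa}$ moment bound, the law of the $\mathsf{Ext}_a$ coordinates as the heat-kernel-weighted Haar measure pushed forward by $\log$, and identification of its limit with the ABG measure via Theorem~\ref{YMABG} (Forman--Liu for $g\geq 2$, Sengupta for $g=1$). Your extra care on two points fills gaps the paper passes over silently and is a welcome addition: you require that $\tilde\nu$ not charge the discontinuity set of $\log$ before applying the continuous mapping theorem, and you state the push-forward at the level of $\tilde\nu$ on $\mathsf{Hol}_{\partial\mathcal S}^{-1}(1_G)$ rather than on $\mathcal M_g$.
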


\appendix

\section{A sketch of the construction of the Yang--Mills random connection }
\label{s:justifyformulaDAS}
The purpose of this appendix is to explain how to define the free boundary Yang--Mills measure in such a way that both the anisotropic norms
 $A^{\mathsf{Free}}_{S,\sigma} \mapsto \Vert A^{\mathsf{Free}}_{S,\sigma} \Vert_{\mathcal{B}_{\mathsf{YM}}^{\alpha,p,s,\ell}  } $ 
and the holonomy functional
 $A^{\mathsf{Free}}_{S,\sigma} \mapsto \mathsf{Hol}_{\max f-\epsilon }\big(A^{\mathsf{Free}}_{S,\sigma}\big) $ 
are measurable. In this paragraph, we explain one way of doing this using a Cauchy sequence argument.
\subsection{A version of the Yang--Mills measure in  $\mathcal{B}_{\mathsf{YM}}^{\alpha,p,s,\ell}$}
In this paragraph, we adapt to the spaces $\mathcal{B}_{\mathsf{YM}}^{\alpha,p,s,\ell}$ a construction  given in ~\cite{BCDRT} to the space $H^{-1^-}(\Sigma)$. This gives a direct construction of the Yang--Mills random field in  $\mathcal{B}_{\mathsf{YM}}^{\alpha,p,s,\ell}$. Let us first explore the free boundary case.
\par Choose a real orthonormal basis $(e_n)_{n\in \mathbb{N}}$ of $L^2(\Sigma,\sigma,\mathfrak{g})$ then define, for $N\in\N$, the $N$ resolution approximation of the white noise as
\[
  \xi_N:= \sum_{0\leqslant n\leqslant N}c_n e_n ,
\]
where $(c_n)_n$ are i.i.d standard Gaussian random variables defined on some probability space $(\Omega,\mathcal{F},\mathbb{P})$. Denote by $A_{\mathcal{N},N}$ the approximation $\iota_V\mathcal{L}_V^{-1}\left(\xi_N\sigma \right)$ of the normal component $A_{\mathcal{N}}$.
In \cite{BCDRT}, it is proved that for any oriented curve $\gamma$, everywhere transverse to the vector field $V$, the sequence $\left(\int_\gamma A_{\mathcal{N},N}\right)_{N\in \mathbb{N}} $ is a martingale bounded in $L^2(\Omega,\mathfrak{g})$. Therefore, 
\[\int_\gamma A_{\mathcal{N},N}\xrightarrow[N\to\infty]{L^2} \int_\gamma A_{\mathcal{N}}. \]
Sine the law of each $ \int_\gamma A_{\mathcal{N},N}$ is Gaussian the convergence holds true in $L^p(\Omega,\mathfrak{g})$
for all $p\in [2,+\infty)$.
Localize on any flow box $[-1,1]_x\times [-1,1]_y$ with coordinates $(x,y)$ where $V=\partial_x$. Denote by $W_{\mathcal{N},N}(x,y):=\int_0^y A_{\mathcal{N},N}(x,u)\dd u$
the integral of $A_{\mathcal{N},N}$ on the vertical interval $ \{x\}\times [0,y]$. For given $(x,y)$, $\big(W_{\mathcal{N},N}(x,y)\big)_{N\geq 0}$ is a sequence of $\mathfrak{g}$--valued martingale bounded in $L^2(\Omega)$. Therefore, it converges to some $\mathfrak{g}$--valued Gaussian random variable $W_{\mathcal{N}}(x,y)$ which satisfies 
\begin{align*}
\mathbb{E}\left[\big\langle W_{\mathcal{N}}(x_1,y_1)-W_{\mathcal{N}}(x_2,y_2), v_1\big\rangle_{\mathfrak{g}}\big\langle W_{\mathcal{N}}(x_1,y_1)-W_{\mathcal{N}}(x_2,y_2), v_2\big\rangle_{\mathfrak{g}} \right]   = \sigma\big(\square(x_1,y_1,x_2,y_2)\big)\left\langle v_1,v_2 \right\rangle_{\mathfrak{g}},
\end{align*}
where $\square(x_1,y_1,x_2,y_2)$ is the flow box whose vertices are $(x_1,y_1),(x_1,y_2),(x_2,y_1),(x_2,y_2)$. This means that $W$ has the law of a $\mathfrak{g}$--valued reparametrized Brownian sheet.
For $N_2\geqslant N_1$, we set 
\[W_{\mathcal{N},N_1,N_2}(x,y):=\int_0^y \iota_V\mathcal{L}_V^{-1}\left( \sum_{ N_1\leqslant n\leqslant N_2}c_ne_n \right)(x,u,\dd u),  \]
which is a well--defined Gaussian random variable. For $\alpha\in (\frac{1}{3},\frac{1}{2})$ and $p\geqslant 2$ some even integer, we have the following estimate.
\begin{align*}
&\mathbb{E}\Big[ \Vert A_{\mathcal{N},N_1}-A_{\mathcal{N},N_2} \Vert_{\mathcal{W}^{\alpha,\alpha-1,p}}^p \Big]\\
&\hspace{1cm}=   \int_{[-1,1]^4}  \dd x_1\dd y_1\dd x_2\dd y_2 
\\&\hspace{2cm}\cdot\frac{\mathbb{E}\Big[\big \vert W_{\mathcal{N},N_1,N_2}(x_1,y_1  )-W_{\mathcal{N},N_1,N_2}(x_2,y_1  )-W_{\mathcal{N},N_1,N_2}(x_1,y_2  )+W_{\mathcal{N},N_1,N_2}(x_2,y_2  ) \big\vert_{\mathfrak{g}}^p\Big] }{\vert x_1-x_2 \vert^{1+\alpha p} \vert y_1-y_2  \vert^{1+\alpha p}}   \\
&\hspace{1cm}\leqslant C_p\int_{[-1,1]^4}   \dd x_1\dd y_1\dd x_2\dd y_2 \cdot \frac{ \left(\sum_{ N_1\leqslant n\leqslant N_2}\vert\int_{\square(x_1,y_1,x_2,y_2)} e_n\sigma\vert_{\mathfrak{g}}^2 \right)^{\frac{p}{2}} }{\vert x_1-x_2 \vert^{1+\alpha p} \vert y_1-y_2  \vert^{1+\alpha p}} \xrightarrow[N_1,N_2\to \infty]{} 0,
\end{align*}
since for all $\epsilon>0$,
{\small
\begin{align*}
    &\sum_{ N_1\leqslant n\leqslant N_2}\left \vert\int_{\square(x_1,y_1,x_2,y_2)} e_n\sigma\right \vert_{\mathfrak{g}}^2\\
&\hspace{2cm}=
   \sum_{N_1\geq n\geq N_2} \vert \langle 1_{\square(x_1,y_1,x_2,y_2)},e_n\rangle_{L^2(\sigma)}\vert^2 \\
&\hspace{2cm}=\left(  \sum_{N_1\geq n\geq N_2} \vert \langle 1_{\square(x_1,y_1,x_2,y_2)},e_n\rangle_{L^2(\sigma)}\vert^2\right)^\varepsilon \left(  \sum_{N_1\geq n\geq N_2} \vert \langle 1_{\square(x_1,y_1,x_2,y_2)},e_n\rangle_{L^2(\sigma)}\vert^2\right)^{1-\varepsilon}\\
&\hspace{2cm}=\left(  \sum_{N_1\geq n\geq N_2} \vert \langle 1_{\square(x_1,y_1,x_2,y_2)},e_n\rangle_{L^2(\sigma)}\vert^2\right)^\varepsilon \left(  \Vert 1_\sigma \Vert_{L^2(\Sigma)}^2  \right)^{1-\varepsilon}\\
    &\hspace{2cm}\leq o(1) \left( \vert x_1-y_1 \vert\vert x_2-y_2 \vert \right)^{1-\varepsilon},
\end{align*}
}
where we have used Pythagora's Theorem, and the fact that far from cirtical points, \[\sigma\big(\square(x_1,y_1,x_2,y_2)\big)=\mathcal{O}(\vert x_1-y_1\vert \vert x_2-y_2\vert).\]
This shows that $A_{\mathcal{N},N}$ is a Cauchy sequence in \[L^p\Big(\Omega, \mathcal{W}^{\alpha-\epsilon,\alpha-1-\epsilon,p}\big([-1,1]^2\big)\Big)\] for all $\epsilon>0$.
\par Since the weighted norms $\mathcal{W}^{\alpha,\alpha-1,p,s,\ell}(\mathcal{S})$
are only constructed from countably local $\mathcal{W}^{\alpha,\alpha-1,p}(\square)$-norms, for some \emph{countable family of flow boxes}, we can easily adapt the above argument and show prove that for every $\alpha\in (\frac{1}{3},\frac{1}{2})$, for weights $s<-1$, $\ell<0$ and every $p\geqslant 2$ large enough:
\[
\mathbb{E}\Big[ \Vert A_{\mathcal{N},N_1}- A_{\mathcal{N},N_2} \Vert^p_{\mathcal{W}^{\alpha,\alpha-1,p;s,\ell}} \Big]  \xrightarrow[N_1,N_2\to \infty]{} 0. 
\]

\subsection{Control of the holonomy along Cauchy sequences}

The construction of the closed measure relies on conditioning with respect to \(\mathsf{Hol}_{\partial \mathcal{S}}(A_{\Sigma,\sigma}^{\mathsf{Free}})\). We explain here how to define the holonomy of \(A_{\Sigma,\sigma}^{\mathsf{Free}}\). It suffices to define the holonomy of its normal component, as the holonomy of \(A_{\Sigma,\sigma}^{\mathsf{Free}}\) is then obtained by multiplicativity.

We show that \(\mathsf{Hol}_{\partial \mathcal{S}}(A_{\mathcal{N},N})\) forms a Cauchy sequence of random variables, and that \(\mathsf{Hol}_{\partial \mathcal{S}}\) can be realized as a measurable function of the limiting random variable \(A_{\mathcal{N}}\). One way to establish this is via rough path or controlled path techniques.

Let \(\gamma : [0,1] \to \mathcal{S}\) denote the closed oriented curve corresponding to \(\partial \mathcal{S}\), or more generally to a level set between the last critical point and the maximum of $f$.

\paragraph{Regularity of area functional.}
We denote by \(V^\perp\) the smooth vector field obtained by rotating \(V\) by an angle of \(\frac{\pi}{2}\) in each fiber. Given a \(C^1\) curve \(\gamma\), we need to control the regularity of the map
\[
r \mapsto \sigma\big(\angle(\gamma)(s,r)\big),
\]
where \(\angle(\gamma)\) denotes the curve obtained by joining \(\min f\) to \(\gamma(0)\) and \(\min f\) to \(\gamma(1)\) along flow lines of \(f\).

This regularity is formulated in the following lemma, whose proof can be found in~\cite{BCDRT,DN}.
\begin{lemma}\label{lem:regularityarea}
Let \(\gamma: [0,1]\to \Sigma\) be an oriented \(C^1\) curve such that, for all \(t\in [0,1]\), the scalar product
\[
\left\langle \gamma'(t), V^\perp(\gamma(t)) \right\rangle_{T_{\gamma(t)}\Sigma}
\]
has a constant sign. Denote by \(t_0\) the first time at which \(\gamma\) meets an unstable curve \(W^{u}(a)\), with \(a\in \mathrm{Crit}(f)_1\), and define
\[
\tau(t) = \ell\big(\gamma_{[t, t_0]}\big).
\]
Then \(\tau\) is strictly monotone and \(C^1\) for all \(t\) sufficiently close to \(t_0\), with \(t \le t_0\). For \(\delta>0\) small enough, the map
\[
[0,\delta]\ni \tau \mapsto \sigma\big(\angle(\gamma_{[t(\tau), t_0]})\big)
\]
belongs to \(C^{1-}\), where \(t(\tau)\) denotes the inverse function of \(t\mapsto \tau(t)\).

More precisely, as \(t\to t_0\), one has the asymptotic estimate
\begin{equation}
\sigma\big(\angle(\gamma_{[t, t_0]})\big) = \mathcal{O}\big(\tau(t)\log \tau(t)\big).
\end{equation}
In particular, the derivative \(\partial_t \sigma\big(\angle(\gamma_{[t, t_0]})\big)\) exhibits a logarithmic singularity in \(\tau\), namely
\[
\partial_t \sigma\big(\angle(\gamma_{[t, t_0]})\big) = \mathcal{O}\big(\log \tau(t)\big),
\]
and satisfies \(\partial_t \sigma\big(\angle(\gamma_{[t, t_0]})\big) \in L^p([0,t_0])\) for all \(1 \le p < \infty\).
\end{lemma}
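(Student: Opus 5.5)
Work locally near $t_0$ in a flow-box/Morse chart adapted to the saddle $a$ whose unstable curve $W^u(a)$ is first met by $\gamma$ at time $t_0$. The transversality assumption (constant sign of $\langle\gamma',V^\perp\rangle$) means $\gamma$ is everywhere transverse to the flow lines. It therefore crosses $W^u(a)$ transversally at $\gamma(t_0)$, and the arclength $\tau(t)=\ell(\gamma_{[t,t_0]})=\int_t^{t_0}|\gamma'(u)|\,du$ is $C^1$ with derivative $-|\gamma'(t)|\neq0$. Strict monotonicity and the existence of a $C^1$ inverse $t(\tau)$ then follow from the inverse function theorem. The substance of the lemma is the area asymptotics.

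For the area, I would describe $\angle(\gamma_{[t,t_0]})$ as the region swept by the backward flow lines from $\gamma(u)$, $u\in[t,t_0]$, down to $\min f$. Concretely, I parametrize it by $(u,s)\mapsto\varphi_f^{-s}(\gamma(u))$ for $s\in[0,\infty)$ and write
\[
\sigma\big(\angle(\gamma_{[t,t_0]})\big)=\int_t^{t_0}\!\!\int_0^\infty \big|\det\big(\partial_u\varphi_f^{-s}(\gamma(u)),\,V(\varphi_f^{-s}\gamma(u))\big)\big|\,\rho\,ds\,du .
\]
This reduces the problem to estimating the inner integral $h(u)$ as $u\to t_0$. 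For $u<t_0$ but close to $t_0$, the backward orbit of $\gamma(u)$ passes near the saddle $a$. Indeed, $\gamma(t_0)\in W^u(a)$, so its backward orbit converges to $a$, while nearby points leave along $W^s(a)$ toward $\min f$. By the linearization assumption ($V$ linear in Morse charts, eigenvalues $\pm\lambda$), the time spent in the saddle chart is about $\lambda^{-1}\log(1/d(u))$, where $d(u)\asymp\tau(u)$ is the transverse distance to $W^u(a)$.

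The main step, and the obstacle, is the contribution of this passage near the saddle. In linear coordinates $(x,y)$ with $\dot x=\lambda x$, $\dot y=-\lambda y$, the area element $dx\,dy$ is flow-invariant. Mapping an entry segment $\{|x|=1\}$ to an exit segment $\{|y|=1\}$ sends a transverse interval of width $\delta$ at distance $\sim d$ to a region of area $\sim\delta\log(1/d)$. Equivalently, $h(u)\asymp C\log(1/\tau(u))+O(1)$. Outside the saddle chart the flow is a diffeomorphism on compact pieces and contributes bounded density, and near $\min f$ the area converges because $\sigma=r\rho\,dr\,d\theta$ is integrable. Integrating gives
\[
\sigma\big(\angle(\gamma_{[t,t_0]})\big)=\int_{t}^{t_0}h(u)\,du=\mathcal O\big(\tau\log\tau\big),
\]
and differentiating gives $\partial_t\sigma=-h(t)=\mathcal O(\log\tau(t))$. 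Since $\log$ lies in $L^p$ for all finite $p$, the derivative is in $L^p([0,t_0])$. Finally, a function whose derivative has only a logarithmic singularity is Hölder of every exponent $<1$, which gives the $C^{1-}$ claim in the variable $\tau$; composing with the $C^1$ map $t(\tau)$ preserves this. Because the flow is exactly linear near $a$, I do not expect to need a Hartman--Grobman argument, only a careful bookkeeping of the exit time $\lambda^{-1}\log(1/d)$.
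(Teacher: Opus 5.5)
The paper does not prove this lemma; it only states it and refers to \cite{BCDRT,DN}. So your proposal supplies an argument rather than competing with one, and it is correct.

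Your route writes the area as $\int_t^{t_0}h(u)\,\dd u$ with
\[
h(u)=|\sigma(\gamma'(u),V(\gamma(u)))|\int_0^\infty J_s(\gamma(u))\,\dd s,
\]
where $J_s$ is the $\sigma$-Jacobian of $\varphi_f^{-s}$. You then read the logarithm off the exit time $\lambda^{-1}\log(1/d)$ through the linear saddle. This is the dynamical version of the direct Morse-chart computation $\int_0^d\log(1/w)\,\dd w=d\log(1/d)+d$ for the region between the hyperbolas $xy=0$ and $xy=d$.

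What your formulation buys is that it isolates the one structural input: $\operatorname{div}_\sigma V(a)=0$ at the saddle. This holds because the eigenvalues are opposite, which the flat metric in Morse charts guarantees, and it keeps $J_s$ of order one during the passage. Since $\sigma=\rho\,\dd x\,\dd y$ rather than $\dd x\,\dd y$, the relative Jacobian through the chart is $\rho(\text{exit})/\rho(\text{entry})$, which is harmless. With unequal eigenvalues the same computation would instead give a power-law singularity, so this assumption deserves to be stated explicitly.

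Two steps need a sentence more than you give them.
\begin{itemize}
\item Near $\min f$, uniform convergence does not follow from integrability of $\sigma$. The reason is $\operatorname{div}_\sigma V(\min f)>0$, so $J_s$ decays exponentially once the orbit enters the minimum chart. The density entering that chart is bounded precisely because the saddle passage preserves area; equivalently, the Dulac map $(1,d)\mapsto(d,1)$ is the identity in these coordinates.
\item For the $L^p([0,t_0])$ claim you also need $h$ bounded on $[0,t_0-\delta]$. This follows by compactness: no point of $\gamma([0,t_0-\delta])$ lies on an unstable curve, so the hitting time of the minimum chart is continuous there.
\end{itemize}
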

Any transverse $C^1$ curve or any level curve satisfies the assumptions of the above Lemma.
\par We will now construct the holonomy $\mathsf{Hol}_{\gamma}(A_{\mathcal{N}})$ on piecewise $C^1$ curves
by rough path theory.

\paragraph{Identification with rough path and formal calculations of the second order term.}

We recall the definition of rough paths in our setting, following~\cite[p.~13]{friz_hairer}, using adapted notation. Let \(\gamma \in C^\infty([0,1],\Sigma)\) be a smooth curve everywhere transverse to \(V\), contained in a level set of \(f\), and such that it intersects \(\bigcup_a W^u(a)\) at most once, and if so only at its endpoints. For \(0 \le s \le t \le 1\), set
\[
W_{N,s,t} := \int_s^t \gamma^* A_{\mathcal{N},N} \in \mathfrak{g}.
\]

The Cauchy sequence argument showing that \(A_{\mathcal{N},N}\) converges in the anisotropic spaces as \(N \to +\infty\) also implies that \(\lim_{N\to+\infty} W_{N,s,t}\) exists as a random variable for all such level curves \(\gamma\), and that this limit depends measurably on  \(A_{\mathcal{N}}\) in the anisotropic space.

It is essential at this stage that \(\gamma\) is a level curve, so that the above line integral depends measurably on \(A\) in the anisotropic topology, since the definition of the anisotropic norms itself involves line integrals of  \(A_{\mathcal{N}}\)along such curves. In a second step, we will define the second-order term measurably in terms of the line integrals \(\int_s^t \gamma^* A_{\mathcal{N}}\) along level curves.

To extend integration to more general curves, called \emph{approximable} in~\cite{BCDRT}, one must argue via Cauchy sequences of curves, as in~\cite{BCDRT}. However, for general approximable curves, the second-order term cannot be defined in this way.

\begin{definition}[A \(\mathfrak{g}\)-valued rough path]
A rough path on the interval \([0,1]\) with values in the Lie algebra \(\mathfrak{g}\) consists of a continuous function
\[
W : [0,1] \to \mathfrak{g},
\]
together with a continuous function
\[
\mathbb{W} : [0,1]^2 \to \mathfrak{g} \otimes \mathfrak{g},
\]
called the \emph{second-order process}, satisfying the algebraic relation
\begin{equation}\label{eq:RoughPathsEq}
\mathbb{W}_{s,t} - \mathbb{W}_{s,u} - \mathbb{W}_{u,t} = W_{s,u} \otimes W_{u,t},
\end{equation}
for all \(0 \le s \le u \le t \le 1\). We denote the pair by \(\mathbf{W} = (W,\mathbb{W})\).
\end{definition}
The reader may simply view a rough path as the data of a pair \((W,\mathbb{W})\), together with an algebraic constraint expressing the compatibility of the second-order term \(\mathbb{W}\) with the first-order term \(W\).

We now recall the notion of rough paths with H\"older regularity \(\alpha\), following~\cite[Definition 2.1, p.~14]{friz_hairer}.

\begin{definition}[\(\alpha\)-H\"older rough paths]
Let \(\alpha \in (\tfrac{1}{3}, \tfrac{1}{2}]\). The space of \(\alpha\)-H\"older rough paths over \(\mathfrak{g}\), denoted by \(\mathcal{C}^\alpha([0,1],\mathfrak{g})\), consists of pairs \((W,\mathbb{W}) =: \mathbf{W}\) such that
\begin{align*}
\|W\|_\alpha &:= \sup_{s \neq t \in [0,1]} \frac{|W_{s,t}|_{\mathfrak{g}}}{|t-s|^\alpha} < \infty, \\
\|\mathbb{W}\|_{2\alpha} &:= \sup_{s \neq t \in [0,1]} \frac{|\mathbb{W}_{s,t}|_{\mathfrak{g} \otimes \mathfrak{g}}}{|t-s|^{2\alpha}} < \infty,
\end{align*}
and such that the algebraic constraint~\eqref{eq:RoughPathsEq} holds.
\end{definition}

Note that a rough path is a \emph{deterministic} object. We now explain how rough paths arise in our construction. Recall that
\[
W_{N,s,r} := \int_s^r \gamma^* A_{\mathcal{N},N}
\]
is a Cauchy sequence of \(\mathfrak{g}\)-valued Gaussian random variables, whose limit has covariance  (in entries $a$ and $b$)
\[
\mathbb{E}\!\left[ W^a_{s,r} \otimes W^b_{s,r} \right]
= \delta_{ab}\, \cdot \Big\| \Big[\angle\big(\gamma([s,r])\big)\Big] \Big\|_{L^2(\Sigma)}^2.
\]
Here \([\angle(\gamma([s,r]))]\) is the current \(\iota_V \mathcal{L}_V^{-1}\big(\gamma([s,r])\big)\). In the case $\gamma([s,r]) $ intersects no unstable curve then $[\angle(\gamma([s,r]))]$ is nothing but the indicator function on the triangular domain bounded by $\gamma([s,r])$ and the two flowlines connecting from $\min(f)$ to $\gamma(s) $ and from $\min(f)$ to $\gamma(r)$. Now, we decompose \(W_{s,r}\in \mathfrak{g}\) with respect to some orthonormal basis of \(\mathfrak{g}\). This yields independent centered Gaussian components \((W^a_{s,r})_{a=1}^{\dim(\mathfrak{g})}\).

This provides the first piece of data for our rough path. We now turn to the \emph{second-order process}.

There is a subtle point concerning the second-order term: it does not appear to be directly defined along the Cauchy sequence \(A_{\mathcal{N},N}\), \(N \ge 0\). To define it, we first let \(N \to +\infty\) along the Cauchy sequence and then mollify once more along the curve \(\gamma\), introducing a new parameter \(\varepsilon\). In fact, we construct the second-order term as a measurable function of the integrals \(W^{\mathfrak{g}}_{s,r}\), via a suitable mollification procedure. This will imply that the second-order term is measurable with respect to \(A_{\mathcal{N}}\).

\paragraph{The second-order process for smooth transverse curves.}

Assume that \(\gamma\) is \(C^\infty\) and transverse to \(V\), and that the map
\[
r \mapsto \sigma\big(\angle\gamma([s,r])\big)
\]
is \emph{strictly increasing}\footnote{In the terminology of \cite{BCDRT}, this means that the curve \(\gamma\) is either of type I or type II only.}. Then the identity
\begin{align*}
\mathbb{E}\!\left[ W_{\gamma,s,r_1}^a\, W_{\gamma,s,r_2}^a \right]
=
\inf\Big\{
\sigma\big(\angle\gamma([s,r_1])\big),
\sigma\big(\angle\gamma([s,r_2])\big)
\Big\}
\end{align*}
shows that the process \(r \mapsto W^{\mathfrak{g}}_{\gamma,s,r}\) is nothing but a \(\mathfrak{g}\)-valued Brownian motion reparametrized by the time $
\sigma\big(\angle\gamma([s,r])\big).$

To describe the geometric rough path enhancement of the initial process \(W^{\mathfrak{g}}_{\gamma,s,r}\), we need to define the second-order term~\cite[Eq.~(2.2), p.~14]{friz_hairer}. In the particular case where \(\gamma\) is of type I, meaning that \(\gamma\) is transverse to \(V\) and does not intersect any unstable curve, we may define the second-order term by
\begin{equation}
\mathbb{W}_{s,t}
:= \int_s^t W^{\mathfrak{g}}_{s,r} \otimes \mathrm{d}_r W^{\mathfrak{g}}_{s,r}
\in \mathfrak{g}\otimes\mathfrak{g},
\end{equation}
where the differential \(\mathrm{d}_r W^{\mathfrak{g}}_{\gamma,r}\) is taken in the \emph{Stratonovich} sense with respect to the variable \(r\); see~\cite[§3.3, pp.~32--34]{friz_hairer} for the construction of the rough path
\[
\mathbf{B}^{\mathsf{Strat}} := \left(B^t, \int_s^t B_{s,r} \circ \mathrm{d}B_r\right).
\]
The stochastic integral is well defined because \(r \mapsto \sigma\big(\angle\gamma([s,r])\big)\) is \(C^\infty\) and strictly increasing, so that \(W^{\mathfrak{g}}_{\gamma,r}\) is a semimartingale. However, there is a problem when \(\gamma\) intersects an unstable curve at an endpoint\footnote{In the terminology of \cite{BCDRT}, this is a curve of type II.}: the function \(r \mapsto \sigma\big(\angle\gamma([s,r])\big)\) is no longer \(C^1\) in \(r\), so classical stochastic calculus cannot be used to define the second-order term \(\mathbb{W}\). We will return to this case later.

A component of the second-order term is given by
\[
\mathbb{W}_{s,t}^{a,b}
:= \int_s^t W^a_{s,r} \circ \mathrm{d}W_r^b,
\qquad (a,b)\in \{1,\dots,\dim(\mathfrak{g})\}^2.
\]
\paragraph{Constructing the second-order process for piecewise \(C^1\) curves.}

When the curve \(\gamma\) is only admissible or piecewise \(C^1\), it is no longer possible to construct the second-order process simply by invoking Stratonovich differentials. Instead of postulating the second-order object, we use the probabilistic origin of \(W^{\mathfrak{g}}_\gamma\) as the integral of a random connection \(A_{\mathcal{N}}\) along \(\gamma\). We will show that, after a suitable mollification, the sequence of second-order terms \(\mathbb{W}^{a,b}_{\varepsilon,s,r}\) converges as \(\varepsilon \to 0^+\) to a limiting random function with the expected properties.

At this point, we recall the definition of geometric rough paths~\cite[§2.2, p.~16]{friz_hairer}.

\begin{definition}[Geometric rough paths]
Let \(\mathbf{W} := (W,\mathbb{W})\) be a \(\mathfrak{g}\)-valued rough path on the interval \([0,1]\). We denote its components by \(W^i\) and \(\mathbb{W}^{j,k}\), where \(i,j,k \in \{1,\dots,\dim(\mathfrak{g})\}\) are Lie algebra indices corresponding to a chosen orthonormal basis of \(\mathfrak{g}\). Then \(\mathbf{W}\) is said to be a \emph{geometric rough path} if
\begin{equation}
\mathbb{W}_{s,t}^{i,j} + \mathbb{W}_{s,t}^{j,i} = W^i_{s,t} W^j_{s,t}
\end{equation}
for all \((i,j) \in \{1,\dots,\dim(\mathfrak{g})\}^2\) and all \((s,t) \in [0,1]^2\).
\end{definition}

The second-order term obtained from the above limiting procedure automatically satisfies
\begin{equation}
\mathbb{W}_{s,t}^{i,j} + \mathbb{W}_{s,t}^{j,i} = W^i_{s,t} W^j_{s,t},
\end{equation}
which is the defining relation for geometric rough paths~\cite[Eq.~(2.5), p.~16]{friz_hairer}. In terms of Wiener chaos expansion, \(\mathbb{W}_{s,t}^{i,j}\) is uniquely given by
\begin{align*}
\mathbb{W}_{s,t}^{i,j}
:= \frac{1}{2}\left( W^i_{s,t} W^j_{s,t}
+ \int_s^t \underbrace{:\! W^i_{\gamma,s,r}\,\mathrm{d}_r W^j_{\gamma,r}
- W^j_{\gamma,s,r}\,\mathrm{d}_r W^i_{\gamma,r}\!:}_{\text{antisymmetric term}} \right),
\end{align*}
where the underbraced antisymmetric term (the \emph{Lévy area} term), whenever it makes sense, lies in the second homogeneous Wiener chaos.\footnote{Beware that the Wick square is ill defined except when the curve is transverse to \(V\) and does not intersect unstable curves; in that case, the Stratonovich differential defines the Wick square. But when \(\gamma\) intersects unstable curves, it is no longer clear how to make sense of the Wick square.}

\subsection{Existence of the second order term for smooth curves}
We denote by \(V^\perp\) the smooth vector field obtained by rotating \(V\) by an angle of \(\frac{\pi}{2}\) in each fiber. Assume that \(\gamma\) is a \(C^\infty\) curve transverse to \(V\) and that it satisfies the following important property: for every \(t\in [0,1]\), the scalar product
\begin{equation}\label{eq:constantsign}
\left\langle \gamma'(t), V^\perp(\gamma(t)) \right\rangle_{T_{\gamma(t)}\Sigma}
\end{equation}
never vanishes and does not change sign. Thus, we reduce to the case where \(\gamma\) satisfies condition~\eqref{eq:constantsign}, which means that the area function
\[
r \mapsto \sigma\big(\angle\gamma[s,r]\big)
\]
is strictly monotone; this will play a crucial role later.

Define \(W_{\varepsilon;s,r}^i := W_{s,r}^i \star \chi_\varepsilon\), where
\[
\chi_\varepsilon(r) = \varepsilon^{-1}\chi(r\varepsilon^{-1}), \qquad \chi \in C_c^\infty(\mathbb{R}), \ \chi \ge 0, \ \int \chi = 1,
\]
and where the convolution product is taken in the \(r\)-variable. The corresponding mollified second-order term is
\begin{align*}
\mathbb{W}_{\varepsilon; s,t}
:= \int_s^t W_{\varepsilon; s,r}\otimes \mathrm{d}_r W_{\varepsilon,r}
\in \mathfrak{g}\otimes \mathfrak{g},
\end{align*}
where the right-hand side is well defined since everything is smooth.

Using orthogonality in \(\mathfrak{g}\otimes \mathfrak{g}\), Wick's theorem, and the independence of the different components of \(W\), we obtain the following identity:
\begin{align*}
\mathbb{E}\!\left[ \left\langle \mathbb{W}_{\varepsilon; s,t}, \mathbb{W}_{\varepsilon; s,t} \right\rangle_{\mathfrak{g}\otimes \mathfrak{g}} \right]
&= \sum_{i,j} \mathbb{E}\!\left[\left(\mathbb{W}^{i,j}_{\varepsilon;s,t}\right)^2\right] \\
&= \sum_{i,j} \mathbb{E}\!\left[\left(\int_s^t W_{\varepsilon;s,r_1}^i\, \mathrm{d}_r W^j_{\varepsilon;s,r_1}\right)
\left(\int_s^t W_{\varepsilon;s,r_2}^i\, \mathrm{d}_r W^j_{\varepsilon;s,r_2}\right)\right] \\
&= \sum_{i,j}\int_s^t\!\!\int_s^t
\mathbb{E}\!\left[ W_{\varepsilon;s,r_1}^i W_{\varepsilon;s,r_2}^i \right]
\mathbb{E}\!\left[ \mathrm{d}_{r_1}W_{\varepsilon;s,r_1}^j\, \mathrm{d}_{r_2}W_{\varepsilon;s,r_2}^j \right] \\
&\quad + \sum_i \left(\int_s^t \mathbb{E}\!\left[ W_{\varepsilon;s,r_1}^i\, \mathrm{d}_{r_1}W^i_{\varepsilon;s,r_1} \right]\right)
\left(\int_s^t \mathbb{E}\!\left[ W_{\varepsilon;s,r_2}^i\, \mathrm{d}_{r_2}W^i_{\varepsilon;s,r_2} \right]\right).
\end{align*}

Let us stop here and comment on the previous identity. We decomposed the scalar product \(\left\langle \mathbb{W}_{\varepsilon; s,t}, \mathbb{W}_{\varepsilon; s,t} \right\rangle_{\mathfrak{g}\otimes \mathfrak{g}}\) with respect to an orthonormal basis of \(\mathfrak{g}\otimes \mathfrak{g}\). We then applied Wick's theorem to each expectation
\[
\mathbb{E}\!\left[\left( \int_s^t W_{\varepsilon;s,r_1}^i\, \mathrm{d}_r W^j_{\varepsilon;s,r_1} \right)
\left( \int_s^t W_{\varepsilon;s,r_2}^i\, \mathrm{d}_r W^j_{\varepsilon;s,r_2} \right)\right],
\]
since the \(W_\varepsilon^i\) are Gaussian random variables. We also used the fact that the components \(W^i\) and \(W^j\) are independent for \(i\neq j\), which eliminates many terms in the sum. Finally, note that the differentials in the above integrals are ordinary differentials, not stochastic differentials, since everything has been mollified.

Now, by definition of $W^i_\varepsilon$, we can calculate each term more explicitly as
\begin{align*}
&\mathbb{E}\left[  W_{ \varepsilon; s,r_1}^iW_{\varepsilon;s,r_2}^i  \right]= 
\int\limits_{[s,t]^2} \inf\left\{ \sigma\big(\angle \gamma([s,u_1])\big),\sigma\big(\angle \gamma([s,u_2])\big) \right\}\chi_\varepsilon(r_1-u_1)\chi_\varepsilon(r_2-u_2)\dd u_1\dd u_2, 
\end{align*}
hence
\begin{align*}
&\mathbb{E}\left[\dd_{r_1}  W_{ \varepsilon; s,r_1}^i\dd_{r_2}W_{\varepsilon;s,r_2}^i  \right]\\
&\hspace{2cm}=   
\int\limits_{[s,t]^2} \inf\left\{ \sigma\big(\angle \gamma([s,u_1])\big),\sigma\big(\angle \gamma([s,u_2])\big) \right\}\partial_{r_1}\chi_\varepsilon(r_1-u_1)\partial_{r_2}\chi_\varepsilon(r_2-u_2)\dd u_1\dd u_2, 
\end{align*}
and
also
\begin{align*}
& \int_s^t \mathbb{E}\left[ W_{\varepsilon;s,r_1}^i \dd_{r_1}W^i_{\varepsilon;s,r_1} \right]\\
&\hspace{2cm}=    \int\limits_{[s,t]^2} \inf\left\{ \sigma\big(\angle \gamma([s,u_1])\big),\sigma\big(\angle \gamma([s,u_2])\big) \right\}\chi_\varepsilon(r_1-u_1)\partial_{r_1}\chi_\varepsilon(r_1-u_2)\dd u_1\dd u_2.  
\end{align*}
Let us first deal with the following term. 
\begin{align*}
&\int_s^t\int_s^t
\mathbb{E}\left[  W_{ \varepsilon; s,r_1}^iW_{\varepsilon;s,r_2}^i  \right]\mathbb{E}\left[  \dd_{r_1}W_{\varepsilon;s,r_1}^j \dd_{r_2} W_{\varepsilon;s,r_2}^j  \right]\\
&\hspace{2cm}= \int\limits_{[s,t]^2} \dd r_1 \dd r_2 \\
&\hspace{4cm}\cdot \int\limits_{[s,t]^2} \inf\left\{ \sigma\big(\angle \gamma([s,u_1])\big),\sigma\big(\angle \gamma([s,u_2])\big) \right\}\chi_\varepsilon(r_1-u_1)\chi_\varepsilon(r_2-u_2)\dd u_1\dd u_2 \\
&\hspace{6cm}\cdot\partial_{r_1}\partial_{r_2} \int \dd v_1\dd v_2 \chi_\varepsilon(r_1-v_1)\chi_\varepsilon(r_2-v_2) \\
&\hspace{10cm}\cdot \inf\left\{\sigma\big(\angle \gamma([s,v_1])\big),\sigma\big(\angle \gamma([s,v_2])\big) \right\}
\end{align*}
Now let us make the crucial observations.

First, the function
\[
(r_1,r_2)\mapsto
\inf\!\big\{ \sigma\big(\angle \gamma([s,r_1])\big),\sigma\big(\angle \gamma([s,r_2])\big) \big\}
\]
is well defined and of bounded variation on \([s,1]^2\); applying \(\mathrm{d}_{r_1}\otimes\mathrm{d}_{r_2}\) to this function yields a well-defined current of degree \(2\). Owing to the monotonicity of the area functional, the distribution
\[
\partial_{r_1}\partial_{r_2}\,
\inf\!\big\{\sigma\big(\angle \gamma([s,r_1])\big),\sigma\big(\angle \gamma([s,r_2])\big)\big\}
\]
is a signed measure of finite mass supported on a subset of the diagonal
\[\{(r,r)\in[s,1]^2:\partial_r\sigma(\angle\gamma([s,r]))\neq0\}\subset[s,1]^2.\]
To see this, note that away from the diagonal the infimum is equal locally to one of the two smooth functions \(\sigma(\angle\gamma([s,r_1]))\) or \(\sigma(\angle\gamma([s,r_2]))\), so the mixed derivative vanishes there by the chain rule.

By Lemma~\ref{lem:regularityarea}, the derivatives of the area have at worst logarithmic singularities on \([0,1]\); hence
\[
r\mapsto \partial_r\sigma\big(\angle\gamma([s,r])\big)\in L^p([s,1])
\quad\text{for all }1\le p<\infty,
\]
uniformly in \(s\in[0,1]\). Mollification preserves this integrability: for any \(\psi\in L^p\) and the mollifier \(\chi_\varepsilon\),
Young's inequality gives
\[
\|\chi_\varepsilon\star\psi\|_{L^p}\le\|\chi_\varepsilon\|_{L^1}\,\|\psi\|_{L^p}=\|\psi\|_{L^p},
\]
so the mollified derivatives remain uniformly in \(L^p\).

This implies that 
\[\partial_{r_1}\partial_{r_2} \int_{v_1,v_2} \chi_\varepsilon(r_1-v_1)\chi_\varepsilon(r_2-v_2) \inf\left\{ \sigma\big(\angle \gamma([s,v_1])\big),\sigma\big(\angle \gamma([s,v_2])\big) \right\}\dd v_1\dd v_2 \]
converges when $\varepsilon\rightarrow 0^+$
to
 $\left(\partial_r\sigma\big(\angle \gamma([s,r_1])\big) \right) ^2 \delta(r_1-r_2) $  where $\left(\partial_r\sigma\big(\angle \gamma([s,.])\big) \right) ^2$ belongs to all $L^p$ spaces for $p\in [1,+\infty)$.

Therefore
letting $\varepsilon\rightarrow 0^+$,
the expression $\mathbb{E}\left[ \left\langle \mathbb{W}_{\varepsilon; s,t}, \mathbb{W}_{\varepsilon; s,t} \right\rangle_{\mathfrak{g}\otimes \mathfrak{g}}\right]$ converges and simplifies as 
\begin{align*}
&\mathbb{E}\left[ \left\langle \mathbb{W}_{s,r}, \mathbb{W}_{s,r} \right\rangle_{\mathfrak{g}\otimes \mathfrak{g}}\right]=C_1 \int_s^t \sigma\big(\angle \gamma([s,r])\big) \left(\partial_r\sigma\big(\angle \gamma([s,r])\big) \right) ^2 \dd r +C_2 \sigma\big(\angle \gamma([s,t])\big)^2
\end{align*}
where $C_1,C_2>0$ are combinatorial constants coming from Wick's Theorem.
\paragraph{Kolmogorov criterion for rough paths.}

Now that we have proved the convergence of the second-order process \(\mathbb{W}\), it remains to study its regularity. Our goal is therefore to bound
\[
\mathbb{E}\!\left[\left\langle \mathbb{W}_{s,t}, \mathbb{W}_{s,t} \right\rangle_{\mathfrak{g}\otimes\mathfrak{g}}\right].
\]

By Hölder's inequality, for every \(p\in[1,\infty)\), the area increments satisfy
\begin{align*}
\sigma\big(\angle\gamma([s,t])\big)
&\le \int_s^t \left|\partial_r \sigma\big(\angle\gamma([s,r])\big)\right|\,\mathrm{d}r \\
&\le |t-s|^{\frac{p-1}{p}} \big\|\partial_r \sigma\big(\angle\gamma([s,r])\big)\big\|_{L^p},
\end{align*}
which yields
\begin{align*}
\mathbb{E}\!\left[\left\langle \mathbb{W}_{s,t}, \mathbb{W}_{s,t} \right\rangle_{\mathfrak{g}\otimes\mathfrak{g}}\right]
&\lesssim
\left(\int_s^t \big(|s-r|^{\frac{p-1}{p}}\big)^{\frac{p}{p-1}} \mathrm{d}r\right)^{\frac{p-1}{p}}
\left(\int_s^t \big(\partial_r \sigma(\angle\gamma([s,r]))\big)^{2p}\,\mathrm{d}r\right)^{\frac{1}{p}} \\
&\qquad + |t-s|^{\frac{2(p-1)}{p}} \\
&\lesssim |t-s|^{2\frac{p-1}{p}} + |t-s|^{\frac{2(p-1)}{p}}
\lesssim |t-s|^{2\frac{p-1}{p}}.
\end{align*}
Here we used again Hölder's inequality and the fact that \(\partial_r \sigma\big(\angle\gamma([s,r])\big)\) belongs to \(L^p\) for every \(p\in[1,\infty)\).

Observe that \(W\) and \(\mathbb{W}\) are polynomial functionals of the white noise of degrees \(1\) and \(2\), respectively, so they belong to the second Wiener chaos. By hypercontractivity, for every \(q\ge 2\),
\begin{equation}\label{eq:Kolmo1}
\mathbb{E}\!\left[|W_{s,t}|_{\mathfrak{g}}^{2q}\right]^{\frac{1}{2q}}
\lesssim
\mathbb{E}\!\left[|W_{s,t}|_{\mathfrak{g}}^{2}\right]^{\frac{1}{2}}
\lesssim |t-s|^{\frac{\beta}{2}},
\end{equation}
and
\begin{equation}\label{eq:Kolmo2}
\mathbb{E}\!\left[|\mathbb{W}_{s,t}|_{\mathfrak{g}\otimes\mathfrak{g}}^{4q}\right]^{\frac{1}{4q}}
\lesssim
\mathbb{E}\!\left[|\mathbb{W}_{s,t}|_{\mathfrak{g}\otimes\mathfrak{g}}^{2}\right]^{\frac{1}{2}}
\lesssim |t-s|^{\beta},
\end{equation}
for every \(\beta\in(0,1)\) with \(\beta>\frac{1}{q}\) (it is enough to choose \(\frac{p-1}{p}\ge \beta\)).

It follows from the Kolmogorov criterion for rough paths~\cite[Thm.~3.1, p.~28]{friz_hairer} that \(\mathbf{W}:=(W,\mathbb{W})\in \mathcal{C}^\alpha\) for all \(\alpha\in(0,\tfrac12)\), and we obtain estimates of the form
\begin{equation*}
\mathbb{E}_{\mathsf{Free}}\left[
\sup_{s\neq t\in[0,1]} \frac{|W_{s,t}|_{\mathfrak{g}}}{|t-s|^\alpha}
+
\sup_{s\neq t\in[0,1]} \frac{|\mathbb{W}_{s,t}|_{\mathfrak{g}\otimes\mathfrak{g}}}{|t-s|^{2\alpha}}
\right]
<\infty.
\end{equation*}
\subsection{Differential equations driven by rough paths}

Rough path theory provides an almost deterministic framework for studying stochastic differential equations driven by rough signals, which in our setting define the holonomies of the random connection. Before stating the main result on differential equations driven by rough paths, we recall a key tool of the theory: the integration theorem against controlled paths, due to Gubinelli.

We begin with the notion of \emph{controlled paths}, which describes the space of \(\mathfrak{g}\)-valued functions \(Y\) on \([0,1]\) whose singularities are modelled on those of a given reference rough path \(\mathbf{W}\)~\cite[Def.~4.6, p.~56]{friz_hairer}.
\begin{definition}[Controlled paths]
Given a rough path \(\mathbf{W}\in \mathcal{C}^\alpha([0,1],\mathfrak{g})\), we say that a path \(Y\) is controlled by \(\mathbf{W}\) if there exists \(Y' \in \mathcal{C}^\alpha([0,1],\mathrm{End}(\mathfrak{g}))\) such that
\begin{equation}\label{eq:controlledRP}
Y_{s,t}=Y'_s W_{s,t}+R^Y_{s,t},
\end{equation}
where the remainder satisfies \(\|R^Y\|_{2\alpha}<\infty\). This defines the space of \emph{controlled rough paths}
\[
(Y,Y')\in \mathcal{D}^{2\alpha}_{\mathbf{W}}([0,1],\mathfrak{g}).
\]
\end{definition}

Note that the relation between \(Y\), \(Y'\), and \(R^Y\) is not necessarily unique; what matters is the existence of such a decomposition rather than uniqueness of the representatives. Another important point is that the remainder term \(R^Y\) is more regular, which is crucial for applications.

We also need a notion of integration against rough paths due to Gubinelli~\cite[Thm.~4.10, p.~57]{friz_hairer}:
\begin{thm}[Integration against rough paths]\label{thm:gubintegral}
Let \(Y\in \mathcal{D}^{2\alpha}_{\mathbf{W}}\) be controlled by a driving rough path \(\mathbf{W}\in \mathcal{C}^\alpha\). Define
\begin{equation*}
\int_0^t Y_s\,\mathrm{d}\mathbf{W}_s
:= \lim_{\mathcal{P}\to 0^+} \sum_{[s,t]\in \mathcal{P}} \Big( Y_s W_{s,t}+Y'_s\mathbb{W}_{s,t} \Big).
\end{equation*}
 The above  limit\footnote{This formula shows that the second-order term \(\mathbb{W}\) must be taken into account.} exists, and the corresponding linear map
\begin{equation*}
\mathcal{D}^{2\alpha}_{\mathbf{W}}([0,1],\mathrm{End}(\mathfrak{g}))
\longrightarrow
\mathcal{D}^{2\alpha}_{\mathbf{W}}([0,1],\mathfrak{g}),
\qquad
(Y,Y')\longmapsto \left(\int_0^{\cdot} Y_s\,\mathrm{d}\mathbf{W}_s,\, Y\right),
\end{equation*}
is continuous with respect to the relevant Banach norms.
\end{thm}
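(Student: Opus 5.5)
The statement to prove is Gubinelli's integration theorem, quoted from Friz--Hairer (Thm.~4.10), so the plan is the standard sewing-lemma argument.

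\textbf{Step 1: the germ and its defect.} For a partition interval $[s,t]$ set the germ
\[
\Xi_{s,t}:=Y_sW_{s,t}+Y'_s\mathbb{W}_{s,t}.
\]
For $s\le u\le t$ compute $\delta\Xi_{s,u,t}:=\Xi_{s,t}-\Xi_{s,u}-\Xi_{u,t}$. Chen's relation \eqref{eq:RoughPathsEq} and the controlled decomposition \eqref{eq:controlledRP} give
\[
\delta\Xi_{s,u,t}=-R^Y_{s,u}W_{u,t}-Y'_{s,u}\mathbb{W}_{u,t}.
\]
Hence
\[
|\delta\Xi_{s,u,t}|\lesssim \big(\|R^Y\|_{2\alpha}\|W\|_\alpha+\|Y'\|_\alpha\|\mathbb{W}\|_{2\alpha}\big)|t-s|^{3\alpha}.
\]
The exponent $3\alpha$ is greater than $1$ precisely because $\alpha>\frac13$. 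This is the one place where the restriction $\alpha\in(\frac13,\frac12]$ is used.

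\textbf{Step 2: sewing lemma.} Any two-parameter germ with $\|\delta\Xi\|_{3\alpha}<\infty$ and $3\alpha>1$ has Riemann sums converging, as the mesh tends to $0$, to a unique additive increment $\mathcal{I}_{s,t}$. The limit satisfies
\[
|\mathcal{I}_{s,t}-\Xi_{s,t}|\le C_\alpha\|\delta\Xi\|_{3\alpha}|t-s|^{3\alpha}.
\]
I would prove this directly in the usual way. First, along dyadic partitions, remove one point at a time, always choosing a point whose neighbours are within $2|t-s|/(n-1)$ of each other. The total change is bounded by $\sum_n (2|t-s|/n)^{3\alpha}$, which is a convergent zeta sum. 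Uniqueness follows because any two additive functionals differing by $O(|t-s|^{3\alpha})$ must coincide. This step is the main technical point, but it is classical.

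\textbf{Step 3: the integral is a controlled path.} Set $Z_t:=\int_0^tY\,\mathrm{d}\mathbf{W}$. Then
\[
Z_{s,t}=Y_sW_{s,t}+R^Z_{s,t},\qquad R^Z_{s,t}=Y'_s\mathbb{W}_{s,t}+O(|t-s|^{3\alpha}).
\]
This gives $\|R^Z\|_{2\alpha}\lesssim \|Y'\|_\infty\|\mathbb{W}\|_{2\alpha}+\|\delta\Xi\|_{3\alpha}$. Also $Y\in\mathcal{C}^\alpha$, since $Y$ is controlled. So $(Z,Y)\in\mathcal{D}^{2\alpha}_{\mathbf{W}}$. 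All the bounds are linear in $(Y,Y')$, with the constants depending on $\|\mathbf{W}\|$ and on $|Y_0|$, $|Y'_0|$. This yields continuity of the linear map for the norm
\[
|Y_0|+|Y'_0|+\|Y'\|_\alpha+\|R^Y\|_{2\alpha}.
\]
Optionally, I would also record local Lipschitz dependence on $\mathbf{W}$ by comparing the two germs. That is the version needed later to pass mollified holonomies to the limit.
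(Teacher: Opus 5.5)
Your proposal is correct and follows essentially the paper's route: the paper gives no argument of its own for this statement and simply quotes Friz--Hairer (Thm.~4.10), whose proof is exactly your sewing-lemma argument. The defect identity $\delta\Xi_{s,u,t}=-R^Y_{s,u}W_{u,t}-Y'_{s,u}\mathbb{W}_{u,t}$ from Chen's relation, the sewing step using $3\alpha>1$, and the resulting bound on $R^Z$, giving continuity for the norm $|Y_0|+|Y'_0|+\|Y'\|_\alpha+\|R^Y\|_{2\alpha}$, all match that proof.
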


One should also keep in mind that It\^o and Stratonovich stochastic integrals are particular cases of Gubinelli's integration when the driving rough path is \(\mathbf{B}^{\mathsf{Ito}}\) or \(\mathbf{B}^{\mathsf{Strat}}\), respectively, as discussed in~\cite[Chapter 5]{friz_hairer}.

The main theorem used to integrate ODEs driven by rough signals reads as follows; set \(\beta>\frac{1}{3}\):
\begin{thm}\label{thm:roughpathODE}
Let \(a\in M_N(\mathbb{C})\) be an initial condition and let \(\mathbf{W}\in \mathcal{C}^\beta(\mathbb{R}_+,\mathfrak{g})\) be a driving rough path, with \(\beta\in \left(\frac{1}{3},\frac{1}{2}\right)\). Then there exists a unique element \((Y,Y')\in \mathcal{D}^{2\alpha}_{\mathbf{W}}([0,1],\mathfrak{g})\), controlled by the driving rough path \(\mathbf{W}\), which solves the ODE in integral form
\begin{equation}
Y_t=a+\int_0^t Y_s\,\mathrm{d}\mathbf{W}_s
\end{equation}
for all \(t<\tau\), for some \(\tau>0\). Here the integral against \(\mathrm{d}\mathbf{W}_s\) is understood in the sense of Theorem~\ref{thm:gubintegral}.
\end{thm}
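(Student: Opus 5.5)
We prove Theorem~\ref{thm:roughpathODE} by the classical fixed-point argument on the space of controlled paths, using Theorem~\ref{thm:gubintegral} as the only analytic input. Since the equation is linear in $Y$, this is the simplest case of the rough differential equation theory in~\cite[Ch.~8]{friz_hairer}. For $T\in(0,1]$ we define the map
\[
\mathcal{M}_T:(Y,Y')\longmapsto \Big(a+\int_0^{\cdot} Y_s\,\mathrm{d}\mathbf{W}_s,\;Y\Big)
\]
on the affine subspace of $\mathcal{D}^{2\beta}_{\mathbf{W}}([0,T])$ consisting of pairs with $Y_0=a$ and $Y'_0=a$. Here $Y$ is $M_N(\mathbb{C})$-valued, and $\mathrm{d}\mathbf{W}$ acts by right multiplication, so that the Gubinelli derivative of $Y$ is $Y'=Y$. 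Theorem~\ref{thm:gubintegral} guarantees that $\mathcal{M}_T$ maps this space into itself and that the integral is a controlled path with Gubinelli derivative equal to the integrand. A fixed point of $\mathcal{M}_T$ is exactly a controlled solution of the integral equation.

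The key estimate to establish is the quantitative form of the Gubinelli integral. Its increment is given by the sewing remainder
\[
\int_s^t Y\,\mathrm{d}\mathbf{W}-Y_sW_{s,t}-Y'_s\mathbb{W}_{s,t}=O\big(|t-s|^{3\beta}\big),
\]
with a constant controlled by $\|Y,Y'\|_{\mathbf{W},2\beta}$ times $\|W\|_\beta+\|\mathbb{W}\|_{2\beta}$. The sewing lemma applies because $3\beta>1$, and this is where the hypothesis $\beta>\frac13$ enters. From this bound I would derive
\[
\|\mathcal{M}_T(Y)\|_{\mathbf{W},2\beta;[0,T]}\le C\big(|a|+T^{\beta}\,\|Y,Y'\|_{\mathbf{W},2\beta;[0,T]}\big),
\]
where $C$ depends only on $\|\mathbf{W}\|_{\beta}$. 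The gain of $T^\beta$ comes from measuring the $\beta$-norm of $Y'=Y$ and the $2\beta$-norm of $R^Y$ over an interval of length $T$, using that the initial data are fixed. Because the map is affine, the difference $\mathcal{M}_T(Y)-\mathcal{M}_T(\tilde Y)$ obeys the same bound with $a=0$. Choosing $T$ small so that $CT^\beta<\frac12$ then makes $\mathcal{M}_T$ a contraction on a closed ball of the complete space $\mathcal{D}^{2\beta}_{\mathbf{W}}([0,T])$. Banach's fixed point theorem gives existence and uniqueness on $[0,\tau]$ with $\tau=T$, which is exactly what the statement asserts.

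The main obstacle is this $T^\beta$ gain. Naively, the controlled norm is not small for short intervals, because the term $\|Y'\|_\beta$ reproduces $\|Y\|_\beta$ without any small factor. Following~\cite[Thm.~8.4]{friz_hairer}, I would handle this by interpolation: estimate $\|Y\|_{\beta;[0,T]}\le |Y'_0|\,\|W\|_\beta+T^{\beta}\|R^Y\|_{2\beta}$, and work with a slightly smaller exponent $\beta'<\beta$. The factor $T^{\beta-\beta'}$ then provides smallness, and the solution is still obtained in $\mathcal{D}^{2\beta'}_{\mathbf{W}}$, which suffices for the statement. Finally, because the equation is linear and the contraction constant depends only on $\|\mathbf{W}\|_\beta$ and not on $a$, the construction can be iterated on consecutive intervals of length $\tau$. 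This would in fact give global existence on $[0,1]$, which the holonomy construction requires; the statement only claims local existence, and I would record the global extension as a remark.
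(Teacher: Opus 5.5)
Your proposal is correct. It is the standard Banach fixed-point argument on the affine space of controlled paths with $Y_0=a$, $Y'_0=a$, which is exactly what the paper relies on: it states this theorem without proof as a result recalled from Friz--Hairer. One remark: in this linear case the detour through $\beta'<\beta$ is unnecessary. The correct bound $\|Y\|_{\beta;[0,T]}\le (|Y'_0|+T^\beta\|Y'\|_\beta)\|W\|_\beta+T^\beta\|R^Y\|_{2\beta}$ (your version drops the $T^\beta\|Y'\|_\beta\|W\|_\beta$ term) already gives the $T^\beta$ factor directly in $\mathcal{D}^{2\beta}_{\mathbf{W}}$ once $Y'_0$ is fixed, exactly as in your displayed estimate.
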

\subsection{Level sets holonomy process}

Recall that in the previous part, we only allowed smooth curves $\gamma$ which are transverse to $V$ and eventually intersect unstable curves at most once and only at endpoints. Let us call such curves \emph{elementary curves}. Since our application is for a level set $\gamma$
of $f$, we need to decompose such $\gamma$ as concatenation of finite number of elementary curves.

\paragraph{Solving along elementary pieces of level curves.}

Let us summarize what we have obtained so far. The random connection $A_{\mathcal{N}}$ is constructed in the anisotropic spaces. For all $C^\infty$ elementary curve $\gamma$ everywhere transverse to $V$ and such that $\gamma$ intersects $\cup_{a\in \mathsf{Crit}_1(f)} W^u(a)$ at most once and it should be at endpoints (this ensures the monotonicity of $r\mapsto \sigma\big(\angle \gamma([0,r])\big)$), we constructed a geometric rough path $\mathbf{W}_\gamma$ as 
above such that it satisfies the two above estimates (\ref{eq:Kolmo1}, \ref{eq:Kolmo2}) prior to applying Kolmogorov. The rough path $\mathbf{W}_\gamma$ satisfies the input estimates of Kolmogorov  (\ref{eq:Kolmo1}, \ref{eq:Kolmo2}). 
So we just defined a map $\mathbf{W}$ which depends on our input random connection $A_{\mathcal{N}}$ in a measurable way and indexed by the above smooth curves $\gamma$ which is valued 
in the space of geometric rough paths of H\"older regularity $\alpha\in (\frac{1}{3},\frac{1}{2})$ (endowed with its natural distance)
and for given smooth $\gamma\in C^\infty([0,1],\Sigma)   $.

Therefore for each such curve $\gamma$ there is a process $\mathbf{W}_\gamma$ which is a rough path of regularity 
$\mathcal{C}^\alpha, \alpha\in (\frac{1}{3},\frac{1}{2})$ and using this H\"older rough path, we may solve the rough differential equation written in integral form
$$U(t)=\mathrm{Id}+\int_0^t U(s) \dd\mathbf{W}_s    $$
where the initial data is the identity matrix $\mathrm{Id}$.
The \emph{parallel transport} of $A_{\mathcal{N}}$ along $\gamma$ is the solution $U(1)$ of the above rough differential equation which 
depends measurably on $A_{\mathcal{N}}$.

\paragraph{Decomposing arbitrary transverse curves into elementary pieces and including jumps.}

For a general closed level curve $\gamma:[0,1] \mapsto \Sigma $ intersecting the union $\cup_{a\in \mathsf{Crit}_1(f)} W^u(a)$ transversally, this applies to any level set $\{x; f(x)=\max(f)-\varepsilon\}\subset \Sigma$ for $\varepsilon>0$ small enough,  we first 
denote by $0=t_1< \dots <t_k=1$ the times at which our curve $\gamma$ intersects 
$\cup_{a\in \mathsf{Crit}_1(f)} W^u(a)$, for $i\in \{1,\dots,k\}$, the point $ \gamma(t_i) $ belongs to the unstable curve $W^u(a_{j_i})$ indexed by the saddle point $a_{j_i}\in \mathsf{Crit}_1(f)$ and where $\gamma(0)=\gamma(1)$.
For each piece $\gamma([t_i,t_{i+1}])$, $i\in \{ 1,\dots,k-1 \}$ of curve, the previous construction yields a parallel transport matrix denoted by $U_{t_i,t_{i+1}}$ since each piece of curve $\gamma([t_i,t_{i+1}])$ is the concatenation of two elementary curves.
Then the holonomy process $\mathsf{Hol}_\gamma$ is defined as
\begin{equation}
\mathsf{Hol}_\gamma(A^{\mathsf{Free}}_{\Sigma,\sigma}):= g_{j_k}  U_{[t_{k-1},t_k]} \dots  g_{j_2}  U_{[t_1,t_2]}  g_{j_1} U_{[t_0,t_1]}  .  
\end{equation}

\begin{remark}
 The above discussion applies almost verbatim to piecewise $C^1$ curves obtained by finite composition of either flowlines or $C^1$ curve everywhere transverse to $V$ and therefore allows to define a holonomy $\mathsf{Hol}_\gamma$ for such curve. 
\end{remark}

\begin{corollary}
There exists a measurable map $ A\mapsto \mathsf{Hol}_\gamma(A)$ which is obtained as follows.
\begin{itemize}
    \item For every $C^\infty$ curve $\gamma$ everywhere transverse to $V$, define
    $\mathsf{Hol}\left( \gamma \right)
     := g_{t=1}$, where the $G$-valued path $\left( g_t \ ; \ 0 \leq t \leq 1 \right)$ solves the ODE (in the rough path sense)
     $$
     \dd g_t = g_t \circ \dd W_t^\gamma \ ,
     $$
     and $W_t^\gamma := W( \gamma_{[0,t]} )$. Because we are considering the time $1$ result of the flow driven by $W(\gamma)$, the notation $\mathsf{Hol}\left( \gamma \right) = \exp_{\mathrm{Curves}}\left( W(\gamma) \right)$ seems natural.
    \item The map $ \mathbf{W}(\gamma) \mapsto \mathsf{Hol}\left( \gamma \right)$ is continuous.
\end{itemize}
\end{corollary}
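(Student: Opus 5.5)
The plan is to build the map $A\mapsto\mathsf{Hol}_\gamma(A)$ as a composition of measurable and continuous maps, which keeps measurability automatic.

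\emph{Step 1 (lift to a rough path).} For a smooth curve $\gamma$ transverse to $V$, split it at the finitely many times $t_1<\dots<t_k$ where it meets $\bigcup_{a\in\mathsf{Crit}_1(f)}W^u(a)$. Each piece is then a concatenation of elementary curves, on which $r\mapsto\sigma(\angle\gamma([s,r]))$ is strictly monotone. On each elementary piece, the first level $W_{s,t}=\int_s^t\gamma^*A_{\mathcal N}$ is defined as the limit of the Cauchy sequence $\int_s^t\gamma^*A_{\mathcal N,N}$. By the construction of the appendix, this limit is a measurable function of $A_{\mathcal N}$ in $\mathcal W^{\alpha,p,s,\ell}$. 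The second level $\mathbb W_{s,t}$ is taken as the limit of the mollified iterated integrals $\mathbb W_{\varepsilon;s,t}$, which are smooth and hence measurable functions of $W$. I would pass to an a.s.\ limit along a subsequence $\varepsilon_k\to0$, using the $L^2$ convergence established above; measurability survives because $\limsup$ of measurable maps is measurable. Hypercontractivity and the estimates \eqref{eq:Kolmo1}--\eqref{eq:Kolmo2} then feed the Kolmogorov criterion of \cite{friz_hairer}. This places $\mathbf W(\gamma)=(W,\mathbb W)$ in the geometric rough path space $\mathcal C^\alpha$ for $\alpha\in(\frac13,\frac12)$, almost surely. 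On the null set where this fails, I set $\mathbf W(\gamma)$ to a fixed value, so the resulting map is globally measurable.

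\emph{Step 2 (solve the RDE and show continuity).} For $\mathbf W\in\mathcal C^\alpha$, apply Theorem~\ref{thm:roughpathODE} to $\dd g_t=g_t\,\dd\mathbf W_t$ with $g_0=1_G$. The equation is linear, so the local solution extends to all of $[0,1]$: the standard a priori bound for linear RDEs gives growth of order $\exp(C\|\mathbf W\|^{1/\alpha})$, with no blow-up. Because $\mathbf W$ is geometric and $\mathfrak g$-valued, the solution stays in $G$. To see this, write $g_t$ as the limit of the ODE solutions driven by smooth approximations, each of which lies in $G$, and use that $G$ is closed. Continuity of $\mathbf W\mapsto g_1$ is the local Lipschitz estimate for RDEs, which is the Itô--Lyons continuity theorem in \cite{friz_hairer}. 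It follows from Theorem~\ref{thm:gubintegral} by a fixed-point argument in $\mathcal D^{2\alpha}_{\mathbf W}$, iterated over finitely many short subintervals.

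\emph{Step 3 (assemble the holonomy).} On each elementary piece, set $U_{[t_i,t_{i+1}]}=g_1$. Interleave these with the jumps $g_{j_i}=\exp(\mathsf{Ext}_{a_{j_i}}(A))$, which are continuous in $A$ by Proposition~\ref{prop:keyprop}, following the product formula for $\mathsf{Hol}_\gamma$. The result is a composition of measurable maps, hence measurable.

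The main obstacle is the measurability and existence of $\mathbb W$ as a genuine function of $A_{\mathcal N}$, rather than only as a random variable. This is delicate because $L^2$ limits are defined only up to null sets, and Kolmogorov's criterion produces a modification. I would handle it as described in Step 1: define $\mathbb W$ on dyadic $(s,t)$ by a.s.\ limits along a fixed subsequence, extend by the Chen relation \eqref{eq:RoughPathsEq} together with Hölder continuity, and set $\mathbb W$ to zero off the measurable set where this extension succeeds.
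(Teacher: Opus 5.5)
Your proposal is correct and follows essentially the same route as the paper. The appendix construction makes $\mathbf W(\gamma)=(W_\gamma,\mathbb W_\gamma)$ a geometric $\mathcal C^\alpha$ rough path, Theorem~\ref{thm:roughpathODE} solves $\dd g_t=g_t\circ\dd W^\gamma_t$, and the It\^o--Lyons stability estimate gives continuity of $\mathbf W(\gamma)\mapsto\mathsf{Hol}(\gamma)$. The paper's short proof leaves implicit the extra points you spell out: global existence on $[0,1]$ by linearity, $G$-valuedness via smooth approximations of the geometric rough path, and the handling of null sets so the map is genuinely measurable in $A$. These strengthen the argument rather than change it.
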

\begin{proof}
For any admissible curve $\gamma$, the pair $(W_{\gamma,s,t},\mathbb{W}_{\gamma,s,t})$ forms a geometric rough path of regularity $\mathcal{C}^\alpha$ for all $\alpha\in (0,\frac{1}{2})$.
Therefore the above theorem~\ref{thm:roughpathODE} yields a deterministic integration scheme in order to continuously map
$
   \mathbf{W}( \gamma )
   \mapsto \mathsf{Hol}_\gamma \ .
$
The continuity follows from the Ito--Lyons stability estimates, see ~\cite[Thm 8.5 p.~141]{friz_hairer} which states that solutions have Lipschitz dependence on the driving rough paths in the rough path topology (see the Lipschitz estimate in~\cite[on top of  p.~142]{friz_hairer}).
In the particular case when $\gamma$ is transverse and does not intersect unstable curves, the Holonomy process boils down to solving SDE in the Stratanovich sense. 
\end{proof}

Another consequence is that we have a well--defined of holonomy for the Yang--Mills measure on the closed surface which is no longer Gaussian since for any closed curve $\gamma$ obtained by finite composition of either flowlines or $C^1$ curve everywhere transverse to $V$, we have
\begin{align*}
&\mathbb{E}_{\mathsf{free}}\left(\sup_{s\neq t\in [0,1]} \frac{ \vert W_{s,t} \vert_{\mathfrak{g}} }{ \vert t-s\vert^\alpha  }  + \sup_{s\neq t \in [0,1] } \frac{\vert \mathbb{W}_{s,t} \vert_{\mathfrak{g}\otimes \mathfrak{g}}}{ \vert t-s\vert^{2\alpha} }  \right)<+\infty \\
&\implies
\mathbb{E}_{\mathsf{YM}}\left(\sup_{s\neq t\in [0,1]} \frac{ \vert W_{s,t} \vert_{\mathfrak{g}} }{ \vert t-s\vert^\alpha  }  + \sup_{s\neq t \in [0,1] } \frac{\vert \mathbb{W}_{s,t} \vert_{\mathfrak{g}\otimes \mathfrak{g}}}{ \vert t-s\vert^{2\alpha} }  \right)<+\infty.
\end{align*}

This concludes our justification that the Yang--Mills measure has a version in
the space $\mathcal{B}_{{\rm YM}}^{\alpha,p,s,\ell}$.

\section{Tangent space at regular points and the Atiyah--Bott--Lefschetz trace formula}\label{appendix:Atiyah-Bott-Lefschetz}
Let us sketch the proof of the following fact. At a regular point $\rho$ of $\mathcal{M}_g$, denote by $\nabla$ the corresponding flat connection (the one whose holonomy process represents $\rho$). We have \[ \dim T_\rho(\mathcal{M}_g)= \dim H^1 \left(\Omega^\bullet(\Sigma,\operatorname{End}(E)),\mathsf{ad}_\nabla\right) =(2g-2)\dim(G). \] 
Consider any vector field $V$ on our surface $\Sigma$ with discrete, nondegenerate critical points. Using any flat connection $\nabla$ on $E$, we may lift the action of the flow generated by $V$ on functions to an action 
\[e^{-t\mathcal{L}_V^{\mathsf{ad}_\nabla}}: C^\infty\big (\operatorname{End}(E)\big )\rightarrow C^\infty\big (\operatorname{End}(E)\big ),\]
where $\mathcal{L}_V^{\mathsf{ad}_\nabla}=[\mathsf{ad}_\nabla,\iota_V]$,
on sections of $C^\infty(\operatorname{End}(E))$ using $\mathsf{ad}_\nabla$. For fixed $t>0$, the operator $e^{-t\mathcal{L}_V^{\mathsf{ad}_\nabla}}$ is a \emph{geometric endomorphism} acting on the complex $\left(\Omega^\bullet(\Sigma,\operatorname{End}(E)),\mathsf{ad}_\nabla\right)$ in the terminology of Atiyah--Bott~\cite{AB67}. Its induced action on the cohomology $H^{\bullet}\left(\Omega^\bullet(\Sigma,\operatorname{End}(E)), \mathsf{ad}_\nabla \right)$ is the identity map since  $e^{-t\mathcal{L}_V^{\mathsf{ad}_\nabla}}$ is isotopic to the identity. By the Atiyah--Bott--Lefschetz fixed point formula, we find that 
\begin{align*}
  \chi\bigl(\Omega^\bullet(\Sigma, \operatorname{End}(E)), \mathsf{ad}_\nabla\bigr)
  &= \operatorname{Tr}\bigl(\mathrm{Id}\big|_{H^\bullet \to H^\bullet}\bigr) \\
  &= \sum_{a \in \operatorname{Fix}(V)} (-1)^{\operatorname{ind}(a)} \operatorname{Rank}(\operatorname{End}(E)) \\
  &= (2 - 2g) \dim(\mathsf{SU}(N)),
\end{align*}
 where we used the Poincar\'e--Hopf formula $\sum_{a\in \text{Fix}(V)} (-1)^{\ind(a)}=2-2g $. But since both \[
H^0\bigl(\Omega^\bullet(\Sigma,\operatorname{End}(E)),\mathsf{ad}_\nabla\bigr)
\quad\text{and}\quad
H^2\bigl(\Omega^\bullet(\Sigma,\operatorname{End}(E)),\mathsf{ad}_\nabla\bigr)
\]
vanish for $\nabla$ irreducible, the Euler characteristic verifies 
\[
\chi\bigl(\Omega^\bullet(\Sigma,\operatorname{End}(E)),\mathsf{ad}_\nabla\bigr)
=
-\dim\Bigl(H^1\bigl(\Omega^\bullet(\Sigma,\operatorname{End}(E)),\mathsf{ad}_\nabla\bigr)\Bigr),
\]
which concludes our argument.

\section{Symbolic index} \label{symb}

We collect in this appendix commonly used symbols of the article, together
with their meaning and, if relevant, the page where they first occur.

 \begin{center}
\renewcommand{\arraystretch}{1.1}
\begin{longtable}{lll}
\toprule
Symbol & Meaning & Page\\
\midrule
\endfirsthead
\toprule
Symbol & Meaning & Page\\
\midrule
\endhead
\bottomrule
\endfoot
\bottomrule
\endlastfoot
$\Sigma$ & Compact surface &\\
$\pi_1(\Sigma)$& Fundamental group of $\Sigma$ & \\
$\sigma$& Area measure, smooth volume form \\
$\partial \mathcal{S} $ & Boundary of $S$\\
$G$ & A compact connected semi-simple Lie group &\\
$1_G$& Identity on G&\\
$\g$ & Lie algebra of $G$ equipped with bi-invariant inner product&\\
$f$&  A Morse function & \\
$\nabla f$&  Gradient of $f$ & \\
$W^{s/u}(a)$& Stable and unstable manifolds of $a\in \mathsf{Crit}(f)$&\pageref{StableUnstable}\\
$\mathsf{Crit}_1(f)$&The set of critical points of index $1$&\pageref{Crit}\\
$\mathsf{Crit}(f)$&The set of all critical points of $f$&\pageref{Crit}\\
$[U_a]$& Unstable current associated to the critical point $a\in\mathsf{Crit}_1(f)$&\\
$\mathsf{Hol}_{\partial\mathcal{S}}$ & Combinatorial holonomy& \pageref{def:combihol}\\
$\mathcal{M}_g$& Moduli space of flat connections in Morse gauge for surfaces of genus $g$& \pageref{ModuSpace}\\
$T_{(g_a)_{a\in\mathsf{Crit}_1(f)}}\mathcal{M}_g$& Tangent cone to $\mathcal{M}_g$ at $(g_a)_{a\in\mathsf{Crit}_1(f)}$& \pageref{TangCone}\\
$\omega$& ABG symplectic form in Morse gauge& \pageref{ABGSymp}\\
$\mathcal{W}^{p;\alpha,\alpha-1}(\square)$& A space of anisotrpic $1$-currents on a flow box $\square$&\pageref{FlowboxSpace}\\
$\mathcal{W}^{\alpha,p,s,\ell}$& Weighted  anisotropic noise space& \pageref{NoiseSpace}\\
$\mathcal{B}^{\alpha,p,s,\ell}_{\mathsf{YM}}$& &\pageref{YmSpace}\\
$\pi_\mathsf{noise}$& Extraction of noise component  &\pageref{prop:keyprop} \\
$\pi_\mathsf{flat}$& Extraction of flat component &\pageref{prop:keyprop} \\
$\mathsf{Ext}_a$& $\mathsf{Ext}_a(M_b[U_b])=M_b\delta_{a}(b)$ & \pageref{prop:keyprop}\\
$A_{S,\sigma}^\mathsf{Free}$ &The Yang--Mills measure on $(S,\sigma)$ with free boundary condition &\pageref{FreeYMM}\\
$A_{\Sigma,\sigma}$ &The Yang--Mills measure on $(\Sigma,\sigma)$ &\pageref{ClosedYMM}\\
$A_{\Sigma,0}$& The limit of the Yang--Mills measure in $0$ area & \\
$\Sigma_{\leq r}$& The subset $\{x\in \Sigma : f(x) \leq r\}$ \\
$\Sigma_{\geq r}$& The subset $\{x\in \Sigma : f(x) \geq r\}$ \\
$Z_{\Sigma,\sigma}$& Partition function of the Yang--Mills measure on $(\Sigma,\sigma)$\\
$Z_{S,\sigma}(g)$& Partition function with boundary condition  $\mathsf{Hol}(\partial \mathcal{S} )=[g]$\\
$\gamma_r$& The curve $\{f=r\}$ \\

$\mathsf{Hol}_r(A)$& The holonomy of $\gamma_r$ under $A$
\end{longtable}
\end{center}

\end{document}